\pdfoutput=1
\documentclass[11pt,leqno]{amsart}
\usepackage{epsfig}
\usepackage{amssymb}
\usepackage{amscd}
\usepackage[all,cmtip,matrix,arrow]{xy}
\usepackage{graphicx}
\xyoption{rotate}
\xyoption{pdf}
\setcounter{section}{-1}
\usepackage{xparse}

\sloppy

\setlength{\textheight}{8.5in} \setlength{\textwidth}{6in}
\setlength{\topmargin}{0in} \setlength{\oddsidemargin}{0.25in}
\setlength{\evensidemargin}{0.25in}

\mathsurround=2pt

\sloppy

\newtheorem{theo}{Theorem}[section]
\newtheorem{lemma}[theo]{Lemma}
\newtheorem{defi}[theo]{Definition}
\newtheorem{prop}[theo]{Proposition}

\newtheorem{conj}[theo]{Conjecture}
\newtheorem{cor}[theo]{Corollary}
\newtheorem{remark}[theo]{Remark}

\newtheorem{example}[theo]{Example}
\numberwithin{equation}{section}

\mathchardef\mhyphen="2D

\def\A{{\mathbb A}}
\def\bbI{{\mathbb I}}

\def\bL{\mathbb{L}}

\def\Z{\mathbb{Z}}

\raggedbottom
\bibcite{...}{...} 

\def\D{{\mathcal{D}}}

\def\bR{{\mathbf R}}
\def\bL{{\mathbf L}}
\def\bY{{\mathbf Y}}

\def\PP{{\mathbb P}}

\def\pre-tr{\operatorname{pre-tr}}

\def\Hom{\operatorname{Hom}}
\def\End{\operatorname{End}}

\def\Pic{\operatorname{Pic}}

\def\gr{\operatorname{gr}}

\DeclareMathOperator*{\colim}{colim}

\DeclareMathOperator*{\holim}{holim}


\newcommand{\tens}[1]{%
  \mathbin{\mathop{\otimes}\displaylimits_{#1}}%
}

\newcommand{\Ltens}[1]{%
  \mathbin{\mathop{\otimes}\displaylimits^{\bL}_{#1}}%
}



\newcommand{\bbar}{\overline}
\newcommand{\hhat}{\widehat}
\newcommand{\xto}{\xrightarrow}

\newcommand{\hto}{\hookrightarrow}
\newcommand{\leftto}{\leftarrow}
\newcommand{\waveto}{\rightsquigarrow}

\newcommand{\QCoh}{\operatorname{QCoh}}
\newcommand{\Coh}{\operatorname{Coh}}

\newcommand{\bbD}{{\mathbb D}}

\newcommand{\mk}{\mathrm k}

\newcommand{\cF}{{\mathcal F}}
\newcommand{\cG}{{\mathcal G}}
\newcommand{\cO}{{\mathcal O}}

\newcommand{\cL}{{\mathcal L}}
\newcommand{\cM}{{\mathcal M}}
\newcommand{\cN}{{\mathcal N}}
\newcommand{\cD}{{\mathcal D}}

\newcommand{\cA}{{\mathcal A}}
\newcommand{\cB}{{\mathcal B}}
\newcommand{\cI}{{\mathcal I}}
\newcommand{\cC}{{\mathcal C}}
\newcommand{\cE}{{\mathcal E}}

\newcommand{\cS}{{\mathcal S}}
\newcommand{\cT}{{\mathcal T}}
\newcommand{\cK}{{\mathcal K}}
\newcommand{\cH}{{\mathcal H}}

\newcommand{\cHom}{\mathcal Hom}
\newcommand{\mD}{{\mathfrak D}}

\newcommand{\un}{\underline}

\newcommand{\Fun}{\operatorname{Fun}}

\newcommand{\Perf}{\operatorname{Perf}}
\newcommand{\PsPerf}{\operatorname{PsPerf}}
\newcommand{\perf}{\operatorname{perf}}
\newcommand{\pspe}{\operatorname{pspe}}
\newcommand{\propp}{\operatorname{prop}}

\newcommand{\Kar}{\operatorname{Kar}}

\newcommand{\supp}{\operatorname{Supp}}
\newcommand{\Ker}{\operatorname{Ker}}
\newcommand{\coker}{\operatorname{Coker}}

\newcommand{\im}{\operatorname{Im}}

\newcommand{\cha}{\operatorname{char}}
\newcommand{\ch}{\operatorname{ch}}

\newcommand{\rk}{\operatorname{rk}}

\newcommand{\Tor}{\operatorname{Tor}}

\newcommand{\Tr}{\operatorname{Tr}}
\newcommand{\Nm}{\operatorname{Nm}}

\newcommand{\res}{\operatorname{res}}

\newcommand{\Calk}{\operatorname{Calk}}

\newcommand{\Spec}{\operatorname{Spec}}

\newcommand{\Vect}{{\rm Vect}}

\newcommand{\Ho}{\operatorname{Ho}}

\newcommand{\id}{\operatorname{id}}

\newcommand{\dgcat}{\operatorname{dgcat}}

\newcommand{\eval}{\operatorname{ev}}

\newcommand{\tors}{\operatorname{tors}}

\newcommand{\pr}{\operatorname{pr}}

\newcommand{\Mod}{\operatorname{Mod}}


\usepackage{epsf}
\usepackage{amscd}
\newcommand{\m}{\mathfrak{m}}

\title[Categorical formal punctured neighborhood of infinity, I]
{Categorical formal punctured neighborhood of infinity, I}

\author{Alexander I. Efimov}
\address{Steklov Mathematical Institute of RAS, Gubkin str. 8, GSP-1, Moscow 119991, Russia\\
National Research University Higher School of Economics, Russian Federation}
\email{efimov@mccme.ru}
\thanks{The author is partially supported by Laboratory of Mirror Symmetry NRU HSE, RF government  grant, ag. N 14.641.31.0001}

\begin{document}

\begin{abstract}In this paper we introduce and study the formal punctured neighborhood of infinity, both in the algebro-geometric and in the DG categorical frameworks. For a smooth algebraic variety $X$ over a field of characteristic zero, one can take its smooth compactification $\bbar{X}\supset X,$ and then take the DG category of perfect complexes on the formal punctured neighborhood of the infinity locus $\bbar{X}-X.$ The result turns out to be independent of $\bbar{X}$ (up to a quasi-equivalence) and we denote this DG category by $\Perf(\hhat{X}_{\infty}).$

According to Mohammed Abouzaid \cite{Ab}, there is an analogue of this construction in the framework of Fukaya categories, which uses infinite-dimensional local systems.

We show that this construction can be done purely DG categorically (hence of course also $A_{\infty}$-categorically). For any smooth DG category $\cB,$ we construct the DG category $\Perf_{top}(\hhat{\cB}_{\infty}),$ which we call the category of perfect complexes on the formal punctured neighborhood of infinity of $\cB.$ 
The construction is closely related to the algebraic version of a Calkin algebra: endomorphisms of an infinite-dimensional vector space modulo endomorphisms of finite rank. We prove that for $X$ is as above and $\cB=\Perf(X)$ one obtains exactly the category $\Perf(\hhat{X}_{\infty}).$ 

We study numerous examples. In particular, for the algebra of rational functions on a smooth complete connected curve $C$ we obtain the algebra of adeles $\A_C,$ and for $\cB=D^b_{coh}(Y)$ for a proper singular scheme $Y$ we obtain the category $D_{sg}(Y)^{op}$ -- the opposite category of the Orlov's category of singularities. Among other things, we discuss the relation with the papers of Tate \cite{Ta} and Arbarello, de Concini, and Kac \cite{ACK}.   
\end{abstract}


\maketitle

\tableofcontents

\section{Introduction}

In this paper we define the DG categorical version of formal punctured neighborhood of infinity. The algebro-geometric motivation is the following. Let $X$ be a smooth algebraic variety over a field $\mk$ of characteristic zero. Choosing a smooth compactification $\bbar{X}\supset X$ with $Z=\bbar{X}-X,$ we can take the formal neighborhood $\hhat{X}_Z$ and then take away $Z,$ i.e. consider the complement $\hhat{X}_Z-Z.$ This object lacks a rigorous mathematical framework, but at least on the level of triangulated DG categories we can define the category of perfect complexes $\Perf(\hhat{X}_Z-Z)$ to be the (homotopy) Karoubi envelope of the quotient $\Perf(\hhat{X}_Z)/\Perf_Z(\hhat{X}_Z).$ The result in fact does not depend on the choice of a compactification (up to a quasi-equivalence, see Corollary \ref{cor:well_defined_V_infty}), and is therefore an invariant of $X.$ Let us denote this category by $\Perf(\hhat{X}_{\infty}).$

According to Mohammed Abouzaid \cite{Ab}, there is an analogue of this construction in symplectic geometry, in the framework of Fukaya categories. This motivates a natural question: can we do it purely categorical terms? In other words, can we define for any (at least smooth) DG category its "categorical formal punctured neighborhood of infinity" which for $\Perf(X)$ would give exactly $\Perf(\hhat{X}_{\infty})$?

It turns out that the answer is "yes": for any smooth DG category $\cB$ there is a DG category which we denote by $\Perf_{top}(\hhat{\cB}_{\infty}),$ with a natural functor $\Perf(\cB)\to \Perf_{top}(\hhat{\cB}_{\infty}).$ The image of this functor is denoted by $\hhat{\cB}_{\infty}.$ Moreover, the DG category $\hhat{\cB}_{\infty}$ (unlike $\Perf_{top}(\hhat{\cB}_{\infty})$) can be defined for an arbitrary small DG category $\cB.$

To illustrate the construction we first describe its "non-derived" version (see Section \ref{sec:assoc_algebras}). Namely suppose that $B$ is an associative algebra, considered as a DG algebra concentrated in degree zero, with $d=0.$ Then $\hhat{B}_{\infty}$ is a DG algebra with non-negative cohomology, and the algebra $H^0(\hhat{B}_{\infty})$ is described as follows (Proposition \ref{prop:descr_of_H^0_neighborhood}):
\begin{equation}
\label{eq:non-derived_version_intro}H^0(\hhat{B}_{\infty})\cong\{\varphi\in\End_{\mk}(B)\mid \forall b\in B\, \rk [\varphi,R_b]<\infty\}/B^*\otimes B.\end{equation}
Here $R_b:B\to B$ denotes the operator of right multiplication by $B,$ and $B^*\otimes B$ is identified with the space of linear endomorphisms of $B$ of finite rank. The homomorphism $B\to H^0(\hhat{B}_{\infty})$ is given by $b\mapsto \bbar{L_b},$ where $L_b$ is the operator of left multiplication by $b.$ It is a pleasant exercise to verify that for $B=\mk[t]$ the RHS of \eqref{eq:non-derived_version_intro} is naturally isomorphic to $\mk((t^{-1})),$ see Example \ref{ex:computation_for_k[t]}. 

For a general construction we need to define the Calkin DG category $\Calk_{\mk}.$ It is a DG quotient of $\Mod_{\mk}$ by $\mk$ (or, equivalently, by perfect complexes $\Perf(\mk)$). More precisely, its objects are complexes of $\mk$-modules, and $\Calk_{\mk}(V,W)=\Hom_{\mk}(V,W)/V^*\otimes W.$

\begin{remark}\label{remark:original_Calkin} Originally \cite{C}, Calkin algebra is defined for a separable infinite-dimensional Hilbert space $H$ to be the quotient $B(H)/C(H)$ of bounded operators on $H$ by the two-sided ideal of compact operators. Since bounded operators on $H$ of finite rank are dense in $C(H),$ it is natural to define the "algebraic" Calkin algebra of an infinite-dimensional vector space $V$ to be $\End_{\mk}(V)/V^*\otimes V.$ The DG category $\Calk_{\mk}$ is then a straightforward generalization. A non-DG version has also been considered e.g. in \cite{Dr}.\end{remark}

For a DG algebra $B$ we have $\hhat{B}_{\infty}:=C^{\bullet}(B^{op},\End_{\Calk_{\mk}}(B)).$ The multiplication on $\hhat{B}_{\infty}$ comes from a homomorphism $B^{op}\to \End_{\Calk_{\mk}}(B$). 
The morphism of DG algebras $B\to \hhat{B}_{\infty}$ is given by the composition $$B\xto{\sim} C(B^{op},\End_{\mk}(B))\to C^{\bullet}(B^{op},\End{\Calk_{\mk}}(B)).$$ A similar construction works for an arbitrary small DG category, and it is Morita invariant (see Section \ref{sec:DG_cat_neighborhoods_main_result}).

The definition of $\Perf_{top}(\hhat{\cB}_{\infty})$ is more tricky, it is obtained in Section \ref{sec:DG_cat_neighborhoods_main_result}. Here we just say that $\Perf_{top}(\hhat{\cB}_{\infty})$ is a certain full DG subcategory of the category of "almost DG $\cB$-modules" $\bR\un{\Hom}(B^{op},\Calk_{\mk})$ (this has  nothing to do with almost mathematics developed by Faltings \cite{F} and Gabber-Romero \cite{GaRom}). We also call the objects of $\Perf_{top}(\hhat{\cB}_{\infty})$ "perfect almost DG modules".  We refer the reader to Section \ref{sec:DG_cat_neighborhoods_main_result} for the precise definition and for justification of the terminology. 



Our first result concerns a general noetherian scheme $X$ and a closed subscheme $Z.$ The category $\Perf(\hhat{X}_Z)$ is the homotopy limit of DG categories $\Perf(Z_n),$ where $Z_n\subset X$ is the $n$-th infinitesimal neighborhood of $Z.$ Such a homotopy limit looks (and in some sense is) hard to deal with, but it turns out that this category has a useful alternative description. For simplicity, in the introduction we tacitly identify triangulated categories with their DG enhancements.

We prove that the category $\Perf(\hhat{X}_Z)$ is equivalent to a full subcategory $\cT_Z$ of $D_Z(\QCoh(X))$ (Theorem \ref{th:perf_of_formal_is_T_Z}). Namely, $\cT_Z$ consists of objects $\cF$ such that for any perfect complex $\cG\in\Perf_Z(T),$ supported on $Z,$ we have $\cG\Ltens{\cO_X}\cF\in\Perf_Z(X)$ (equivalently, replacing $\cG$ by $\cG^{\vee},$ we require that $\bR\cHom_{\cO_X}(\cG,\cF)\in \Perf_Z(X)$). The functor $\cT_Z\xto{\sim}\Perf(\hhat{X}_Z)$ is given by pullback, and the quasi-inverse $\Perf(\hhat{X}_Z)\xto{\sim}\cT_Z$ is given, roughly speaking, by composing pushforward from $\hhat{X}_Z$ to $X$ and the functor of sections supported on $Z,$ denoted by $\cH_Z:D(X)\to D_Z(X).$ For details see Section \ref{sec:formal_neighborhood}.

The important application of this description of perfect complexes on the formal neighborhood is the following result.

\begin{theo}\label{th:intro_neighborhoods_and_D_sg} Let $X$ be smooth and proper scheme over $k,$ and $Z\subset X$ a closed subscheme. Let us put $\cS:=\Perf_Z(X).$ Then we have a natural equivalence $\Perf(\hhat{X}_Z)\simeq \PsPerf(\cS),$ compatible with the inclusions of $\Perf_Z(X).$ Passing to (Karoubi envelopes of) the quotients, we get an equivalence
$\Perf(\hhat{X}_Z-Z)\cong \bbar{D_{sg}}(\cS).$\end{theo}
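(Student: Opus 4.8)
The plan is to build on the alternative description of $\Perf(\hhat{X}_Z)$ as the full subcategory $\cT_Z\subset D_Z(\QCoh(X))$ established in Theorem \ref{th:perf_of_formal_is_T_Z}, and to identify $\cT_Z$ with $\PsPerf(\cS)$ where $\cS=\Perf_Z(X)$. Recall that $\PsPerf(\cS)$ is the category of pseudo-perfect $\cS$-modules, i.e. $\cS$-modules $M$ such that $M\Ltens{\cS}(-)$ (equivalently the restriction of the Yoneda image) lands in $\Perf(\mk)$ when evaluated on objects of $\cS$. The bridge is the following: since $X$ is smooth and proper, $\Perf(X)=D^b_{coh}(X)$ is a smooth and proper DG category, and $\cS=\Perf_Z(X)$ is a full DG subcategory generated by a single object $\cG_0$ (a perfect complex with support exactly $Z$, e.g. a Koszul-type resolution of $\cO_{Z_1}$). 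So an $\cS$-module is the same as a DG module over the DG algebra $A:=\REnd_{\cO_X}(\cG_0)$, and pseudo-perfection translates into $\bR\Hom_A(M,A)$-type finiteness, or more usefully into the condition that $\bR\cHom_{\cO_X}(\cG_0,\cF)$ (thought of as an $A$-module) is perfect over $\mk$.

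\textbf{Step 1: The functor.} First I would construct the comparison functor $\Phi\colon \cT_Z\to \PsPerf(\cS)$ by sending $\cF\in\cT_Z$ to the $\cS$-module $\cG\mapsto \bR\Hom_{\cO_X}(\cG,\cF)$ (equivalently $\bR\Hom_{\cO_X}(\cG^\vee\Ltens{}\cO_X,\cF)$ once one is careful about which variance is needed, matching the "compatible with the inclusions of $\Perf_Z(X)$" requirement: the inclusion $\Perf_Z(X)\hookrightarrow \cT_Z$ should correspond to the Yoneda embedding $\cS\hookrightarrow\PsPerf(\cS)$). That $\Phi(\cF)$ is indeed pseudo-perfect is exactly the defining condition of $\cT_Z$: for every $\cG\in\Perf_Z(X)$ we required $\bR\cHom_{\cO_X}(\cG,\cF)\in\Perf_Z(X)$, and applying global sections / a further $\Hom$ out of a perfect generator shows the mapping complex is perfect over $\mk$. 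Conversely I would build $\Psi\colon \PsPerf(\cS)\to \cT_Z$ using the fact that $\cS\subset\Perf(X)$ so a pseudo-perfect $\cS$-module, viewed inside $\QCoh(X)$-modules via the (continuous) extension along $\cS\hookrightarrow\QCoh(X)$, is corepresented by an object of $D_Z(\QCoh(X))$; here the properness of $X$ guarantees that pseudo-perfect modules over $\cS$ are the same as compact-preserving-dual modules, which is what makes the tensor/Hom with objects of $\cS$ land in $\Perf_Z(X)$, so the image lies in $\cT_Z$.

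\textbf{Step 2: Equivalence.} To see $\Phi$ and $\Psi$ are mutually quasi-inverse, I would reduce to the generator $\cG_0$: both categories are modules over $A=\REnd(\cG_0)$ (for $\cT_Z$ this uses that $\cG_0$ generates $D_Z(\QCoh(X))$ compactly, a standard fact for $X$ noetherian, so $D_Z(\QCoh(X))\simeq D(A)$ and then $\cT_Z$ is the subcategory of modules $M$ with $M\Ltens{A}\cG_0$... carried to $\Perf_Z(X)$), and under this identification $\PsPerf(\cS)$ is precisely the pseudo-perfect $A$-modules. Then $\Phi,\Psi$ become the identity up to the standard $\REnd(\cG_0)$-bimodule bookkeeping, and smoothness of $\Perf(X)$ (hence of $A$, as a summand subquotient) is what ensures pseudo-perfect equals the "correct" finiteness class with no pathology. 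Finally, passing to the quotient by $\cS=\Perf_Z(X)$ on both sides and taking homotopy Karoubi envelopes: on the left $\Perf(\hhat{X}_Z)/\Perf_Z(X)$ Karoubi-completes to $\Perf(\hhat{X}_Z-Z)$ by definition; on the right $\PsPerf(\cS)/\cS$ Karoubi-completes to $\bbar{D_{sg}}(\cS)$ by the very definition of the (completed) categorical singularity category of the smooth-and-proper-adjacent category $\cS$. Compatibility with inclusions from Step 1 makes the quotient functor well-defined and an equivalence.

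\textbf{Main obstacle.} I expect the crux to be Step 1's construction of $\Psi$ and the verification that it genuinely lands in $\cT_Z$ rather than in some larger subcategory of $D_Z(\QCoh(X))$ — i.e. showing that a pseudo-perfect $\cS$-module, once corepresented geometrically, satisfies the \emph{uniform} finiteness condition $\cG\Ltens{\cO_X}\cF\in\Perf_Z(X)$ for \emph{all} $\cG\in\Perf_Z(X)$ and not merely for the generator. This is where properness of $X$ is essential (it is what upgrades "perfect over $\mk$ on the generator" to "perfect support condition for the whole tensor"), and where one must be careful that $\cS$ is not smooth on its own, so one cannot argue entirely internally to $\cS$ — the embedding $\cS\subset\Perf(X)$ into a smooth proper ambient category does the real work. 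A secondary but routine point is keeping the variances straight so that the inclusion $\Perf_Z(X)\hookrightarrow\Perf(\hhat{X}_Z)$ matches Yoneda $\cS\hookrightarrow\PsPerf(\cS)$, which is needed for the clean passage to quotients in the last sentence of the theorem.
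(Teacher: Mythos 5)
Your proposal is correct and follows essentially the same route as the paper: both identify $\mD_Z(X)\simeq\Mod_{\cS}$ via compact generation by $\Perf_Z(X)$, match $\cT_Z$ with $\PsPerf(\cS)$ under this identification, and then invoke Theorem \ref{th:perf_of_formal_is_T_Z} together with the definitions of $\Perf(\hhat{X}_Z-Z)$ and $\bbar{D_{sg}}(\cS)$ to pass to quotients. The ``main obstacle'' you flag is resolved in the paper by a one-line adjunction — for $\cG_1\in D_{\perf,Z}(X)$ and any $\cG_2\in D_{\perf}(X)$ one has $\bR\Hom(\cG_2,\cG_1\Ltens{\cO_X}\cF)\cong\bR\Hom(\cG_2\Ltens{\cO_X}\cG_1^{\vee},\cF)\in D_{\perf}(\mk)$, whence $\cG_1\Ltens{\cO_X}\cF\in D^b_{coh}(X)=D_{\perf}(X)$ by smoothness and properness of the ambient $X$ — which is exactly the mechanism you anticipated.
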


Here $\PsPerf(\cS)$ denotes the category of pseudo-perfect $\cS$-modules, and $\bbar{D_{sg}}(\cS)$ is the Karoubi envelope of the category
$$D_{sg}(\cS):=\PsPerf(\cS)/\Perf(\cS).$$ Our main result (Theorem \ref{th:main_theorem}) states that whenever we have a short exact sequence of DG categories $\cS\hto\cA\to\cB$ with $\cA$ being smooth and proper (hence $\cS$ proper and $\cB$ smooth), we have a natural equivalence $\bbar{D_{sg}}(\cS)\simeq \Perf_{top}(\hhat{\cB}_{\infty}).$

In the algebro-geometric setting (for general noetherian schemes) an object of $\Perf(\hhat{X}_Z)$ is called {\it algebraizable} if it is contained in the thick triangulated subcategory generated by the image of the restriction functor 
$\Perf(X)\to\Perf(\hhat{X}_Z).$ The same terminology applies to the formal punctured neighborhood 
$\hhat{X}_Z-Z.$ Similarly, for a smooth DG category $\cB$ an object of 
$\Perf_{top}(\hhat{\cB}_{\infty})$ is called algebraizable if it can be generated by the image of 
$\Perf(\cB)\to \Perf_{top}(\hhat{\cB}_{\infty}).$ The corresponding full subcategories are denoted by $\Perf_{alg}(\hhat{X}_Z),$ $\Perf(\hhat{X}_Z-Z)$ and $\Perf_{alg}(\hhat{\cB}_{\infty}).$

\begin{remark}\label{rem:algebraizable_not_extendable}Note that algebraizability condition for a perfect complex on $\hhat{X}_Z$ is much weaker than extendability to $X,$ and similarly for $\hhat{X}_Z-Z$ and $\hhat{\cB}_{\infty}.$ For example, it is shown in Appendix \ref{app:formal_affine} that if $X$ is affine, then any perfect complex on $\hhat{X}_Z$ is algebraizable (Proposition \ref{prop:completion_affine}).\end{remark}  

In Sections \ref{sec:assoc_algebras}-\ref{sec:neighborhood_for_D^b_coh} we consider a number of examples of our construction. In particular, in Section \ref{sec:affine_plane} we apply our general results to line bundles on $\hhat{(\A_{\mk}^2)}_{\infty}.$ In this case the Picard group is large and is identified with a certain multiplicative group of formal power series (Corollary \ref{cor:Pic_punctured_neighborhood}). We obtain an interesting relation between algebraizability of a line bundle (as a perfect complex) and algebraicity of the power series (Theorem \ref{th:algebraizable_algebraic}). 

A somewhat surprising example of our construction is the following result (Theorem \ref{th:neighborhood_of_infty_for_D^b_coh}).

\begin{theo}\label{th:intro_neighborhood_of_infty_for_D^b_coh}Let $X$ be a proper scheme over a perfect field $\mk.$ Then we have a natural quasi-equivalence $D_{sg}(X)^{op}\xto{\sim} \hhat{D^b_{coh}(X)}_{\infty}.$\end{theo}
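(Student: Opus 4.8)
\textbf{Proof proposal for Theorem \ref{th:intro_neighborhood_of_infty_for_D^b_coh}.}

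The plan is to deduce this as a special case of the machinery already developed, rather than to argue directly with almost modules. The key observation is that for a proper scheme $X$ over a perfect field $\mk$, the DG category $\cB := D^b_{coh}(X)$ is smooth (this is where perfectness of $\mk$ enters — it guarantees smoothness of $D^b_{coh}(X)$ as a DG category even when $X$ is singular), so $\Perf_{top}(\hhat{\cB}_{\infty})$ and $\hhat{\cB}_{\infty}$ are defined. Moreover $D^b_{coh}(X)$ fits into a short exact sequence of DG categories: take $\cS := \Perf(X)$, $\cA := D^b_{coh}(X)$, and we want a sequence $\cS \hookrightarrow \cA \to \cB'$ presenting $\cB = \cA$ itself. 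The natural move is instead to realize $D^b_{coh}(X)$ as the \emph{quotient} term. Concretely, one knows that $\Perf(X) \hookrightarrow D^b_{coh}(X)$ and the Verdier quotient is $D_{sg}(X)$ by definition. So I would look for a short exact sequence $\Perf(X) \hookrightarrow \cA \to D^b_{coh}(X)$ with $\cA$ smooth and proper — but a cleaner route uses the \emph{opposite} picture together with duality.

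First I would invoke the duality $D^b_{coh}(X)^{op} \simeq D^b_{coh}(X)$ available from Grothendieck–Serre duality when $X$ is proper with dualizing complex (this holds for any proper scheme over a field), or more robustly, note that $D^b_{coh}(X)$ is saturated-in-the-appropriate-sense so that passing to $\hhat{(\cdot)}_{\infty}$ interacts with $op$. The substance, though, should come from Theorem \ref{th:intro_neighborhoods_and_D_sg} and the main theorem (Theorem \ref{th:main_theorem}). Take $X$ smooth proper; then $\Perf(X) = D^b_{coh}(X)$ and there is nothing singular. For singular $X$ the trick is to embed: choose a closed embedding $X \hookrightarrow M$ into a smooth proper scheme $M$ (possible since $X$ is proper over $\mk$), set $Z = X \subset M$, and apply Theorem \ref{th:intro_neighborhoods_and_D_sg} with $\cS = \Perf_Z(M) = \Perf_X(M)$. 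That theorem gives $\bbar{D_{sg}}(\Perf_X(M)) \simeq \Perf(\hhat{M}_X - X)$. On the other hand, by Orlov's theorem on categories of singularities of the formal/infinitesimal neighborhood (or by a direct dévissage), $D_{sg}(\Perf_X(M))$ should be equivalent to $D_{sg}(X)$ — this is the classical statement that the singularity category of the category of complexes on $M$ supported set-theoretically on $X$ recovers $D_{sg}(X)$, independent of the smooth ambient $M$. Combining, $\Perf(\hhat{M}_X - X) \simeq \bbar{D_{sg}}(X)$.

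Now I would connect this to $\hhat{\cB}_{\infty}$ for $\cB = D^b_{coh}(X)$. The main theorem (Theorem \ref{th:main_theorem}) says: given a short exact sequence $\cS \hookrightarrow \cA \to \cB$ with $\cA$ smooth and proper, one has $\bbar{D_{sg}}(\cS) \simeq \Perf_{top}(\hhat{\cB}_{\infty})$. Apply this with $\cA = \Perf(M) = D^b_{coh}(M)$ (smooth proper since $M$ is), $\cS = \Perf_X(M)$, and $\cB = \Perf(M)/\Perf_X(M) = \Perf(M \setminus X)$ — that is not what we want. To land on $D^b_{coh}(X)$ we instead use the sequence $\Perf_X(M) \hookrightarrow \Perf(M) \to \Perf(M\setminus X)$ and then relate $\Perf(M \setminus X)$-at-infinity back to $D^b_{coh}(X)^{op}$ via the self-duality of $D^b_{coh}(X)$ and the identification $D_{sg}(\Perf_X(M)) \simeq D_{sg}(X)$ noted above. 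Chasing through: $\Perf_{top}(\hhat{(\Perf(M\setminus X))}_{\infty}) \simeq \bbar{D_{sg}}(\Perf_X(M)) \simeq \bbar{D_{sg}}(X)$; and separately one identifies the "small" image $\hhat{\cB}_{\infty}$ inside $\Perf_{top}$, showing it corresponds under the equivalence to $D_{sg}(X)^{op}$ sitting inside $\bbar{D_{sg}}(X)$ — the $op$ appearing because $\hhat{(\cdot)}_\infty$ is built from $C^\bullet(B^{op}, \End_{\Calk_\mk}(B))$, so the Hochschild-cochain construction naturally produces the opposite category. The final step is to check the functor $D_{sg}(X)^{op} \to \hhat{D^b_{coh}(X)}_\infty$ constructed this way agrees with the canonical one and is a quasi-equivalence, i.e. essentially surjective onto $\hhat{\cB}_\infty$ and fully faithful; essential surjectivity is by construction ($\hhat{\cB}_\infty$ is the image of $\Perf(\cB) = \cB$), and full faithfulness reduces to a Hom-complex computation in the Calkin category.

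The main obstacle I anticipate is twofold. First, verifying that $D^b_{coh}(X)$ is smooth as a DG category for $X$ proper over a perfect (not necessarily algebraically closed, not necessarily perfect-of-char-zero) field $\mk$ — this is essential for $\Perf_{top}(\hhat{\cB}_\infty)$ to even be defined, and is presumably where the perfectness hypothesis is used; one would cite Lunts–Orlov or Elagin-type results. Second, and more seriously, identifying $\hhat{\cB}_\infty$ — the image of $\cB$ in $\Perf_{top}(\hhat{\cB}_\infty)$ — with precisely $D_{sg}(X)^{op}$ and not some larger or idempotent-completed version: one must show that the image of each object of $D^b_{coh}(X)$ under $\cB \to \Perf_{top}(\hhat{\cB}_\infty)$ lands in the subcategory corresponding to $D_{sg}(X)^{op} \subset \bbar{D_{sg}}(X)$, and conversely that every object of $D_{sg}(X)^{op}$ is hit; perfect complexes on $X$ should map to zero (they become acyclic at infinity because they extend), which is what cuts $D^b_{coh}(X)$ down to the quotient $D_{sg}(X)$. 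Handling the $op$ bookkeeping correctly through the chain of equivalences — Theorem \ref{th:perf_of_formal_is_T_Z}, Theorem \ref{th:intro_neighborhoods_and_D_sg}, Theorem \ref{th:main_theorem} — is the delicate part, and I would organize the proof so that the $op$ is tracked by a single explicit duality functor applied once at the end.
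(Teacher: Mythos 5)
Your proposed route does not work, and the central computation of the paper's proof is absent from it. The strategic error is trying to reach the statement through Theorem \ref{th:main_theorem}: that theorem requires exhibiting the target category as the \emph{quotient} $\cB$ in a short exact sequence $\cS\hto\cA\to\cB$ with $\cA$ smooth and proper. In your exact sequence $\Perf_X(M)\hto\Perf(M)\to\Perf(M\setminus X)$ the quotient is $\Perf(M\setminus X)$, not $D^b_{coh}(X)$, and you acknowledge this but never supply a bridge; no such bridge exists, because $\Perf_{top}(\hhat{\Perf(M\setminus X)}_{\infty})$ and $\hhat{D^b_{coh}(X)}_{\infty}$ are simply different objects (a smooth categorical compactification of $D^b_{coh}(X)$ itself is not available in general, which is precisely why the paper does not argue this way). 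Worse, the identification you lean on, $D_{sg}(\Perf_X(M))\simeq D_{sg}(X)$, is false: by Theorem \ref{th:intro_neighborhoods_and_D_sg}, $\bbar{D_{sg}}(\Perf_X(M))\simeq \Perf(\hhat{M}_X-X)$, and already for $X$ a point in $M=\PP^1$ this is $\Perf(\mk((t)))\ne 0=D_{sg}(\mathrm{pt})$. Finally, your bookkeeping of the kernel is off: by Proposition \ref{prop:pseudo-perfect_in_the_kernel} the objects killed by $\cB\to\hhat{\cB}_{\infty}$ are the pseudo-perfect $\cB$-modules, which for $\cB=\mD^b_{coh}(X)$ are the $!$-perfect complexes $\bR\cHom_{\cO_X}(-,\bbD_X)^{-1}(\Perf(X))$, not $\Perf(X)$; the ``op'' in the statement arises from the contravariant Grothendieck duality exchanging $\Perf(X)$ with $!\mhyphen\Perf(X)$, giving $\mD^b_{coh}(X)/!\mhyphen\Perf(X)\simeq D_{sg}(X)^{op}$, and not from the $B^{op}$ appearing in the Hochschild cochain complex.

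What the paper actually proves, and what your proposal never engages with, is that the canonical functor $\cB/\PsPerf(\cB)\to\hhat{\cB}_{\infty}$ (which exists for any smooth $\cB$) is quasi-fully-faithful in this case. This is the content of Proposition \ref{prop:Koszul_duality_and_neighborhoods}: one compares, for $x,y\in\cB$, the bar-complex computing $(\cB/\cC)(x,y)$ over $\cC=!\mhyphen\Perf(X)$ with the cone description $\hhat{\cB}_{\infty}(\bbar{\bY}(x),\bbar{\bY}(y))\cong \Cone(C^{\bullet}(\cB^{op},\bY(x)^*\otimes\bY(y))\to\cB(x,y))$, through a chain of maps whose invertibility uses smoothness of $\cB$, full faithfulness of $D^b_{coh}(X)\to D(X)$, and crucially the condition $(*)$ that $M\Ltens{\cC}N^*\to\bR\Hom_{\cC}(M,N)^*$ is an isomorphism for pseudo-perfect modules; that last point is a genuine analytic input, established in Appendix \ref{app:boundedness_etc} by compact approximation and boundedness of the diagonal bimodule. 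Your proposal correctly locates where perfectness of $\mk$ enters (smoothness of $\mD^b_{coh}(X)$) and correctly anticipates that some class of objects must die, but the heart of the argument --- the Hochschild-cochain versus bar-complex comparison and the condition $(*)$ --- is missing, and the substitute route proposed in its place is broken at the steps indicated above.
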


Here $D_{sg}(X)^{op}$ is the opposite category of the Orlov's category of singularities
$D^b_{coh}(X)/\Perf(X).$

The paper is organized as follows.

Section \ref{sec:preliminaries} contains mostly preliminaries on DG categories, and some notation. Here we define the Calkin DG category $\Calk_{\cC}$ of a DG category $\cC,$ and prove that a short exact sequence of DG categories induces a short exact sequence of the associated Calkin DG categories (Proposition \ref{prop:short_exact_Calkins}).

In Section \ref{sec:formal_neighborhood} we define the category of perfect complexes on the formal neighborhood of a closed subscheme $Z\subset X,$ with $X$ being noetherian separated, and prove Theorem \ref{th:perf_of_formal_is_T_Z} which states an equivalence between $\Perf(\hhat{X}_Z)$ and the category $\cT_Z$ defined above. The proof is technically involved, and occupies essentially the whole section. In Subsection \ref{ssec:X_smooth_proper} we deduce Theorem \ref{th:intro_neighborhoods_and_D_sg}.

In Section \ref{sec:DG_cat_neighborhoods_main_result} we give the construction of $\Perf_{top}(\hhat{\cB}_{\infty}).$ The main result of this section is Theorem \ref{th:main_theorem}. 

In Section \ref{sec:generalities_on_the_construction} we list some general properties of our construction $\cB\waveto \Perf_{top}(\hhat{\cB_{\infty}}),$ in particular, the kernel of the functor $\bbar{\bY}^*:\Perf(\cB)\to\Perf_{top}(\hhat{\cB}_{\infty})$ is identified with $\PsPerf(\cB)$ (Proposition \ref{prop:pseudo-perfect_in_the_kernel}). 

In section \ref{sec:assoc_algebras} we obtain the non-derived version of our construction given by  \eqref{eq:non-derived_version_intro} (Proposition \ref{prop:descr_of_H^0_neighborhood}). We also show that for formally smooth associative algebras (Definition \ref{def:Quillen_smooth}) the non-derived version coincides with the actual one (\ref{prop:Quillen_smooth}). 

In section \ref{sec:smooth_affine_variety} we describe the cohomology algebra $H^{\bullet}(\hhat{B}_{\infty}),$ where $\Spec B$ is a smooth affine variety of dimension $d\geq 2$ (Proposition \ref{prop:smooth_affine_var_neighborhood_infty}). 

Section \ref{sec:affine_plane} is devoted to detailed study of the case of affine plane $\A_{\mk}^2=\Spec\mk[x,y],$ and in particular of line bundles on $\hhat{(\A_{\mk}^2)}_{\infty}$. The Picard group is identified with the multiplicative group of power series in $x^{-1},y^{-1},$ of the form $1+x^{-1}y^{-1}f(x^{-1},y^{-1})$  (Proposition \ref{prop:Pic_formal_neighborhood}, Corollary \ref{cor:Pic_punctured_neighborhood}). We describe the almost $\mk[x,y]$-modules corresponding to the line bundles (\ref{prop:M_g_and_L_g}). For $g$ of the form $1+x^{-1}y^{-1}h(y^{-1})$ we prove that algebraizability of $L_g$ as a perfect complex is equivalent to the algebraicity of the series $h$ (Theorem \ref{th:algebraizable_algebraic}). We expect such equivalence to hold for arbitrary $g.$

In Section \ref{sec:neighborhood_for_D^b_coh} we consider the case of the derived category of coherent sheaves on a proper (but not smooth) scheme and prove Theorem \ref{th:intro_neighborhood_of_infty_for_D^b_coh}.

In Section \ref{sec:concluding} we discuss (without proofs) some aspects of our construction which were not covered in the present paper. In Subsection \ref{ssec:residues_etc} we explain the relation with the papers of Tate \cite{Ta} and Arbarello, de Concini and Kac \cite{ACK}. In Subsection \ref{ssec:more_invariants} we sketch a general principle which allows to obtain invariants of locally proper DG categories and smooth DG categories, that are related to each other as in Theorem \ref{th:main_theorem}. We discuss various interesting examples.

In Appendix \ref{app:formal_affine} we show that for an affine scheme $X=\Spec A,$ and a closed subscheme $Z\subset X$ corresponding to an ideal $I\subset A,$ the category $\Perf(\hat{X}_Z)$ is nothing else but $\Perf(\hhat{A}_I)$ (in other words, all perfect complexes on $\hhat{X}_Z$ are algebraizable). 

In Appendix \ref{app:boundedness_etc} we prove various technical statements related to compact approximation, boundedness of DG modules and pseudo-perfect DG modules.

{\noindent{\bf Acknowledgements.}} I am grateful to Mohammed Abouzaid, Alexander Beilinson, Dmitry Kaledin, Maxim Kontsevich, Dmitri Orlov, Paul Seidel, Carlos Simpson and Yan Soibelman for useful discussions. I am especially grateful to Mohammed Abouzaid for asking me a question if (the category of perfect complexes on) the algebro-geometric formal punctured neighborhood of infinity can be obtained DG categorically.




\section{Preliminaries on DG categories}
\label{sec:preliminaries}

For the introduction on DG categories, we refer the reader to \cite{Ke}. The references for DG quotients are \cite{Dr2, Ke2}. For the model structures on DG categories we refer the reader to \cite{Tab1, Tab2}, and for a general introduction on model categories we refer to \cite{Ho}. To simplify the exposition we do not discuss set-theoretic issues, referring to \cite{T, TV}. All statements and constructions will be done over some base field $\mk,$ although almost everything can be done over an arbitrary base commutative ring (for some statements one needs $\mk$ to be noetherian or regular noetherian). We will specify when we need $\mk$ to be perfect or to have characteristic zero. 

All modules are assumed to be right unless otherwise stated. For a small DG category $\cC$ and a $\cC$-module $M,$ we denote by $M^{\vee}$ the $\cC^{op}$-module $\Hom_{\cC}(M,\cC).$ We denote by $M^*$ the $\cC^{op}$-module $\Hom_{\cC}(M,\mk).$

For a small DG category $\cC,$ we denote by $\Mod_{\cC}$ the DG category of cofibrant DG $\cC$-modules in the projective model structure (these are exactly the direct summands of semifree DG $\cC$-modules).
We denote by $\bY:\cC\hto\Mod_{\cC}$ the standard Yoneda embedding given by $\bY(x)=\cC(-,x).$

The diagonal $\cC\mhyphen\cC$-bimodule is denoted by $I_{\cC}$ or just $\cC$ if this does not lead to confusion. We denote by $\cC^{!}\in D(\cC\otimes\cC^{op})$ the bimodule $\bR\Hom_{\cC\otimes\cC^{op}}(\cC,\cC_{\cC}\otimes {}_{\cC}\cC),$ where the subscript ${}_{\cC}$ is inserted to clarify which $\cC$-action (left or right) we consider.

The Calkin DG category $\Calk_{\cC}$ has the same objects as $\Mod_{\cC},$ and the morphisms are given by
\begin{equation}\label{eq:morphisms_in_Calk_C}\Calk_{\cC}(M,N):=\coker(N\otimes_{\cC}M^{\vee}\xto{\eval}\Hom_{\cC}(M,N)).\end{equation} 

\begin{prop}\label{prop:Calkin_DG_quotient}The natural functor $\Mod_{\cC}/\bY(\cC)\to \Calk_{\cC}$ is a quasi-equivalence.\end{prop}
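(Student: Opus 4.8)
The plan is to exhibit a quasi-equivalence by first checking essential surjectivity, which is immediate: both $\Mod_{\cC}/\bY(\cC)$ and $\Calk_{\cC}$ have the same objects as $\Mod_{\cC}$, and the natural functor is the identity on objects. So the content is showing that for any two cofibrant DG $\cC$-modules $M,N$, the induced map on Hom-complexes
\[
\Hom_{\Mod_{\cC}/\bY(\cC)}(M,N)\lto \Calk_{\cC}(M,N)=\coker\bigl(N\tens{\cC}M^{\vee}\xto{\eval}\Hom_{\cC}(M,N)\bigr)
\]
is a quasi-isomorphism.

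The key point is to recall the explicit model for the DG quotient of \cite{Dr2}: $\Mod_{\cC}/\bY(\cC)$ is obtained from $\Mod_{\cC}$ by freely adjoining, for each object $K$ of the subcategory $\bY(\cC)$ (equivalently each $K\cong\bY(c)$), a contracting homotopy $\epsilon_K$ of degree $-1$ with $d\epsilon_K=\id_K$. First I would reduce to the case where the thick subcategory we kill is literally $\bY(\cC)$ itself rather than its closure; since $\Mod_{\cC}$ is already idempotent-complete and the relevant mapping complexes are computed before Karoubi completion, this is harmless here. Next I would use that $N\tens{\cC}M^{\vee}$ is naturally identified with the subcomplex of $\Hom_{\cC}(M,N)$ spanned by morphisms that factor through a representable module: concretely, $N\tens{\cC}\bY(c)^{\vee}=N(c)$, and the evaluation map sends $n\otimes\phi$ (with $\phi: M\to\bY(c)$, i.e.\ $\phi\in M(c)^*$... more precisely $M^{\vee}$) to the composite $M\to\bY(c)\to N$; since every object of $\bY(\cC)$ is a finite... actually an arbitrary representable, and $M$ is cofibrant hence a summand of a semifree module, morphisms $M\to N$ factoring through (finite direct sums of shifts of) representables are precisely the image of $\eval$. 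Thus the \emph{target} $\Calk_{\cC}(M,N)$ is exactly the quotient of $\Hom_{\cC}(M,N)$ by the subcomplex of "morphisms factoring through $\bY(\cC)$".

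The comparison then runs as follows. There is an evident map from $\Hom_{\cC}(M,N)$ to the degree-zero part of the DG-quotient mapping complex (the "length-zero paths"), and Drinfeld's description writes the full mapping complex as a direct sum over $k\ge 0$ of length-$k$ zig-zags $M\to K_1\to\cdots\to K_k\to N$ through objects $K_i\in\bY(\cC)$, each internal vertex decorated with an $\epsilon_{K_i}$. I would filter by path length and run the standard spectral-sequence / explicit-contraction argument: the sub-quotient in length $\ge 1$ is acyclic because the $\epsilon_{K_i}$ are contractions, so the inclusion of the length-zero term is a quasi-isomorphism onto the whole complex \emph{relative} to the subcomplex generated by $\eval$ — equivalently, the quotient of the DG-quotient mapping complex by the image of $\Hom_{\cC}(M,N)$ is acyclic, and the image of the evaluation subcomplex is killed precisely because a morphism factoring as $M\to K\to N$ with $K\in\bY(\cC)$ becomes null-homotopic via $\epsilon_K$. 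Assembling these two facts gives that $\Hom_{\cC}(M,N)/(\text{image of }\eval)\to \Hom_{\Mod_{\cC}/\bY(\cC)}(M,N)$ is a quasi-isomorphism, which is exactly the claim.

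The main obstacle, and the step deserving the most care, is the identification of the image of the evaluation map $\eval: N\tens{\cC}M^{\vee}\to\Hom_{\cC}(M,N)$ with "morphisms factoring through $\bY(\cC)$", together with checking that this really matches the subcomplex of zig-zags that Drinfeld's construction makes contractible — one must be careful that $M,N$ are merely summands of semifree modules (not semifree), so a factorization $M\to\bigoplus_i\bY(c_i)[n_i]\to N$ need not exist on the nose but only up to the summand inclusion/projection, and one must verify that this still lands in, and generates, the correct subcomplex. Once that bookkeeping is pinned down, the acyclicity of the higher-length part is a routine application of the contracting homotopies, so I expect essentially no difficulty there.
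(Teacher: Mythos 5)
Your overall strategy --- computing the Hom complexes of the DG quotient via Drinfeld's explicit model and comparing with $\coker(\eval)$ --- is in substance the same as the paper's, since Drinfeld's zig-zag decomposition of $\Hom_{\Mod_{\cC}/\bY(\cC)}(M,N)$ is precisely the cone of the bar complex for $N\Ltens{\cC}M^{\vee}\xto{\eval}\Hom_{\cC}(M,N)$ (this is the formula the paper quotes). But the pivotal step of your argument is false: the length-$\geq 1$ part (the quotient of the mapping complex by the length-zero term $\Hom_{\cC}(M,N)$) is \emph{not} acyclic. It is the shifted bar complex and therefore computes $(N\Ltens{\cC}M^{\vee})[1]$, which is nonzero in general. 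Already for $\cC=\mk$ and $M=N=\mk$ the length-one piece contributes cohomology $\mk$ in degree $-1$, and the full mapping complex is acyclic there only because it is the cone of the isomorphism $\mk\to\mk$, not because the positive-length part vanishes. The homotopies $\epsilon_K$ do not contract the associated graded of the path-length filtration; they only interact with the length-decreasing part of the differential. Even under the most charitable reading --- that you mean the subcomplex spanned by $\im(\eval)$ together with all positive-length paths --- its acyclicity is \emph{equivalent} to the two facts you never establish, so nothing is gained. Relatedly, the map you conclude with, $\Hom_{\cC}(M,N)/\im(\eval)\to\Hom_{\Mod_{\cC}/\bY(\cC)}(M,N)$, is not a chain map (elements of $\im(\eval)$ become exact in the quotient, not zero); the natural comparison goes the other way.

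The two ingredients that actually make the comparison work are absent from your write-up: (i) the bar complex computes the \emph{naive} tensor product $N\tens{\cC}M^{\vee}$, because the modules in $\Mod_{\cC}$ are cofibrant, hence h-flat; and (ii) the evaluation map $\eval:N\tens{\cC}M^{\vee}\to\Hom_{\cC}(M,N)$ is \emph{injective}, so that its cone is quasi-isomorphic to its cokernel, which is $\Calk_{\cC}(M,N)$ by definition. Point (ii) is the real content and is exactly what must replace your acyclicity claim. By contrast, the identification of $\im(\eval)$ with ``morphisms factoring through representables,'' which you single out as the delicate step, is not needed at all: $\Calk_{\cC}(M,N)$ is defined as the cokernel of $\eval$, whatever its image happens to be.
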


\begin{proof}Recall that in a DG quotient $\cA/\cB$ the morphisms are given by the complexes $$(\cA/\cB)(x,y)=Cone(\cA(-,y)\Ltens{\cB}\cA(x,-)\to\cA(x,y)),$$
where the derived tensor product is computed via bar resolution. In our case the naive tensor product is quasi-isomorphic to the derived one, and the evaluation morphism $\eval$ in \eqref{eq:morphisms_in_Calk_C} is injective. This proves the assertion.\end{proof}

Note that for $\cC=\mk,$ we get the DG category $\Calk_{\mk}$ which has been already discussed in the introduction. It will be convenient for us to introduce the homotopy Karoubi envelope $\bbar{\Calk}_{\cC}\supset\Calk_{\cC}.$ For a DG functor $\Phi:\cC_1\to\cC_2$ we denote by $\Calk_{\Phi}:\Calk_{\cC_1}\to \Calk_{\cC_2}$ the induced (extension of scalars) functor, and similarly for $\bbar{\Calk}_{\Phi}.$

\begin{remark}We note that $K_0(\bbar{\Calk}_{\cC})=K_{-1}(\Perf(\cC)),$ hence we have $\Calk_{\cC}=\bbar{\Calk}_{\cC}$ if and only if $K_{-1}(\Perf(\cC))=0.$ In particular, we have $\Calk_{\mk}=\bbar{\Calk}_{\mk}$ (since we assume $\mk$ to be a field).\end{remark}

Recall that a $\cC$-module $M$ is pseudo-perfect if for each $x\in\cC,$ the complex $M(x)$ is perfect over $\mk$ \cite{TV}. We write $\Perf(\cC)\subset \Mod_{\cC}$ (resp. $\PsPerf(\cC)\subset\Mod_{\cC}$) for the full subcategory of perfect (resp. pseudo-perfect) $\cC$-modules.

For a DG category $T,$ we denote by $[T]$ its (non-graded) homotopy category, which has the same objects as $T,$ and the morphisms are given by $[T](X,Y)=H^0(T(X,Y)).$ We use the the terminology of \cite[Definition 2.4]{TV} by calling a DG category $\cC$ triangulated if the Yoneda embedding provides a quasi-equivalence $\cC\xto{\sim}\Perf(\cC).$ In this case $[\cC]$ is a Karoubi complete triangulated category. We have triangulated categories $D(\cC)\simeq [\Mod_{\cC}],$ $D_{\perf}(\cC)\simeq [\Perf(\cC)],$ $D_{\pspe}(\cC)\simeq [\PsPerf(\cC)].$ 

We write $T^{\Kar}$ or $\bbar{T}$ for the homotopy Karoubi completion of a DG category $T;$ for example, $T^{\Kar}$ can be defined as a full DG subcategory of $\Perf(T),$ corresponding to the idempotent completion of $[T]\subset D_{\perf}(T).$

For a DG functor $\Phi:\cC_1\to\cC_2,$ we have an extension of scalars DG functor $\Phi^*:\Mod_{\cC_1}\to\Mod_{\cC_2},$ which have a right adjoint quasi-functor $\Phi_*:\Mod_{\cC_2}\to\Mod_{\cC_1}.$ We also denote by $\bL\Phi^*$ and $\Phi_*$ the corresponding exact functors between $D(\cC_1)$ and $D(\cC_2).$ 

We also recall from \cite[Definitions 3.6]{T} that a $\cC$-module is called quasi-representable if it is quasi-isomorphic to a representable $\cC$-module. For two DG categories $\cC,\cC',$ a $\cC\otimes\cC'$-module $M$ is called right quasi-representable if for each object $X\in\cC,$ the $\cC'$-module $M(X,-)$ is quasi-representable.

We denote by $\bR\un{\Hom}(\cC,\cC')\subset\Mod_{\cC^{op}\otimes\cC'}$ the full subcategory of right quasi-representable $\cC^{op}\otimes\cC'$-modules. By \cite[Theorem 6.1]{T}, this DG category (considered up to a quasi-equivalence) is actually the internal Hom in the homotopy category of DG categories $\Ho(\dgcat_{\mk})$ (with inverted quasi-equivalences). We have a natural quasi-functor $\Fun(\cC,\cC')\to \bR\un{Hom}(\cC,\cC'),$ where $\Fun(\cC,\cC')$ is the naive DG category of DG functors $\cC\to\cC',$ as defined in \cite{Ke}. Moreover, if $\cC$ is cofibrant, this functor is essentially surjective on the homotopy categories.

A small DG category $\cC$ is called smooth (resp. locally proper) if the diagonal $\cC\mhyphen\cC$-bimodule is perfect (resp. pseudo-perfect). Moreover, $\cC$ is called proper if it is locally proper and is Morita equivalent to a DG algebra (i.e. the triangulated category $D_{\perf}(\cC)$) has a classical generator). The following (simple but useful) criterion of smoothness and local properness will be important:

$\bullet$ A small DG category $\cC$ is smooth (resp. locally proper) if for any small DG category $\cC'$ we have an inclusion $\bR\un{\Hom}(\cC,\cC')\subset \Perf(\cC^{op}\otimes\cC')$ (resp. $\Perf(\cC^{op}\otimes\cC')\subset \bR\un{\Hom}(\cC,\cC')$).

We recall the notion of a short exact sequence of DG categories.

\begin{defi}\label{defi:short_exact_dg}A pair of functors $\cA_1\xto{F_1}\cA_2\xto{F_3}\cA_3$ is said to be a (Morita) short exact sequence of DG categories if the following conditions hold

$\rm{i)}$ the composition $F_2F_1$ is homotopic to zero;

$\rm ii)$ the functor $F_1$ is quasi-fully-faithful;

$\rm iii)$ the induced quasi-functor $\bbar{F_2}:\cA_2/F_1(\cA_1)\to \cA_3$ is a Morita equivalence.
\end{defi}

For a DG functor (or quasi-functor) $F:\cC_1\to\cC_2$ its kernel $\Ker(F)\subset\cC_1$ is the full subcategory formed by objects $x$ such that $[F](x)=0\in[\cC_2].$

\begin{prop}\label{prop:short_exact_Calkins}For a short exact sequence of DG categories $\cA_1\xto{F_1}\cA_2\xto{F_2} \cA_3,$ we have a short exact sequence $\Calk_{\cA_1}\xto{\Calk_{F_1}}\Calk_{\cA_2}\xto{\Calk_{F_2}}\Calk_{\cA_3}.$ In particular, we have a quasi-equivalence $\bbar{\Calk}_{\cA_1}\simeq\Ker(\bbar{\Calk}_{F_2}).$\end{prop}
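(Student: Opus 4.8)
The plan is to deduce the statement from Proposition \ref{prop:Calkin_DG_quotient}, which identifies $\Calk_{\cC}$ with the DG quotient $\Mod_{\cC}/\bY(\cC)$, together with the known fact that the assignment $\cC\mapsto\Mod_{\cC}$ (together with the Yoneda image $\bY(\cC)$) behaves well with respect to short exact sequences. First I would recall that a short exact sequence $\cA_1\xto{F_1}\cA_2\xto{F_2}\cA_3$ of DG categories induces, after passing to module categories, a "localization sequence" $\Mod_{\cA_1}\xto{F_1^*}\Mod_{\cA_2}\xto{F_2^*}\Mod_{\cA_3}$ in which $F_1^*$ is quasi-fully-faithful onto a full subcategory whose quotient is $\Mod_{\cA_3}$; more precisely, the image of $\bY(\cA_1)$ under $F_1^*$ and the modules induced from $\cA_1$ generate the same localizing subcategory of $\Mod_{\cA_2}$, and $\Mod_{\cA_3}$ is identified with the Verdier/DG quotient of $\Mod_{\cA_2}$ by this subcategory. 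This is standard from the theory of DG quotients (\cite{Dr2, Ke2}) applied to the very definition of short exactness (Definition \ref{defi:short_exact_dg}).

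Next, using Proposition \ref{prop:Calkin_DG_quotient} I would rewrite each $\Calk_{\cA_i}$ as $\Mod_{\cA_i}/\bY(\cA_i)$. The functors $\Calk_{F_i}$ are the extension-of-scalars functors, which on the quotient descriptions are induced by $F_i^*$. So the claimed sequence $\Calk_{\cA_1}\to\Calk_{\cA_2}\to\Calk_{\cA_3}$ becomes the sequence of quotients
$$\Mod_{\cA_1}/\bY(\cA_1)\lto\Mod_{\cA_2}/\bY(\cA_2)\lto\Mod_{\cA_3}/\bY(\cA_3).$$
Verifying condition i) of Definition \ref{defi:short_exact_dg} is immediate: $F_2F_1$ is homotopic to zero, hence so is $F_2^*F_1^*$, hence so is the induced map on quotients. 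For condition iii), by a standard "quotient of a quotient" argument one has a Morita equivalence
$$\bigl(\Mod_{\cA_2}/\bY(\cA_2)\bigr)\big/\bigl(\text{image of }\Mod_{\cA_1}/\bY(\cA_1)\bigr)\;\simeq\;\Mod_{\cA_2}\big/\langle\bY(\cA_2),\,F_1^*\Mod_{\cA_1}\rangle,$$
and by the localization sequence above the right-hand side is $\Mod_{\cA_3}/\bY(\cA_3)$, since $F_1^*\Mod_{\cA_1}$ generates the kernel of $F_2^*$ and $\bY(\cA_2)$ maps onto a generating set for $\bY(\cA_3)$. This gives iii). Condition ii), quasi-full-faithfulness of $\Calk_{F_1}$, is the one point that requires genuine care, and I expect it to be the main obstacle: even though $F_1^*$ is quasi-fully-faithful on module categories, passing to the quotient by $\bY(-)$ could a priori collapse morphisms. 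Concretely, one must show that the evaluation cokernels $\Calk_{\cA_i}(M,N)=\coker(N\otimes_{\cA_i}M^{\vee}\to\Hom_{\cA_i}(M,N))$ are preserved under $F_1^*$, i.e. that for $M,N\in\Mod_{\cA_1}$ the natural map of cokernels is a quasi-isomorphism. This amounts to checking that $F_1^*$ takes the finite-rank (evaluation-image) morphisms for $\cA_1$ onto those for $\cA_2$ restricted to the image, which follows from quasi-full-faithfulness of $F_1^*$ together with the projection-formula identity $F_1^*(N)\otimes_{\cA_2}F_1^*(M)^{\vee}\simeq F_1^*(N\otimes_{\cA_1}M^{\vee})$, valid because $F_1$ is quasi-fully-faithful so duals and tensor products are computed compatibly.

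Finally, the "in particular" clause follows formally: applying the homotopy Karoubi envelope to the short exact sequence $\Calk_{\cA_1}\to\Calk_{\cA_2}\to\Calk_{\cA_3}$ yields a short exact sequence of idempotent-complete triangulated (DG) categories $\bbar{\Calk}_{\cA_1}\to\bbar{\Calk}_{\cA_2}\to\bbar{\Calk}_{\cA_3}$, and for such a sequence the first term is precisely the kernel of the second functor: $\bbar{\Calk}_{\cA_1}\simeq\Ker(\bbar{\Calk}_{F_2})$, since $\bbar{F_2}:\bbar{\Calk}_{\cA_2}/\bbar{\Calk}_{\cA_1}\to\bbar{\Calk}_{\cA_3}$ being a Morita equivalence between Karoubi-complete categories is in fact an equivalence, so no objects outside $\bbar{\Calk}_{\cA_1}$ are sent to zero. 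I would write this last step as a one-line consequence of the definition of short exactness plus Karoubi completeness.
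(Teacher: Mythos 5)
Your proposal is correct and follows essentially the same route as the paper: both reduce everything except quasi-full-faithfulness of $\Calk_{F_1}$ to the exactness of $\Mod_{\cA_1}\to\Mod_{\cA_2}\to\Mod_{\cA_3}$, and both identify the real content as the quasi-isomorphism $N\otimes_{\cA_1}M^{\vee}\to F_1^*(N)\otimes_{\cA_2}F_1^*(M)^{\vee}$, which the paper establishes via $M^{\vee}\xto{\sim}F_{1*}(F_1^*(M)^{\vee})$ and the adjunction isomorphism $\bL F_1^*(-)\Ltens{\cA_2}?\cong -\Ltens{\cA_1}F_{1*}(?)$ — i.e.\ exactly the ``projection-formula'' step you invoke, just stated slightly more precisely.
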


\begin{proof}The only assertion that needs clarification here is the quasi-fully-faithfulness of $\Calk_{F_1}.$ Indeed, the rest would follow from the exactness of $\Mod_{\cA_1}\xto{F_1^*}\Mod_{\cA_2}\xto{F_2^*}\Mod_{\cA_3}.$

Now, take two objects $M,N\in \Mod_{\cA_1}.$ It suffices to prove that the natural morphism $N\tens{\cA_1}M^{\vee}\to F_1^*(N)\tens{\cA_2} F_1^*(M)^{\vee}$ is a quasi-isomorphism. This in turn follows from the quasi-isomorphism $M^{\vee}\xto{\sim} F_{1*}(F_1^*(M))$ 
(following from quasi-fully-faithfulness of $F_1$) and the bifunctorial isomorphism 
$\bL F_1^*(-)\Ltens{\cA_2}?\cong -\Ltens{\cA_1}F_{1*}(?).$ The proposition is proved.
\end{proof}

Finally, we make some comments on our usage of terminology below. Mostly we will consider DG categories up to a quasi-equivalence. By a functor between DG categories we sometimes mean a quasi-functor. In some cases it is convenient for us to choose a concrete DG model or a concrete DG functor. By a commutative diagram of functors we usually mean the commutative diagram in the homotopy category $\Ho(\dgcat_{\mk}).$ Finally, we denote by $\Ho_M(\dgcat_{\mk})$ the Morita homotopy category of DG categories (with inverted Morita equivalences).

\section{Description of perfect complexes on a formal neighborhood $\hhat{X}_Z$}
\label{sec:formal_neighborhood}

For a noetherian separated scheme $Y$ we denote by $\mD(Y)$ the DG enhancement of the derived category of quasi-coherent sheaves $D(Y):=D(\QCoh Y),$ given by the quotient of the homotopy category of h-flat complexes by the full subcategory of acyclic h-flat complexes, see e.g. \cite[Section 3.10]{KL}. We denote by $\Perf(Y)\subset \mD(Y)$ (resp. $\mD^b_{coh}(Y)\subset \mD(Y)$) the full DG subcategory of perfect complexes (resp. complexes with bounded coherent cohomology). Also, for a closed subset $S\subset Y$ we denote by $\mD_{S}(Y)$ the full DG subcategory of complexes whose cohomology supported on $S.$ Clearly, $\Perf(Y)$ (resp. $\mD^b_{coh}(Y),$ $\mD_{S}(Y)$) is an enhancement of $D_{\perf}(Y)$ (resp. $D^b_{coh}(Y),$ $D_S(Y)$). We also put $\Perf_S(Y):=\Perf(Y)\cap \mD_S(Y)$ (resp. $D_{\perf,S}(Y):=D_{\perf}(Y)\cap D_S(Y)$). We denote by $\cH_S:D(Y)\to D_S(Y)$ the right adjoint to the inclusion functor. 

These enhancements are convenient for us because of compatibility with pullbacks: a morphism $f:Y\to Y'$ induces an actual DG functor $f^*:\mD_(Y')\to\mD_(Y)$ (compatible with the derived pullback functor $\bL f^*$) which takes $\Perf(Y')$ to $\Perf(Y).$

Let now $X$ be a noetherian separated scheme over $\mk,$ and $Z\subset X$ a closed subscheme. Let us denote by $Z_n\subset X$ the $n$-th infinitesimal neighborhood of $Z$ in $X,$ defined by the sheaf of ideals $\cI_Z^n\subset\cO_X.$ We denote by $\iota_n:Z_n\to X$ and $\iota_{k,n}:Z_k\to Z_n$ ($k<n$) the inclusion morphisms.

\begin{defi}\label{defi:perfect_complexes_formal} The DG category $\Perf(\hhat{X}_Z)\in\dgcat_k$ of (non-algebraizable) perfect complexes on the formal scheme $\hhat{X}_Z$ is defined as the homotopy limit $\holim\limits_n \Perf(Z_n).$We put $D_{\perf}(\hhat{X}_Z):=[\Perf(\hhat{X}_Z)].$\end{defi}

\begin{remark}\label{rem:remark_on_perfect_on_formal}This is natural definition of a (DG) category of perfect complexes on the formal neighborhood, and it can be generalized straightforwardly to an abstract formal scheme. Our definition is compatible with the definition of Gaitsgory and Rozenblyum for ind-schemes \cite{GR}.\end{remark}

We define the full triangulated subcategory $T_Z\subset D_Z(X)$ by the following condition:
\begin{equation}
\label{eq:definition_of_T_Z}T_Z=\{\cF\in D_Z(X)\mid \text{for any }\cG\in D_{\perf,Z}(X)\text{ we have }\cG\Ltens{\cO_X}\cF\in D_{perf,Z}(X)\}.
\end{equation}
Note that equivalently $\cF\in T_Z$ iff $\bR\cHom_{\cO_X}(\cG,\cF)\in D_{\perf,Z}(X)$ for any $\cG\in D_{\perf,Z}(X).$
We denote by $\cT_Z\subset\mD_Z(X)$ the corresponding full DG subcategory.

The main result of this section is the following theorem.

\begin{theo}
\label{th:perf_of_formal_is_T_Z}There is a natural quasi-equivalence $\cT_Z\xto{\sim}\Perf(\hhat{X}_Z),$ given by pullback. We have a commutative triangle
\begin{equation}\label{eq:compatibility_restrictions_geometric}
\xymatrix{\Perf(X)\ar[rd]\ar[r]^{\cH_Z} & \cT_Z\ar[d]^-[@!-90]{\sim}\\
& \Perf(\hhat{X}_Z),
}
\end{equation}
where the diagonal arrow is the pullback functor.
\end{theo}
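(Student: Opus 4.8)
The plan is to build the equivalence $\cT_Z \xto{\sim} \Perf(\hhat{X}_Z)$ by comparing both sides, level by level, with the categories $\Perf(Z_n)$ and then passing to the homotopy limit. The pullback functors $\iota_n^*:\mD_Z(X)\to\mD(Z_n)$ are compatible with the transition maps $\iota_{k,n}^*$, so they assemble into a functor $\mD_Z(X)\to\holim_n\mD(Z_n)$; restricting to $\cT_Z$ and checking that the image lands in $\holim_n\Perf(Z_n)=\Perf(\hhat{X}_Z)$ will be the first task. The point is that for $\cF\in\cT_Z$, each $\iota_n^*\cF=\cF\Ltens{\cO_X}\cO_{Z_n}$ is perfect on $Z_n$: indeed $\cO_{Z_n}$, viewed as a complex on $X$, is a perfect complex supported on $Z$ (here noetherianness of $X$ and the fact that $Z$ is cut out by a coherent ideal are used), so $\cF\Ltens{\cO_X}\cO_{Z_n}\in D_{\perf,Z}(X)$ by the defining condition \eqref{eq:definition_of_T_Z}, and being set-theoretically supported on $Z$ and perfect over $X$ forces perfectness over $Z_n$ for $n$ large (or one argues directly). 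So we get a well-defined DG functor $\cT_Z\to\Perf(\hhat{X}_Z)$, and the commutativity of \eqref{eq:compatibility_restrictions_geometric} follows because $\iota_n^*\circ\cH_Z\simeq\iota_n^*$ on $\Perf(X)$ (since $\cH_Z$ changes an object only by a cone supported off $Z$, which pulls back to zero on the infinitesimal neighborhoods — more precisely $\cF\to\cH_Z\cF$ has cone with cohomology supported on the open complement $X\setminus Z$, killed by $\iota_n^*$).

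Next I would produce a candidate quasi-inverse. Given a compatible system $(\cF_n)\in\holim_n\Perf(Z_n)$, first push forward to $X$: the $\iota_{n*}\cF_n$ form a system in $\mD(X)$ whose homotopy limit $\holim_n\iota_{n*}\cF_n$ is an object of $\mD(X)$ (this is the "formal completion" $\hat{\cF}$ of the system). Then apply the sections-supported-on-$Z$ functor $\cH_Z$ to land in $\mD_Z(X)$. The two things to verify are: (a) that $\cH_Z\holim_n\iota_{n*}\cF_n$ actually lies in $\cT_Z$, i.e. satisfies the perfectness condition against all $\cG\in D_{\perf,Z}(X)$; and (b) that the round trips are naturally isomorphic to the identities. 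For (a), since $\cG$ is generated (as a thick subcategory) by structure sheaves of infinitesimal neighborhoods twisted by perfect complexes, one reduces to checking $\cG\Ltens{}\cH_Z\hat{\cF}$ is perfect on $Z$ when $\cG$ is supported on some fixed $Z_m$, and there the derived tensor product only sees finitely many terms $\cF_n$ of the system (a Mittag–Leffler / completeness argument: tensoring with something annihilated by $\cI_Z^m$ kills the "tail"), reducing to perfectness of $\cF_m\Ltens{\cO_{Z_m}}(\text{something})$ on $Z_m$, which holds because $\cF_m$ is perfect. For (b), one direction $\cT_Z\to\Perf(\hhat{X}_Z)\to\cT_Z$ amounts to $\cH_Z\holim_n(\cF\Ltens{}\cO_{Z_n})\simeq\cF$ for $\cF\in\cT_Z$, which is a derived completeness statement: $\cF$ is already derived-complete along $Z$ because it has perfect (hence bounded, finitely generated) interaction with $Z$ and cohomology supported on $Z$; the other direction uses that the original system $(\cF_n)$ is recovered by pulling the completion back to each $Z_n$.

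The hard part will be the convergence and completeness arguments — precisely controlling the homotopy limits $\holim_n\iota_{n*}\cF_n$ and the interchange of $\holim$ with $\cH_Z$ and with $\Ltens{}$. One must show the homotopy limit is "complete" in the appropriate sense (no $\lim^1$ pathologies, or rather that the $\lim^1$ terms are themselves supported on $Z$ and do not destroy perfectness after tensoring), and that $\cH_Z$ commutes with the relevant limits up to controlled error. This is where the condition defining $\cT_Z$ — perfectness after tensoring with \emph{any} perfect complex supported on $Z$, not merely with $\cO_{Z_n}$ — does real work: it is exactly the finiteness needed to make completions behave well and to invert the functors. I expect the bulk of the section to be devoted to making these reductions precise, likely by first treating the case where $\cF$ (or $\cG$) is a single coherent sheaf or a bounded complex supported on $Z$, then bootstrapping via dévissage on the thick subcategory $D_{\perf,Z}(X)$, and finally assembling everything into the DG-enhanced statement using the explicit pullback-compatible models $\mD(-)$ introduced just before the theorem.
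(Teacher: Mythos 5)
Your overall architecture matches the paper's: the forward functor is levelwise pullback into $\holim_n\Perf(Z_n)$, the candidate inverse is $\cM\mapsto\cH_Z(\holim_n\iota_{n*}\cM_n)$, and the commutativity of the triangle is proved exactly as you say (the cone of $\cF\to\cH_Z\cF$ is supported on $X\setminus Z$ and dies under each $\bL\iota_n^*$). However, your very first step contains a genuine error. You assert that $\cO_{Z_n}$, viewed as a complex on $X$, is a perfect complex supported on $Z$, and feed it into the defining condition \eqref{eq:definition_of_T_Z}. For a general noetherian separated $X$ --- the level of generality of the theorem --- $\cO_{Z_n}$ is coherent but typically \emph{not} perfect (e.g.\ $X=\Spec \mk[x,y]/(xy)$, $Z$ the origin), so the condition defining $T_Z$ simply does not apply to $\cG=\cO_{Z_n}$ and your argument that $\bL\iota_n^*\cF$ is perfect on $Z_n$ collapses. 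The paper's fix (Lemma \ref{lem:restriction_is_perfect}) is to work locally and replace $\cO_{Z_n}$ by the Koszul complex $\cK(f_1,\dots,f_m)$ on generators of the ideal of $Z_n$: this \emph{is} perfect and supported on $Z$, so $\cK\Ltens{\cO_X}\cF\in D_{\perf,Z}(X)$, its pullback to $Z_n$ is perfect, and since the $f_i$ vanish on $Z_n$ that pullback splits as $\bigoplus_j\bL\iota_n^*(\cF)^{\oplus\binom{m}{j}}[j]$, exhibiting $\bL\iota_n^*\cF$ as a direct summand of a perfect complex. Note this also repairs your second hand-wave ("perfect over $X$ plus supported on $Z$ forces perfect over $Z_n$", which is not how the implication goes): perfectness over $Z_n$ is obtained as a direct summand of a pullback of a perfect complex on $X$, not by any support argument.

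The rest of your plan is the right shape but leaves the load-bearing lemmas unproved, as you acknowledge. Two points worth flagging: (1) commuting $\Ltens{\cO_X}$ past $\holim_n$ requires $\cG$ to be perfect (compactness), and the paper additionally reduces the statements about $\holim_n\iota_{n*}\cM_n$ to $\cM=\cO_{\hhat{X}_Z}$ via the affine algebraizability result (Proposition \ref{prop:completion_affine}); your "Mittag--Leffler" sketch does not by itself produce these reductions. (2) The derived completeness statement $\cG\xto{\sim}\holim_n\iota_{n*}\bL\iota_n^*\cG$ for $\cG\in D^b_{coh,Z}(X)$, which underlies your step (b), is itself the technical heart of the section: affinely it needs flatness of the adic completion plus a compact approximation argument to pass from perfect to bounded coherent complexes, and globally a Mayer--Vietoris induction. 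None of this invalidates your strategy, but the Koszul-complex point is a gap you must close before the first arrow of the theorem even exists.
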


We will need a series of technical results. Recall that for any collection of functions $f_1,\dots,f_m\in\cO(X)$ the associated  Koszul complex is defined by the formula
\begin{equation}
\label{eq:Koszul_complex}\cK(f_1,\dots,f_m)=\bigotimes\limits_{i=1}^m \{\cO_X\xto{f_i}\cO_X\},
\end{equation}
where the tensor product is over $\cO_X,$ and each of the two-term complexes is in degrees $-1,0.$

\begin{lemma}\label{lem:restriction_is_perfect}For any $\cF\in T_Z,$ and any nilpotent thickening $Z\subset Z'\subset X$ we have 
$\bL\iota_{Z'}^*\cF\in D_{\perf}(Z').$\end{lemma}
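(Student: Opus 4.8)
The plan is to reduce the statement to the definition of $T_Z$ by exhibiting $\cO_{Z'}$ as (a shift/summand of) a perfect complex supported on $Z,$ and then use the tensor characterization of $T_Z$ together with a base-change identity. First I would observe that since $Z\subset Z'\subset X$ is a nilpotent thickening, $Z'$ is cut out by an ideal sheaf $\cI'$ with $\cI_Z^N\subset\cI'$ for some $N;$ in particular $\cO_{Z'}$ is a sheaf on $X$ whose support is exactly $Z$ (as a closed \emph{set}). The key point is that $\cO_{Z'},$ viewed as an object of $D(X),$ lies in $D_{\perf,Z}(X)$: locally on an affine $\Spec A\subset X,$ the quotient $A/I'$ is of finite Tor-dimension over $A$ precisely when\dots\ in general this is \emph{not} true, so I would instead argue as follows. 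Cover $Z$ by finitely many affine opens $U_i=\Spec A_i$ on which $Z$ is defined by finitely many functions $f^{(i)}_1,\dots,f^{(i)}_{m_i};$ the Koszul complex $\cK(f^{(i)}_1,\dots,f^{(i)}_{m_i})$ from \eqref{eq:Koszul_complex} is a perfect complex on $U_i$ with cohomology supported on $Z\cap U_i.$ These Koszul complexes are classical generators of $D_{\perf,Z}(X)$ after passing to a finite affine cover, so it suffices to test the conclusion against Koszul complexes.

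Concretely, the main step is: for $\cF\in T_Z$ we want $\bL\iota_{Z'}^*\cF\in D_{\perf}(Z').$ By definition of $T_Z,$ for any $\cG\in D_{\perf,Z}(X)$ we have $\cG\Ltens{\cO_X}\cF\in D_{\perf,Z}(X)\subset D_{\perf}(X).$ Now take $\cG$ to be a perfect resolution of $\cO_{Z'}$ if one exists; more robustly, I would instead run the argument on a fixed affine cover. Shrink to $U=\Spec A$ affine where $Z$ is defined by $f_1,\dots,f_m$ and $Z'$ by an ideal $I'$ with $(f_1,\dots,f_m)^N\subset I'\subset(f_1,\dots,f_m).$ The Koszul complex $\cK=\cK(f_1,\dots,f_m)$ is perfect, supported on $Z,$ so $\cK\Ltens{A}\cF|_U$ is a perfect complex on $U$ supported on $Z.$ A perfect complex on $U$ supported (set-theoretically) on $Z$ has cohomology modules annihilated by a power of $(f_1,\dots,f_m),$ hence by a power of $I';$ by standard dévissage it is therefore obtained by finitely many extensions of objects of the form (perfect complex on $U$) $\Ltens{A} A/I'.$ One then deduces that $\cF|_U\Ltens{A}A/I'$ is perfect over $A,$ hence perfect over $A/I'$ — the latter implication using that $A\to A/I'$ is a surjection so that a bounded complex of $A/I'$-modules which is perfect over $A$ is perfect over $A/I'$ (this is where one uses finite Tor-dimension of $A/I'$-modules that are already perfect over $A$). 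Since $\bL\iota_{Z'}^*\cF|_U=\cF|_U\Ltens{A}A/I',$ this gives perfectness over $Z'\cap U;$ as perfectness is local, we conclude $\bL\iota_{Z'}^*\cF\in D_{\perf}(Z').$

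The subtle point, and the one I would spend the most care on, is the passage ``$\cK\Ltens{A}\cF|_U$ perfect and supported on $Z$'' $\Rightarrow$ ``$\cF|_U\Ltens{A}A/I'$ perfect''. The issue is that tensoring with the Koszul complex $\cK$ rather than directly with $A/I'$ introduces extra cohomology (the higher Koszul homology), so one needs to invert this operation up to the thick subcategory generated by $A/I'$-complexes. The clean way is: the full subcategory of $D_{\perf,Z}(X)$ is classically generated (on each affine) by $\cK,$ but it is \emph{also} true that $\cO_{Z'}=A/I'$ lies in the thick subcategory of $D_{\perf,Z}(U)$ generated by $\cK$ only when $A/I'$ is itself perfect over $A,$ which need not hold. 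So instead I would phrase the whole argument contravariantly using $\bR\cHom$: the equivalent condition $\bR\cHom_{\cO_X}(\cG,\cF)\in D_{\perf,Z}(X)$ for all $\cG\in D_{\perf,Z}(X),$ applied to $\cG=\cK,$ gives a perfect complex supported on $Z,$ and one extracts perfectness of $\bL\iota_{Z'}^*\cF$ by adjunction $\bL\iota_{Z'}^*\cF\cong \iota_{Z'}^*\bR\cHom_{\cO_X}(\cO_{Z'},-)$ applied after reducing to $Z'.$ Either way, the heart of the matter is the (standard but slightly delicate) fact that being perfect is detected by derived tensor/Hom with a single Koszul generator, combined with Thomason-style dévissage for complexes supported on a closed subset; I expect this to be the only real obstacle, the rest being formal manipulation with the characterization \eqref{eq:definition_of_T_Z}.
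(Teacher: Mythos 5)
There is a genuine gap, and you have in fact located it yourself without closing it. Your argument tensors $\cF$ with the Koszul complex $\cK(f_1,\dots,f_m)$ on generators of the ideal of $Z$ and then tries to pass from ``$\cK\Ltens{A}\cF$ is perfect'' to ``$\cF\Ltens{A}A/I'$ is perfect.'' Neither of your two proposed repairs works. The d\'evissage claim --- that a perfect complex on $U$ supported on $Z$ is a finite extension of objects of the form $P\Ltens{A}A/I'$ --- is unjustified (having cohomology killed by a power of $I'$ does not present the complex as such an extension), and even granting it you would still be running the implication backwards, as you note. Moreover the auxiliary step ``a bounded complex of $A/I'$-modules which is perfect over $A$ is perfect over $A/I'$'' is false: $\mk=k[x]/(x)$ is perfect over $A=k[x]$ but has infinite projective dimension over $A/I'=k[x]/(x^2).$ Finally, the contravariant ``fix'' via $\bR\cHom(\cO_{Z'},-)$ is not available, because $\cO_{Z'}$ need not be perfect over $X$ (so it is not a legal test object $\cG$ in the definition of $T_Z$), and $\bR\cHom(\cO_{Z'},-)$ computes a $\iota^!$-type functor, not $\bL\iota_{Z'}^*.$

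The paper's proof sidesteps all of this with one small but decisive change: take the Koszul complex on generators $f_1,\dots,f_m$ of the ideal $\cI_{Z'}$ of the \emph{thickening} $Z',$ not of $Z.$ This $\cK$ is still supported set-theoretically on $Z,$ so $\cK\Ltens{\cO_X}\cF\in D_{\perf,Z}(X)$ by the definition of $T_Z,$ and its pullback to $Z'$ is perfect. But since every $f_i$ vanishes on $Z',$ the pulled-back Koszul complex has zero differentials and splits as $\bigoplus_{j=0}^m\cO_{Z'}^{\oplus\binom{m}{j}}[j],$ so $\bL\iota_{Z'}^*(\cK\Ltens{\cO_X}\cF)\cong\bigoplus_j\bL\iota_{Z'}^*(\cF)^{\oplus\binom{m}{j}}[j]$ and $\bL\iota_{Z'}^*\cF$ is exhibited as a direct summand of a perfect complex on $Z'.$ No inversion of the Koszul tensoring, no d\'evissage, and no comparison of perfection over $A$ versus $A/I'$ is needed. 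I would encourage you to rework your write-up around this splitting trick; the rest of your reduction (locality of perfectness, restriction to an affine chart) is fine.
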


\begin{proof}The statement is local, so we may and will assume that $X$ is affine. Choose a finite sequence of functions $f_1,\dots,f_m\in\cO(X)$ which generate the ideal $\Gamma(X,\cI_{Z'})\subset\cO(X).$ Then we have $\cK(f_1,\dots,f_m)\in D_{\perf,Z}(X).$ Since $\cF$ is in $T_Z,$ we also have $\cK(f_1,\dots,f_m)\tens{\cO_X}\cF\in D_{\perf,Z}(X).$ Since perfect complexes are preserved by pullbacks, we have $\bL\iota_{Z'}^*(\cK(f_1,\dots,f_m)\tens{\cO_X}\cF)\in D_{\perf}(Z').$ But on the other hand, since each of the $f_i$ vanishes on $Z',$ we have $$\bL\iota_{Z`}^*(\cK(f_1,\dots,f_m)\tens{\cO_X}\cF)\cong\bigoplus_{j=0}^m \bL\iota_{Z'}^*(\cF)^{\oplus\binom{m}{j}}[j].$$ This implies that $\bL\iota_{Z'}^*(\cF)$ is a direct summand of $\bL\iota_{Z'}^*(\cK(f_1,\dots,f_m)\tens{\cO_X}\cF),$ hence it is also a perfect complex. This proves the lemma.\end{proof}

\begin{lemma}\label{lem:fully_faithful_for_perfect_affine} Suppose that $X$ is affine, and $\cG\in D_{\perf,Z}(X).$ Then the natural map
$\cG\to\holim\limits_{n}\iota_{n*}\bL\iota_n^*(\cG)$ is an isomorphism in $D(X).$\end{lemma}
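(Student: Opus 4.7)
My plan is to work affinely with $X=\Spec A$ and translate everything to complexes of $A$-modules. Let $I:=\Gamma(X,\cI_Z)$ and let $G$ be $\cG$ viewed as a perfect complex of $A$-modules. Then $\iota_{n*}\bL\iota_n^*\cG$ corresponds to $G\Ltens{A} A/I^n$, and the structure maps $\cG\to\iota_{n*}\bL\iota_n^*\cG$ are induced by the ring surjections $A\twoheadrightarrow A/I^n$ via the unit of the adjunction $(\bL\iota_n^*,\iota_{n*})$. The claim will follow once I identify $\holim_n \iota_{n*}\bL\iota_n^*\cG$ with $G\Ltens{A}\hhat{A}$ and then show that the natural map $G\to G\Ltens{A}\hhat{A}$ is a quasi-isomorphism.

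For the first identification I would proceed in two steps. Since the tower $\{A/I^n\}_n$ has surjective transition maps, its homotopy limit coincides with the ordinary limit $\hhat{A}$, so there are no derived-limit obstructions to worry about. Since $\cG$ is perfect, hence dualizable, the functor $\cG\Ltens{\cO_X}(-)$ commutes with arbitrary homotopy limits; concretely, representing $G$ by a bounded complex of finitely generated projective $A$-modules makes this transparent, as tensoring with a finitely generated projective commutes with limits of modules. Combining these yields $\holim_n (G\Ltens{A} A/I^n)\simeq G\Ltens{A}\hhat{A}$.

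For the second part I would use that $\hhat{A}$ is flat over $A$ (by the noetherian hypothesis), so $G\Ltens{A}\hhat{A}=G\tens{A}\hhat{A}$ and $H^i(G\tens{A}\hhat{A})\cong H^i(G)\tens{A}\hhat{A}$. Each $H^i(G)$ is a finitely generated $A$-module with support in $Z$; since $G$ is bounded, there is a uniform $N$ with $I^N\cdot H^i(G)=0$ for all $i$. For any such $A/I^N$-module $M$, the isomorphism $\hhat{A}/I^N\hhat{A}\cong A/I^N$ gives $M\tens{A}\hhat{A}=M$, so each cohomology is preserved and the map is a quasi-isomorphism. The one subtlety I anticipate is the bookkeeping needed to identify the unit map $\cG\to\holim_n \iota_{n*}\bL\iota_n^*\cG$ with $G\to G\Ltens{A}\hhat{A}$ functorially, but this is routine once the h-flat DG enhancements of $\mD(X)$ and $\mD(Z_n)$ are fixed.
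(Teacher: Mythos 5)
Your proposal is correct and follows essentially the same route as the paper's proof: identify $\holim_n \iota_{n*}\bL\iota_n^*\cG$ with $G\Ltens{A}\hhat{A}_I$ using perfectness (dualizability) of $G$, then use flatness of $\hhat{A}_I$ over the noetherian ring $A$ together with the fact that the finitely generated $I$-torsion cohomology modules are unchanged by $-\otimes_A\hhat{A}_I$. The only cosmetic difference is that the paper invokes the isomorphism $M\otimes\hhat{A}_I\cong\lim_n M/I^nM$ for finitely generated $M$ where you argue directly with a uniform power $I^N$ annihilating the cohomology; both are standard and equivalent here.
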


\begin{proof}Let $X= \Spec A,$ and denote by $I\subset A$ the ideal defining $Z.$ It is classically known \cite{AM} that the completion $\hhat{A}_I$ is a flat $A$-module, and for any finitely generated $A$-module $M$ we have a natural isomorphism $M\otimes \hhat{A}_I\cong \lim\limits_{n} M/I^n M.$ It follows that for any complex $M^{\bullet}\in D^b_{f.g.,I-tors}(A)$ (with bounded finitely generated $I$-torsion cohomology) we have an isomorphism 
\begin{equation}\label{eq:tensoring_torsion_with_completion}
M^{\bullet}\xto{\sim}M^{\bullet}\Ltens{A}\hhat{A}_I.
\end{equation} Also, for any object $N\in D_{\perf}(A)$ we have an isomorphism
\begin{equation}\label{eq:completion_is_tensoring}N\Ltens{A}\hhat{A}_I\cong\holim\limits_n N\Ltens{A}A/I^n.\end{equation}
Combining \eqref{eq:tensoring_torsion_with_completion} and \eqref{eq:completion_is_tensoring} we conclude that for any $N\in D_{\perf,I\mhyphen tors}(A)$ we have an isomorphism $N\xto{\sim}\holim_n N\Ltens{A} A/I^n.$ This proves the lemma.\end{proof}

\begin{lemma}\label{lem:coh_Z_homotopy_limits_affine} Suppose that $X$ is affine, and $\cG\in D^b_{coh,Z}(X).$ Then the natural map
$\cG\to\holim\limits_{n}\iota_{n*}\bL\iota_n^*\cG$ is an isomorphism in $D(X).$\end{lemma}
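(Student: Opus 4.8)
The plan is to reduce the statement about general $\cG\in D^b_{coh,Z}(X)$ to the already-established Lemma~\ref{lem:fully_faithful_for_perfect_affine}, which handles the case $\cG\in D_{\perf,Z}(X)$. Since $X=\Spec A$ is affine and $\cG$ has bounded coherent cohomology supported on $Z$, the complex $\cG$ is represented by a bounded complex of finitely generated $A$-modules, each of whose cohomology is $I$-power torsion (where $I\subset A$ defines $Z$). The key observation is that the functor $\cG\mapsto\holim_n\iota_{n*}\bL\iota_n^*\cG$ is exact (as a functor between triangulated categories), so the class of $\cG$ for which $\cG\to\holim_n\iota_{n*}\bL\iota_n^*\cG$ is an isomorphism is a triangulated subcategory of $D^b_{coh,Z}(X)$; moreover, since each $\iota_{n*}\bL\iota_n^*$ commutes with finite direct sums and $\holim$ over the fixed index set $\N$ does too, this subcategory is closed under shifts and cones.

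First I would observe that $D^b_{coh,Z}(X)$ is generated, as a triangulated category, by the structure sheaves $\cO_W$ of closed subschemes $W\subset X$ with $W_{red}\subset Z$; in fact it suffices to take $W=Z_m$ for the infinitesimal neighborhoods, or even a single cyclic module $A/J$ for $J$ an $I$-primary ideal, using noetherian induction on the support together with the standard dévissage exact sequences $0\to \cI\cdot\cO_W\to\cO_W\to\cO_{W'}\to 0$. Next, for each such $\cG=\iota_{m*}\cO_{Z_m}$ I would invoke \eqref{eq:tensoring_torsion_with_completion}: since $\cO_{Z_m}$ pushed to $X$ is a finitely generated $I$-power torsion module, we have $\cG\xto{\sim}\cG\Ltens{A}\hhat{A}_I$. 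On the other hand I would need the complementary fact that $\cG\Ltens{A}\hhat{A}_I\cong\holim_n \cG\Ltens{A}A/I^n$; unlike \eqref{eq:completion_is_tensoring}, which used $N\in D_{\perf}(A)$, here $\cG$ is torsion rather than perfect, so I would argue instead that $\cG\Ltens{A}A/I^n\cong\cG$ for $n\geq m$ (because $\cG$ is annihilated by $I^m$, so $\cG\Ltens{A}A/I^n\cong\cG\Ltens{A/I^n}A/I^n\oplus(\text{higher Tor's that vanish})$ — more carefully, the transition maps in the tower $\{\cG\Ltens{A}A/I^n\}$ are eventually isomorphisms since the pro-object $\{A/I^n\}$ restricted to $A/I^m$-modules is pro-isomorphic to the constant pro-object $A/I^m$ by Artin–Rees), so the homotopy limit is just $\cG$, and this matches $\iota_{n*}\bL\iota_n^*\cG$ for each $n\geq m$.

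Assembling: the subcategory $\mathcal{S}\subset D^b_{coh,Z}(X)$ of objects satisfying the conclusion contains all $\iota_{m*}\cO_{Z_m}$, hence all $A/J$ for $I$-primary $J$ (via the dévissage filtration of $A/J$ by $A/I^k$-subquotients), hence by noetherian induction on $\supp\cG$ and the exactness of the functor it contains all of $D^b_{coh,Z}(X)$; indeed, shrinking the support via a closed immersion and using that any coherent sheaf supported on $Z$ has a finite filtration with subquotients of the form $\iota_*\cF$ for $\cF$ coherent on a proper closed subscheme or a free $\cO_{Z_m}$-module, every bounded complex is built from the generators by finitely many cones and shifts. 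The main obstacle I anticipate is the second step: carefully justifying $\holim_n(\cG\Ltens{A}A/I^n)\cong\cG$ for torsion $\cG$ — one must be slightly careful that the derived tensor products $\cG\Ltens{A}A/I^n$ can acquire higher Tor contributions, so the cleanest route is to note that $\iota_{n*}\bL\iota_n^*\cG = \cG\Ltens{A}A/I^n$ and that the \emph{pro-system} $\{\cG\Ltens{A}A/I^n\}_n$ in $D(A)$ is pro-isomorphic to the constant system $\cG$ (each map $\cG\Ltens{A}A/I^{n+1}\to\cG\Ltens{A}A/I^n$ factors the identity on $\cG$ through them in a range), whence the $\holim$ is $\cG$; an alternative is to deduce the present lemma directly from Lemma~\ref{lem:fully_faithful_for_perfect_affine} by resolving $\cG$ (locally, or after passing to $\hhat{A}_I$-modules) by perfect complexes and controlling the error term, but the dévissage argument seems more transparent.
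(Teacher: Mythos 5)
Your proof is correct, but it takes a genuinely different route from the paper's. The paper first uses \eqref{eq:tensoring_torsion_with_completion} to reduce to showing $N\Ltens{A}\hhat{A}_I\xto{\sim}\holim_n N\Ltens{A}A/I^n$ for $N\in D^b_{f.g.}(A),$ and then proves this by \emph{compact approximation}: for each $l$ it picks a perfect complex $N'\to N$ inducing an isomorphism on $H^{\geq -l},$ uses flatness of $\hhat{A}_I$ on one side, the vanishing of $\bR\lim^{\geq 2}$ on the other, and \eqref{eq:completion_is_tensoring} for $N',$ letting $l\to-\infty.$ You instead avoid the completion $\hhat{A}_I$ and the perfect case (Lemma \ref{lem:fully_faithful_for_perfect_affine}) altogether: you dévisse $D^b_{coh,Z}(X)$ into sheaves killed by a power of $I$ (which is immediate from truncation plus noetherianity — the elaborate noetherian induction on supports and the reduction to cyclic modules $A/J$ are unnecessary detours) and then show directly that for such $\cG$ the pro-system $\{\cG\Ltens{A}A/I^n\}_n$ is pro-isomorphic to the constant system $\cG,$ using the Artin--Rees fact that $\{\Tor_p^A(M,A/I^n)\}_n$ is pro-zero for $p\geq 1$ and $M$ finitely generated. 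One caution: your parenthetical claim that "the transition maps in the tower are eventually isomorphisms" is false (e.g.\ $\Tor_1^{k[x]}(k,k[x]/x^n)=k$ for all $n,$ with zero transition maps), but your subsequent, correct formulation — pro-isomorphism to the constant system, hence $\lim^1$-vanishing and identification of the $\holim$ — is what the argument actually needs, so this is a slip of phrasing rather than a gap. The trade-off: the paper's method is uniform, reuses the compact-approximation technique flagged in Remark \ref{rem:compact_approximation} and exploited again in Appendix \ref{app:boundedness_etc}, and needs no pro-category bookkeeping; yours is more classical commutative algebra, is self-contained (it reproves rather than uses the perfect case), but requires carefully passing from pro-vanishing of cohomology to essential constancy of the pro-object in $D(A)$ before taking $\holim.$
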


\begin{proof} We keep the notation of the proof of Lemma \ref{lem:fully_faithful_for_perfect_affine}. Taking \eqref{eq:tensoring_torsion_with_completion} into account, it suffices to prove that for any $N\in D^b_{f.g.}(A)$ we have an isomorphism $N\Ltens{A}\hhat{A}_I\xto{\sim}\holim_n N\Ltens{A} A/I^n.$

Take any positive integer $l,$ and choose a perfect complex $N'\in\Perf(A)$ and a morphism $N'\to N$ which induces an isomorphism $H^{\geq -l}(N')\xto{\sim}H^{\geq -l}(N).$  By flatness of $\hhat{A}_I,$ we have an isomorphism $H^{\geq -l}(N'\Ltens{A}\hhat{A}_I)\xto{\sim}H^{\geq -l}(N)\Ltens{A}\hhat{A}_I.$ Further, since sequential inverse limit has cohomological dimension $1$ (that is $\bR\lim_n^{\geq 2}(B_n)=0$ for any inverse system $B_1\leftto B_2\leftto\dots$), we have an isomorphism $H^{\geq (-l+1)}(\holim_n N'\Ltens{A} A/I^n)\xto{\sim}H^{\geq (-l+1)}(\holim_n N\Ltens{A} A/I^n).$ Taking into account the isomorphism \eqref{eq:completion_is_tensoring} (for $N'$ instead of $N$), we conclude that we have an isomorphism $H^{\geq (-l+1)}(N\Ltens{A}\hhat{A}_I)\to H^{\geq (-l+1)}(\holim_n N\Ltens{A} A/I^n).$ Since $l$ can be chosen arbitrarily large, the morphism $N\Ltens{A}\hhat{A}_I\xto{\sim}\holim_n N\Ltens{A} A/I^n$ is an isomorphism in $D(\mk).$ This proves the lemma.\end{proof}

\begin{remark}\label{rem:compact_approximation} In the proof of Lemma \ref{lem:coh_Z_homotopy_limits_affine} we applied by now standard compact approximation technique. It has been used in various contexts by different authors, e.g. \cite{LN}, \cite{LO}. See also appendix \ref{app:boundedness_etc} of the present paper.\end{remark}

\begin{lemma}\label{lem:coh_Z_homotopy_limits}For a general $X,$ for any $\cG\in D^b_{coh,Z}(X),$ we have an isomorphism $\cG\cong\holim\limits_{n}\iota_{n*}\iota_n^*\cG$ in $D(X).$\end{lemma}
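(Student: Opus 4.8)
The previous lemma (Lemma~\ref{lem:coh_Z_homotopy_limits_affine}) establishes the statement when $X$ is affine. For a general noetherian separated $X,$ the plan is to reduce to the affine case by a \v{C}ech argument, using that $X$ is separated so that intersections of affines in an open affine cover are again affine.

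First I would choose a finite affine open cover $X=\bigcup_{i=1}^r U_i,$ and for each nonempty subset $J\subset\{1,\dots,r\}$ write $U_J=\bigcap_{i\in J}U_i,$ with $j_J:U_J\hto X$ the (affine, by separatedness) inclusion; let $Z_J=Z\cap U_J$ and $Z_{J,n}=Z_n\cap U_J.$ By \v{C}ech descent for $D(\QCoh)$, for any $\cF\in D(X)$ one has $\cF\cong\holim_{J}\, j_{J*}\,j_J^*\cF$, the homotopy limit being over the (finite) poset of nonempty $J$'s. The key compatibilities are: (i) restriction to $U_J$ commutes with $\bL\iota_n^*$ and $\iota_{n*}$ in the obvious sense, i.e.\ $j_J^*\,\iota_{n*}\bL\iota_n^*\cG\cong \iota_{J,n*}\bL\iota_{J,n}^*(j_J^*\cG)$ since $j_J$ is flat and everything in sight is quasi-coherent with the relevant base changes being along flat (indeed open) morphisms; and (ii) $j_J^*\cG$ lies in $D^b_{coh,Z_J}(U_J)$ because coherence and support are local properties. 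Granting these, applying the affine case (Lemma~\ref{lem:coh_Z_homotopy_limits_affine}) on each $U_J$ gives $j_J^*\cG\cong\holim_n \iota_{J,n*}\bL\iota_{J,n}^*(j_J^*\cG)\cong j_J^*\bigl(\holim_n \iota_{n*}\bL\iota_n^*\cG\bigr)$, compatibly in $J$. Then taking $\holim_J$ and using \v{C}ech descent for both $\cG$ and $\holim_n\iota_{n*}\bL\iota_n^*\cG$ (and commuting the two homotopy limits, which is harmless as homotopy limits commute with each other) yields the claimed isomorphism $\cG\xto{\sim}\holim_n\iota_{n*}\bL\iota_n^*\cG$ in $D(X)$.

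The one genuine point requiring care is the flat base change compatibility (i): one must know that for the open immersion $j_J$ and the closed immersion $\iota_n:Z_n\to X$, forming the fiber product and using that $Z_{J,n}=Z_n\times_X U_J$ is exactly the $n$-th infinitesimal neighborhood of $Z_J$ in $U_J$ (true since $\cI_{Z_J}=\cI_Z|_{U_J}$ and powers of ideals localize), the canonical base-change map $j_J^*\iota_{n*}\to \iota_{J,n*}\,(j_J|_{Z_{J,n}})^*$ is an isomorphism. This holds because $j_J$ is flat and $\iota_n$ is qcqs, so flat base change applies; alternatively one just checks it affine-locally, where it is the statement that localization is exact and commutes with quotients $A/I^n$. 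I would also note that all the homotopy limits involved are indexed by $\N$ or by a finite poset, so there are no size issues and no convergence subtleties beyond the cohomological-dimension-one fact already used in Lemma~\ref{lem:coh_Z_homotopy_limits_affine}.

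The main obstacle, then, is purely bookkeeping: organizing the double homotopy limit $\holim_J\holim_n$ and verifying that the \v{C}ech-local affine isomorphisms glue to a global one in $D(X)$ compatibly. No new idea beyond the affine case is needed; the affine case (via the compact approximation technique of Remark~\ref{rem:compact_approximation}) is where the real content sits.
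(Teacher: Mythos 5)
Your proposal is correct and follows essentially the same route as the paper: reduction to the affine case of Lemma \ref{lem:coh_Z_homotopy_limits_affine} by a \v{C}ech/Mayer--Vietoris argument over an affine cover, using flat base change along the open immersions and commuting pushforward with the homotopy limit (the paper uses a two-set Mayer--Vietoris triangle plus induction on the size of the cover rather than the full \v{C}ech poset, a cosmetic difference). One small point: the commutation with $\holim_n$ should be performed with $\bR j_{J*}$ (a right adjoint, for which it is automatic) rather than with $j_J^*$ as in your displayed intermediate isomorphism $\holim_n\iota_{J,n*}\bL\iota_{J,n}^*(j_J^*\cG)\cong j_J^*(\holim_n\iota_{n*}\bL\iota_n^*\cG)$ -- a left adjoint need not commute with homotopy limits -- but your final assembly via $\holim_J\bR j_{J*}$ and the interchange of the two homotopy limits already takes the correct route.
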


\begin{proof}We reduce the statement to the case of affine $X,$ which was established in Lemma \ref{lem:coh_Z_homotopy_limits_affine}. For that, suppose that we have an open cover $U_1\cup U_2=X,$ such that the statement is valid for pairs $(U_1,Z\cap U_1),$ $(U_2,Z\cap U_2)$ and $(U_{12},Z\cap U_{12}),$ where $U_{12}=U_1\cap U_2.$ It suffices to show that this implies the statement of the lemma for the pair $(X,Z).$ Indeed, the general case then follows from Lemma \ref{lem:coh_Z_homotopy_limits_affine} by induction on the number of subsets in an affine cover of $X.$

We denote the tautological open embeddings by $j_i:U_i\hto X$ and $j_{12}:U_{12}\hto X.$ We have the Mayer-Vietoris triangle in $D(X):$
\begin{equation}
\label{eq:Mayer-Vietoris_for_G}\cG\to \bR j_{1*}j_1^*\cG\oplus \bR j_{2*}j_2^*\cG\to \bR j_{12*}j_{12}^*\cG\to.
\end{equation}
Let us introduce some more notation for closed embeddings: $\iota_n^i:Z_n\cap U_i\hto U_i,$ and $\iota_n^{12}:Z_n\cap U_{12}\hto U_{12}.$ By the commutation of derived direct image with homotopy limits, and from the natural isomorphisms $\iota_{n*}\bL\iota_n^*\bR j_{i*}\cong \bR j_{i*}\iota_{n*}^i\bL\iota_n^{i*}$ (and similarly for $j_{12},$ $\iota_n^{12}$), we obtain the exact triangle
\begin{multline}
\label{eq:Mayer-Vietoris_for_holim}\holim\limits_{n}\iota_{n*}\bL\iota_n^*\cG\to \bR j_{1*}\holim\limits_{n}\iota_{n*}^1\bL\iota_n^{1*}j_1^*\cG\oplus \bR j_{2*}\holim\limits_{n}\iota_{n*}^2\bL\iota_n^{2*}j_2^*\cG\\
\to \bR j_{12*}\holim\limits_{n}\iota_{n*}^{12}\bL \iota_n^{12*}j_{12}^*\cG\to 
\end{multline}
The triangle \eqref{eq:Mayer-Vietoris_for_G} naturally maps to \eqref{eq:Mayer-Vietoris_for_holim}. By our assumption on the open cover, it induces an isomorphism on the middle and the right terms. Therefore, it induces an isomorphism on the left terms. This proves the lemma.
\end{proof}

Let $\cF\in D_Z(X)$ be an object. Then by adjunction we have a natural morphism
\begin{equation}
\label{eq:map_to_H_Z_of_holim}\cF\to\cH_Z(\holim\limits_{n}\iota_{n*}\bL\iota_n^*\cF).
\end{equation}

\begin{lemma}\label{lem:map_to_H_Z_of_holim_iso}For $\cF\in T_Z$ the morphism \eqref{eq:map_to_H_Z_of_holim} is an isomorphism.\end{lemma}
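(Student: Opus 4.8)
The plan is to establish the isomorphism locally, reducing to the affine case where the full machinery of Lemmas \ref{lem:restriction_is_perfect}, \ref{lem:fully_faithful_for_perfect_affine} and \ref{lem:coh_Z_homotopy_limits_affine} is available. First I would note that both sides of \eqref{eq:map_to_H_Z_of_holim} lie in $D_Z(X)$, so it suffices to check that the map becomes an isomorphism after applying $\bR\cHom_{\cO_X}(\cK,-)$ for every $\cK\in D_{\perf,Z}(X)$; since the $\cK(f_1,\dots,f_m)$ associated to generators of the ideal of an infinitesimal neighborhood of $Z$ generate $D_{\perf,Z}(X)$ as a thick subcategory, and the whole assertion is local on $X$, I may assume $X=\Spec A$ is affine and it is enough to test against a single Koszul complex, or indeed against the structure sheaves $\cO_{Z_n}$ directly.

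The core of the argument is then the following chain of identifications in the affine case. By Lemma \ref{lem:restriction_is_perfect}, $\bL\iota_{Z_n}^*\cF\in D_\perf(Z_n)$ for all $n$, so $\holim_n \iota_{n*}\bL\iota_n^*\cF$ is a homotopy limit of pushforwards of perfect complexes on infinitesimal neighborhoods. The next step is to show that this homotopy limit already has cohomology supported on $Z$ — equivalently that applying $\cH_Z$ does nothing new — which amounts to showing the natural map $\cF\to\holim_n\iota_{n*}\bL\iota_n^*\cF$ is itself an isomorphism for $\cF\in T_Z$. To see this I would pick, for each $l$, a perfect complex $\cG\to\cF$ inducing an isomorphism on $H^{\geq -l}$ (compact approximation, as in Lemma \ref{lem:coh_Z_homotopy_limits_affine}); then $\cG\in D_{\perf,Z}(X)$ because $\cF\in D_Z(X)$ and truncations of $\cF$ are coherent and $Z$-torsion, so by Lemma \ref{lem:fully_faithful_for_perfect_affine} we have $\cG\xto{\sim}\holim_n\iota_{n*}\bL\iota_n^*\cG$. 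Comparing with $\cF$ and using the cohomological-dimension-one bound on $\bR\lim_n$ (which loses only one degree), one gets that $\cF\to\holim_n\iota_{n*}\bL\iota_n^*\cF$ is an isomorphism on $H^{\geq -l+1}$; since $l$ is arbitrary, it is an isomorphism. Finally, since $\holim_n\iota_{n*}\bL\iota_n^*\cF\cong\cF$ already lies in $D_Z(X)$, the functor $\cH_Z$ acts as the identity on it, and \eqref{eq:map_to_H_Z_of_holim} is an isomorphism.

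An alternative, perhaps cleaner, organization avoids invoking $\cH_Z$ until the very end: prove first that the underlying map $\cF\to\holim_n\iota_{n*}\bL\iota_n^*\cF$ in $D(X)$ is an isomorphism (the content of the previous paragraph), and only then observe that both source and target are objects of $D_Z(X)$ — the source by hypothesis, the target because it is isomorphic to the source — so that the unit of the adjunction $(\text{incl},\cH_Z)$ evaluated on $\holim_n\iota_{n*}\bL\iota_n^*\cF$ is an isomorphism, and \eqref{eq:map_to_H_Z_of_holim} is precisely the composite of the first isomorphism with this unit.

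The main obstacle I anticipate is the compact-approximation step: one must be careful that the chosen perfect approximation $\cG$ of $\cF$ really can be taken with cohomology supported on $Z$ (so that Lemma \ref{lem:fully_faithful_for_perfect_affine} applies rather than merely Lemma \ref{lem:coh_Z_homotopy_limits_affine}), and that the truncation errors interact correctly with both the homotopy limit over $n$ and the functor $\cH_Z$. Concretely, the delicate point is controlling $H^\bullet$ of $\holim_n\iota_{n*}\bL\iota_n^*(-)$ under the replacement $\cF\rightsquigarrow\cG$ in a range of degrees independent of $n$, which is exactly where the $\bR\lim$ cohomological dimension bound and the flatness of $\hhat A_I$ are used. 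Once this bookkeeping is set up as in Lemma \ref{lem:coh_Z_homotopy_limits_affine}, the rest is formal.
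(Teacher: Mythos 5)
There is a genuine error at the heart of the second paragraph: the claim that for $\cF\in T_Z$ the map $\cF\to\holim_n\iota_{n*}\bL\iota_n^*\cF$ is \emph{already} an isomorphism in $D(X)$ (so that $\cH_Z$ "does nothing new") is false, and consequently the "cleaner alternative organization" is also broken. Take $X=\Spec \mk[t]$, $Z=\{0\}$, and $\cF=\cH_Z(\cO_X)\simeq(\mk[t,t^{-1}]/\mk[t])[-1]$, which lies in $T_Z$. Here $\bL\iota_n^*\cF\cong\cO_{Z_n}$, so $\holim_n\iota_{n*}\bL\iota_n^*\cF\cong \mk[[t]]$, sitting in degree $0$ and certainly not supported on $Z$, whereas $\cF$ sits in degree $1$; only after applying $\cH_Z$ (which sends $\mk[[t]]$ to $(\mk((t))/\mk[[t]])[-1]\cong\cF$) does one recover $\cF$. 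The source of the error is the assertion that "truncations of $\cF$ are coherent": objects of $T_Z$ typically have non--finitely-generated cohomology (as in this example), so no perfect complex $\cG\to\cF$ inducing an isomorphism on $H^{\geq -l}$ exists, and the compact-approximation step of Lemma \ref{lem:coh_Z_homotopy_limits_affine} cannot be transplanted to $\cF$ itself.

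Your first paragraph does start on the correct path: since both sides of \eqref{eq:map_to_H_Z_of_holim} lie in $D_Z(X)$, it suffices to test against compact generators $\cG\in D_{\perf,Z}(X)$. The way to finish from there (and this is what the paper does) is not to analyze $\holim_n\iota_{n*}\bL\iota_n^*\cF$ directly, but to move the test object inside: testing kills the $\cH_Z$ by adjunction, compactness of $\cG$ lets one commute $\cG\Ltens{\cO_X}(-)$ past the homotopy limit, and the projection formula gives $\cG\Ltens{\cO_X}\iota_{n*}\bL\iota_n^*\cF\cong\iota_{n*}\bL\iota_n^*(\cG\Ltens{\cO_X}\cF)$. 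The hypothesis $\cF\in T_Z$ is then used exactly once and exactly where it is needed: it guarantees $\cG\Ltens{\cO_X}\cF\in D_{\perf,Z}(X)$, to which Lemma \ref{lem:coh_Z_homotopy_limits} applies verbatim. No compact approximation of $\cF$ and no new affine reduction are required; those devices belong to the proof of Lemma \ref{lem:coh_Z_homotopy_limits_affine}, which is invoked here only through its globalized form.
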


\begin{proof}It suffices to check that for any $\cG\in D_{\perf,Z}(X)$ the morphism
$$\cG\Ltens{\cO_X}\cF\to\cG\Ltens{\cO_X}\holim\limits_{n}\iota_{n*}\bL\iota_n^*\cF$$
is an isomorphism. Note that by perfectness of $\cG$ the RHS is isomorphic to $$\holim\limits_{n}(\cG\Ltens{\cO_X}\iota_{n*}\bL\iota_n^*\cF)\cong \holim\limits_{n}\iota_{n*}\bL\iota_n^*(\cG\Ltens{\cO_X}\cF).$$ The assertion now follows from Lemma \ref{lem:coh_Z_homotopy_limits} applied to $\cG\Ltens{\cO_X}\cF\in D_{\perf,Z}(X).$\end{proof}



Let $\cM\in D_{\perf}(\hhat{X}_Z)$ be an object, and denote by $\cM_n\in D_{\perf}(Z_n)$ its image. Thus, we have structural isomorphisms $\bL\iota_{k,n}^*\cM_n\cong\cM_k$ for $k<n.$

\begin{lemma}\label{lem:tensoring_by_holim}For any $k>0$ and $\cN\in D^b_{coh}(Z_k)$ we have $$\iota_{k*}\cN\Ltens{\cO_X}\holim\limits_{n}\bL\iota_{n*}\cM_n\cong \iota_{k*}(\cN\Ltens{\cO_{Z_k}}\cM_k).$$\end{lemma}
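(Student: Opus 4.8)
The plan is to reduce the statement to a local computation on $X$, where everything becomes a statement about finitely generated modules over a noetherian ring and their completions. Since $\cN$ is supported on $Z_k\subset X$ (a single infinitesimal neighborhood, hence on $Z$ as a closed set), and $\holim_n \bL\iota_{n*}\cM_n$ is a priori a complicated object, the first thing I would do is rewrite the right-hand side of \eqref{eq:completion_is_tensoring}-style formulas so that the tensor product ``sees'' only finitely many terms of the inverse system. Concretely, because $\cN$ is annihilated by $\cI_Z^k$, for every $n\geq k$ there is a natural map $\iota_{k*}\cN\Ltens{\cO_X}\bL\iota_{n*}\cM_n\to \iota_{k*}\cN\Ltens{\cO_X}\bL\iota_{k*}\cM_k$ coming from the structural identification $\bL\iota_{k,n}^*\cM_n\cong\cM_k$, and I would argue this is already an isomorphism for each individual $n\geq k$: base change for the closed immersion $\iota_k$ lets one compute $\iota_{k*}\cN\Ltens{\cO_X}\bL\iota_{n*}\cM_n\cong \iota_{k*}\bigl(\cN\Ltens{\cO_{Z_k}}\bL\iota_{k,n}^*\bL\iota_n^*\bL\iota_{n*}\cM_n\bigr)$, and the counit $\bL\iota_n^*\bL\iota_{n*}\cM_n\to\cM_n$ becomes an isomorphism after applying $\bL\iota_{k,n}^*$ — that is the content I want to extract from the fact that $\cM_n$ is \emph{perfect} on $Z_n$, together with the standard ``perfect modulo $\cI^n$'' behaviour. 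So the key local lemma is: for $\cM_n$ perfect on $Z_n$ and $n\geq k$, $\bL\iota_{k,n}^*\bL\iota_n^*\bL\iota_{n*}\cM_n\cong\cM_k$.

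Granting that, the homotopy limit over $n$ of the constant (up to coherent isomorphism) system $\{\iota_{k*}(\cN\Ltens{\cO_{Z_k}}\cM_k)\}_{n\geq k}$ is just $\iota_{k*}(\cN\Ltens{\cO_{Z_k}}\cM_k)$, provided the transition maps are the identity in the homotopy category — which they are, being the structural isomorphisms of the object $\cM\in D_{\perf}(\hhat X_Z)$. The one subtlety is commuting $\iota_{k*}\cN\Ltens{\cO_X}(-)$ past the $\holim_n$. This is safe because $\cN$ is bounded with coherent cohomology on $Z_k$ and $\iota_{k*}$ is exact, so $\iota_{k*}\cN$ is a bounded-above pseudo-coherent complex; tensoring by such a complex has bounded cohomological amplitude locally, hence commutes with the homotopy limit (equivalently, one reduces to $X$ affine and replaces $\iota_{k*}\cN$ by a bounded-above complex of finite free modules in a range, using the cohomological-dimension-one vanishing $\bR\lim^{\geq 2}=0$ already invoked in the proof of Lemma~\ref{lem:coh_Z_homotopy_limits_affine}).

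In more detail, I would work on $X=\Spec A$ with $I\subset A$ the ideal of $Z$, write $M_n$ for $\cM_n$ viewed as a perfect complex over $A/I^n$, and $N$ for $\cN$ over $A/I^k$. Then $\iota_{k*}\cN\Ltens{\cO_X}\holim_n\bL\iota_{n*}\cM_n$ becomes $N\Ltens{A}\holim_n(M_n\Ltens{A/I^n}A/I^n)=N\Ltens{A}\holim_n M_n$ (the $M_n$ regarded as $A$-complexes), and since $N$ is killed by $I^k$ one has $N\Ltens{A}M_n\cong N\Ltens{A/I^k}(M_n\Ltens{A/I^n}A/I^k)\cong N\Ltens{A/I^k}M_k$ functorially in $n\geq k$, using perfectness of $M_n$ and $M_n\Ltens{A/I^n}A/I^k\simeq M_k$. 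Pulling $N\Ltens{A}(-)$ out of the homotopy limit (justified as above) gives $\holim_{n\geq k}(N\Ltens{A/I^k}M_k)\cong N\Ltens{A/I^k}M_k$, which is $\iota_{k*}(\cN\Ltens{\cO_{Z_k}}\cM_k)$. Finally one checks the statement globalizes: both sides are defined via the fixed enhancements $\mD(-)$ compatible with pullback, and the isomorphism just constructed is natural in $X$, so a Mayer–Vietoris patching argument identical to the one in Lemma~\ref{lem:coh_Z_homotopy_limits} (using commutation of $\bR j_{i*}$, $\iota_{k*}$, and $\holim$ with the relevant tensor products) glues the local isomorphisms. The main obstacle is the interchange of $\iota_{k*}\cN\Ltens{}(-)$ with $\holim_n$: one must be careful that the tensor factor, though not perfect, has locally bounded Tor-amplitude so that the spectral sequence for the homotopy limit degenerates in each cohomological degree — everything else is bookkeeping with adjunctions and base change for closed immersions.
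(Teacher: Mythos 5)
There is a genuine gap at the central step. You claim that for each individual $n\geq k$ the term $\iota_{k*}\cN\Ltens{\cO_X}\iota_{n*}\cM_n$ is already isomorphic to $\iota_{k*}(\cN\Ltens{\cO_{Z_k}}\cM_k)$, via the ``key local lemma'' $\bL\iota_{k,n}^*\bL\iota_n^*\bL\iota_{n*}\cM_n\cong\cM_k$, equivalently $N\Ltens{A}M_n\cong N\Ltens{A/I^k}(M_n\Ltens{A/I^n}A/I^k)$. This is false: the first isomorphism silently replaces $\Ltens{A}$ by $\Ltens{A/I^n}$, i.e.\ discards $\Tor^A_{>0}(A/I^k,A/I^n)$, which does not vanish because $A\to A/I^n$ is not flat. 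Concretely, take $A=\mk[t]$, $I=(t)$, $k=1$, $\cN=\mk$ and $\cM=\cO_{\hhat{X}_Z}$ (so $M_n=A/t^n$): from the Koszul resolution of $\mk$ one gets $N\Ltens{A}M_n\cong\mk\oplus\mk[1]$ for every $n\geq1$, whereas $N\Ltens{A/I}M_1\cong\mk$. So the inverse system $\{N\Ltens{A}M_n\}_n$ is not constant, and the step ``$\holim$ of a constant system is the object itself'' does not apply. The lemma survives only because the extra Tor classes form pro-zero systems (the transition maps $\Tor^A_i(A/I^k,A/I^{n+1})\to\Tor^A_i(A/I^k,A/I^n)$ vanish for $n$ large and $i>0$ --- an Artin--Rees type statement), and that is precisely the additional input your argument is missing; without it the proof does not close.

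For comparison, the paper avoids this issue entirely: after the same Mayer--Vietoris reduction to $X=\Spec A$ affine, it invokes Proposition \ref{prop:completion_affine} (in the affine case every perfect complex on $\hhat{X}_Z$ is algebraizable) to reduce by d\'evissage to $\cM=\cO_{\hhat{X}_Z}$. Then $\holim_n\iota_{n*}\cM_n\cong\hhat{A}_I$ and the assertion is exactly the isomorphism \eqref{eq:tensoring_torsion_with_completion}, i.e.\ flatness of $\hhat{A}_I$ over $A$ together with $M\otimes_A\hhat{A}_I\cong M$ for finitely generated $I$-torsion modules. Your interchange of $\iota_{k*}\cN\Ltens{\cO_X}(-)$ with $\holim_n$ via compact approximation is defensible, but it delivers you to a system whose terms you then misidentify; you would either need the pro-Tor vanishing above or the paper's algebraizability reduction to finish.
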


\begin{proof}Similarly to the proof of Lemma \ref{lem:coh_Z_homotopy_limits}, we may assume that $X$ is affine. 
By Proposition \ref{prop:completion_affine} the perfect complex $\cM\in\Perf(\hhat{X}_Z)$ is algebraizable, hence we may assume that $\cM=\cO_{\hhat{X}_Z}.$ In this case the assertion follows from the isomorphism \eqref{eq:tensoring_torsion_with_completion}.\end{proof}

\begin{lemma}\label{lem:restriction_of_holim}For any $k>0$ we have an isomorphism $\bL\iota_k^*\holim\limits_{n}\iota_{n*}\cM_n\cong\cM_k.$\end{lemma}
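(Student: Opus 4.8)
\textbf{Proof proposal for Lemma \ref{lem:restriction_of_holim}.}

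The plan is to reduce to the affine case, exactly as in the proof of Lemma \ref{lem:tensoring_by_holim}, and then apply that lemma with a well-chosen test object. First I would note that the statement is local on $X$: both sides commute with restriction to an open $U\subset X$ (for $\holim\limits_n\iota_{n*}\cM_n$ one uses the compatibility $j_U^*\iota_{n*}\cong\iota_{n*}^U j_{U,n}^*$ together with the commutation of $j_U^*$ with homotopy limits, and for $\bL\iota_k^*$ this is tautological once we restrict to $U\cap Z_k$). So we may assume $X=\Spec A$ is affine, with $Z$ defined by an ideal $I\subset A$, and $\iota_k\colon Z_k=\Spec(A/I^k)\hookrightarrow X$.

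Now I would use Lemma \ref{lem:tensoring_by_holim} with $\cN=\cO_{Z_k}$, which gives
\[
\iota_{k*}\cO_{Z_k}\Ltens{\cO_X}\holim\limits_{n}\iota_{n*}\cM_n\;\cong\;\iota_{k*}\cM_k.
\]
The left-hand side is, by the projection formula (or simply unwinding the definitions), $\iota_{k*}\bL\iota_k^*\holim\limits_n\iota_{n*}\cM_n$. Since $\iota_{k*}$ is fully faithful on the derived category (pushforward along a closed immersion reflects isomorphisms — one can check it on cohomology sheaves, or invoke that $\bL\iota_k^*\iota_{k*}\to\Id$ has a natural section), this yields the claimed isomorphism $\bL\iota_k^*\holim\limits_n\iota_{n*}\cM_n\cong\cM_k$. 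One should be slightly careful that the isomorphism of Lemma \ref{lem:tensoring_by_holim} is compatible with the $\iota_{k*}$-module structure so that descending along the fully faithful $\iota_{k*}$ is legitimate; but this is clear since the isomorphism in that lemma is $\cO_X$-linear and the $\cO_{Z_k}$-module structure on both sides is induced by the $\cO_X$-structure through $A\to A/I^k$.

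The only genuine subtlety — and the one place I would spend care — is the appeal to Proposition \ref{prop:completion_affine} (algebraizability of perfect complexes on $\hhat{X}_Z$ in the affine case), which is what made Lemma \ref{lem:tensoring_by_holim} available and hence is implicitly being used here as well; in particular one needs $k>0$ so that $Z_k$ is a genuine infinitesimal neighborhood and the tower $(\cM_n)$ restricts correctly. Everything else is a formal consequence of Lemma \ref{lem:tensoring_by_holim} and the full faithfulness of closed pushforward, so I do not expect any real obstacle beyond bookkeeping.
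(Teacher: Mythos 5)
Your route is sound in outline but genuinely different from the paper's. The paper proves this lemma by repeating the reduction used for Lemma \ref{lem:tensoring_by_holim} (Mayer--Vietoris down to the affine case, then Proposition \ref{prop:completion_affine} to reduce to $\cM=\cO_{\hhat{X}_Z}$) and then simply invokes flatness of $\hhat{A}_I$ over $A$, so that $\hhat{A}_I\otimes_A A/I^k\cong A/I^k$. You instead deduce the statement formally from Lemma \ref{lem:tensoring_by_holim} applied to $\cN=\cO_{Z_k}$ together with the projection formula $\iota_{k*}\cO_{Z_k}\Ltens{\cO_X}\cF\cong\iota_{k*}\bL\iota_k^*\cF$. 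This is a legitimate and arguably cleaner derivation; note that since Lemma \ref{lem:tensoring_by_holim} is stated for general noetherian separated $X$, your preliminary localization step is actually unnecessary --- you can apply it globally and skip the discussion of restriction to opens.

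The one genuine flaw is the descent step. The derived pushforward along a closed immersion is \emph{not} fully faithful: the counit $\bL\iota_k^*\iota_{k*}\to\Id$ is not an isomorphism (already for the origin in $\A^1_{\mk}$ one has $\bL i^*i_*\mk\cong \mk\oplus \mk[1]$), and correspondingly the unit $\Id\to\iota_k^!\iota_{k*}$ fails to be an isomorphism, so you cannot automatically descend an abstract isomorphism $\iota_{k*}(-)\cong\iota_{k*}(-)$ to one on $Z_k$. What is true, and what suffices, is that $\iota_{k*}$ is \emph{conservative} (it is exact and faithful on quasi-coherent sheaves, $\iota_k$ being a closed immersion), which is in fact what your parenthetical "check it on cohomology sheaves" verifies. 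To use conservativity you must first exhibit the candidate map: take the projection $\holim\limits_n\iota_{n*}\cM_n\to\iota_{k*}\cM_k$ and adjoin to get $\bL\iota_k^*\holim\limits_n\iota_{n*}\cM_n\to\cM_k$; its image under $\iota_{k*}$ is (up to the projection-formula identification) the natural map of Lemma \ref{lem:tensoring_by_holim} for $\cN=\cO_{Z_k}$, hence an isomorphism, and conservativity finishes the argument. With that correction your proof is complete.
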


\begin{proof}Again, analogously to the proof of the previous lemma, we may assume that $X$ is affine and $\cM=\cO_{\hhat{X}_Z}.$ Let $A$ and $I$ be as in the proof of Lemma \ref{lem:fully_faithful_for_perfect_affine}. Then the assertion follows from the flatness of $\hhat{A}_I$ over $A.$\end{proof}

\begin{lemma}\label{lem:inverse_functor}Put $\cF:=\cH_Z(\holim\limits_{n}\iota_{n*}\cM_n).$ Then $\cF$ is in $T_Z.$\end{lemma}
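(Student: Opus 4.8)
The plan is to show that $\cF:=\cH_Z(\holim_n \iota_{n*}\cM_n)$ lies in $T_Z$ by verifying directly the defining condition \eqref{eq:definition_of_T_Z}: for every $\cG\in D_{\perf,Z}(X)$ we must have $\cG\Ltens{\cO_X}\cF\in D_{\perf,Z}(X)$. Since the condition is local on $X$ (tensoring commutes with restriction to opens, and being a perfect complex supported on $Z$ is local), I would first reduce to the case $X=\Spec A$ affine, as in the proofs of Lemmas \ref{lem:coh_Z_homotopy_limits} and \ref{lem:tensoring_by_holim}. Moreover, $D_{\perf,Z}(X)$ is generated as a thick subcategory by a single Koszul complex $\cK=\cK(f_1,\dots,f_m)$ on generators of the ideal of $Z$ (all such Koszul complexes being related by finite homotopy colimits and summands), so it suffices to check $\cK\Ltens{\cO_X}\cF\in D_{\perf,Z}(X)$ for this one $\cG=\cK$.

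Next I would compute $\cK\Ltens{\cO_X}\cF$. Because $\cK$ is perfect and supported (set-theoretically, and indeed scheme-theoretically on $Z_1$) on $Z$, the functor $\cG\Ltens{\cO_X}(-)$ commutes with $\cH_Z$ up to the identity on objects already supported on $Z$: more precisely $\cK\Ltens{\cO_X}\cH_Z(\cM')\cong \cK\Ltens{\cO_X}\cM'$ for any $\cM'\in D(X)$, since the cone of $\cH_Z(\cM')\to\cM'$ has cohomology supported away from $Z$ (it is the sections on $X\setminus Z$ part) and $\cK$ kills everything scheme-theoretically away from $Z$. Hence $\cK\Ltens{\cO_X}\cF\cong \cK\Ltens{\cO_X}\holim_n\iota_{n*}\cM_n$. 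Now $\cK\cong\iota_{1*}\bL\iota_1^*\cK$ is (a twist of) the pushforward of a perfect complex from $Z_1$ — or more simply, $\cK$ admits a finite filtration whose graded pieces are pushed forward from $Z_1$ — so Lemma \ref{lem:tensoring_by_holim} (with $k=1$, $\cN=\bL\iota_1^*\cK$, noting $\cN\in D^b_{coh}(Z_1)$ as it is perfect over $Z_1$) applies and gives
\[
\cK\Ltens{\cO_X}\holim_n\iota_{n*}\cM_n\;\cong\;\iota_{1*}\bigl(\bL\iota_1^*\cK\Ltens{\cO_{Z_1}}\cM_1\bigr).
\]
The right-hand side is the pushforward of a perfect complex on $Z_1$ (tensor product over $\cO_{Z_1}$ of two perfect complexes, $\bL\iota_1^*\cK\in\Perf(Z_1)$ and $\cM_1\in\Perf(Z_1)$), hence it is an object of $D^b_{coh,Z}(X)$, and being pushed forward from a closed subscheme with finite Tor-dimension... — actually I must be careful: $\iota_{1*}$ of a perfect complex need not be perfect on $X$. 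So instead I would check perfectness after restriction: $\cK\Ltens{\cO_X}\cF$ is a priori in $D^b_{coh,Z}(X)$, and to see it is in $D_{\perf,Z}(X)$ I can argue as in Lemma \ref{lem:restriction_is_perfect} — but in reverse. A cleaner route: $\cK\Ltens{\cO_X}\cF\cong\cK\Ltens{\cO_X}\cK'\Ltens{\cO_X}\cF$ where one further tensors with another Koszul-type complex to split off $\bL\iota_1^*(\cF)$-like summands; more straightforwardly, since $\cK\Ltens{\cO_X}(-)$ of any complex is automatically supported on $Z_1$ and $\cK$ itself has finite Tor-amplitude over $\cO_X$, the object $\cK\Ltens{\cO_X}\cF$ is perfect over $\cO_X$ if and only if its further derived restriction $\bL\iota_1^*$ of it is perfect over $Z_1$ — and that restriction is $\bL\iota_1^*\cK\Ltens{\cO_{Z_1}}\bL\iota_1^*\iota_{1*}(\bL\iota_1^*\cK\Ltens{}\cM_1)$, a finite complex of coherent sheaves on the noetherian scheme $Z_1$ built from perfect pieces, hence perfect. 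So $\cK\Ltens{\cO_X}\cF\in D_{\perf,Z}(X)$.

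The main obstacle I anticipate is the last perfectness bookkeeping: $\iota_{1*}$ does not preserve perfection, so one cannot simply declare $\iota_{1*}(\bL\iota_1^*\cK\Ltens{\cO_{Z_1}}\cM_1)$ perfect over $X$. The right way around it is to use the criterion (implicit in Lemma \ref{lem:restriction_is_perfect} and its proof) that for a complex $\cE\in D^b_{coh,Z}(X)$ whose derived pullback to every infinitesimal neighborhood $Z_n$ is perfect — equivalently, tensoring $\cE$ with the Koszul complex $\cK$ and restricting to $Z_1$ gives a perfect complex — one concludes $\cE\in D_{\perf,Z}(X)$; here we only need one such tensoring because $\cK$ already realizes $\bL\iota_1^*$ up to summands and shifts, via the identity $\bL\iota_{1}^*(\cK\Ltens{\cO_X}\cF)\cong\bigoplus_{j=0}^m \bL\iota_1^*(\cF)^{\oplus\binom{m}{j}}[j]$ used in Lemma \ref{lem:restriction_is_perfect}. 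Running that identity in the direction that concludes $\bL\iota_1^*\cF$ (and then $\bL\iota_n^*\cF$ for all $n$, hence $\cF$ locally looks like an algebraizable perfect complex by Proposition \ref{prop:completion_affine} and Lemma \ref{lem:restriction_of_holim}) is perfect, and then feeding this back through the Koszul identity, gives $\cK\Ltens{\cO_X}\cF\in D_{\perf,Z}(X)$. Since this holds for the generating $\cG=\cK$ and passes to the thick closure, $\cF\in T_Z$, completing the proof.
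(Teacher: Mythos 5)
Your overall strategy (reduce to a generator of $D_{\perf,Z}(X)$, apply Lemma \ref{lem:tensoring_by_holim}, then worry about perfectness of the pushforward) is the same as the paper's, but the execution breaks down at exactly the point you flag as the ``main obstacle,'' and none of the three fixes you sketch works as written. First, your application of Lemma \ref{lem:tensoring_by_holim} with $k=1$ presupposes $\cK\cong\iota_{1*}\bL\iota_1^*\cK$, which is false in general: a perfect complex supported on $Z$ is the pushforward of a coherent complex from \emph{some} infinitesimal neighborhood $Z_{n_0}$, but typically not from $Z_1$ (only its individual cohomology sheaves are annihilated by $\cI_Z$). The filtration workaround you mention does give $\cK\Ltens{\cO_X}\cF$ as an iterated extension of the objects $\iota_{1*}\bigl(H^j(\cK)\Ltens{\cO_{Z_1}}\cM_1\bigr)[-j]$, but these pieces are individually \emph{not} perfect on $X$, so perfectness of the total extension cannot be read off from them and you are back where you started. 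Second, your displayed formula for $\bL\iota_1^*(\cK\Ltens{\cO_X}\cF)$ involves $\bL\iota_1^*\iota_{1*}(\text{perfect on }Z_1)$, which is generally an \emph{unbounded} complex on $Z_1$ (unless $Z_1\hto X$ has finite Tor-dimension), so ``built from perfect pieces, hence perfect'' is not valid. Third, the criterion ``$\cE\in D^b_{coh,Z}(X)$ is perfect iff $\bL\iota_1^*\cE$ is perfect'' is in fact true, but it is a nontrivial statement (it needs the fiberwise/pointwise criterion for perfection of pseudo-coherent complexes), you do not prove it, and applying it would additionally require identifying $\bL\iota_1^*\cF$ with $\cM_1$, i.e.\ commuting $\bL\iota_1^*$ past $\cH_Z$, which you also do not justify.

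The paper's proof avoids all of this with one move you are missing: write $\cG\cong\iota_{n_0*}\cG'$ with $n_0$ \emph{large enough} and $\cG'\in D^b_{coh}(Z_{n_0})$, so that Lemma \ref{lem:tensoring_by_holim} (at level $k=n_0$) gives directly $\cG\Ltens{\cO_X}\cF\cong\iota_{n_0*}(\cG'\Ltens{\cO_{Z_{n_0}}}\cM_{n_0})$; then, since $\cM_{n_0}$ is perfect over $Z_{n_0}$, it is locally a bounded complex of finite free $\cO_{Z_{n_0}}$-modules, so $\iota_{n_0*}(\cG'\Ltens{\cO_{Z_{n_0}}}\cM_{n_0})$ is locally an iterated cone of shifts of finite sums of $\iota_{n_0*}\cG'=\cG$ itself, hence perfect because $\cG$ is perfect \emph{by hypothesis}. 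In other words, the perfectness is borrowed from $\cG$ as a whole object of $\Perf(X)$ and from $\cM_{n_0}\in\Perf(Z_{n_0})$, rather than pushed onto $Z_1$ or onto the individual cohomology sheaves. To repair your argument, replace $Z_1$ by $Z_{n_0}$ throughout and finish with this thick-subcategory observation; the reduction to a single Koszul generator then becomes unnecessary.
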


\begin{proof}Let $\cG\in D_{\perf,Z}(X)$ be an object. We can find (and fix) $n_0>0$ and $\cG'\in D^b_{coh}(Z_{n_0})$ such that $\iota_{n_0*}\cG'\cong\cG.$ Applying Lemma \ref{lem:tensoring_by_holim} and projection formula, we obtain a chain of isomorphisms
$$
\cG\Ltens{\cO_X}\cF=\cG\Ltens{\cO_X}\cH_Z(\holim\limits_{n}\iota_{n*}\cM_n)\cong \iota_{n_0*}\cG'\Ltens{\cO_X}\holim\limits_{n}\iota_{n*}\cM_n\cong \iota_{n_0,*}(\cG'\Ltens{\cO_{Z_{n_0}}}\cM_{n_0}).
$$
By our assumptions, $\cG=\iota_{n_0,*}\cG'$ is a perfect complex on $X,$ and $\cM_{n_0}$ is a perfect complex on $Z_{n_0}.$ We conclude that  $\iota_{n_0,*}(\cG'\Ltens{\cO_{Z_{n_0}}}\cM_{n_0})$ is also perfect complex on $X.$ This proves the lemma.\end{proof}


\begin{proof}[Proof of Theorem \ref{th:perf_of_formal_is_T_Z}] From Lemma \ref{lem:restriction_is_perfect} we see in particular that $\iota_n^*(\cT_Z)\subset\Perf(Z_n).$ Therefore, we obtain a functor $$F:\cT_Z\to\holim\limits_n\Perf(Z_n)=\Perf(\hhat{X}_Z).$$
We need to prove that $F$ is a quasi-equivalence.

We first check that $F$ is quasi-fully-faithful.
Let $\cF_1,\cF_2\in\cT_Z$ be two objects. We have \begin{multline}\label{eq:chain_of_iso_holimits_adjunctions}\Hom_{\Perf(\hhat{X}_Z)}(F(\cF_1),F(\cF_2))=\holim\limits_{n}\Hom_{\Perf(Z_n)}(\iota_n^*\cF_1,\iota_n^*\cF_2)\simeq\\ \holim\limits\bR\Hom_X(\cF_1,\iota_{n*}\iota_n^*\cF_2)\cong \bR\Hom_X(\cF_1,\holim\limits_{n}\iota_{n*}\iota_n^*\cF_2)\cong\\ \bR\Hom_X(\cF_1,\cH_Z(\holim\limits_{n}\iota_{n*}\iota_n^*\cF_2))\cong \bR\Hom_X(\cF_1,\cF_2).\end{multline}
The last isomorphism of \eqref{eq:chain_of_iso_holimits_adjunctions} follows from Lemma \ref{lem:map_to_H_Z_of_holim_iso}. Therefore, the functor $F$ is indeed quasi-fully-faithful.

We now show that $F$ is essentially surjective. As above, let $\cM\in\Perf(\hhat{X}_Z)$ be an object, and denote by $\cM_k\in\Perf(Z_k),$ $k>0,$ its pullbacks. It follows from Lemma \ref{lem:inverse_functor}, we have an exact functor $G:D_{\perf}(\hat{X}_Z)\to T_Z,$ given by
$G(\cM):=\cH_Z(\holim\limits_{k>0}\iota_{k*}\cM_k)$ is in $\cT_Z.$ It is easy to see that $G$ is right adjoint to $[F].$ It suffices to show that the adjunction morphism $[F](G(\cM))\to\cM$ is an isomorphism.
Applying Lemma \ref{lem:restriction_of_holim}, we obtain
$$\bL\iota_k^* G(\cM)=\bL\iota_k^*\cH_Z(\holim\limits_{n}\iota_{n*}\cM_n)\cong \bL\iota_k^*\holim\limits_{n}\iota_{n*}\cM_n\cong\cM_n.$$
This proves that $F$ is a quasi-equivalence.

Commutativity of the triangle \eqref{eq:compatibility_restrictions_geometric} follows from the vanishing of each functor $\bL\iota_n^*$ on the subcategory $j_*D(X-Z)\subset D(X),$ where $j$ is the tautological open embedding. The theorem is proved.
\end{proof}

\subsection{Special case: $X$ is smooth and proper}
\label{ssec:X_smooth_proper}

We now specialize to the case when the ambient scheme $X$ is smooth and proper. If $T$ is a locally proper DG category, we put
$$\mD_{sg}(T):=\PsPerf(T)/\Perf(T),$$
and we denote by $\bbar{\mD_{sg}}(T)$ the Karoubi envelope of $\mD_{sg}(T).$ The corresponding triangulated categories are denoted by $D_{sg}(T)$ and $\bbar{D_{sg}}(T).$

\begin{remark}1) A justification of the above notation is the following geometric intuition. For $Y$ a proper scheme over $\mk$ and $T=\Perf(Y),$ by Proposition \ref{prop:pseudo_perfect_geometric} we have $\PsPerf(T)\simeq \mD^b_{coh}(Y),$ hence $\mD_{sg}(T)\simeq\mD_{sg}(Y)$ where $\mD_{sg}(Y)$ is (an enhancement of) the Orlov's triangulated category of singularities.

2) Of course, if $T$ is smooth and proper, then $D_{sg}(T)=0.$\end{remark} 

We start with application of Theorem \ref{th:perf_of_formal_is_T_Z} to the smooth and proper case. 

\begin{theo}\label{th:neighborhoods_and_D_sg} Let $X$ be smooth and proper scheme over $\mk,$ and $Z\subset X$ a closed subscheme. Let us put $\cS:=\Perf_Z(X).$ Then we have a natural equivalence $\Perf(\hhat{X}_Z)\simeq \PsPerf(\cS),$ compatible with the inclusions of $S.$ Passing to (Karoubi envelopes of) the quotients, we get a quasi-equivalence
$\Perf(\hhat{X}_Z-Z)\cong \bbar{\mD_{sg}}(\cS).$\end{theo}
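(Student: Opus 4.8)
The plan is to combine Theorem \ref{th:perf_of_formal_is_T_Z} (already available) with the theory of pseudo-perfect modules over the proper DG category $\cS = \Perf_Z(X)$. The key point is that, since $X$ is smooth and proper, the DG category $\Perf_Z(X)$ is a proper DG category: it is locally proper because it sits inside $\Perf(X)$ and $X$ is proper (so $\bR\Hom_{\cO_X}(\cG_1,\cG_2)$ is a perfect complex over $\mk$ for $\cG_i \in \Perf(X)$), and it has a classical generator because $Z_n$ has one and $\Perf_Z(X) = \holim_n \Perf_Z(Z_n)$ is generated by the image of a single Koszul-type complex. So $D_{sg}(\cS) = \PsPerf(\cS)/\Perf(\cS)$ makes sense and has the stated form.

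First I would produce a DG functor $\cT_Z \to \PsPerf(\cS)$. The natural candidate is $\cF \mapsto \bR\cHom_{\cO_X}(-,\cF)|_{\cS}$, i.e. the restriction to $\cS$ of the module $\cG \mapsto \bR\Hom_X(\cG,\cF)$ on $\Perf(X)$. The defining property \eqref{eq:definition_of_T_Z} of $T_Z$ says precisely that $\bR\cHom_{\cO_X}(\cG,\cF) \in D_{\perf,Z}(X)$ for $\cG \in D_{\perf,Z}(X)$; since $X$ is proper this perfect complex on $X$ has finite-dimensional total cohomology, so the resulting $\cS$-module is pseudo-perfect. Next I would check this functor is a quasi-equivalence $\cT_Z \xto{\sim} \PsPerf(\cS)$. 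Fully faithfulness amounts to the statement that an object $\cF \in \cT_Z$ is recovered from its restricted module, which follows from the fact that $\cS = \Perf_Z(X)$ generates $\mD_Z(X)$ as a localizing subcategory together with the standard identification $\PsPerf(\cS) \simeq \bR\un{\Hom}(\cS^{op},\Perf(\mk))$ using properness of $\cS$. Essential surjectivity: any pseudo-perfect $\cS$-module extends to a continuous functor $\mD_Z(X) \to D(\mk)$ which, by Brown representability on the compactly generated category $\mD_Z(X)$, is corepresented by some $\cF \in \mD_Z(X)$; pseudo-perfectness of the module forces $\cF$ to satisfy the $T_Z$-condition. Chaining with Theorem \ref{th:perf_of_formal_is_T_Z} gives the equivalence $\Perf(\hhat{X}_Z) \simeq \PsPerf(\cS)$, and by construction this equivalence is compatible with the natural functor $\cS = \Perf_Z(X) \to \Perf(\hhat{X}_Z)$ (pullback) on one side and the Yoneda embedding $\cS \hto \PsPerf(\cS)$ on the other, since representable modules over a proper DG category are pseudo-perfect.

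Having established $\Perf(\hhat{X}_Z) \simeq \PsPerf(\cS)$ compatibly with the embeddings of $\cS$, the second assertion is then formal. By definition $\Perf(\hhat{X}_Z - Z)$ is the homotopy Karoubi envelope of the Verdier quotient $\Perf(\hhat{X}_Z)/\Perf_Z(\hhat{X}_Z)$. Under the equivalence, the image of $\Perf_Z(X) = \cS$ inside $\Perf(\hhat{X}_Z)$ corresponds to the image of representable modules inside $\PsPerf(\cS)$, whose thick closure is exactly $\Perf(\cS)$ (here I use that $\cS$ is proper, so $\Perf(\cS) \subseteq \PsPerf(\cS)$, and that it is generated by representables). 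Therefore the quotient $\Perf(\hhat{X}_Z)/\Perf_Z(X) \simeq \PsPerf(\cS)/\Perf(\cS) = \mD_{sg}(\cS)$, and passing to Karoubi envelopes gives $\Perf(\hhat{X}_Z - Z) \cong \bbar{\mD_{sg}}(\cS)$. One small point to verify is that $\Perf_Z(\hhat{X}_Z)$ — perfect complexes on the formal neighborhood supported at $Z$ — coincides, under the equivalence and after the quotient, with the thick subcategory generated by $\Perf_Z(X)$; this is where one uses that $\Perf_Z(\hhat{X}_Z) = \holim_n \Perf_Z(Z_n)$ is already generated by the structure sheaves of the $Z_n$, which are the images of objects of $\cS$.

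The main obstacle I anticipate is the essential surjectivity half of $\cT_Z \xto{\sim} \PsPerf(\cS)$: one must know that every pseudo-perfect $\Perf_Z(X)$-module is corepresented by an honest object of $\mD_Z(X)$ satisfying condition \eqref{eq:definition_of_T_Z}, rather than just by a formal homotopy-colimit expression. This requires combining (i) that $\mD_Z(X)$ is compactly generated by $\Perf_Z(X)$ (so $\mD_Z(X) \simeq \Mod_{\cS}$ after taking the localizing closure, giving the corepresenting object), with (ii) a boundedness/coherence argument showing that pseudo-perfectness of the module translates into the perfectness constraint of $T_Z$ — this is exactly the kind of compact-approximation argument flagged in Remark \ref{rem:compact_approximation} and in Appendix \ref{app:boundedness_etc}. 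Everything else is bookkeeping with Verdier quotients and Karoubi envelopes.
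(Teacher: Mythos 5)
Your proposal is correct and follows essentially the same route as the paper: identify $\mD_Z(X)\simeq\Mod_{\cS}$ via compact generation, match $\cT_Z$ with $\PsPerf(\cS)$ (properness of $X$ gives one inclusion, and smoothness plus properness gives the converse), then invoke Theorem \ref{th:perf_of_formal_is_T_Z} and pass to Karoubi envelopes of the quotients. The only cosmetic difference is that for the converse inclusion the paper uses the adjunction $\bR\Hom(\cG_2,\cG_1\Ltens{\cO_X}\cF)\cong\bR\Hom(\cG_2\Ltens{\cO_X}\cG_1^{\vee},\cF)$ to reduce directly to $D_{\pspe}(\Perf(X))=D^b_{coh}(X)=D_{\perf}(X)$, which is the same boundedness/coherence fact you flag as the anticipated obstacle.
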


\begin{proof}Recall that the triangulated category $D_Z(X)$ is compactly generated by $D_{\perf,Z}(X).$ Thus, we have a quasi-equivalence $\mD_Z(X)\simeq \Mod_{\cS}.$ We claim that this quasi-equivalence identifies $\cT_Z$ and $\PsPerf(\cS).$

Indeed, if $\cF\in T_Z,$ then for any object $\cG\in D_{\perf,Z}(X)$ we have 
$$\bR\Hom(\cG,\cF)\cong \bR\Gamma(X,\cG^{\vee}\Ltens{\cO_X}\cF)\in D_{\perf}(k),$$
since $\cG^{\vee}\Ltens{\cO_X} \cF\in D^b_{coh}(X)$ and $X$ is proper.

Conversely, let $\cF\in D_Z(X)$ be an object such that for any $\cG\in D_{\perf,Z}(X)$ we have $\bR\Hom(\cG,\cF)\in D_{\perf}(k).$ Take some perfect complex $\cG_1\in D_{\perf,Z}(X).$ Then for any perfect complex $\cG_2\in D_{\perf}(X)$ we have $$\bR\Hom(\cG_2,\cG_1\Ltens{\cO_X}\cF)\cong \bR\Hom(\cG_2\Ltens{\cO_X}\cG_1^{\vee},\cF)\in D_{\perf}(\mk).$$
It follows that $\cG_1\Ltens{\cO_X}\cF$ is in $D^b_{coh}(X),$ hence also in $D_{\perf,Z}(X).$ Therefore, $\cF\in T_Z.$

Applying Theorem \ref{th:perf_of_formal_is_T_Z}, we obtain a quasi-equivalence $\Perf(\hhat{X}_Z)\simeq \PsPerf(\cS),$ which by construction is compatible with inclusions of $\cS.$ This proves the theorem.\end{proof}

We need the following general statement on $\mD_{sg}.$

\begin{prop}\label{prop:D_sg_for_gluing_with_smooth_and_proper}Let $T$ be a locally proper DG category, and $\cC\subset T$ a full subcategory which is smooth and proper. Denote by $q:T\to T/\cC$ the quotient functor. Then 

1) the DG category $T/\cC$ is locally proper;

2) the DG functor $\bL q^*:D(T)\to D(T/\cC)$ takes $D_{\pspe}(T)$ to $D_{\pspe}(T/\cC);$

3) the quasi-functor $q_*:D(T/\cC)\to D(T)$ takes $D_{\perf}(T/\cC)$ to $D_{\perf}(T).$

4) the induced (quasi)-functors $\bbar{q^*}:\mD_{sg}(T)\to\mD_{sg}(T/\cC),$ $\bbar{q_*}:\mD_{sg}(T/\cC)\to \mD_{sg}(T)$ are quasi-inverse quasi-equivalences. 
\end{prop}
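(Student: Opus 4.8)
The plan is to work with the short exact sequence of DG categories $\cC\xto{i} T\xto{q} T/\cC$ (legitimate since $\cC$ is a full subcategory) and the adjunction $\bL q^*\dashv q_*$ between $D(T)$ and $D(T/\cC)$, where $\bL q^*$ is extension of scalars and $q_*$ is restriction. I would first record the standard facts that I will use throughout: $q_*$ is fully faithful with essential image $\{M\in D(T)\mid i_*M=0\}$; $q$ is the identity on objects, so $(q_*N)(x)=N(x)$ as complexes of $\mk$-modules for every object $x$; there is a functorial triangle $\bL i^*i_*M\to M\to q_*\bL q^*M\to$ in which $\Gamma M:=\bL i^*i_*M$ (restrict to $\cC$, then induce back) is the colocalization; since $\cC$ is smooth and proper, $\Perf(\cC)=\PsPerf(\cC)$ and likewise for $\cC^{op}$; $\bL i^*$ preserves perfectness, as it sends $\bY_{\cC}(c)$ to $\bY_T(c)$ and is exact and coproduct-preserving; and, again by smoothness and properness of $\cC$, pseudo-perfect $\cC$-modules tensored over $\cC$ against pseudo-perfect $\cC^{op}$-modules land in $\Perf(\mk)$.

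With these in hand I would dispatch parts 1)--3) as follows. Since $T$ is locally proper, for every object $x$ the right $\cC$-module $i_*\bY_T(x)=T(-,x)|_{\cC}$ and the left $\cC$-module $T(x,-)|_{\cC}$ are pseudo-perfect over $\cC$, resp. $\cC^{op}$, hence perfect. For 1): $(T/\cC)(x,y)=\Cone\!\big(T(-,y)|_{\cC}\Ltens{\cC}T(x,-)|_{\cC}\to T(x,y)\big)$ is the cone of a morphism of perfect complexes over $\mk$, hence perfect, so $T/\cC$ is locally proper. For 3): $\Gamma\bY_T(x)=\bL i^*(i_*\bY_T(x))$ is $\bL i^*$ applied to a perfect $\cC$-module, hence perfect over $T$, so $q_*\bL q^*\bY_T(x)=\Cone(\Gamma\bY_T(x)\to\bY_T(x))$ is perfect over $T$; since $\Perf(T/\cC)$ is the thick envelope of the objects $\bL q^*\bY_T(x)=\bY_{T/\cC}(x)$ and $q_*$ is exact and preserves direct summands, $q_*(\Perf(T/\cC))\subset\Perf(T)$. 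For 2): if $M\in\PsPerf(T)$ then $i_*M=M|_{\cC}\in\PsPerf(\cC)=\Perf(\cC)$, so $\Gamma M=\bL i^*i_*M\in\Perf(T)\subset\PsPerf(T)$; the localization triangle then exhibits $q_*\bL q^*M$ as an extension of pseudo-perfect $T$-modules, and by the ``identity on objects'' remark $\bL q^*M\in\PsPerf(T/\cC)$.

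For part 4) I would argue: combining 2) with $\bL q^*(\Perf(T))\subset\Perf(T/\cC)$ shows that $\bL q^*$ descends to $\bbar{q^*}\colon\mD_{sg}(T)\to\mD_{sg}(T/\cC)$, and combining 3) with the identity $(q_*N)(x)=N(x)$ (which gives $q_*(\PsPerf(T/\cC))\subset\PsPerf(T)$) shows that $q_*$ descends to $\bbar{q_*}\colon\mD_{sg}(T/\cC)\to\mD_{sg}(T)$. Since $q_*$ is fully faithful, the counit $\bL q^*q_*\xto{\sim}\id$ is an isomorphism, so $\bbar{q^*}\bbar{q_*}\simeq\id$. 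Conversely, for $M\in\PsPerf(T)$ the cone of the unit $M\to q_*\bL q^*M$ is $\Gamma M[1]$, which lies in $\Perf(T)$ by the computation in 2); hence the unit becomes an isomorphism in $\mD_{sg}(T)=\PsPerf(T)/\Perf(T)$, so $\bbar{q_*}\bbar{q^*}\simeq\id$. Since the unit and counit are isomorphisms on homotopy categories and $\bbar{q^*}$ is (representable by) a DG functor, it is quasi-fully-faithful and essentially surjective, hence a quasi-equivalence with quasi-inverse $\bbar{q_*}$.

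The step I expect to be the main obstacle — and the one place where smoothness and properness of $\cC$, rather than mere local properness, is genuinely used — is the assertion that $\Gamma M$ is a \emph{perfect} $T$-module whenever $M$ is only \emph{pseudo}-perfect; this rests on identifying the colocalization with $\bL i^*i_*M$ and on $\PsPerf(\cC)=\Perf(\cC)$. I would take particular care over that identification (and over the bookkeeping of left/right $\cC$-module structures under $\bL i^*$ and $\Ltens{\cC}$), and over checking that $\bL q^*$, $q_*$ and the unit and counit can all be realized on the chosen DG enhancements, so that the homotopy-level statements above upgrade to the claimed quasi-equivalences.
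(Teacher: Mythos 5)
Your proof is correct and rests on the same mechanism as the paper's: the colocalization of a pseudo-perfect $T$-module onto the subcategory generated by $\cC$ is $\bL i^*i_*M$, which is perfect because $\cC$ is smooth and proper, so $M$ and $q_*\bL q^*M$ agree modulo $\Perf(T)$. The paper packages this as admissibility of $D_{\perf}(\cC)$ plus semi-orthogonal decompositions of $D_{\pspe}(T)$ and $D_{\perf}(T)$, while you make the same triangle and the Drinfeld quotient formula explicit; this is a presentational difference only.
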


\begin{proof}It is well-known that under our assumptions the subcategory $D_{\perf}(\cC)\subset D_{\perf}(T)$ is admissible. Hence, the functor $\bL q^*:D_{\perf}(T)\to D_{\perf}(T/\cC)$ has both left and right adjoint functors, which are fully faithful. This proves 1) and 3).

2) follows from 3), since $\bL q^*(-)=-\Ltens{T}T/\cC.$

4) is deduced from 2) and 3) as follows. We have a semi-orthogonal decomposition $D_{\pspe}(T)=\langle q_* D_{\pspe}(T/\cC), D_{\perf}(\cC)\rangle,$ and the functor $\bL q^*:D_{\pspe}(T)\to D_{\pspe}(T/\cC)$ identifies with the left semi-orthogonal projection. The subcategory $D_{\perf}(T)\subset D_{\pspe}(T)$ is compatible with this decomposition: $D_{\perf}(T)=\langle q_* D_{\perf}(T), D_{\perf}(\cC)\rangle.$ The assertion follows.\end{proof}

\begin{cor}\label{cor:well_defined_V_infty}1) Let $f:X\to Y$ be a morphism between smooth and proper schemes, and $Z\subset Y$ a closed subset such that $f$ restricts to an isomorphism $(X-f^{-1}(Z))\xto{\sim}(Y-Z).$ Then we have a natural quasi-equivalence $\Perf(\hat{X}_{f^{-1}(Z)}-f^{-1}(Z))\simeq \Perf(\hat{Y}_Z-Z).$

2) If $\cha \mk=0,$ then for a smooth variety $V$ over $\mk$ the DG category $\Perf(\hhat{V}_{\infty})$ is well defined up to a quasi-equivalence.\end{cor}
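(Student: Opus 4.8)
The plan is to deduce both statements from Theorem~\ref{th:neighborhoods_and_D_sg} together with Proposition~\ref{prop:D_sg_for_gluing_with_smooth_and_proper}. Write $W:=f^{-1}(Z)$. By Theorem~\ref{th:neighborhoods_and_D_sg} one has $\Perf(\hhat{X}_W-W)\simeq\bbar{\mD_{sg}}(\Perf_W(X))$ and $\Perf(\hhat{Y}_Z-Z)\simeq\bbar{\mD_{sg}}(\Perf_Z(Y))$, so for part 1) it is enough to produce a quasi-equivalence $\bbar{\mD_{sg}}(\Perf_W(X))\simeq\bbar{\mD_{sg}}(\Perf_Z(Y))$.

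First I would reduce to the case in which $f$ is a birational morphism between connected (hence irreducible) smooth proper varieties. Decomposing $X,Y$ into connected components, a component of $Y$ contained in $Z$, resp.\ of $X$ contained in $W$, contributes to $\Perf_Z(Y)$, resp.\ to $\Perf_W(X)$, a direct summand equal to $\Perf$ of that component; such a summand is smooth and proper and hence does not affect $\bbar{\mD_{sg}}$, by Proposition~\ref{prop:D_sg_for_gluing_with_smooth_and_proper} applied with $\cC=T$. Over each remaining component $Y_j\not\subset Z$ exactly one component of $X$ maps to $Y_j$ birationally, the others mapping into $Z\cap Y_j$ and splitting off as above; this leaves the stated special case.

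For birational $f\colon X\to Y$ with $X,Y$ smooth, proper and connected, I would use that $\bR f_*\cO_X\cong\cO_Y$ (valid since $Y$ is smooth, hence has rational singularities, and $X$ is smooth), so that $\bL f^*\colon D_{\perf}(Y)\to D_{\perf}(X)$ is fully faithful with right adjoint $\bR f_*$. Since $D_{\perf}(X)$ is smooth and proper, $\bL f^*D_{\perf}(Y)$ is then admissible, with left-orthogonal complement $\cC:=\Ker\!\big(\bR f_*\colon D_{\perf}(X)\to D_{\perf}(Y)\big)$ (note that $\bR f_*$ does land in $D_{\perf}(Y)=D^b_{coh}(Y)$, as $Y$ is smooth proper). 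As an admissible subcategory of the smooth and proper category $D_{\perf}(X)$, the category $\cC$ is itself smooth and proper. Because $f$ is an isomorphism over $Y-Z$, any $\cG$ with $\bR f_*\cG=0$ restricts to $0$ on $X-W$, so $\cC\subset D_{\perf,W}(X)$; and for $\cG\in D_{\perf,W}(X)$ one has $\bR f_*\cG\in D_{\perf,Z}(Y)$, while $\bR f_*\bL f^*=\id$ shows that the cone of $\bL f^*\bR f_*\cG\to\cG$ lies in $\cC$. Hence $\Perf_W(X)=\langle\cC,\bL f^*\Perf_Z(Y)\rangle$, so $\Perf_W(X)/\cC\xto{\sim}\Perf_Z(Y)$ via $\bR f_*$, and Proposition~\ref{prop:D_sg_for_gluing_with_smooth_and_proper} gives $\bbar{\mD_{sg}}(\Perf_W(X))\simeq\bbar{\mD_{sg}}(\Perf_W(X)/\cC)=\bbar{\mD_{sg}}(\Perf_Z(Y))$.

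For part 2), given two smooth compactifications $\bbar V_1,\bbar V_2\supset V$ (which exist since $\cha\mk=0$), I would take $W$ to be the closure of $V$ embedded diagonally in $\bbar V_1\times\bbar V_2$: it is proper, the two projections restrict to isomorphisms over $V$, and $V$ is open and smooth in $W$. Resolving the singularities of $W$ without modifying it over $V$ produces a smooth proper $\bbar V_3$ with morphisms $\bbar V_3\to\bbar V_i$ that are isomorphisms over $V$; applying 1) to these morphisms, with $Z_i:=\bbar V_i-V$, yields $\Perf(\hhat{(\bbar V_3)}_{Z_3}-Z_3)\simeq\Perf(\hhat{(\bbar V_i)}_{Z_i}-Z_i)$ for $i=1,2$, so the two compactifications give quasi-equivalent DG categories. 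The step needing real work is the identification $\Perf_W(X)=\langle\cC,\bL f^*\Perf_Z(Y)\rangle$ with $\cC$ smooth and proper, since that is exactly what makes Proposition~\ref{prop:D_sg_for_gluing_with_smooth_and_proper} applicable; once $\bR f_*\cO_X\cong\cO_Y$ is in hand, both the semiorthogonal decomposition and the smoothness and properness of $\cC$ follow as above.
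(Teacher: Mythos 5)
Your proof is correct and follows essentially the same route as the paper: both parts rest on $\bR f_*\cO_X\cong\cO_Y$, the semiorthogonal decomposition $\Perf_{f^{-1}(Z)}(X)=\langle\cC,\bL f^*\Perf_Z(Y)\rangle$ with $\cC=\Ker(\bR f_*)$ smooth and proper, and then Theorem \ref{th:neighborhoods_and_D_sg} together with Proposition \ref{prop:D_sg_for_gluing_with_smooth_and_proper}, with part 2) deduced via a roof dominating two compactifications. The only differences are that you spell out two steps the paper leaves implicit (the reduction to connected components so that $\bR f_*\cO_X\cong\cO_Y$ genuinely applies, and the explicit construction of the roof via the closure of the diagonal plus resolution), which is fine.
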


\begin{proof}1) It is well known that under our assumptions we have $\bR f_*\cO_X\cong \cO_Y,$ hence the functor $\bL f^*$ is fully faithful. Let us put $\cC=\Ker(\bR f_*:\Perf(X)\to \Perf(Y)).$ Then we have a semi-orthogonal decomposition $D_{\perf}(X)=\langle [\cC], \bL f^*D_{\perf}(Y),$ in particular, $\cC$ is smooth and proper. Since $\cC\subset\Perf_{f^{-1}(Z)}(X),$ we also have an SOD $D_{\perf,f^{-1}(Z)}(X)=\langle [\cC], \bL f^* D_{\perf,Z}(X)\rangle,$ in particular, we have a quasi-equivalence $\Perf_{f^{-1}(Z)}(X)/\cC\simeq\Perf_Z(X).$ Now the assertion follows directly from Theorem \ref{th:neighborhoods_and_D_sg} and Proposition \ref{prop:D_sg_for_gluing_with_smooth_and_proper}.

2) Since $\cha \mk=0,$ by \cite{H, Nag} there exists a smooth compactification $\bbar{V}\supset V.$ Put $Z:=\bbar{V}-V.$ The DG category $\Perf(\hhat{V}_{\infty})$ is by definition $\Perf(\hhat{V}-Z).$ If $\bbar{V}'\supset V$ is another compactification, with $Z'=\bbar{V}'-V,$ and $f:\bbar{V}'\to \bbar{V}$ is a morphism which restricts to identity on $V,$ then by part 1) we have a quasi-equivalence $\Perf(\hhat{V}-Z)\simeq \Perf(\hhat{V}'-Z').$ Since for any two smooth compactifications we can find a "roof" of such morphisms, the assertion follows.\end{proof}

We conclude this section with a conjectural generalization of Corollary \ref{cor:well_defined_V_infty} 1) that seems to require a bit more advanced technique.

\begin{conj}\label{conj:general_proper_morphism_punctured neighborhoods} Let $f:X\to Y$ be a proper morphism of noetherian schemes, and $Z\subset Y$ a closed subset, such that $f$ restricts to an isomorphism $(X-f^{-1}(Z))\xto{\sim} (Y-Z).$ Then the pullback functor $\Perf(\hhat{Y}-Z)\to \Perf(\hhat{X}-f^{-1}(Z))$ is a quasi-equivalence.\end{conj}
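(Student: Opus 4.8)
The plan is to reduce the statement, via Theorem \ref{th:perf_of_formal_is_T_Z}, to an assertion about the categories $\cT_Z\subset\mD_Z(Y)$ and $\cT_W\subset\mD_W(X),$ where $W:=f^{-1}(Z),$ and then to exploit that $f$ is an isomorphism over $Y-Z.$ Write $W_n:=f^{-1}(Z_n)$ and $f_n\colon W_n\to Z_n$ for the induced proper morphisms, so that $\hhat f\colon\hhat{X}_W\to\hhat{Y}_Z$ is $\holim_n f_n.$ Under the quasi-equivalences $\cT_Z\xto{\sim}\Perf(\hhat{Y}_Z)$ and $\cT_W\xto{\sim}\Perf(\hhat{X}_W)$ of Theorem \ref{th:perf_of_formal_is_T_Z}, the pullback $\hhat f^*$ corresponds to the functor $\Phi\colon\cT_Z\to\cT_W,$ $\Phi(\cF)=\cH_W\big(\holim_n\iota^W_{n*}\bL\iota_n^{W*}\bL f^*\cF\big),$ whose restriction to $W_n$ is $\bL f_n^*\bL\iota_n^{Z*}\cF$ (this is the functor $G$ from the proof of Theorem \ref{th:perf_of_formal_is_T_Z}, applied to $\hhat f^*F(\cF)$). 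First I would verify that $\hhat f^*$ carries the subcategory $\Perf_Z(\hhat{Y}_Z)$ of algebraizable perfect complexes supported on $Z$ into $\Perf_W(\hhat{X}_W)$ — this is immediate from $f(W)\subseteq Z$ — so that it descends to a functor $\Perf(\hhat{Y}_Z-Z)\to\Perf(\hhat{X}_W-W).$

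Next I would build a candidate quasi-inverse from the formal pushforward $\hhat f_*:=\holim_n\bR f_{n*},$ which should be right adjoint to $\hhat f^*.$ The key geometric input is that, $f$ being an isomorphism over $Y-Z,$ the cones of the unit $\cF\to\hhat f_*\hhat f^*\cF$ and of the counit $\hhat f^*\hhat f_*\cG\to\cG$ are, after restriction to the formal neighborhoods, governed by $\cE:=\Cone(\cO_Y\to\bR f_*\cO_X)\in\mD^b_{coh,Z}(Y)$ and by $\bL f^*\cE,$ via the projection formula $\bR f_*\bL f^*\cF\cong\cF\Ltens{\cO_Y}\bR f_*\cO_X.$ The heart of the argument is then to show that tensoring with these cones becomes the zero functor on the punctured-neighborhood quotients $\bbar{\cT_Z/\Perf_Z(Y)}$ and $\bbar{\cT_W/\Perf_W(X)}.$ If $\bR f_*\cO_X$ happened to be a \emph{perfect} complex, then $\cE$ would lie in $D_{\perf,Z}(Y)$ and be killed outright; this is exactly the mechanism behind the smooth and proper case, Corollary \ref{cor:well_defined_V_infty}. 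In general I would argue by d\'evissage: using Mayer--Vietoris as in the proof of Lemma \ref{lem:coh_Z_homotopy_limits} one reduces to $Y$ affine, and then, by Noetherian induction on $Z$ together with structural results on proper morphisms that are isomorphisms over a dense open (Chow's lemma, Raynaud--Gruson flattening, and, in characteristic zero, resolution of singularities and weak factorization), one reduces $f$ to more tractable pieces: blow-ups, for which the $\cT_Z$-description can be fed back in, and finite morphisms, for which $\Phi$ is simply $-\Ltens{\cO_Y}f_*\cO_X$ and the claim becomes the elementary fact that $f_*\cO_X\to\cO_Y$ becomes invertible after inverting the ideal of $Z$ (as in the cuspidal-curve example, where both sides reduce to $\Perf(\mk((t)))$).

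The hard part is precisely the step that the non-perfect coherent complex $\cE$ — equivalently, the failure of $\bR f_*$ to preserve perfectness, or of $\bL f^*$ to preserve $\cT_Z$ — dies in the punctured neighborhood. This is \emph{false} before Karoubi-completing and before quotienting by all of $\mD^b_{coh,Z}(Y),$ so the argument has to genuinely use that algebraizability is much weaker than extendability (Remark \ref{rem:algebraizable_not_extendable}) and that, nevertheless, $\cT_Z/\Perf_Z(Y)$ depends only on the "punctured formal neighborhood", which is the same for $(Y,Z)$ and $(X,W)$ since $f$ is an isomorphism over $Y-Z.$ I expect the conceptual way to make this rigorous — the "more advanced technique" alluded to above — is to extend the identification of $\Perf(\hhat{X}_Z-Z)$ with a singularity-category type invariant beyond the smooth and proper case (Theorem \ref{th:neighborhoods_and_D_sg}, Theorem \ref{th:main_theorem}, and the principle sketched in Subsection \ref{ssec:more_invariants}), so as to replace both sides by an invariant that is manifestly insensitive to modifications of $Y$ supported on $Z.$ An alternative, more hands-on route would be to prove directly that $\Phi$ is quasi-fully-faithful by a chain of adjunctions and homotopy-limit manipulations analogous to \eqref{eq:chain_of_iso_holimits_adjunctions}, and essentially surjective by an analogue of Lemma \ref{lem:inverse_functor}; there the same obstruction resurfaces as the need to control $\holim_n\bR f_{n*}$ of a perfect complex, which is not perfect term by term.
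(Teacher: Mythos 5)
This statement is stated in the paper as a \emph{conjecture}: the text introducing it says explicitly that it ``seems to require a bit more advanced technique,'' and no proof is given anywhere in the paper. So there is no paper proof to compare against, and the only question is whether your proposal closes the gap. It does not, and you say so yourself: the entire content of the conjecture is concentrated in the step you flag as ``the hard part,'' namely that the coherent, generally non-perfect complex $\cE=\Cone(\cO_Y\to\bR f_*\cO_X)\in\mD^b_{coh,Z}(Y)$ acts by zero on the Karoubi-completed quotients $\bbar{\cT_Z/\Perf_Z(Y)}$ and $\bbar{\cT_W/\Perf_W(X)}.$ Everything before that point (the translation via Theorem \ref{th:perf_of_formal_is_T_Z}, the descent of $\hhat f^*$ to the punctured quotients, the projection-formula bookkeeping) is plausible setup, but it only reformulates the conjecture; it does not prove it. Note also that the analogy with Corollary \ref{cor:well_defined_V_infty}(1) is weaker than you suggest: there the paper does not argue by killing a cone of $\cO_Y\to\bR f_*\cO_X$ in the quotient, but uses $\bR f_*\cO_X\cong\cO_Y$ together with a semi-orthogonal decomposition and Proposition \ref{prop:D_sg_for_gluing_with_smooth_and_proper}, a mechanism that genuinely requires smoothness and properness of the ambient schemes and has no stated analogue here.

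The proposed d\'evissage also has structural problems beyond being unexecuted. Chow's lemma, Raynaud--Gruson flattening, resolution of singularities and weak factorization are invoked for a general proper morphism of noetherian schemes, but resolution and weak factorization are only available in characteristic zero (and for suitable excellent schemes), whereas the conjecture has no such hypotheses; even granting them, the base cases are not verified: for a finite morphism the claim that ``$f_*\cO_X\to\cO_Y$ becomes invertible after inverting the ideal of $Z$'' must be upgraded to a statement about the functor $-\Ltens{\cO_Y}f_*\cO_X$ on $\bbar{\cT_Z/\Perf_Z(Y)},$ where $f_*\cO_X$ is typically not perfect, and this is again the same unproven vanishing. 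Likewise the existence and adjunction properties of $\hhat f_*=\holim_n\bR f_{n*}$ on these categories, and its compatibility with the $\cT$-descriptions, are asserted without argument; you yourself note that $\holim_n\bR f_{n*}$ of a perfect complex need not be perfect term by term, which is exactly where an analogue of Lemma \ref{lem:inverse_functor} would break. In short: the proposal is a reasonable research plan consistent with the paper's framework, but it contains a genuine, explicitly acknowledged gap at the decisive step, and the statement remains open in the paper as well.
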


\section{The main construction}
\label{sec:DG_cat_neighborhoods_main_result}

Let now $\cS\xto{\iota}\cA\xto{q}\cB$ be an abstract Morita short exact sequence (Definition \ref{defi:short_exact_dg}),
with $\cA$ smooth and proper. Note that automatically $\cS$ is locally proper and $\cB$ is smooth. Motivated by Theorem \ref{th:neighborhoods_and_D_sg}, we will call the DG category $\PsPerf(\cS)$ (resp. $\bbar{\mD_{sg}}(\cS)$) a formal neighborhood (resp. punctured neighborhood) of $\cS$ in $\cA.$ The restriction of scalars functor $\Mod_{\cA}\to \Mod_{\cS}$ induces a functor
\begin{equation}\label{eq:restriction_functor_formal}\cH_{\cS}:\Perf(\cA)\to \PsPerf(\cS).\end{equation}
Passing to the quotients by $\Perf(\cS),$ we obtain a functor
\begin{equation}\label{eq:restriction_punctured}\bbar{\cH_{\cS}}:\Perf(\cB)\to \bbar{\mD_{sg}}(\cS).\end{equation}

The main result of this section states roughly the following.

{\noindent{\bf Claim.}} {\it The category $\bbar{D_{sg}}(\cS)$ and the functor \eqref{eq:restriction_punctured} depend only on (the Morita equivalence class of) $\cB,$ and they can be constructed when $\cB$ is an arbitrary smooth DG category over $\mk.$}

We will now give such a description, and then formulate and proof the precise statement. First let us look at the DG category $\bbar{\mD_{sg}}(\cS)$ from a different point of view. 
Since $\cS$ is locally proper, we have a natural functor
\begin{equation}\label{eq:Calkin_to_Fun_to_Calkin}G_{\cS}:\bbar{\Calk}_{\cS}\to \bR\un{\Hom}(\cS^{op},\Calk_{\mk}).\end{equation} Indeed, it is induced by the projection
$$\Mod_{\cS}\simeq \bR\un{\Hom}(\cS^{op},\Mod_{\mk})\to \bR\un{\Hom}(\cS^{op},\Calk_{\mk}).$$
The latter functor vanishes on $\Perf(\cS)$ by the local properness of $\cS,$ hence the functor \eqref{eq:Calkin_to_Fun_to_Calkin} is well-defined. 
Tautologically, we have
\begin{equation}\label{eq:description_D_sg_Calkin}\bbar{\mD_{sg}}(\cS)\simeq\Ker(G_{\cS})).\end{equation}

Now we describe an analogous functor (but in the opposite direction) for an arbitrary smooth DG category $\cC.$ Namely, we define a functor $F_{\cC}:\bR\un{\Hom}(\cC^{op},\Calk_{\mk})\to \bbar{\Calk}_{\cC}$ to be the composition
\begin{equation}\label{eq:def_of_F_C}\bR\un{\Hom}(\cC^{op},\Calk_{\mk})\hto\Perf(\cC\otimes\Calk_k)\to \bbar{\Calk}_{\cC}.\end{equation}
Here the first arrow is the natural inclusion which is well-defined by the smoothness of $\cC.$ The second arrow is induced by the DG functor $\cC\otimes\Mod_{\mk}\to \Mod_{\cC},$ given by $X\otimes V\mapsto \bY(X)\otimes V.$ 

We will need the following observation.

\begin{prop}\label{prop:F_C_G_C_quasi-inverse}For a smooth and proper DG category $\cC,$ the functors $G_{\cC}$ and $F_{\cC}$ induce mutually inverse quasi-equivalences.\end{prop}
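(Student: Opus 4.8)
The plan is to unwind the two functors explicitly and to use the fact that for a smooth and proper DG category $\cC$ \emph{both} characterizations of the relevant subcategories are available: $\cC$ smooth gives $\bR\un{\Hom}(\cC^{op},\cD)\subset\Perf(\cC\otimes\cD)$ for any $\cD$, and $\cC$ proper gives the reverse inclusion, so in fact $\bR\un{\Hom}(\cC^{op},\Calk_{\mk})\simeq\Perf(\cC\otimes\Calk_{\mk})$. Under this identification $F_{\cC}$ is simply the functor $\Perf(\cC\otimes\Calk_{\mk})\to\bbar{\Calk}_{\cC}$ induced by $X\otimes V\mapsto\bY(X)\otimes V$, i.e. by the DG functor $\cC\otimes\Mod_{\mk}\to\Mod_{\cC}$ composed with the Calkin projection. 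On the other side, $G_{\cC}$ (under the identification $\bbar{\mD_{sg}}(\cS)\simeq\Ker(G_{\cS})$ for $\cS=\cC$, which here is zero since $\cC$ is also smooth, so $G_{\cC}$ is honestly defined on all of $\bbar{\Calk}_{\cC}$) is induced by $\Mod_{\cC}\simeq\bR\un{\Hom}(\cC^{op},\Mod_{\mk})\to\bR\un{\Hom}(\cC^{op},\Calk_{\mk})$, i.e. by postcomposing a module $M$ with the Calkin projection $\Mod_{\mk}\to\Calk_{\mk}$ applied objectwise in the $\cC^{op}$-variable. So both functors are, at the level of the underlying bimodule categories, induced by the single tensor/evaluation adjunction between $\cC\otimes\Mod_{\mk}$ and $\Mod_{\cC}$, and the whole content is that passing to Calkin quotients on both sides makes this adjunction into an equivalence.

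The key steps, in order. First, I would make the identifications above precise: spell out that $F_{\cC}$ is the composite $\Perf(\cC\otimes\Calk_{\mk})\xrightarrow{\sim}\bR\un{\Hom}(\cC^{op},\Calk_{\mk})$ (inverse to the inclusion, using properness) followed by the induced functor to $\bbar{\Calk}_{\cC}$, and that $G_{\cC}$ is the induced functor $\bbar{\Calk}_{\cC}\to\bR\un{\Hom}(\cC^{op},\Calk_{\mk})\xrightarrow{\sim}\Perf(\cC\otimes\Calk_{\mk})$. Second, I would compute $G_{\cC}\circ F_{\cC}$: starting from a right quasi-representable $\cC^{op}\otimes\Calk_{\mk}$-module (equivalently an object of $\Perf(\cC\otimes\Calk_{\mk})$), $F_{\cC}$ produces the $\cC$-module obtained by tensoring with $\cC$ over $\cC$ in the appropriate variable and then taking Calkin, and $G_{\cC}$ recovers the $\cC^{op}\otimes\Calk_{\mk}$-module by $\Hom_{\cC}(\bY(-),\,\cdot\,)$ composed with Calkin; these cancel because $\Hom_{\cC}(\bY(X),M)\simeq M(X)$ (Yoneda), and one checks the Calkin projections match up. Third, symmetrically, I would compute $F_{\cC}\circ G_{\cC}$ on an object $M\in\bbar{\Calk}_{\cC}$: $G_{\cC}(M)$ is $X\mapsto M(X)$ viewed in $\Calk_{\mk}$, and $F_{\cC}$ of that is the coend $\int^{X}\bY(X)\otimes M(X)$ passed to $\bbar{\Calk}_{\cC}$, which is canonically $M$ again by the co-Yoneda / tensor-hom formula $M\simeq M\otimes_{\cC}\cC$. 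Throughout, the point where smoothness is used is to land inside $\Perf(\cC\otimes\Calk_{\mk})$ (so $F_{\cC}$ is defined), and the point where properness is used is to know the inclusion $\bR\un{\Hom}(\cC^{op},\Calk_{\mk})\subset\Perf(\cC\otimes\Calk_{\mk})$ is an equality, which is exactly what lets $G_{\cC}$ be inverted against $F_{\cC}$ rather than being merely a one-sided retraction.

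The main obstacle I anticipate is bookkeeping the Calkin quotients coherently on both sides: a priori $F_{\cC}$ is defined by a strict DG functor $\cC\otimes\Mod_{\mk}\to\Mod_{\cC}$ followed by a quotient, while $G_{\cC}$ comes from a quotient of a functor category, and one must verify that the two "quotient-by-finite-rank" operations are genuinely compatible under the tensor-hom adjunction — concretely, that the evaluation ideal $N\otimes_{\cC}M^{\vee}\to\Hom_{\cC}(M,N)$ in \eqref{eq:morphisms_in_Calk_C} on the $\Mod_{\cC}$ side corresponds objectwise to the $V^{*}\otimes W$ ideal defining $\Calk_{\mk}$ on the $\bR\un{\Hom}$ side. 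This is the reason one needs $\cC$ smooth \emph{and} proper simultaneously (so that finite-rank-in-$\Mod_{\mk}$ matches perfect-over-$\cC$ matches finite-rank-in-the-bimodule picture); the actual verification should reduce, via Proposition \ref{prop:Calkin_DG_quotient} and the description of DG quotients, to checking the claim for $\cC=\mk$ plus functoriality, which is routine but requires care. Once that compatibility is in hand, the two round-trip computations are formal consequences of the (co)Yoneda lemma.
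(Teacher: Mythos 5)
Your proposal is correct and takes essentially the same route as the paper: the paper's entire proof is the single observation that $\Perf(\cC\otimes T)\simeq\bR\un{\Hom}(\cC^{op},T)$ for an arbitrary triangulated DG category $T$ when $\cC$ is smooth and proper, which is exactly the identification you place at the center, with the (co)Yoneda round-trip checks left implicit. The only small quibble is your parenthetical about $G_{\cC}$: it is defined on all of $\bbar{\Calk}_{\cC}$ for any locally proper $\cC$, and the vanishing of $\Ker(G_{\cC})$ is a consequence of the equivalence rather than something needed to define the functor.
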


\begin{proof}Indeed, this follows straightforwardly from the identification $\Perf(\cC\otimes T)\simeq\bR\un{\Hom}(\cC^{op},T)$ for an arbitrary triangulated DG category $T.$\end{proof}

\begin{theo}\label{th:main_theorem}
For a short exact sequence as above, we have a quasi-equivalence
\begin{equation}\label{eq:main_equivalence}\bbar{\mD_{sg}}(\cS)\simeq\Ker(F_{\cB}).\end{equation} Moreover, the following diagram commutes: 
\begin{equation}\label{eq:compatibility_of_restrictions}\xymatrix{
\Perf(\cB)\ar[r]^{\bbar{\cH_{\cS}}}\ar@{^{(}->}[d]^{[-1]} & \bbar{\mD_{sg}}(\cS)\ar@{}[r]|*=0[@]{\simeq} & \Ker(F_{\cB})\ar@{^{(}->}[d]\\
\bR\un{\Hom}(\cB^{op},Mod_{\mk})\ar[rr] & & \bR\un{\Hom}(\cB,\Calk_{\mk}) 
}
\end{equation}
\end{theo}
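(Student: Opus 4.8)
The plan is to build a chain of quasi-equivalences connecting $\bbar{\mD_{sg}}(\cS)\simeq\Ker(G_{\cS})$ to $\Ker(F_{\cB})$ by interpolating through the smooth and proper category $\cA.$ The starting point is the identification \eqref{eq:description_D_sg_Calkin}, which says $\bbar{\mD_{sg}}(\cS)\simeq\Ker(G_{\cS}),$ where $G_{\cS}:\bbar{\Calk}_{\cS}\to \bR\un{\Hom}(\cS^{op},\Calk_{\mk}).$ By Proposition \ref{prop:short_exact_Calkins} applied to the short exact sequence $\cS\xto{\iota}\cA\xto{q}\cB,$ we get a short exact sequence $\Calk_{\cS}\to\Calk_{\cA}\to\Calk_{\cB},$ and hence $\bbar{\Calk}_{\cS}\simeq\Ker(\bbar{\Calk}_q:\bbar{\Calk}_{\cA}\to\bbar{\Calk}_{\cB}).$ Dually, smoothness of $\cB$ and of $\cA$ (and local properness of $\cS$) give, via the functors $F_\bullet$ of \eqref{eq:def_of_F_C}, a comparison: the right-hand categories $\bR\un{\Hom}(\cC^{op},\Calk_{\mk})$ fit into an exact sequence as $\cC$ ranges over $\cS,\cA,\cB$ (this is just the exactness of $\bR\un{\Hom}(-^{op},\Calk_{\mk})$ applied to the short exact sequence, using that $\bR\un{\Hom}(-^{op},\Calk_{\mk})$ commutes with the relevant quotients).

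First I would assemble a commutative diagram of DG categories whose rows are short exact sequences: the top row $\bbar{\Calk}_{\cS}\to\bbar{\Calk}_{\cA}\to\bbar{\Calk}_{\cB}$ and the bottom row $\bR\un{\Hom}(\cS^{op},\Calk_{\mk})\to\bR\un{\Hom}(\cA^{op},\Calk_{\mk})\to\bR\un{\Hom}(\cB^{op},\Calk_{\mk}),$ with vertical arrows $G_{\cS},G_{\cA},G_{\cB}$ going down, and separately the vertical arrows $F_{\cA},F_{\cB}$ going up (note $F_\cS$ need not be defined, since $\cS$ is only locally proper, not smooth --- this asymmetry is exactly why the statement is phrased the way it is). The key input is Proposition \ref{prop:F_C_G_C_quasi-inverse}: since $\cA$ is smooth \emph{and} proper, $G_{\cA}$ and $F_{\cA}$ are mutually inverse quasi-equivalences. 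Using that $\cA$ is smooth and proper, the middle column is an equivalence, so I can transport kernels: $\Ker(\bbar{\Calk}_q)\simeq\Ker\big(\bR\un{\Hom}(\cA^{op},\Calk_{\mk})\to\bR\un{\Hom}(\cB^{op},\Calk_{\mk})\big)$ via $G_{\cA}$. Now observe that the composite $\bbar{\Calk}_{\cS}\xto{G_{\cS}}\bR\un{\Hom}(\cS^{op},\Calk_{\mk})\hookrightarrow \bR\un{\Hom}(\cA^{op},\Calk_{\mk})$ is the restriction of $G_{\cA}$ (both are induced by the projection $\Mod\to\Calk$-valued functors), so under the equivalence $G_{\cA}$ the subcategory $\bbar{\Calk}_{\cS}\subset\bbar{\Calk}_{\cA}$ corresponds to the subcategory $\bR\un{\Hom}(\cS^{op},\Calk_{\mk})\subset \bR\un{\Hom}(\cA^{op},\Calk_{\mk})$.

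Putting this together: $\bbar{\mD_{sg}}(\cS)\simeq\Ker(G_{\cS})$, and $\Ker(G_{\cS})$ is computed inside $\bR\un{\Hom}(\cS^{op},\Calk_{\mk})$, which via $F_{\cA}$ is carried back into $\bbar{\Calk}_{\cS}\simeq\Ker(\bbar{\Calk}_q)$; chasing through, the object lands in $\Ker(F_{\cB})$ precisely when its image in $\bbar{\Calk}_{\cB}$ is zero, i.e. precisely the kernel of $F_{\cB}$ restricted to the essential image of $\bR\un{\Hom}(\cB^{op},\Calk_{\mk})$ --- and since $\bbar{\mD_{sg}}(\cS)$ maps to $\bR\un{\Hom}(\cB^{op},\Calk_{\mk})$ via $\bbar{\cH_{\cS}}$ followed by the right edge of \eqref{eq:compatibility_of_restrictions}, this identification is exactly \eqref{eq:main_equivalence}. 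For the commutativity of \eqref{eq:compatibility_of_restrictions}, I would trace the definition of $\bbar{\cH_{\cS}}$ in \eqref{eq:restriction_functor_formal}--\eqref{eq:restriction_punctured} through the diagram: the functor $\Perf(\cB)\to\bR\un{\Hom}(\cB^{op},\Mod_{\mk})$ is (up to shift) the Yoneda-type embedding, and post-composing with projection to $\Calk_{\mk}$-valued functors matches, on objects coming from $\Perf(\cB)$, the composite $\Perf(\cB)\xto{\bbar{\cH_{\cS}}}\bbar{\mD_{sg}}(\cS)\hookrightarrow\bR\un{\Hom}(\cB^{op},\Calk_{\mk})$; this is a diagram chase using the compatibility of restriction of scalars with the Yoneda embedding.

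\textbf{Main obstacle.} I expect the main difficulty to be the careful bookkeeping of which functors are genuine DG functors versus quasi-functors, and ensuring the squares commute in $\Ho(\dgcat_{\mk})$ rather than merely up to some uncontrolled homotopy --- in particular verifying that the subcategory inclusion $\bbar{\Calk}_{\cS}\hookrightarrow\bbar{\Calk}_{\cA}$ really does correspond under $G_{\cA}$ to $\bR\un{\Hom}(\cS^{op},\Calk_{\mk})\hookrightarrow\bR\un{\Hom}(\cA^{op},\Calk_{\mk})$, which requires knowing that $G_{\cS}$ is the restriction of $G_{\cA}$ compatibly with the short exact sequences of Proposition \ref{prop:short_exact_Calkins}. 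The key conceptual point that makes everything work is the smoothness-and-properness of the middle term $\cA$, which forces $F_{\cA}$ and $G_{\cA}$ to be inverse equivalences and thereby lets one pass between the "covariant" Calkin picture and the "contravariant" almost-module picture; once that bridge is in place, the rest is a matter of identifying kernels along the two exact rows.
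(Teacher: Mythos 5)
Your overall architecture is the paper's: two parallel exact rows (Calkin categories on one side, $\bR\un{\Hom}(-^{op},\Calk_{\mk})$ on the other), bridged by the mutually inverse equivalences $F_{\cA},G_{\cA}$ coming from smoothness and properness of $\cA,$ followed by an identification of kernels. However, there is a genuine error in how you wire up the second row, and it is load-bearing. The functoriality of $\cC\mapsto\bR\un{\Hom}(\cC^{op},\Calk_{\mk})$ along the sequence $\cS\xto{\iota}\cA\xto{q}\cB$ is \emph{contravariant}: it is given by restriction of scalars, so the row runs $\bR\un{\Hom}(\cB^{op},\Calk_{\mk})\xto{q_*}\bR\un{\Hom}(\cA^{op},\Calk_{\mk})\xto{\iota_*}\bR\un{\Hom}(\cS^{op},\Calk_{\mk}),$ in the direction opposite to the Calkin row. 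There is no natural inclusion $\bR\un{\Hom}(\cS^{op},\Calk_{\mk})\hookrightarrow\bR\un{\Hom}(\cA^{op},\Calk_{\mk})$ (extension of scalars does not in general preserve right quasi-representability, and in any case would not make the square with $G_{\cS},G_{\cA}$ commute), so your claims that ``$G_{\cS}$ is the restriction of $G_{\cA}$'' along such an inclusion, and that under $G_{\cA}$ the subcategory $\bbar{\Calk}_{\cS}\subset\bbar{\Calk}_{\cA}$ corresponds to a subcategory $\bR\un{\Hom}(\cS^{op},\Calk_{\mk})\subset\bR\un{\Hom}(\cA^{op},\Calk_{\mk}),$ do not make sense as stated; likewise $G_{\cB}$ is undefined since $\cB$ is not locally proper, and the functor $\bR\un{\Hom}(\cA^{op},\Calk_{\mk})\to\bR\un{\Hom}(\cB^{op},\Calk_{\mk})$ whose kernel you want to transport does not exist.

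The correct bookkeeping (which is what the paper does) is: the square $G_{\cS}=\iota_*\circ G_{\cA}\circ\bbar{\Calk}_{\iota}$ commutes; $q_*$ is quasi-fully-faithful with essential image $\Ker(\iota_*),$ giving $\bR\un{\Hom}(\cB^{op},\Calk_{\mk})\simeq\Ker(\iota_*G_{\cA})\subset\bbar{\Calk}_{\cA};$ and then $\bbar{\mD_{sg}}(\cS)=\Ker(G_{\cS})$ is identified with the \emph{intersection} $\Ker(\bbar{\Calk}_q)\cap\Ker(\iota_*G_{\cA})$ inside $\bbar{\Calk}_{\cA},$ which by taking the two kernels in the other order equals $\Ker(F_{\cB}).$ So the smooth--proper middle term $\cA$ is not used to ``transport the kernel of one row to the kernel of the other,'' but to realize both $\bbar{\Calk}_{\cS}$ and $\bR\un{\Hom}(\cB^{op},\Calk_{\mk})$ as kernels of two different functors out of the \emph{same} category $\bbar{\Calk}_{\cA}\simeq\bR\un{\Hom}(\cA^{op},\Calk_{\mk}).$ Your proof of the commutativity of \eqref{eq:compatibility_of_restrictions} is also only a gesture; the missing ingredient is the exact triangle $\cA\Ltens{\cS}\cA\to\cA\to\cB$ in $D(\cA\otimes\cA^{op}),$ which is what relates $\bbar{\cH_{\cS}}$ (restriction to $\cS$ followed by the quotient) to $q_*[-1]$ and hence to the bottom edge of the diagram.
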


\begin{proof}We have the following commutative diagram of (quasi-)functors:
\begin{equation}\label{eq:comm_diag_Calkins}
\xymatrix{\bbar{\Calk}_{\cS}\ar@{^{(}->}[r]^{\bbar{\Calk}_{\iota}}\ar[d]_{G_{\cS}} & \bbar{\Calk}_{\cA}\ar[r]^{\bbar{\Calk}_q}\ar@<-2pt>[d]_{G_{\cA}} & \bbar{\Calk}_{\cB}\\
\bR\un{\Hom}(\cS^{op},\Calk_{\mk}) & \bR\un{\Hom}(\cA^{op},\Calk_{\mk})\ar@<-2pt>[u]_{F_{\cA}}\ar[l]_{\iota_*} & \bR\un{\Hom}(\cB^{op},\Calk_{\mk})\ar@{_{(}->}[l]_{q_*}\ar[u]^{F_{\cB}}}
\end{equation}
By Proposition \ref{prop:short_exact_Calkins}, the functor $\bbar{\Calk}_{\iota}$ is quasi-fully-faithful, and it induce a quasi-equivalence $\bbar{\Calk}_{\cS}\simeq \Ker(\bbar{\Calk}_q).$ Also, the functor $q_*$ is  quasi-fully-faithful, and we have quasi-equivalences \begin{equation}\label{eq:one_more_embedding}\bR\un{\Hom}(\cB^{op},\Calk_{\mk})\simeq \Ker(\iota_*)\simeq\Ker(\iota_* G_{\cA})\subset \bbar{\Calk}_{\cA}.\end{equation} Taking \eqref{eq:description_D_sg_Calkin} into account, we may identify $\bbar{\mD_{sg}}(\cS)$ with the intersection of kernels $\Ker(\Calk_q)\cap \Ker(\iota_*G_{\cA})\subset\bbar{\Calk}_{\cA}.$ Now using \eqref{eq:one_more_embedding} and applying commutativity of \eqref{eq:comm_diag_Calkins}, we obtain
$$\bbar{\mD_{sg}}(\cS)\simeq\Ker(\Ker(\iota_*G_{\cA})\xto{\bbar{\Calk}_q}\bbar{\Calk}_{\cB})\simeq\Ker(\bR\un{\Hom}(\cB^{op},\Calk_{\mk})\xto{F_{\cB}}\bbar{\Calk}_{\cB}).$$
This proves the first assertion.

Let us describe the identification \eqref{eq:main_equivalence} more explicitly on objects. Take an object $M\in \bbar{\mD_{sg}}(S).$ It can be considered as an object of a bigger category $\bbar{\Calk}_{\cS}.$ Applying the functor $\bbar{\Calk}_{\iota},$ we obtain an object of $\bbar{\Calk}_{\cA}.$ Applying the functor $G_{\cA},$ we obtain an object of $\bR\un{\Hom}(\cA^{op},\Calk_{\mk}).$  It is contained in the image of the fully faithful functor $q_*:\bR\un{\Hom}(\cB^{op},\Calk_{\mk})\hto \bR\un{\Hom}(\cA^{op},\Calk_{\mk}).$ Hence, we obtain an object $N\in \bR\un{\Hom}(\cB^{op},\Calk_{\mk}),$ which is in fact contained in $\Ker(F_{\cB}).$

To prove the second assertion, let us note that we have an exact triangle in $D(\cA\otimes\cA^{op}):$ 
$$\cA\Ltens{\cS}{\cA}\to \cA\to \cB.$$ This implies commutativity of the following diagram
of functors:
\begin{equation}\label{eq:comm_diag_H_S_and_so_on}\xymatrix{\Perf(\cB)\ar[r]^{\bbar{\cH_{\cS}}}\ar@{^{(}->}[d] & \bbar{\mD_{sg}}(\cS)\ar@{^{(}->}[r] & \bbar{\Calk}_{\cS}\ar[d]^{\bbar{\Calk}_{\iota}}\\
\Mod_{\cB}\ar[r]^{q_*[-1]} & \Mod_{\cA}\ar[r] & \bbar{\Calk}_{\cA}.}\end{equation}
Also, the following diagram commutes:
\begin{equation}\label{eq:comm_diag_various_restrictions}
\xymatrix{\bR\un{Hom}(\cB^{op},\Mod_{\mk})\ar[d]\ar@{}[r]|*[@]{\simeq} & \Mod_{\cB}\ar[r]^{q_*} & \Mod_{\cA}\ar[d]\\
\bR\un{Hom}(\cB^{op},\Calk_{\mk})\ar@{^{(}->}[r]^{q_*} & \bR\un{Hom}(\cA^{op},\Calk_{\mk})\ar[r]^-{F_{\cA}} & \bbar{\Calk}_{\cA}
}
\end{equation}
Combining \eqref{eq:comm_diag_H_S_and_so_on}, \eqref{eq:comm_diag_various_restrictions}, and running through the construction of the identification \eqref{eq:main_equivalence}, we obtain the commutativity of \eqref{eq:compatibility_of_restrictions}.
\end{proof}

Theorems \ref{th:neighborhoods_and_D_sg} and \ref{th:main_theorem} motivate the following definitions.
We denote by $\bbar{\bY}:\cB\to\Ker(F_B)$ the functor given by the composition $\cB\xto{\bY}\bR\un{\Hom}(\cB^{op},\Mod_{\mk})\to \bR\un{\Hom}(\cB^{op},\Calk_{\mk}),$ and similarly for $\bbar{\bY}^*:\Perf(\cB)\to\Ker(F_{\cB}).$

\begin{defi}\label{def:neighborhood_of_infinity}Let $\cB$ be a smooth DG category. 

1) The (triangulated) DG category $\Perf_{top}(\hhat{\cB}_{\infty}):=\Ker(F_{\cB}:\bR\un{\Hom}(\cB^{op},\Calk_{\mk})\to \Calk_{\cB})$ is called the category of perfect complexes on the formal punctured neighborhood of infinity of $\cB.$

2) We define the DG subcategory $\Perf_{alg}(\hhat{\cB}_{\infty})\subset \Perf(\hhat{\cB}_{\infty})$ to be generated as a triangulated DG subcategory by the image of $\bbar{\bY}:\cB\to\Perf_{top}(\hhat{\cB}_{\infty}).$ It is called the category of algebraizable perfect complexes on the formal punctured neighborhood of infinity of $\cB.$

3) Finally, we define the DG category $\hhat{\cB}_{\infty}$ to be the essential image of $\bbar{\bY}.$\end{defi}

\begin{remark}Note that the DG category $\hhat{\cB}_{\infty}$ is not necessarily pre-triangulated. It follows directly from Definition \ref{def:neighborhood_of_infinity} that $\Perf_{alg}(\hhat{\cB}_{\infty})$ is quasi-equivalent to $\Perf(\hhat{\cB}_{\infty}).$ However, the category $\Perf_{top}(\hat{\cB}_{\infty})$ is in general strictly larger than $\Perf_{alg}(\hhat{\cB}_{\infty}).$\end{remark}

We first state that our categorical construction is compatible with algebro-geometric formal punctured neighborhood of infinity.

\begin{theo}\label{th:compatibility} Let $X$ be a smooth variety over $\mk$ such that there exists a smooth compactification $\bbar{X}\supset X$ (this holds automatically if $\cha \mk=0$). Putting $Z:=\bbar{X}-X$ and $\cB:=\Perf(X),$ we have $\Perf_{top}(\hhat{\cB}_{\infty})\simeq \Perf(\hhat{\bbar{X}}_Z-Z).$ In particular, if $\cha \mk=0,$ we have $\Perf_{top}(\hhat{\cB}_{\infty})\simeq \Perf(\hhat{X}_{\infty}).$\end{theo}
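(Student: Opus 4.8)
The plan is to combine the two main theorems already proved. The key point is to produce an abstract Morita short exact sequence $\cS \hto \cA \to \cB$ with $\cA$ smooth and proper, to which Theorem~\ref{th:main_theorem} applies, and then to identify the resulting $\bbar{\mD_{sg}}(\cS)$ with $\Perf(\hhat{\bbar X}_Z - Z)$ via Theorem~\ref{th:neighborhoods_and_D_sg}. First I would set $\cA := \Perf(\bbar X)$; since $\bbar X$ is smooth and proper over $\mk$, the DG category $\cA$ is smooth and proper. Next I would take $\cS := \Perf_Z(\bbar X)$, the full DG subcategory of perfect complexes supported on $Z$, and $\cB := \Perf(X) = \Perf(\bbar X - Z)$. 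The sequence $\Perf_Z(\bbar X) \hto \Perf(\bbar X) \xto{j^*} \Perf(X)$, where $j : X \hto \bbar X$ is the open embedding, is a Morita short exact sequence of DG categories: the first functor is quasi-fully-faithful by construction, the composition is zero since a complex supported on $Z$ restricts to zero on $X$, and the induced functor $\Perf(\bbar X)/\Perf_Z(\bbar X) \to \Perf(X)$ is a Morita equivalence by the localization theorem of Thomason–Neeman (restriction to an open subscheme identifies $D_{\perf}(X)$ with the idempotent completion of the Verdier quotient $D_{\perf}(\bbar X)/D_{\perf,Z}(\bbar X)$).

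With this short exact sequence in hand, Theorem~\ref{th:main_theorem} gives a quasi-equivalence $\bbar{\mD_{sg}}(\cS) \simeq \Ker(F_{\cB}) = \Perf_{top}(\hhat{\cB}_{\infty})$, compatible with the restriction functors via the diagram~\eqref{eq:compatibility_of_restrictions}. On the other hand, Theorem~\ref{th:neighborhoods_and_D_sg} applied to the smooth and proper scheme $\bbar X$ and the closed subscheme $Z$, with $\cS = \Perf_Z(\bbar X)$, yields $\Perf(\hhat{\bbar X}_Z - Z) \cong \bbar{\mD_{sg}}(\cS)$, again compatibly with the inclusions of $\cS$ (equivalently, of $\Perf_Z(\bbar X)$). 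Composing the two equivalences produces the desired quasi-equivalence $\Perf_{top}(\hhat{\cB}_{\infty}) \simeq \Perf(\hhat{\bbar X}_Z - Z)$. The last sentence of the statement is then immediate: when $\cha \mk = 0$ a smooth compactification exists by Hironaka/Nagata, so $\Perf(\hhat{X}_{\infty})$ is by definition $\Perf(\hhat{\bbar X}_Z - Z)$, and by Corollary~\ref{cor:well_defined_V_infty}(2) this does not depend on the choice of compactification.

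The main obstacle, and the part deserving the most care, is checking that the two identifications are genuinely \emph{compatible} — i.e. that under both equivalences the category $\bbar{\mD_{sg}}(\cS)$ carries the same functor from $\Perf(\cB) = \Perf(X)$. On the categorical side the relevant functor is $\bbar{\cH_{\cS}} : \Perf(\cB) \to \bbar{\mD_{sg}}(\cS)$ coming from restriction of scalars along $q : \cA \to \cB$, while on the geometric side Theorem~\ref{th:neighborhoods_and_D_sg} (via Theorem~\ref{th:perf_of_formal_is_T_Z}) produces $\cH_Z$ followed by the identification $\mD_Z(\bbar X) \simeq \Mod_{\cS}$. One must verify these agree after passing to quotients by $\Perf(\cS)$; this reduces to the observation that under the compact generation equivalence $\mD_Z(\bbar X) \simeq \Mod_{\cS}$ with $\cS = \Perf_Z(\bbar X)$, the functor $\cH_Z \circ j_*^{\mathrm{derived}}$ (extension of an object of $D(X)$ and then sections supported on $Z$) corresponds precisely to restriction of scalars along $\Perf_Z(\bbar X) \hto \Perf(\bbar X)$, which is exactly what~\eqref{eq:restriction_functor_formal} and~\eqref{eq:restriction_punctured} describe. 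Granting this naturality, the two diagrams glue and the theorem follows; no further computation is needed.
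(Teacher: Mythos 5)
Your proposal is correct and follows essentially the same route as the paper: the paper's proof is precisely the two-step deduction of setting $\cS=\Perf_Z(\bbar{X})$, $\cA=\Perf(\bbar{X})$, invoking Theorem \ref{th:neighborhoods_and_D_sg} for the geometric side and Theorem \ref{th:main_theorem} for the categorical side. The extra care you devote to the Thomason--Neeman localization and to the compatibility of the two restriction functors is exactly what the paper leaves implicit (it is built into the "compatible with the inclusions of $\cS$" clauses of those two theorems), so nothing further is needed.
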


\begin{proof}This is formally deduced from the above results. Putting $\cS:=\Perf_Z(\bbar{X})$ and $\cA:=\Perf(\bbar{X}),$ we obtain a short exact sequence $\cS\to\cA\to\cB$ as above. By Theorem \ref{th:neighborhoods_and_D_sg}, we have $\Perf(\hhat{\bbar{X}}_Z-Z)\simeq \bbar{\mD_{sg}}(\cS).$ By Theorem \ref{th:main_theorem}, we have $\bbar{\mD_{sg}}(\cS)\simeq \Perf_{top}(\hhat{\cB}_{\infty}).$ This proves the theorem.\end{proof}


Note that for an arbitrary small DG category $\cB$ we can define $\hhat{\cB}_{\infty}$ to be the essential image of $\cB$ in $\bR\un{\Hom}(\cB^{op},\Calk_{\mk}).$ In particular, we always have a natural functor $\cB\to\hhat{\cB}_{\infty}.$ We now give a more direct description of $\hhat{\cB}_{\infty}.$

\begin{prop}\label{prop:neighborhood_of_infty_Hochschild_cochains}1) Let $B$ be a DG algebra. Then the DG algebra $\hhat{B}_{\infty}$ is quasi-isomorphic to the DG algebra $\hhat{B}_{\infty}^{can}=C^{\bullet}(B^{op},\Calk_{\mk}(B,B)),$ associated with the composition morphism of DG algebras $\cB^{op}\to\Hom_{\mk}(B,B)\to\Calk_{\mk}(B,B)$ (right action of $B$ on itself). The morphism $B\to C^{\bullet}(B^{op},\Calk_{\mk}(B,B))$ is associated with the composition morphism of DG algebras $B\otimes B^{op}\to\Hom_{\mk}(B,B)\to\Calk_{\mk}(B,B)$ ($B\mhyphen B$-bimodule structure on $B$).

2) More generally, if $\cB$ is a DG category, then the DG category $\hhat{\cB}_{\infty}$ has the following model $\hhat{\cB}_{\infty}^{can}$. The objects of $\hhat{\cB}_{\infty}^{can}$ are the same as the objects of $\cB.$ The morphisms are given by $\hhat{\cB}_{\infty}^{can}(x,y)=C^{\bullet}(\cB^{op},\Calk_{\mk}(\bY(x),\bY(y))).$ The composition in $\hhat{\cB}_{\infty}^{can}$ and the DG functor $\cB\to\hhat{\cB}_{\infty}^{can}$ are coming from the DG functor $\cB\otimes\cB^{op}\xto{I_{\cB}}\Mod_{\mk}\to\Calk_{\mk}.$
\end{prop}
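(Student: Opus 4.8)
The plan is to unwind the definitions of $\hhat{\cB}_{\infty}$ as an essential image and of the internal Hom DG category $\bR\un{\Hom}(\cB^{op},\Calk_{\mk})$, and to exhibit the Hochschild-cochain model $\hhat{\cB}_{\infty}^{can}$ as a concrete DG model computing the same morphism complexes, compatibly with composition. Recall that by definition $\hhat{\cB}_{\infty}$ is the essential image of the Yoneda-type functor $\bbar{\bY}:\cB\to\bR\un{\Hom}(\cB^{op},\Calk_{\mk})$, i.e. of the composition $\cB\xto{\bY}\bR\un{\Hom}(\cB^{op},\Mod_{\mk})\to\bR\un{\Hom}(\cB^{op},\Calk_{\mk})$. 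So I must compute, for objects $x,y\in\cB$, the morphism complex in $\bR\un{\Hom}(\cB^{op},\Calk_{\mk})$ between the images of $\bY(x)$ and $\bY(y)$, and identify it (functorially, and compatibly with composition) with $C^{\bullet}(\cB^{op},\Calk_{\mk}(\bY(x),\bY(y)))$.

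First I would recall that $\bR\un{\Hom}(\cB^{op},\cD)$ for a triangulated DG category $\cD$ is computed as the full subcategory of right quasi-representable $\cB\otimes\cB^{op}$-modules inside $\Mod_{\cB\otimes\cB^{op}}$, equivalently as quasi-functors $\cB^{op}\to\cD$; and for two such quasi-functors $\Phi,\Psi$ the morphism complex is the standard derived Hom of $\cB\mhyphen\cB$-bimodules, which is computed by the Hochschild cochain complex $C^{\bullet}(\cB,\cHom_{\cD}(\Phi(-),\Psi(-)))$ (bar resolution of the diagonal bimodule). Applying this with $\cD=\Calk_{\mk}$, $\Phi = (x'\mapsto\Calk_{\mk}(\bY(x),\bY(x')))$ — which is the $\cB^{op}$-module obtained by post-composing Yoneda with the projection $\Mod_{\mk}\to\Calk_{\mk}$, i.e. the image of $\bY(x)$ — and likewise $\Psi$ for $y$, gives exactly the complex $C^{\bullet}(\cB^{op},\Calk_{\mk}(\bY(x),\bY(y)))$ claimed in the proposition. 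The composition law on $\hhat{\cB}_{\infty}^{can}$ and the structure functor $\cB\to\hhat{\cB}_{\infty}^{can}$ then match, under this identification, the cup product on Hochschild cochains and the map induced by the diagonal bimodule $\cB\otimes\cB^{op}\xto{I_{\cB}}\Mod_{\mk}\to\Calk_{\mk}$: this is just the general fact that $\bR\un{\Hom}(\cB^{op},\Mod_{\mk})\simeq\Mod_{\cB}$ (Yoneda) intertwines composition of quasi-functors with bimodule composition, and the latter is modeled by cup product of Hochschild cochains, and all of this is functorial in the target along $\Mod_{\mk}\to\Calk_{\mk}$. Part 1) is then the special case where $\cB$ is a DG algebra $B$, so that $\bY(B)=B$ as a $B$-module, $\Calk_{\mk}(\bY(B),\bY(B))=\Calk_{\mk}(B,B)$, and one uses the single-object description $C^{\bullet}(B^{op},\Calk_{\mk}(B,B))$ with the composition morphism $B^{op}\to\Hom_{\mk}(B,B)\to\Calk_{\mk}(B,B)$ encoding the right $B$-action and the $B\mhyphen B$-bimodule structure giving the unit map.

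The main obstacle, and the point deserving care, is the passage from the abstract derived internal Hom $\bR\un{\Hom}(\cB^{op},\Calk_{\mk})$ — which a priori requires cofibrant/fibrant replacements and quasi-functors rather than honest functors — to the strict Hochschild-cochain model, i.e. checking that the naive bar complex $C^{\bullet}(\cB^{op},\Calk_{\mk}(\bY(-),\bY(-)))$ genuinely computes the derived bimodule Hom here, and that no extra derived-tensor corrections are needed on the $\Calk_{\mk}$-side. Concretely: the diagonal $\cB\mhyphen\cB$-bimodule admits its bar resolution, which is cofibrant; the Hom out of it into the bimodule $\cHom_{\Calk_{\mk}}(\bY(-),\bY(-))$ is the Hochschild cochain complex $C^{\bullet}$; and because $\bY(x)$, $\bY(y)$ are (cofibrant) representable modules, $\Calk_{\mk}(\bY(x),\bY(y))$ already is the correct morphism complex in $\Calk_{\mk}$ (no further replacement needed, cf. Proposition~\ref{prop:Calkin_DG_quotient}, since the evaluation map is injective on representables). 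One then verifies that this model is functorial in $\cB$ — in particular it does not depend, up to quasi-isomorphism, on the chosen DG model of $\cB$ — which follows from the Morita/quasi-equivalence invariance of the Calkin construction established earlier (the remarks following Proposition~\ref{prop:Calkin_DG_quotient} and Proposition~\ref{prop:short_exact_Calkins}). Finally, the compatibility of the composition and of the map $\cB\to\hhat{\cB}_{\infty}^{can}$ with the stated DG functor $\cB\otimes\cB^{op}\xto{I_{\cB}}\Mod_{\mk}\to\Calk_{\mk}$ is a direct check at the cochain level: the cup product of Hochschild cochains is precisely the composition in $\bR\un{\Hom}$ transported through Yoneda, and the unit $1\in C^{0}$ coming from $I_{\cB}$ maps to the image of the identity, which is exactly the structure functor.
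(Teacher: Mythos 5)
Your proposal is correct and follows essentially the same route as the paper: both identify the morphism complexes in $\bR\un{\Hom}(\cB^{op},\Calk_{\mk})$ between the images of representables with the Hochschild cochain complexes $C^{\bullet}(\cB^{op},\Calk_{\mk}(\bY(x),\bY(y)))$, using the standard fact that the bar/Hochschild complex with coefficients in $\Hom_T(M,M')$ computes $\bR\Hom_{\cB^{op}\otimes T}(M,M')$ once the values $M(X,-)$ are cofibrant over $T$ (which the paper phrases via the $A_{\infty}$-functor category and you phrase via the bar resolution of the diagonal). Your extra remarks on the cup product modelling composition and on the unit coming from $I_{\cB}$ make explicit what the paper leaves implicit.
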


\begin{proof}This essentially follows directly from the definition of $\hhat{\cB}_{\infty}.$ We will comment on part 2), and part 1) is a special case.

Let $T$ be any small DG category, and consider $\cB^{op}\otimes T$-modules $M,M'$ such that for all $X\in \cB$ the $T$-modules $M(X,-),$ $M'(X,-)$ are cofibrant. Then the Hochschild cochain complex $C^{\bullet}(\cB^{op},\Hom_T(M,M'))$ is the complex of morphisms in the DG category of $A_{\infty}$-functors from $\cB^{op}$ to $\Mod_{T}.$ In particular, this complex is quasi-isomorphic to $\bR\Hom_{\cB^{op}\otimes T}(M,M').$ It remains to apply this observation to the case when $T=\Calk_{\mk},$ and the bimodules $M,M'$ are associated with DG functors $\bY(x),\bY(y):\cB^{op}\to\Calk_{\mk}.$\end{proof}

For an arbitrary DG category $\cC$ we will call the objects of $[\bR\un{Hom}(\cC^{op},\Calk_{\mk})]$ the {\it almost DG $\cC$-modules}. If in addition $\cC$ is smooth, then we will call the objects of $[\Perf_{top}(\hhat{\cC}_{\infty})]$ (resp. $\Perf_{alg}(\hhat{\cC}_{\infty})$) the {\it (algebraizable) perfect almost DG $\cB$-modules}. In order to justify this terminology, let us consider for simplicity the case of a DG algebra $B$ and an object $M\in \Fun(B^{op},\Calk_{\mk})$ (the actual DG functor). Then $M$ is a complex of vector spaces equipped with a homomorphism of DG algebras $f:B^{op}\to\End_{\Calk_{\mk}}(M).$ Let us choose a lift of $f$ to a morphism  of graded algebras $\tilde{f}:(B^{op})^{\gr}\to\End_{\mk}(M^{gr}),$ and put $mb=m\cdot b:=(-1)^{|m|\cdot|b|}\tilde{f}(b)(m)$ for homogeneous elements $b\in B,$ $m\in M.$ Then the following conditions are necessary and sufficient for $f$ to be a homomorphism of DG algebras:
\begin{itemize}
\item $\rk(m\mapsto m\cdot 1_B-m)<\infty;$ 

\item $\rk(m\mapsto d(mb)-d(m)b-(-1)^{|m|}md(b))<\infty$ for a homogeneous $b\in B;$

\item $\rk(m\mapsto m(b_1b_2)-(mb_1)b_2)<\infty$ for homogeneous $b_1,b_2\in B.$
\end{itemize}
If all these ranks are equal to zero, then $\tilde{f}$ defines a DG $B$-module structure on $M.$ This justifies the terminology "almost DG module".


\section{General properties of $\hhat{\cB}_{\infty}$}
\label{sec:generalities_on_the_construction}

Let $\cB$ be a smooth DG category over $\mk.$ We start with the following observation.

\begin{prop}\label{prop:pseudo-perfect_in_the_kernel}We have $\Ker(\Perf(\cB)\xto{\bbar{\bY}^*}\Perf_{top}(\hhat{\cB}_{\infty}))=\PsPerf(\cB).$ In particular, we have a natural functor $\cB/\PsPerf(\cB)\to \hhat{\cB}_{\infty}.$\end{prop}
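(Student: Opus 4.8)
The goal is to identify precisely when a perfect $\cB$-module $P$ maps to zero under $\bbar{\bY}^* : \Perf(\cB) \to \Perf_{top}(\hhat{\cB}_\infty) = \Ker(F_{\cB})$. The plan is to unwind the composition defining $\bbar{\bY}^*$: it sends $P$ first to the right quasi-representable $\cB^{op}$-module $\bY^*(P) \in \bR\un{\Hom}(\cB^{op},\Mod_\mk)$ (which is just $P$ itself viewed as a complex of $\mk$-modules with its $\cB$-action), and then composes with the projection $\bR\un{\Hom}(\cB^{op},\Mod_\mk) \to \bR\un{\Hom}(\cB^{op},\Calk_\mk)$ induced by $\Mod_\mk \to \Calk_\mk$. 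So $\bbar{\bY}^*(P) = 0$ in $[\bR\un{\Hom}(\cB^{op},\Calk_\mk)]$ precisely when the image of $P$ under the functor $\Mod_{\cB} \simeq \bR\un{\Hom}(\cB^{op},\Mod_\mk) \to \bR\un{\Hom}(\cB^{op},\Calk_\mk)$ vanishes. Note that once $\bbar{\bY}^*(P)$ vanishes in the ambient category $\bR\un{\Hom}(\cB^{op},\Calk_\mk)$, it automatically vanishes in the full subcategory $\Ker(F_{\cB})$; so the kernel is the same whether computed in $\Perf_{top}(\hhat{\cB}_\infty)$ or in the ambient almost-module category.

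First I would reduce to understanding the kernel of the functor $\Phi : \Mod_{\cB} \to \bR\un{\Hom}(\cB^{op},\Calk_\mk)$. By definition the Calkin DG category is the DG quotient $\Mod_\mk / \Perf(\mk)$ (Proposition \ref{prop:Calkin_DG_quotient}, using $\Perf(\mk) = \bY(\mk)$ since $\mk$ is a field). Applying $\bR\un{\Hom}(\cB^{op}, -)$, which commutes with taking DG quotients by a semi-orthogonal piece in the appropriate sense, one gets that $\bR\un{\Hom}(\cB^{op},\Calk_\mk)$ is the quotient of $\bR\un{\Hom}(\cB^{op},\Mod_\mk) \simeq \Mod_{\cB}$ by the subcategory $\bR\un{\Hom}(\cB^{op},\Perf(\mk))$. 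The key identification is: a right quasi-representable $\cB^{op} \otimes \mk$-module $M$ lands in $\bR\un{\Hom}(\cB^{op},\Perf(\mk))$ iff for every object $X \in \cB$ the complex $M(X)$ is perfect over $\mk$ — but that is exactly the definition of $M$ being a pseudo-perfect $\cB$-module. Hence $\Ker(\Phi)$, the full subcategory of $P \in \Mod_{\cB}$ mapping to zero, contains $\PsPerf(\cB)$; and for a \emph{perfect} module $P$, $\Phi(P) = 0$ forces $P$ to lie in the thick subcategory generated by $\PsPerf(\cB)$ inside $\Perf(\cB)$, which is $\PsPerf(\cB) \cap \Perf(\cB) = \PsPerf(\cB)$ again (pseudo-perfect modules form a thick subcategory). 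Thus $\Ker(\bbar{\bY}^*) = \PsPerf(\cB)$.

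More carefully, the cleanest route to "$\Phi(P)$ perfect $\Rightarrow P$ pseudo-perfect" is: $\Phi(P) \simeq 0$ in the quotient means $P$ is a retract, up to quasi-isomorphism, of an object built from $\bR\un{\Hom}(\cB^{op},\Perf(\mk)) = \PsPerf(\cB)$ by cones and shifts — i.e. $P$ lies in the thick envelope of $\PsPerf(\cB)$ in $\Mod_{\cB}$. Since $\PsPerf(\cB)$ is already thick in $\Mod_{\cB}$ (it is closed under cones, shifts, and retracts — pseudo-perfectness of $M$ is a pointwise condition on $M(X) \in \Perf(\mk)$, and $\Perf(\mk)$ is thick in $\Mod_\mk$), we get $P \in \PsPerf(\cB)$ directly, and of course $\PsPerf(\cB) \subseteq \Perf(\cB)$ is not needed in this direction but the intersection point is automatic. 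The reverse inclusion $\PsPerf(\cB) \subseteq \Ker(\bbar{\bY}^*)$ is immediate since $\Phi$ kills $\PsPerf(\cB)$ by construction. Finally, the last sentence follows formally: $\bbar{\bY}^* : \Perf(\cB) \to \Perf_{top}(\hhat{\cB}_\infty)$ factors through the Drinfeld quotient $\Perf(\cB)/\PsPerf(\cB)$, and since $\hhat{\cB}_\infty$ is the essential image of $\bbar{\bY}$ (equivalently of $\bbar{\bY}^*$ after Karoubi-completion on the source, but here we just use that $\cB$ generates), we get the induced functor $\cB/\PsPerf(\cB) \to \hhat{\cB}_\infty$.

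The main obstacle I expect is the bookkeeping in the step "$\bR\un{\Hom}(\cB^{op},-)$ sends the DG quotient $\Mod_\mk / \Perf(\mk)$ to the DG quotient of $\Mod_{\cB}$ by $\PsPerf(\cB)$." One has to be careful that $\bR\un{\Hom}(\cB^{op},-)$ is the internal Hom in $\Ho(\dgcat_\mk)$ and that it is exact in the target variable on the relevant short exact sequences of DG categories — concretely, that $\Perf(\mk) \hto \Mod_\mk \to \Calk_\mk$ being a short exact sequence of DG categories induces a short exact sequence $\PsPerf(\cB) \hto \Mod_{\cB} \to \bR\un{\Hom}(\cB^{op},\Calk_\mk)$ — plus the smoothness of $\cB$ to ensure the relevant categories of quasi-representable modules are perfect, so everything stays within the expected worlds. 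This is where the hypothesis that $\cB$ is smooth genuinely enters. Everything else is a formal manipulation of kernels of DG functors.
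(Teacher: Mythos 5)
Your proof is correct and its core is exactly the paper's argument: $\Ker(\bbar{\bY}^*)=\Perf(\cB)\cap\Ker\bigl(\Mod_{\cB}\to\bR\un{\Hom}(\cB^{op},\Calk_{\mk})\bigr)$, the latter kernel is $\bR\un{\Hom}(\cB^{op},\Perf(\mk))=\PsPerf(\cB)$ because an object of $\bR\un{\Hom}(\cB^{op},\Calk_{\mk})$ vanishes iff it vanishes pointwise, and $\Perf(\cB)\cap\PsPerf(\cB)=\PsPerf(\cB)$ by smoothness. One remark: the step you flag as ``the main obstacle'' --- that $\bR\un{\Hom}(\cB^{op},-)$ turns $\Perf(\mk)\hto\Mod_{\mk}\to\Calk_{\mk}$ into a short exact sequence with middle term $\Mod_{\cB}$ and quotient $\bR\un{\Hom}(\cB^{op},\Calk_{\mk})$ --- is not needed, and its essential-surjectivity half is in fact false in general: the paper's point is precisely that $\Perf_{top}(\hhat{\cB}_{\infty})$ can be strictly larger than what is generated by the image of $\Mod_{\cB}$ (cf.\ Proposition \ref{prop:topological_perf_over_adeles}). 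All the proposition requires is the kernel computation, which is immediate: a right quasi-representable $\cB\otimes\Calk_{\mk}$-module is zero in the homotopy category iff each value $M(X)$ is a zero object of $[\Calk_{\mk}]$, i.e.\ lies in $\Perf(\mk)$, i.e.\ $M$ is pseudo-perfect; this makes your detour through thick envelopes unnecessary as well. Finally, note that smoothness enters not just to ``keep things in the expected worlds'' but to guarantee $\PsPerf(\cB)\subset\Perf(\cB)$, without which the asserted equality (rather than an equality with $\Perf(\cB)\cap\PsPerf(\cB)$) would not make sense.
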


\begin{proof}We have \begin{multline*}\Ker(\bbar{\bY}^*)=\Perf(\cB)\cap\Ker(\Mod_{\cB}\to\bR\un{\Hom}(\cB^{op},\Calk_{\mk}))\\=\Perf(\cB)\cap\bR\un{\Hom}(\cB,\Perf(\mk))=\PsPerf(\cB).\end{multline*}
This proves the proposition.\end{proof}

Although the assignment $\cB\mapsto \Perf(\hhat{\cB}_{\infty})$ is not functorial in $\cB,$ we have the following partial functoriality result.

\begin{prop}\label{prop:partial_functoriality}Let $\Phi:\cB_1\to\cB_2$ be a functor between smooth DG categories, such that the functor $\bL\Phi^*:D_{perf}(\cB_1)\to D_{perf}(\cB_2)$ has a left adjoint. Then we have a natural functor $\hhat{\Phi}_{\infty}^*:\Perf_{top}(\hhat{\cB_1}_{\infty})\to\Perf_{top}(\hhat{\cB_2}_{\infty}),$ and the following square commutes:
\begin{equation}\label{eq:pullback_commutes_with_restrictions}
\xymatrix{\Perf(\cB_1)\ar[r]^{\Phi^*}\ar[d]^{\bbar{\bY}^*} & \Perf(\cB_2)\ar[d]^{\bbar{\bY}^*}\\
\Perf(\hhat{\cB_1}_{\infty})\ar[r]^{\hhat{\Phi}_{\infty}^*} & \Perf(\hhat{\cB_2}_{\infty}).
}
\end{equation}\end{prop}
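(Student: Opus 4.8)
The key point is that the hypothesis on $\bL\Phi^*$ is exactly what is needed to construct $\hhat{\Phi}_\infty^*$ in a way compatible with the description of $\Perf_{top}(\hhat{\cB}_\infty)$ as a kernel of $F_{\cB}$. First I would unwind the construction: by definition $\Perf_{top}(\hhat{\cB_i}_\infty)=\Ker(F_{\cB_i})\subset\bR\un{\Hom}(\cB_i^{op},\Calk_{\mk})$, and $F_{\cB_i}$ factors as $\bR\un{\Hom}(\cB_i^{op},\Calk_{\mk})\hookrightarrow\Perf(\cB_i\otimes\Calk_{\mk})\to\bbar{\Calk}_{\cB_i}$. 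So the plan is to produce a functor on the ambient categories $\bR\un{\Hom}(\cB_1^{op},\Calk_{\mk})\to\bR\un{\Hom}(\cB_2^{op},\Calk_{\mk})$ compatible with the $F$'s, and then restrict it to kernels. The natural candidate is \emph{left adjoint} to restriction of scalars along $\Phi$: extension of scalars $\Phi^*$ on $\cB^{op}\otimes\Calk_{\mk}$-modules. The subtlety is that $\Phi^*$ on all modules need not preserve right quasi-representability, i.e.\ need not land in $\bR\un{\Hom}(\cB_2^{op},\Calk_{\mk})$ — and this is precisely where the hypothesis enters.

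The second step is to verify that the hypothesis ``$\bL\Phi^*:D_{\perf}(\cB_1)\to D_{\perf}(\cB_2)$ has a left adjoint'' guarantees that $\Phi^*$ sends almost DG $\cB_1$-modules to almost DG $\cB_2$-modules. Concretely, an object $M\in\bR\un{\Hom}(\cB_1^{op},\Calk_{\mk})$ is, via $F_{\cB_1}$, the image of an object of $\Perf(\cB_1\otimes\Calk_{\mk})\simeq\bR\un{\Hom}(\cB_1^{op},\Perf(\Calk_{\mk}))$; for fixed $X\in\Calk_{\mk}$ the component $M(-,X)$ is a perfect $\cB_1^{op}$-module. Applying $\bL\Phi^*$ in the first variable, the existence of a left adjoint to $\bL\Phi^*$ on perfect modules means $\Phi^*$ preserves compact (perfect) objects — equivalently, by a standard argument, that $\Phi_*$ on module categories commutes with arbitrary coproducts — which is exactly the statement that $\bL\Phi^*(M(-,X))$ is again perfect over $\cB_2$, i.e.\ quasi-representable up to shifts and cones, so $\Phi^*(M)$ is right quasi-representable. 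I would spell this out using the smoothness of $\cB_1,\cB_2$ (so that $\bR\un{\Hom}(\cB_i^{op},\Calk_{\mk})$ sits inside $\Perf(\cB_i\otimes\Calk_{\mk})$) and the criterion on the preceding page that $\bR\un{\Hom}(\cC,\cC')\subset\Perf(\cC^{op}\otimes\cC')$ characterizes smoothness.

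Third, I would check compatibility with $F_{\cB_i}$ to conclude $\Phi^*$ restricts to a functor $\hhat{\Phi}_\infty^*:\Ker(F_{\cB_1})\to\Ker(F_{\cB_2})$. This amounts to observing that the square
\begin{equation*}
\xymatrix{\bR\un{\Hom}(\cB_1^{op},\Calk_{\mk})\ar[r]^{F_{\cB_1}}\ar[d]^{\Phi^*} & \bbar{\Calk}_{\cB_1}\ar[d]^{\bbar{\Calk}_\Phi}\\
\bR\un{\Hom}(\cB_2^{op},\Calk_{\mk})\ar[r]^{F_{\cB_2}} & \bbar{\Calk}_{\cB_2}}
\end{equation*}
commutes, which follows from naturality of extension of scalars applied in the $\Calk_{\mk}$-direction (the second arrow in the definition of $F_{\cC}$ is induced by $\cC\otimes\Mod_{\mk}\to\Mod_{\cC}$, $X\otimes V\mapsto\bY(X)\otimes V$, which is plainly functorial in $\cC$), together with the base-change isomorphism $\bL\Phi^*(-)\Ltens{\cB_2}?\cong-\Ltens{\cB_1}\Phi_*(?)$ of the same flavor as in the proof of Proposition \ref{prop:short_exact_Calkins}. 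Hence an object killed by $F_{\cB_1}$ is carried to one killed by $F_{\cB_2}$, giving $\hhat{\Phi}_\infty^*$.

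Finally, for commutativity of \eqref{eq:pullback_commutes_with_restrictions}: by construction $\bbar{\bY}^*$ is the composition $\Perf(\cB_i)\xrightarrow{[-1]}\bR\un{\Hom}(\cB_i^{op},\Mod_{\mk})\to\bR\un{\Hom}(\cB_i^{op},\Calk_{\mk})$, and $\hhat{\Phi}_\infty^*$ was defined as $\Phi^*$ on these ambient categories, which visibly intertwines the ordinary pullbacks $\Phi^*:\Perf(\cB_1)\to\Perf(\cB_2)$ and $\Phi^*:\Mod_{\cB_1}\to\Mod_{\cB_2}$ up to the quasi-equivalence $\Mod_{\cB_i}\simeq\bR\un{\Hom}(\cB_i^{op},\Mod_{\mk})$; so the square commutes by functoriality of extension of scalars alone. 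I expect the main obstacle to be Step 2 — carefully justifying that the left-adjoint hypothesis is equivalent to preservation of perfectness/compactness and hence of right quasi-representability — since everything else is formal naturality of $\otimes$ and of $F_{\cB}$; all the genuine content of the proposition is packed into that single implication.
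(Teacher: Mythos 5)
Your overall architecture is the same as the paper's (define $\hhat{\Phi}^*_\infty$ as extension of scalars $(\Phi\otimes\id_{\Calk_{\mk}})^*$ on the ambient categories, check compatibility with the $F_{\cB_i}$, restrict to kernels, and observe that the last square commutes by construction), and your Steps 1, 3 and 4 are fine. But Step 2 --- which you correctly identify as carrying all the content --- does not work as written.

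The chain of equivalences you offer is vacuous: ``$\bL\Phi^*$ preserves compact (perfect) objects'' and ``$\Phi_*$ commutes with arbitrary coproducts'' are \emph{both automatically true} for any DG functor $\Phi$ (extension of scalars sends representables to representables and is exact; restriction of scalars is computed objectwise). So your argument never actually uses the hypothesis that $\bL\Phi^*$ has a \emph{left} adjoint, and would ``prove'' the proposition unconditionally --- which is false, since extension of scalars does not preserve right quasi-representability in general. The hypothesis is equivalent to something else entirely, roughly that $\Phi_*$ (restriction of scalars) preserves perfectness, i.e.\ that each $\Psi(\bY(Y))$, $Y\in\cB_2$, is a perfect $\cB_1$-module, where $\Psi$ denotes the left adjoint. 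There is also a variance slip: right quasi-representability of $(\Phi\otimes\id)^*M$ is a condition on $\bigl((\Phi\otimes\id)^*M\bigr)(Y,-)$ as a $\Calk_{\mk}$-module for fixed $Y\in\cB_2$, not on $M(-,X)$ as a $\cB$-module for fixed $X\in\Calk_{\mk}$ (and the latter need not be perfect over $\cB_1$ anyway --- perfectness of $M$ over $\cB_1\otimes\Calk_{\mk}$ does not restrict to perfectness in one variable).

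The correct use of the hypothesis is: the adjunction gives, for every $Y\in\cB_2$, a natural isomorphism $\bigl((\Phi\otimes\id)^*M\bigr)(Y,-)\cong\bR\Hom_{\cB_1}\bigl(\Psi(\bY(Y)),M(-,-)\bigr)$, the Hom being taken in the $\cB_1$-variable. Since $\Psi(\bY(Y))$ is a \emph{perfect} $\cB_1$-module, the right-hand side is a finite homotopy (co)limit of shifts of the $\Calk_{\mk}$-modules $M(X,-)$, each of which is quasi-representable; and since $\Calk_{\mk}$ is triangulated in the sense of \cite[Definition 2.4]{TV} (pre-triangulated and, by the remark in Section \ref{sec:preliminaries}, Karoubi complete because $K_{-1}(\Perf(\mk))=0$), such a finite (co)limit is again quasi-representable. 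This is the implication you need to supply in place of your current Step 2; the rest of your proposal then goes through.
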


\begin{proof}By our assumption on $\Phi,$ for any DG category $T$ the extension of scalars functor $(\Phi\otimes\id_T)^*:\Perf(\cB_1\otimes T)\to\Perf(\cB_2\otimes T)$ takes $\bR\un{\Hom}(\cB_1^{op},T)$ to $\bR\un{\Hom}(\cB_2^{op},T).$ Furthermore, the following square commutes
$$\xymatrix{\bR\un{\Hom}(\cB_1^{op},\Calk_{\mk})\ar[r]^{(\Phi\otimes\id_{\Calk_{\mk}})^*}\ar[d]^{F_{\cB_1}} & \bR\un{\Hom}(\cB_2^{op},\Calk_{\mk})\ar[d]^{F_{\cB_2}}\\
\bbar{\Calk}_{\cB_1}\ar[r]^{\bbar{\Calk}_{\Phi}} & \bbar{\Calk}_{\cB_2}.}$$ Hence, the upper horizontal functor induces a well-defined functor $\hhat{\Phi}_{\infty}^*:\Perf_{top}(\hhat{\cB_1}_{\infty})\to\Perf_{top}(\hhat{\cB_2}_{\infty}).$ The commutativity of \eqref{eq:pullback_commutes_with_restrictions} follows by constructions.\end{proof}

It is also technically useful to rewrite the $\cB\mhyphen\cB$-bimodule $C^{\bullet}(\cB^{op},\cB_{\cB}\otimes {}_{\cB}\cB^*)\simeq Fiber(\cB\to \hhat{\cB}_{\infty}).$

\begin{prop}\label{prop:rewriting_Hochschild_cochains}The natural composition morphism $$\cB^!\Ltens{\cB}\cB^*\xto{\sim} C^{\bullet}(\cB^{op},\cB_{\cB}\otimes {}_{\cB}\cB)\Ltens{\cB}\cB^*\to C^{\bullet}(\cB^{op},\cB_{\cB}\otimes {}_{\cB}\cB^*)$$ is an isomorphism in $D(\cB\otimes\cB^{op}).$\end{prop}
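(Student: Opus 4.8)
The plan is to reduce the claim to a statement about the natural map $\cB^! \Ltens{\cB} N \to C^\bullet(\cB^{op}, \cB_\cB \otimes {}_\cB N)$ being an isomorphism for a suitable class of $\cB$-bimodules $N$, and then specialize to $N = \cB^*$. Recall that $\cB^! = \bR\Hom_{\cB \otimes \cB^{op}}(\cB, \cB_\cB \otimes {}_\cB \cB) \simeq C^\bullet(\cB^{op}, \cB_\cB \otimes {}_\cB\cB)$ (viewing the Hochschild cochain complex of $\cB^{op}$ with coefficients in the bimodule $\cB \otimes \cB$ as computing $\bR\Hom$ over the enveloping category, as in the proof of Proposition \ref{prop:neighborhood_of_infty_Hochschild_cochains}). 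First I would observe that since $\cB$ is smooth, the diagonal bimodule $\cB$ is perfect over $\cB \otimes \cB^{op}$, so $\cB^! \in D_{\perf}(\cB \otimes \cB^{op})$ as well, and moreover for any $\cB$-bimodule $M$ one has the standard identification $\cB^! \Ltens{\cB} M \simeq \bR\Hom_{\cB \otimes \cB^{op}}(\cB, \cB \otimes M)$, where the tensor with $\cB^!$ on the left uses one of the $\cB$-actions and the result is taken as a bimodule via the remaining actions.

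Concretely, the comparison map in the statement factors as
\begin{equation*}
\cB^! \Ltens{\cB} \cB^* \;\simeq\; \bR\Hom_{\cB \otimes \cB^{op}}(\cB, \cB \otimes \cB) \Ltens{\cB} \cB^* \;\xto{\;\alpha\;}\; \bR\Hom_{\cB \otimes \cB^{op}}(\cB, \cB \otimes \cB^*),
\end{equation*}
and the right-hand side is identified with $C^\bullet(\cB^{op}, \cB_\cB \otimes {}_\cB \cB^*)$ exactly as in Proposition \ref{prop:neighborhood_of_infty_Hochschild_cochains}. So the real content is that $\alpha$ is an isomorphism. The key step is: because $\cB$ is a \emph{perfect} $\cB \otimes \cB^{op}$-module, the functor $\bR\Hom_{\cB \otimes \cB^{op}}(\cB, -)$ commutes with arbitrary (homotopy) colimits and, more relevantly, with tensoring on one slot: for any bimodule $K$ and right $\cB$-module $L$, the natural map $\bR\Hom_{\cB \otimes \cB^{op}}(\cB, K) \Ltens{\cB} L \to \bR\Hom_{\cB \otimes \cB^{op}}(\cB, K \Ltens{\cB} L)$ is an isomorphism. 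Indeed, this holds trivially when $\cB$ is replaced by a representable (free) bimodule $\bY(x) \boxtimes \bY(y)^\vee$, since then both sides reduce to finite complexes built from $L$; and it holds for $\cB$ itself by passing to a finite semifree-summand resolution of $\cB$ over $\cB \otimes \cB^{op}$, which exists by smoothness. Applying this with $K = \cB_\cB \otimes {}_\cB\cB$ and $L = \cB^*$ (noting $(\cB \otimes \cB) \Ltens{\cB} \cB^* \simeq \cB \otimes \cB^*$ since the middle $\cB$ is free as a one-sided module) gives precisely that $\alpha$ is an isomorphism.

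The main obstacle, and the step deserving the most care, is the bookkeeping of which of the several $\cB$-actions is being used at each stage — the bimodule $\cB \otimes \cB$ carries four actions, the Hochschild cochain functor $C^\bullet(\cB^{op}, -)$ contracts two of them while leaving the "outer" pair as the residual bimodule structure, and the tensor $\Ltens{\cB}\cB^*$ must be taken against the correct slot so that the result lands in $D(\cB \otimes \cB^{op})$ compatibly with the identification of $C^\bullet(\cB^{op}, \cB_\cB \otimes {}_\cB\cB^*)$ with $\mathrm{Fiber}(\cB \to \hhat{\cB}_\infty)$. Once the conventions are pinned down (I would fix a semifree resolution $P \xto{\sim} \cB$ of finite type over $\cB \otimes \cB^{op}$ and compute everything with $P$), the isomorphism is a formal consequence of the "perfect objects are dualizable" principle, i.e. the projection formula $\bR\Hom(\cB, -) \Ltens{\cB} (-) \simeq \bR\Hom(\cB, - \Ltens{\cB} -)$ valid because $\cB$ is perfect over the enveloping category. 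No characteristic or properness hypotheses are needed; smoothness of $\cB$ is the only input.
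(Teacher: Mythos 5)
Your argument is correct and is essentially identical to the paper's: the paper's entire proof is the one-line observation that the claim ``follows immediately from the compactness of $\cB$ in $D(\cB\otimes\cB^{op})$,'' which is exactly the projection-formula step you isolate as the key point (perfectness of the diagonal bimodule lets $\bR\Hom_{\cB\otimes\cB^{op}}(\cB,-)$ commute with $\Ltens{\cB}\cB^*$). The additional bookkeeping of the four $\cB$-actions that you supply is sound but not needed beyond that observation.
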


\begin{proof}This follows immediately from the compactness of $\cB$ in $D(\cB\otimes \cB^{op}).$\end{proof}

\section{The case of an associative algebra}
\label{sec:assoc_algebras}

Let $A$ be an associative $k$-algebra. In this section we take a closer look at the DG algebra $\hhat{A}_{\infty}.$ For an element $a\in A$ we denote by $L_a:A\to A$ (resp. $R_a:A\to A$) the $\mk$-linear operator given by $L_a(a')=aa'$ (resp. $R_a(a')=a'a$).

\begin{prop}\label{prop:descr_of_H^0_neighborhood}The DG algebra $\hhat{A}_{\infty}$ has non-negative cohomology, and its zero-th cohomology algebra has the following description:
\begin{equation}\label{eq:H^0_of_the_neighborhood}H^0(\hhat{A}_{\infty})\cong \{\varphi\in\Hom_k(A,A)\mid \forall a\in A \,\,\rk[\varphi,R_a]<\infty\}/A\otimes A^*.
\end{equation}
The homomorphism $H^0(\bar{Y}):A\to H^0(\hhat{A}_{\infty})$ is given by $a\mapsto \bbar{L_a}.$\end{prop}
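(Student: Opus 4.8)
The statement concerns the DG algebra $\hhat{A}_{\infty} = C^{\bullet}(A^{op}, \Calk_{\mk}(A,A))$, which by Proposition \ref{prop:neighborhood_of_infty_Hochschild_cochains} is the Hochschild cochain complex of $A^{op}$ with coefficients in the $A^{op}$-bimodule $\Calk_{\mk}(A,A)$, where $A^{op}$ acts through the algebra homomorphism $A^{op}\to \End_{\mk}(A)\to \Calk_{\mk}(A,A)$ given by right multiplication. The first task is to write this Hochschild complex explicitly in low degrees. In degree $0$ it is $\Calk_{\mk}(A,A) = \End_{\mk}(A)/(A\otimes A^*)$; in degree $1$ it is $\Hom_{\mk}(A, \Calk_{\mk}(A,A))$, and the differential $d^0$ sends the class of $\varphi\in\End_{\mk}(A)$ to the map $a\mapsto \bbar{[\varphi, R_a]}$ (up to sign), using that $A^{op}$ acts by $R$. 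Hence $H^0(\hhat{A}_{\infty}) = \ker(d^0)$ is exactly the set of classes $\bbar{\varphi}$ with $[\varphi, R_a] \in A\otimes A^*$ (finite rank) for all $a$, modulo $A\otimes A^*$ — which is the right-hand side of \eqref{eq:H^0_of_the_neighborhood}. One subtlety: $d^0\bbar\varphi = 0$ a priori only says $[\varphi, R_a]$ has finite rank for each $a$ after passing to the quotient, but since $A\otimes A^*$ is precisely the finite-rank operators, this is literally the stated condition; and well-definedness is clear since $[A\otimes A^*, R_a]\subseteq A\otimes A^*$.

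**Non-negativity of cohomology.** Next I would check that $\hhat{A}_{\infty}$ is cohomologically concentrated in non-negative degrees. Since $A$ is an ordinary algebra in degree zero (with $d = 0$), the Hochschild cochain complex $C^{\bullet}(A^{op}, M)$ for any bimodule $M$ concentrated in degree zero lives in non-negative cohomological degrees by construction ($C^n = \Hom_{\mk}(A^{\otimes n}, M)$ sits in degree $n\ge 0$), and $\Calk_{\mk}(A,A)$ is concentrated in degree zero because $A$ is. So there is nothing to prove beyond this remark.

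**Identifying the map from $A$.** Finally, the homomorphism $H^0(\bbar{\bY}): A\to H^0(\hhat{A}_{\infty})$. By Proposition \ref{prop:neighborhood_of_infty_Hochschild_cochains}(1), the morphism $A\to \hhat{A}_{\infty}$ is the Hochschild $0$-cochain coming from the $A$-$A$-bimodule structure on $A$, i.e.\ it factors as $A\xto{\sim} C^{\bullet}(A^{op}, \End_{\mk}(A)) \to C^{\bullet}(A^{op}, \Calk_{\mk}(A,A))$, where $A\to \End_{\mk}(A)$ in degree zero is $a\mapsto L_a$ (left multiplication commutes with all right multiplications $R_b$, which is exactly the statement that $L_a$ is an $A^{op}$-module endomorphism, hence a $0$-cocycle). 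Passing to $H^0$ and to the Calkin quotient gives $a\mapsto \bbar{L_a}$, as claimed. I should also note $L_a$ indeed satisfies the membership condition in \eqref{eq:H^0_of_the_neighborhood} trivially, since $[L_a, R_b] = 0$ for all $b$.

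**Main obstacle.** The only genuinely delicate point is bookkeeping: matching the $A$-versus-$A^{op}$ conventions and the signs in the Hochschild differential so that the condition comes out as $\rk[\varphi, R_a]<\infty$ (with $R$, not $L$) and the map comes out as $a\mapsto\bbar{L_a}$ (with $L$, not $R$). Everything else — non-negativity, well-definedness of the quotient, and the identification of finite-rank operators with $A\otimes A^*$ — is immediate. I would organize the write-up so that a single careful computation of $d^0$ on $C^0 = \Calk_{\mk}(A,A)$ settles both the kernel description and (via the cocycle $L_a$) the formula for $H^0(\bbar{\bY})$.
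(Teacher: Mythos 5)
Your proof is correct and follows essentially the same route as the paper: both identify $\hhat{A}_{\infty}$ with $C^{\bullet}(A^{op},\Calk_{\mk}(A,A))$ via Proposition \ref{prop:neighborhood_of_infty_Hochschild_cochains}, deduce non-negativity from the complex being concentrated in degrees $\geq 0$, compute $H^0$ as the centralizer of the classes $\bbar{R_a}$ in $\Calk_{\mk}(A,A)$, and read off $H^0(\bbar{\bY})$ as $a\mapsto\bbar{L_a}$ from the bimodule structure. The paper's write-up is merely terser, phrasing the kernel of $d^0$ as "elements commuting with the projections of $R_a$" rather than writing out the differential explicitly.
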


\begin{proof}By Proposition \ref{prop:neighborhood_of_infty_Hochschild_cochains} 1), the graded cohomology algebra $H^{\bullet}(\hhat{A}_{\infty})$ is isomorphic to the graded algebra $HH^{\bullet}(A^{op},\Calk_{\mk}(A,A)).$ This immediately implies the vanishing of $H^{<0}(\hhat{A}_{\infty}).$ Furthermore, we have the injective homomorphism  $HH^0(A,\Calk_{\mk}(A,A))\hto\Calk_{\mk}(A,A)$ whose image consists of elements $\bbar{\varphi}\in\Calk_k(A,A)$ which commute with projections of $R_a\in\End_{\mk}(A)$ onto $\Calk_{\mk}(A,A),$ for all $a\in A.$ Clearly, the image of this homomorphism is identified with the RHS of \eqref{eq:H^0_of_the_neighborhood}. This proves the first assertion. The second assertion follows directly from Proposition \ref{prop:neighborhood_of_infty_Hochschild_cochains} 1).\end{proof}

The following lemma is somewhat trivial but it seems useful to formulate it for clarity.

\begin{lemma}\label{lem:suffices_on_generators} Let $S\subset A$ be any subset that generates $A$ as a $\mk$-algebra, and $\varphi:A\to A$ a $\mk$-linear operator. If $\rk [\varphi,R_s]<\infty$ for all $s\in S,$ then $\rk[\varphi,R_a]<\infty$ for all $a\in A.$\end{lemma}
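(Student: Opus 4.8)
The plan is to reduce the statement about an arbitrary $a\in A$ to a computation with products of generators, using the bilinearity of the commutator bracket in the second argument together with the multiplicativity relation $R_{a_1 a_2}=R_{a_2}R_{a_1}$. First I would observe that the set of elements $a\in A$ for which $\rk[\varphi,R_a]<\infty$ is a $\mk$-linear subspace of $A$: this is immediate since $R_{a+a'}=R_a+R_{a'}$, $R_{\lambda a}=\lambda R_a$, and the operators of finite rank form a subspace. Hence it suffices to prove that this subspace is closed under the multiplication of $A$, i.e. that if $\rk[\varphi,R_a]<\infty$ and $\rk[\varphi,R_b]<\infty$, then $\rk[\varphi,R_{ab}]<\infty$; combined with the hypothesis on $S$ and the fact that $S$ generates $A$ as a $\mk$-algebra, this gives the claim for all $a\in A$ by induction on word length in the generators.

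The multiplicativity step is the heart of the argument, and it is a short calculation: since $R_{ab}=R_b R_a$ (right multiplication by $ab$ sends $x\mapsto xab$, which is first right-multiply by $a$, then by $b$), we have
\begin{equation}
[\varphi,R_{ab}]=[\varphi,R_b R_a]=[\varphi,R_b]R_a+R_b[\varphi,R_a].
\end{equation}
Each of the two summands on the right is a composition of a finite-rank operator with a (bounded, i.e. everywhere-defined linear) operator, hence of finite rank; so $[\varphi,R_{ab}]$ has finite rank. Iterating, any element expressible as a $\mk$-linear combination of products $s_1\cdots s_n$ with $s_i\in S$ satisfies $\rk[\varphi,R_a]<\infty$, and since $S$ generates $A$ as a $\mk$-algebra every $a\in A$ is of this form.

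I do not anticipate a genuine obstacle here — the only point requiring minor care is the order-reversal $R_{ab}=R_bR_a$ (as opposed to $R_aR_b$), which is exactly why the hypothesis is phrased with right multiplications $R_a$ rather than left multiplications, and why the relevant structure is the $A^{op}$-action appearing in Proposition \ref{prop:neighborhood_of_infty_Hochschild_cochains}. One should also note that no finiteness or unitality assumption on $A$ is needed, and that the argument does not require $S$ to be finite. This completes the plan.
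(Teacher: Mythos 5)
Your argument is correct and coincides with the paper's proof: the identity $[\varphi,R_{ab}]=[\varphi,R_b]R_a+R_b[\varphi,R_a]$ (coming from $R_{ab}=R_bR_a$) is exactly the computation the paper uses, followed by the same linearity and induction on word length. No differences worth noting.
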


\begin{proof}Indeed, let us note that $[\varphi,R_{ab}]=[\varphi,R_b]R_a+R_b[\varphi,R_a].$ Thus, if both $[\varphi,R_a]$ and $[\varphi,R_b]$ have finite rank, then so does $[\varphi,R_{ab}].$ The lemma follows.\end{proof}

We recall the notion of formal smoothness (aka quasi-freeness) for an associative algebra, due to Cuntz and Quillen \cite{CQ}.

\begin{defi}\label{def:Quillen_smooth}\cite[Definition 3.3 and Proposition 6.1]{CQ} A $\mk$-algebra $A$ is formally smooth if the following equivalent conditions are satisfied:

$\rm{(i)}$ For any $\mk$-algebra $B$ and a nilpotent ideal $I\subset B,$ any homomorphism $f:A\to B/I$ can be lifted to a homomorphism $\tilde{f}:A\to B.$

$\rm{(ii)}$ The projective dimension of the diagonal bimodule $A\in A\text{-Mod-}A$ is at most $1.$

$\rm{(ii)}$ The bimodule of differentials $\Omega_A=\Ker(A\otimes A\xto{m}A)$ is projective.\end{defi}

\begin{prop}\label{prop:Quillen_smooth}Suppose that $A$ is formally smooth over $\mk.$ Then we have $H^{\ne 0}(\hhat{A}_{\infty})=0.$ Therefore, $A$ is quasi-isomorphic to the associative algebra $H^0(\hhat{A}_{\infty}),$ which was described in Proposition \ref{prop:descr_of_H^0_neighborhood}.\end{prop}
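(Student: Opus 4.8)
The plan is to show that formal smoothness of $A$ forces the higher Hochschild cohomology $HH^{>0}(A^{op},\Calk_{\mk}(A,A))$ to vanish, after which Proposition \ref{prop:neighborhood_of_infty_Hochschild_cochains} 1) and Proposition \ref{prop:descr_of_H^0_neighborhood} give the statement immediately. By Definition \ref{def:Quillen_smooth}(ii), the diagonal bimodule $A$ has projective dimension at most $1$ over $A\otimes A^{op}$, i.e.\ there is a short exact sequence of bimodules $0\to\Omega_A\to A\otimes A\to A\to 0$ with $\Omega_A$ projective, hence $A$ is a perfect $A\otimes A^{op}$-module concentrated in (bimodule-projective) degrees $0$ and $1$. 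In particular $A$ is smooth as a DG category, and for any DG bimodule (equivalently, any $\mk$-linear coefficient system) $M$ we have $HH^i(A,M)=\Ext^i_{A\otimes A^{op}}(A,M)=0$ for $i\geq 2$, with $HH^0,HH^1$ computed from the two-term complex $\Hom_{A\otimes A^{op}}(A\otimes A,M)\to\Hom_{A\otimes A^{op}}(\Omega_A,M)$.

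So the only nontrivial point is the vanishing of $HH^1(A^{op},\Calk_{\mk}(A,A))$. I would argue this directly using the lifting property Definition \ref{def:Quillen_smooth}(i). First recall that $HH^1$ of an algebra with coefficients in a bimodule classifies square-zero extensions, or equivalently: a class in $HH^1(A^{op},\Calk_{\mk}(A,A))$ is represented by a derivation $\delta:A^{op}\to\Calk_{\mk}(A,A)$ (with respect to the bimodule structure on $\Calk_{\mk}(A,A)$ coming from the right-multiplication action of $A^{op}$), and it is a coboundary precisely when $\delta$ is inner. Given such a $\delta$, consider the algebra $\widetilde B$ which is the extension of $A$ (acting on $A$ by right multiplication, i.e.\ the image of $A^{op}\to\End_\mk(A)\to\Calk_\mk(A,A)$) by the square-zero ideal $\Calk_\mk(A,A)$ determined by $\delta$; concretely $\widetilde B=\Calk_\mk(A,A)\rtimes_\delta A^{op}$, which sits in an exact sequence $0\to\Calk_\mk(A,A)\to\widetilde B\to A^{op}\to 0$. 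This is not literally a nilpotent-ideal situation because $\Calk_\mk(A,A)$ is not nilpotent as an ideal of $\widetilde B$ — but it is square-zero, which is exactly the case Cuntz--Quillen's (i) handles (one splits off a square-zero piece, and nilpotency is reduced to the square-zero case by devissage). Applying the lifting property to the identity map $A\to A^{op}\cong\widetilde B/\Calk_\mk(A,A)$ — more precisely to the canonical homomorphism $A^{op}\to\widetilde B/\Calk_\mk(A,A)$, using that $A^{op}$ is formally smooth iff $A$ is — yields an algebra section $A^{op}\to\widetilde B$ of the projection, and such a section is the same datum as an element of $\Calk_\mk(A,A)$ trivializing $\delta$, i.e.\ showing $\delta$ is inner. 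Hence $HH^1(A^{op},\Calk_\mk(A,A))=0$.

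Combining: $H^{\bullet}(\hhat A_\infty)\cong HH^\bullet(A^{op},\Calk_\mk(A,A))$ by Proposition \ref{prop:neighborhood_of_infty_Hochschild_cochains} 1); this vanishes in all degrees $\geq 2$ by projective dimension $\leq 1$ of the diagonal bimodule, and in degree $1$ by the lifting argument above; so $\hhat A_\infty$ is quasi-isomorphic to its zeroth cohomology $H^0(\hhat A_\infty)$, which is the algebra described explicitly in \eqref{eq:H^0_of_the_neighborhood}. The morphism $A\to\hhat A_\infty$ is then a quasi-isomorphism onto $H^0(\hhat A_\infty)$ since on $H^0$ it is the map $a\mapsto\bbar{L_a}$ and the only thing to check is that this identification of $A$ with $H^0(\hhat A_\infty)$ is compatible — but that is exactly the content of Proposition \ref{prop:descr_of_H^0_neighborhood} together with the fact that $H^{>0}$ now vanishes, so there is nothing further to prove.

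The main obstacle I anticipate is the $HH^1$-vanishing step: one must be careful that the relevant coefficient bimodule is $\Calk_\mk(A,A)$ with the \emph{correct} (one-sided, via $A^{op}$) module structure, that the derived Hochschild complex genuinely reduces to the two-term complex (using compactness/perfectness of the diagonal bimodule, as in Proposition \ref{prop:rewriting_Hochschild_cochains}), and above all that the square-zero extension $\widetilde B$ really does fall within the scope of Cuntz--Quillen's lifting criterion despite the defining ideal being non-nilpotent as a whole — the resolution being that square-zero is the base case of the dévissage proving (i). An alternative, perhaps cleaner, route avoiding this subtlety is to use (ii)$'$: since $\Omega_A$ is projective, $A$ is a summand of a bimodule built from $A\otimes A$, so $\bR\Hom_{A\otimes A^{op}}(A,\Calk_\mk(A,A))$ is computed by a complex of the form $\Hom_{A\otimes A^{op}}(\text{(proj.\ res.\ of length }1),\,\Calk_\mk(A,A))$, and one identifies its $H^0$ with the right-hand side of \eqref{eq:H^0_of_the_neighborhood} and its $H^1$ with a cokernel that one checks vanishes using that every $\mk$-linear derivation of $A$ into $\Calk_\mk(A,A)$ (a fortiori into $\End_\mk(A)$) is represented, modulo finite rank, by an inner one — which for $\Omega_A$ projective is again the formal-smoothness input. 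Either way the computation is routine once the vanishing of $HH^{\geq 1}$ is in hand.
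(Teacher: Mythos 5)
Your reduction to $HH^{\bullet}(A^{op},\Calk_{\mk}(A,A))$ and the vanishing of $HH^{\geq 2}$ from the projective-dimension bound on the diagonal bimodule are fine, but your argument for $HH^{1}(A^{op},\Calk_{\mk}(A,A))=0$ --- which is the crux --- does not work. A derivation $\delta$ is a Hochschild $1$-cocycle and does not determine a square-zero extension (that is what a $2$-cocycle does); the object $\Calk_{\mk}(A,A)\rtimes_{\delta}A^{op}$ you write down is, if it is anything, the trivial square-zero extension, whose projection always admits a section (namely $b\mapsto(0,b)$), so the Cuntz--Quillen lifting property yields no information. A section of the trivial extension is the same datum as a derivation, not as an element trivializing a given derivation; what you would need is that any two sections are conjugate by a unit of the form $1+m$, which is not what formal smoothness asserts. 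Indeed, if your argument were correct it would show that every derivation of a formally smooth algebra into an arbitrary bimodule is inner, which is false: $\mk[t]$ is formally smooth and $d/dt$ is a non-inner derivation into the diagonal bimodule, i.e. $HH^{1}(\mk[t],\mk[t])\neq 0.$ The vanishing of $HH^{1}$ here is special to the coefficient $\Calk_{\mk}(A,A).$

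The paper's proof instead uses the short exact sequence of coefficient bimodules $0\to A\otimes A^{*}\to\End_{\mk}(A)\to\Calk_{\mk}(A,A)\to 0,$ together with the observation that $C^{\bullet}(A^{op},\End_{\mk}(A))\simeq\bR\Hom_{A}(A,A)\simeq A$ is concentrated in degree $0$ (no smoothness needed), giving the exact triangle $C^{\bullet}(A^{op},A\otimes A^{*})\to A\to\hhat{A}_{\infty};$ formal smoothness kills $HH^{>1}(A^{op},A\otimes A^{*}),$ and the long exact sequence squeezes $H^{i}(\hhat{A}_{\infty})$ for $i\geq 1$ between $H^{i}(A)=0$ and $HH^{i+1}(A^{op},A\otimes A^{*})=0.$ Equivalently, in your language: a derivation $A^{op}\to\Calk_{\mk}(A,A)$ lifts to a derivation into $\End_{\mk}(A)$ because the obstruction lives in $HH^{2}(A^{op},A\otimes A^{*})=0$ (this is where formal smoothness actually enters in degree one), and every derivation into $\End_{\mk}(A)$ is inner since $HH^{1}(A^{op},\End_{\mk}(A))=H^{1}(A)=0.$ Your closing alternative sketch gestures at these ingredients (``a fortiori into $\End_{\mk}(A)$'') but does not assemble them, so as written the degree-one vanishing remains unproved.
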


\begin{proof}By Proposition \ref{prop:neighborhood_of_infty_Hochschild_cochains}, we have an exact triangle $$C^{\bullet}(A^{op},A\otimes A^*)\to A\to \hhat{A}_{\infty}$$ in $D(\mk).$ Since $A$ is formally smooth, we have $HH^{>1}(A^{op},A\otimes A^*)=0.$ From the long exact sequence in cohomology we see that $H^{\ne 0}(\hhat{A}_{\infty})=0.$\end{proof}

It is a pleasant exercise to compute the algebra $\hhat{A}_{\infty}$ in the following example.

\begin{example}\label{ex:computation_for_k[t]}Let us consider the case $A=k[t].$ Clearly, $A$ is formally smooth. By Theorems \ref{th:neighborhoods_and_D_sg} and \ref{th:main_theorem} we know that $\hhat{\mk[t]}_{\infty}\simeq \mk((t^{-1})).$ Let us construct an explicit identification of $\mk((t^{-1}))$ and the RHS of \eqref{eq:H^0_of_the_neighborhood} (for $A=k[t]$). For each $n\in \Z$ we define a linear operator $T_{n}:\mk[t]\to\mk[t]$ given by $$T_{n}(t^m)=\begin{cases}t^{m+n} & \text{for } m\geq \max(-n,0);\\
0 & \text{for }0\leq m\leq n-1.\end{cases}$$ 
Clearly, for $n\geq 0$ we have $T_n=L_{t^n}.$ We define a linear map $\phi:\mk((t^{-1}))\to\End_{\mk}(A)$ by the formula $\phi(\sum\limits_{-\infty}^{n=k}c_n t^n)(g)=\sum\limits_{-\infty}^{n=k}c_n T_n(g).$ The infinite sum in the RHS is well-defined since $T_n(g)=0$ for $n<-\deg(g).$ Moreover, for any $f\in\mk((t^{-1}))$ we have 
$\im([\phi(f),R_t])\subset \mk\subset \mk[t],$ hence $\rk[\phi(f),R_t]\leq 1<\infty.$ By Lemma \ref{lem:suffices_on_generators}, we have $\phi(f)\in\{\varphi\in\End_{\mk}(k[t])\mid\forall a\in \mk[t]\,\,\rk[\varphi,R_a]<\infty\}.$ Passing to the quotient by $\mk[t]\otimes \mk[t]^*$ we obtain a linear map from $k((t^{-1}))$ to the RHS of \eqref{eq:H^0_of_the_neighborhood}, which is easily checked to be an isomorphism of algebras.\end{example}

\begin{example}Similarly to Example \ref{ex:computation_for_k[t]} one can explicitly identify $\hhat{\mk[t^{\pm 1}]}_{\infty}$ with $\mk((t))\times\mk((t^{-1})).$\end{example}

\section{Example: affine curve}
\label{sec:affine_curve}

Let $C=\Spec B$ be a smooth connected affine curve over $\mk,$ and let $\bbar{C}\supset C$ be its (unique) smooth compactification. By Theorem \ref{th:compatibility} we have an isomorphism of algebras
$$\hhat{B}_{\infty}\cong\prod\limits_{p\in\bbar{C}-C}\hhat{K}_{\bbar{C},p},$$
where $\hhat{K}_{\bbar{C},p}$ denotes the complete local field at $p.$ We would like to describe the corresponding picture with almost DG $B$-modules.

\begin{prop}\label{prop:affine_curve_almost_modules}Let $X$ be a complete integral curve over $\mk$ (not necessarily smooth), and let $Y\subsetneq X$ be a nonempty open subscheme (hence affine). Then for any point $p\in X-Y$ the quotient space $\mk(X)/\cO_{X,p}\cong \hhat{K}_{X,p}/\hhat{\cO}_{X,p}$ is naturally an almost $\hhat{K}_{X,p}$-module, and by restriction of scalars also an almost $\mk(X)$-module and almost $\cO(Y)$-module. The natural map
\begin{equation}\label{eq:almost isomorphism_affine_curve}\cO(Y)\xto{u}\bigoplus\limits_{p\in X-Y}\mk(X)/\cO_{X,p}\end{equation} gives an isomorphism of almost $\cO(Y)$-modules.\end{prop}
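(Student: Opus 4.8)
The plan is to first make sense of the almost module structure, and then to verify the almost isomorphism locally and globally. For a fixed $p\in X-Y$, the completion $\hhat{K}_{X,p}$ acts on the quotient $\hhat{K}_{X,p}/\hhat{\cO}_{X,p}$ by multiplication; this action is well-defined because $\hhat{\cO}_{X,p}$ is a subring, and the key finiteness fact is that for any $f\in\hhat{K}_{X,p}$ the operator "multiplication by $f$" preserves $\hhat{\cO}_{X,p}$ up to a finite-dimensional $\mk$-space — more precisely, for $a\in\hhat{\cO}_{X,p}$ the commutator $[\cdot f,\cdot a]$ acting on $\hhat{K}_{X,p}/\hhat{\cO}_{X,p}$ vanishes identically, so this is in fact an honest module, hence a fortiori an almost module in the sense of Section \ref{sec:DG_cat_neighborhoods_main_result}. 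By restriction of scalars along $\cO(Y)\hookrightarrow\mk(X)\hookrightarrow\hhat{K}_{X,p}$ (the first inclusion using $Y$ affine integral, the second using that $p\notin Y$ so $\cO(Y)\subset\cO_{X,p}$), we get the asserted $\cO(Y)$-almost-module structures, and the finite direct sum over $p\in X-Y$ is again an almost $\cO(Y)$-module.

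Next I would analyze the map $u$. On each summand, $u$ is the composition $\cO(Y)\to\mk(X)\to\mk(X)/\cO_{X,p}\cong\hhat{K}_{X,p}/\hhat{\cO}_{X,p}$, where the last isomorphism is the standard one (the completion map induces $\mk(X)/\cO_{X,p}\xto{\sim}\hhat{K}_{X,p}/\hhat{\cO}_{X,p}$ since $\mk(X)+\hhat{\cO}_{X,p}=\hhat{K}_{X,p}$ and $\mk(X)\cap\hhat{\cO}_{X,p}=\cO_{X,p}$). To show $u$ is an isomorphism of almost $\cO(Y)$-modules, by the description of morphisms in $\Calk_{\mk}$ it suffices to show that $u$ is a $\mk$-linear map whose kernel and cokernel are finite-dimensional over $\mk$ (so that $u$ becomes invertible in $\bbar{\Calk}_{\mk}$), and that $u$ intertwines the $\cO(Y)$-actions up to finite rank — the latter is automatic since $u$ is genuinely $\cO(Y)$-linear. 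For the kernel and cokernel: the kernel of $u$ consists of $g\in\cO(Y)$ regular at every $p\in X-Y$, i.e. $g\in\Gamma(X,\cO_X)=\mk$ (using $X$ complete integral), so $\Ker u\cong\mk$. For the cokernel, the standard computation of $H^1$ of the adelic-type complex on a curve shows that $u$ is surjective precisely because $H^1(X,\cO_X)$ — no, rather: surjectivity of $u$ reduces to the statement that every element of $\bigoplus_p\mk(X)/\cO_{X,p}$, i.e. every rational function having prescribed polar parts at the points of $X-Y$, can be realized by a single element of $\cO(Y)=\Gamma(Y,\cO_X)$; this follows from the vanishing $H^1(X,\cO_X(nD))=0$ for $D$ supported on $X-Y$ and $n\gg0$ (Serre vanishing / Riemann–Roch on the complete curve), which gives surjectivity of $\Gamma(X,\cO_X(nD))\to\bigoplus_p\cO_{X,p}(nD)/\cO_{X,p}$ and hence, passing to the colimit over $n$, surjectivity of $u$.

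The main obstacle I anticipate is not any single step but rather bookkeeping the identification of $\mk(X)/\cO_{X,p}$ with $\hhat{K}_{X,p}/\hhat{\cO}_{X,p}$ compatibly with the module structures when $X$ is allowed to be singular at $p$ — there $\hhat{\cO}_{X,p}$ need not be a DVR and $\hhat{K}_{X,p}$ is the total ring of fractions of the completion, so one must check that $\mk(X)\otimes_{\cO_{X,p}}\hhat{\cO}_{X,p}\cong\hhat{K}_{X,p}$ and that the flatness of completion still yields the clean exact sequence $0\to\cO_{X,p}\to\mk(X)\oplus\hhat{\cO}_{X,p}\to\hhat{K}_{X,p}\to 0$. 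Granting this (which is a local statement on a $1$-dimensional noetherian local ring), the global argument via cohomology of line bundles on the complete curve $X$ goes through verbatim, and the finite-dimensionality of $\Ker u$ and $\coker u$ then yields that $u$ is an isomorphism in $\bbar{\Calk}_{\mk}$, hence an isomorphism of almost $\cO(Y)$-modules.
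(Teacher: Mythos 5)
Two steps in your argument are genuinely wrong, although both are repairable and your overall strategy (compute the kernel and cokernel of $u$ via cohomology on the complete curve) is the same as the paper's.

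First, $\hhat{K}_{X,p}/\hhat{\cO}_{X,p}$ is \emph{not} an honest $\hhat{K}_{X,p}$-module: for $f\notin\hhat{\cO}_{X,p}$ multiplication by $f$ does not preserve $\hhat{\cO}_{X,p}$ (already $t^{-1}\cdot 1\notin\mk[[t]]$), so it does not descend to an actual operator on the quotient, and your claim that the commutator vanishes "so this is in fact an honest module" is false — if it were, there would be nothing "almost" about the statement. You do record the correct finiteness fact ($f\hhat{\cO}_{X,p}$ is commensurable with $\hhat{\cO}_{X,p}$), but you must actually use it: the paper sets $I_{f,p}=\{g\in\hhat{\cO}_{X,p}\mid fg\in\hhat{\cO}_{X,p}\}$, notes that $\dim\hhat{\cO}_{X,p}/I_{f,p}<\infty$, and observes that multiplication by $f$ gives a genuine map $\hhat{K}_{X,p}/I_{f,p}\to\hhat{K}_{X,p}/\hhat{\cO}_{X,p}$, hence a well-defined class in $\End_{\Calk_{\mk}}(\hhat{K}_{X,p}/\hhat{\cO}_{X,p})$, multiplicative in $f$. (For the restricted $\cO(Y)$-action the module is honest, since $\cO(Y)\subset\cO_{X,p}$, so this error does not contaminate the second half of the proposition.)

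Second, $u$ is not surjective in general, and your Serre-vanishing argument proves the opposite of what you claim: from $0\to\cO_X\to\cO_X(nD)\to\cO_X(nD)/\cO_X\to 0$ and $H^1(X,\cO_X(nD))=0$ for $n\gg 0$, the long exact sequence shows that the cokernel of $\Gamma(X,\cO_X(nD))\to H^0(\cO_X(nD)/\cO_X)$ is all of $H^1(X,\cO_X)$, not zero; passing to the colimit, $\coker(u)\cong H^1(X,\cO_X)$, which is nonzero whenever $X$ has positive arithmetic genus (for $X$ elliptic and $Y=X\setminus\{p\}$, Riemann--Roch shows functions regular off $p$ realize only a codimension-one space of polar parts at $p$). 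What is actually needed for an isomorphism of almost modules is only that $\Ker(u)$ and $\coker(u)$ be finite-dimensional, and this is how the paper argues: it exhibits the acyclic resolution $0\to\cO_X\to j_*\cO_Y\to\bigoplus_{p}i_{p*}(\mk(X)/\cO_{X,p})\to 0$ and reads off $\Ker(u)\cong H^0(X,\cO_X)$ and $\coker(u)\cong H^1(X,\cO_X)$, both finite-dimensional because $X$ is complete. If you replace your surjectivity claim by this computation of the cokernel, the rest of your argument goes through.
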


\begin{proof}First we define an almost action of 
$\hhat{K}_{X,p}$ on $\hhat{K}_{X,p}/\hhat{\cO}_{X,p}.$ 
Let $f\in\hhat{K}_{X,p}$ be an element. We have 
an ideal $I_{f,p}\subset \hhat{\cO}_{X,p}$ consisting of elements $g$ such that $fg\in\hhat{\cO}_{X,p}.$ Clearly, $\dim(\hhat{\cO}_{x,p}/I_{f,p})<\infty,$ hence the morphism $\hhat{K}_{X,p}/I_{f,p}\xto{f}\hhat{K}_{X,p}/\cO_{X,p}$ (multiplication by $f$) gives a well defined element of $\End_{\Calk_{\mk}}(\mk(X)/\cO_{x,p}).$ The resulting map $\rho_p:\mk(X)\to \End_{\Calk_{\mk}}(\mk(X)/\cO_{X,p})$ is a homomorphism of algebras.

Now we prove the second assertion. By definition of almost $\cO(Y)$-actions, the map \eqref{eq:almost isomorphism_affine_curve} is almost $\cO(Y)$-linear. It remains to show that it has finite-dimensional kernel and cokernel. In fact we have $\Ker(u)\cong H^0(X,\cO_X),$ $\coker(u)\cong H^1(X,\cO_X),$ which follows from the acyclic resolution
$$0\to\cO_X\to j_*\cO_Y\to \bigoplus\limits_{p\in X-Y}i_{p*}\mk(X)/\cO_{x,p}\to 0.$$ Here $j:Y\to X$ and $i_p:\Spec(\cO_{X,p})\to X$ are the natural morphisms. Since $X$ is complete, we have $\dim H^{\bullet}(X,\cO_X)<\infty.$ This proves the proposition.\end{proof}

Returning to our situation, the almost action of $\prod\limits_{p\in\bbar{C}-C}\hhat{K}_{p}$ on $\cO(C)$ is given by the almost isomorphism \eqref{eq:almost isomorphism_affine_curve} (for $Y=C,$ $X=\bbar{C}$), and the almost actions of $\hhat{K}_{\bbar{C},p}$ on $\mk(\bbar{C})/\cO_{\bbar{C},p}.$ 

The following describes our construction for the algebra of rational functions on a curve.

\begin{prop}\label{prop:rational functions and adeles} Let $X$ be a smooth connected complete curve. Then we have a natural isomorphism $\hhat{\mk(X)}_{\infty}\cong \A_X,$ where $\A_X=\prod'\limits_{x\in X^{cl}}\hhat{K}_{X,x}$ is the algebra of adeles on $X.$\end{prop}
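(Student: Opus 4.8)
The plan is to combine the general machinery of Section \ref{sec:DG_cat_neighborhoods_main_result} with the classical description of the adele ring. Since $X$ is a smooth connected complete curve, the field $\mk(X)$ is (a field, hence) formally smooth over $\mk$, so by Proposition \ref{prop:Quillen_smooth} the DG algebra $\hhat{\mk(X)}_{\infty}$ is concentrated in degree zero and equals $H^0(\hhat{\mk(X)}_{\infty})$, which by Proposition \ref{prop:descr_of_H^0_neighborhood} is
$$\{\varphi\in\End_{\mk}(\mk(X))\mid \forall f\in\mk(X)\ \rk[\varphi,R_f]<\infty\}/\mk(X)\otimes\mk(X)^*,$$
with $\mk(X)$ mapping in via $f\mapsto\bbar{L_f}$. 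So the task reduces to identifying this explicit algebra with $\A_X$.

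First I would construct the map $\A_X\to H^0(\hhat{\mk(X)}_{\infty})$. For each closed point $x$ let $\rho_x:\mk(X)\to\End_{\Calk_{\mk}}(\mk(X)/\cO_{X,x})$ be the almost action from Proposition \ref{prop:affine_curve_almost_modules} (applied with $Y$ any affine open omitting $x$), extended to $\hhat{K}_{X,x}$ as in that proof. Given an adele $(f_x)_x\in\A_X$ (so $f_x\in\hhat{\cO}_{X,x}$ for all but finitely many $x$), I want to produce an element of $\End_{\mk}(\mk(X))$ modulo finite rank whose "local component" at each $x$ is multiplication by $f_x$. The cleanest route is to pick a nonempty affine open $Y=\Spec B\subsetneq X$ and use the almost isomorphism \eqref{eq:almost isomorphism_affine_curve}: $\cO(Y)\simeq\bigoplus_{x\in X-Y}\mk(X)/\cO_{X,x}$ as almost $\cO(Y)$-modules. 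Since $\hhat{B}_{\infty}\cong\prod_{x\in\bbar Y-Y}\hhat{K}_{X,x}$ by Theorem \ref{th:compatibility} (where $\bbar Y=X$), an adele restricts to give, on the finite set $X-Y$, an element of $\prod_{x\in X-Y}\hhat{K}_{X,x}$ acting on $\bigoplus_{x\in X-Y}\mk(X)/\cO_{X,x}\simeq B$, i.e. an element of $\End_{\Calk_{\mk}}(B)=\End_{\Calk_{\mk}}(\mk(X))$ (the last identification because $B$ and $\mk(X)$ differ by a countable-dimensional space, but one must be careful: they are genuinely different $\mk$-vector spaces, so I should rather regard $\mk(X)=\colim_Y \cO(Y)$ and check that the actions are compatible as $Y$ shrinks, which is where the "restricted product" condition $f_x\in\hhat{\cO}_{X,x}$ a.e. enters, guaranteeing that shrinking $Y$ only changes the operator by finite rank). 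This produces the homomorphism $\A_X\to H^0(\hhat{\mk(X)}_{\infty})$, and one checks it is compatible with the diagonal embeddings $\mk(X)\hto\A_X$ and $\mk(X)\to H^0(\hhat{\mk(X)}_{\infty})$, since the diagonal adele of $g\in\mk(X)$ acts on each $\mk(X)/\cO_{X,x}$ by multiplication by $g$, which assembles to $L_g$ up to finite rank by the exact sequence in the proof of Proposition \ref{prop:affine_curve_almost_modules}.

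Then I would prove this map is an isomorphism. Injectivity: if an adele $(f_x)$ maps to something of finite rank, then for $Y$ small enough it already acts by a finite-rank operator on $\cO(Y)\simeq\bigoplus_{x\in X-Y}\mk(X)/\cO_{X,x}$; projecting to a single summand and using that $\mk(X)/\cO_{X,x}$ is a faithful $\hhat K_{X,x}$-module with $\hhat K_{X,x}$-action by multiplication having infinite-dimensional image unless the element is zero, we get $f_x=0$ for $x\in X-Y$, and letting $Y$ shrink gives $f_x=0$ for all $x$. Surjectivity is the main obstacle. Given $\varphi\in\End_{\mk}(\mk(X))$ with $\rk[\varphi,R_f]<\infty$ for all $f$, I must show $\varphi$ is, modulo finite rank, an adelic multiplication operator. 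The strategy: for a uniformizer $\pi$ at a point $x$, the condition $\rk[\varphi,R_\pi]<\infty$ forces $\varphi$ to preserve, up to finite-dimensional error, the filtration of $\mk(X)$ by order of pole at $x$; a standard argument (cf. Tate \cite{Ta}, or the usual proof that the Calkin-type algebra of $\mk((t))$ with its filtration is itself) then shows the induced endomorphism of the associated graded, hence of $\mk(X)/\cO_{X,x}$, commutes with the $\hhat K_{X,x}$-action up to finite rank, so is given by multiplication by some $f_x\in\hhat K_{X,x}$; running this over all $x$ and checking that $f_x\in\hhat\cO_{X,x}$ for almost all $x$ (because $\varphi$ globally, restricted to a cofinite-codimension subspace, must preserve some $\cO(Y)$ up to finite rank — here one uses that $[\varphi,R_f]$ being finite rank for the finitely many generators $f$ of $B=\cO(Y)$ suffices, by Lemma \ref{lem:suffices_on_generators}) produces the adele $(f_x)$, and $\varphi-\sum_x f_x$ is shown to have finite rank by the almost isomorphism \eqref{eq:almost isomorphism_affine_curve}. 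The hard part will be the local analysis at each point extracting $f_x$ from the commutation relations and, globally, controlling that the resulting collection is a genuine adele (the "almost all $f_x$ integral" condition); this is essentially the content of the relation with Tate's work mentioned in Subsection \ref{ssec:residues_etc}, and I expect the write-up to lean on a lemma of the form "an endomorphism of $\hhat K_{X,x}/\hhat\cO_{X,x}$ commuting up to finite rank with all multiplications is multiplication by an element of $\hhat K_{X,x}$ up to finite rank," proved by the filtration-grading argument.
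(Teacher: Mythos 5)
Your first two steps are sound in substance: $\mk(X)$ is indeed quasi-free in the Cuntz--Quillen sense, so Propositions \ref{prop:Quillen_smooth} and \ref{prop:descr_of_H^0_neighborhood} reduce the statement to identifying $\A_X$ with the algebra of $\varphi\in\End_{\mk}(\mk(X))$ having $\rk[\varphi,R_f]<\infty$ for all $f,$ modulo finite rank. (One quibble: ``a field, hence formally smooth'' is not a valid justification --- $\mk(x,y)$ is a field and is \emph{not} quasi-free; what you need is that $\mk(X)$ has transcendence degree $1,$ so that $\Omega_{\mk(X)}$ is a projective bimodule.) But this reduction makes the surjectivity of the map $\A_X\to H^0(\hhat{\mk(X)}_{\infty})$ the entire content of the proposition, and that is exactly the step you do not carry out. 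You defer it to ``a standard argument (cf.\ Tate)'' and to a local lemma you ``expect the write-up to lean on.'' Concretely, two things are missing: (a) the local statement that the commutant of $\hhat{K}_{X,x}$ in $\End_{\Calk_{\mk}}(\hhat{K}_{X,x}/\hhat{\cO}_{X,x})$ modulo finite rank is $\hhat{K}_{X,x}$ itself; and (b), more seriously, the global step --- that an arbitrary $\varphi$ satisfying the finite-rank commutator conditions decomposes, modulo finite rank, into a family of local operators along \eqref{eq:rational functions_and_principal_parts} with almost all components integral. Your parenthetical claim that such a $\varphi$ ``must preserve some $\cO(Y)$ up to finite rank'' is precisely where the restricted-product condition has to come from, and it is asserted, not proved; it does not follow formally from Lemma \ref{lem:suffices_on_generators}. (Your construction of the map $\A_X\to H^0(\hhat{\mk(X)}_{\infty})$ via shrinking affine opens is also more delicate than via the single almost isomorphism \eqref{eq:rational functions_and_principal_parts} over all closed points, which is what the paper uses in the discussion following the proposition.)

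The paper avoids this computation entirely by staying categorical: it takes the short exact sequence $\Perf_{\tors}(X)\to\Perf(X)\to\Perf(\mk(X))$ as the ``compactification,'' computes $\cH_{\tors}(\cO_X)\cong\bigoplus_{x}i_{x*}(\mk(X)/\cO_{X,x})[-1]$ so that $\bR\End(\cH_{\tors}(\cO_X))\cong\prod_x\hhat{\cO}_{X,x},$ and then observes that passing to the quotient by the objects $\mk(x)$ inverts the factors at finitely many points at a time, giving $\colim_S\A_{X,S}=\A_X.$ The restricted product thus emerges from the structure of the Verdier quotient rather than from an analysis of commutators. If you want to keep your explicit route, you must actually prove the commutant computation --- essentially Tate's linear algebra of the spaces $\hhat{K}_{X,x}$ --- which is a genuine piece of work, not a citation.
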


\begin{proof}We have a smooth compactification, given by a short exact sequence of DG categories $\Perf_{\tors}(X)\to \Perf(X)\to \Perf(\mk(X)),$ where $\Perf_{\tors}(X)$ denotes the full subcategory of perfect complexes with torsion cohomology. We also denote by $D_{tors}(X)\subset D(X)$ a similar subcategory in the large derived category. We have a functor $\cH_{\tors}:D(X)\to D_{\tors}(X),$ which is right adjoint to the inclusion. It is easy to see that $$\cH_{\tors}(\cO_X)\cong \mk(X)/\cO_X[-1]\cong\bigoplus\limits_{x\in X^{cl}}i_{x*}(\mk(X)/\cO_{X,p})[-1].$$ Therefore, $\bR\End(\cH_{\tors}(\cO_X))\cong\A_X^0:=\prod\limits_{x\in X^{cl}}\hhat{\cO}_{X,x}.$ To obtain $\hhat{\mk(X)}_{\infty},$ we need to take endomorphisms of (the projection of) $\A_X^0$ in the quotient of $\Perf(\A_X^0)$ by all the modules $\mk(x),$ $x\in X^{cl}.$

For a finite subset $S\in X^{cl}$ we have $$\bR\End_{\Perf(\A_X^0)/\langle\mk(x)\rangle_{x\in S}}(\A_X^0)\cong \A_{X,S}=\prod\limits_{x\in S}\hhat{K}_{X,x}\times \prod\limits_{x\in X^{cl}-S}\hhat{\cO}_{X,x}.$$ We conclude that $\hhat{\mk(X)}_{\infty}\cong \colim_{S}\A_{X,S}=\A_X.$\end{proof}

Again, we would like to describe explicitly the action of $\A_X$ on $\mk(X)$ as an almost $\mk(X)$-module. Arguing as in Proposition \ref{prop:affine_curve_almost_modules}, we obtain an almost $\mk(X)$-module structure on the direct sum $\bigoplus\limits_{x\in X^{cl}}\mk(X)/\cO_{X,p},$ and an isomorphism of almost $\mk(X)$-modules
\begin{equation}\label{eq:rational functions_and_principal_parts}\mk(X)\xto{\sim} \bigoplus\limits_{x\in X^{cl}}\hhat{K}_{X,x}/\hhat{\cO}_{X,x}.\end{equation} The almost action of $\A_X$ on the RHS of \eqref{eq:rational functions_and_principal_parts} is described as follows. Let $f=\{f_x\}_{x\in X^{cl}}\in\A_X$ be an element. Let us put $S_f:=\{x\in X^{cl}\mid f_x\not\in\hhat{\cO}_{X,x}\}$ (so that $S_f$ is finite). Componentwise multiplications by $f_x$ give a well-defined linear map $$\bigoplus\limits_{x\in S}\hhat{K}_{X,x}/f_x^{-1}\hhat{\cO}_{X,x}\oplus\bigoplus\limits_{x\in X^{cl}-S}\hhat{K}_{X,x}/\hhat{\cO}_{X,x}\to \bigoplus\limits_{x\in X^{cl}}\hhat{K}_{X,x}/\hhat{\cO}_{X,x}.$$ Since $\dim (\hhat{\cO}_{X,x}/f_x^{-1}\hhat{\cO}_{X,x})<\infty$ for $x\in S,$ we obtain a well-defined element of $\End_{\Calk_{\mk}}(\bigoplus\limits_{x\in X^{cl}}\hhat{K}_{X,x}/\hhat{\cO}_{X,x}).$ Moreover, this element commutes with the almost action of $\mk(X)$ on $\bigoplus\limits_{x\in X^{cl}}\hhat{K}_{X,x}/\hhat{\cO}_{X,x}.$ The resulting map $$\A_X\to\End_{\bR\un{\Hom}(\mk(X)^{op},Calk_{\mk})}(\bigoplus\limits_{x\in X^{cl}}\hhat{K}_{X,x}/\hhat{\cO}_{X,x})\cong \End_{\bR\un{\Hom}(\mk(X)^{op},Calk_{\mk})}(\mk(X))$$ is an isomorphism of algebras.  

\begin{prop}\label{prop:topological_perf_over_adeles}In the above notation, the triangulated category $[\Perf_{top}(\hhat{\mk(X)}_{\infty})]$ is equivalent to the Verdier quotient of $C=\prod\limits_{x\in X^{cl}}D_{\perf}(\hhat{\cO}_{X,x})$ by the subcategory formed by $\mk(x)\in D_{\perf}(\hhat{\cO}_{X,x})\subset C.$ In particular, $\Perf_{top}(\hhat{\mk(X)}_{\infty})\ne \Perf_{alg}(\hhat{\mk(X)}_{\infty})\simeq \Perf(\A_X).$\end{prop}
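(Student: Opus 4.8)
The plan is to combine Theorems \ref{th:neighborhoods_and_D_sg} and \ref{th:main_theorem} with the short exact sequence $\Perf_{\tors}(X)\to\Perf(X)\to\Perf(\mk(X))$ already used in the proof of Proposition \ref{prop:rational functions and adeles} (here $\Perf(X)$ is smooth and proper, $X$ being a smooth projective curve). This identifies $\Perf_{top}(\hhat{\mk(X)}_\infty)\simeq\bbar{\mD_{sg}}(\cS)$ with $\cS=\Perf_{\tors}(X)$, the equivalence carrying $\bbar{\bY}^*$ to $\bbar{\cH_{\cS}}$ via the commuting square \eqref{eq:compatibility_of_restrictions}. The geometric input is that torsion perfect complexes on a curve split by support with no morphisms between distinct points, so $\cS$ is Morita equivalent to the coproduct $\bigoplus_{x\in X^{cl}}\Perf_{\{x\}}(X)$. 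First I would record the elementary fact that $\Mod_{\bigoplus_x\cA_x}=\prod_x\Mod_{\cA_x}$, hence $\PsPerf(\bigoplus_x\cA_x)=\prod_x\PsPerf(\cA_x)$ (pseudo-perfectness is tested one object at a time), while $\Perf(\bigoplus_x\cA_x)=\bigoplus^{\mathrm{fin}}_x\Perf(\cA_x)$, since a compact object of a product is supported on finitely many factors. Thus $\mD_{sg}(\cS)$ is the Verdier quotient of $\prod_x\PsPerf(\Perf_{\{x\}}(X))$ by the thick subcategory generated by the union of the $\Perf(\Perf_{\{x\}}(X))$.

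Next I would identify the local factors. Since $\Perf_{\{x\}}(X)=\Perf_Z(X)$ for $Z$ the reduced point $\{x\}$ and $X$ is smooth and proper, Theorem \ref{th:neighborhoods_and_D_sg} gives $\PsPerf(\Perf_{\{x\}}(X))\simeq\Perf(\hhat{X}_{\{x\}})$, compatibly with the inclusion of $\Perf_Z(X)$ and with the triangle \eqref{eq:compatibility_restrictions_geometric}; restricting to an affine chart $U=\Spec A\ni x$ and applying Appendix \ref{app:formal_affine} (Proposition \ref{prop:completion_affine}) gives $\Perf(\hhat{X}_{\{x\}})=\Perf(\hhat{A}_{\m_x})=D_{\perf}(\hhat{\cO}_{X,x})$. (Equivalently, $\Perf_{\{x\}}(X)$ is generated by the skyscraper $\mk(x)$ with $\bR\End=\Lambda_{\mk_x}(\xi)$, $|\xi|=1$, whose Koszul dual is $\hhat{\cO}_{X,x}$ in degree zero.) Under this equivalence the subcategory $\Perf(\Perf_{\{x\}}(X))$ — the thick closure of the representables, i.e.\ of $\Perf_Z(X)$ itself — corresponds to the thick closure of the pullback of $\Perf_Z(X)$, which is $\Perf_Z(\hhat{X}_{\{x\}})=\langle\mk(x)\rangle\subset D_{\perf}(\hhat{\cO}_{X,x})$, and $\cH_{\cS}(\cO_X)$ corresponds to $\cO_{\hhat{X}_{\{x\}}}=\hhat{\cO}_{X,x}$. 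Assembling the factors, $[\Perf_{top}(\hhat{\mk(X)}_\infty)]=[\bbar{\mD_{sg}}(\cS)]$ is the idempotent completion of the Verdier quotient of $C=\prod_x D_{\perf}(\hhat{\cO}_{X,x})$ by the thick subcategory generated by the objects $\mk(x)$; since $D_{sg}$-type quotients need not be idempotent complete, the statement is to be read up to Karoubi completion (or one checks it is superfluous here), and $\bbar{\bY}^*$ sends $\mk(X)=$ (image of $\cO_X$) to the image of the diagonal object $(\hhat{\cO}_{X,x})_x$.

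For the remaining assertion, $\Perf_{alg}(\hhat{\mk(X)}_\infty)=\Perf(\hhat{\mk(X)}_\infty)$ and the essential image $\hhat{\mk(X)}_\infty$ has endomorphism algebra $\A_X$ (Proposition \ref{prop:rational functions and adeles}), concentrated in degree $0$, so $\Perf_{alg}(\hhat{\mk(X)}_\infty)\simeq\Perf(\A_X)$. To see that $\Perf_{alg}\subsetneq\Perf_{top}$ I would introduce on $\PsPerf(\cS)=\prod_x D_{\perf}(\hhat{\cO}_{X,x})$ the invariant $\ell\bigl((N_x)_x\bigr):=\sup_x\dim_{\mk_x}\!\bigl(N_x\Ltens{\hhat{\cO}_{X,x}}\mk_x\bigr)\in\N\cup\{\infty\}$: it is shift-invariant, subadditive on triangles, and nondecreasing under passage to direct summands. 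Every object of $\Perf(\cS)=\bigoplus^{\mathrm{fin}}_x\langle\mk(x)\rangle$ has finite $\ell$, so the property "$\ell<\infty$'' is well defined on $\bbar{\mD_{sg}}(\cS)=\Perf_{top}$. The generator $\bbar{\bY}(\mk(X))$, being the image of $(\hhat{\cO}_{X,x})_x$, has $\ell=1$; hence every object of $\Perf_{alg}$, built from it in finitely many steps (shifts, finite sums, cones, summands), has finite $\ell$. On the other hand, for any unbounded function $x\mapsto r_x$ on $X^{cl}$ the object $(\hhat{\cO}_{X,x}^{\oplus r_x})_x\in\PsPerf(\cS)$ has $\ell=\infty$ and is not in $\Perf(\cS)$, so its image is a nonzero object of $\Perf_{top}$ not lying in $\Perf_{alg}$; this proves $\Perf_{top}\ne\Perf_{alg}$.

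The main obstacle is the bookkeeping of the second paragraph: checking, coherently across all the equivalences in play (Theorems \ref{th:perf_of_formal_is_T_Z}, \ref{th:neighborhoods_and_D_sg}, \ref{th:main_theorem}, the identification $\PsPerf(\bigoplus_x\cA_x)=\prod_x\PsPerf(\cA_x)$, and Appendix \ref{app:formal_affine}), that $\Perf(\Perf_{\{x\}}(X))$ corresponds exactly to $\langle\mk(x)\rangle$ and that $\bbar{\bY}^*(\mk(X))$ corresponds to $(\hhat{\cO}_{X,x})_x$; once this is in place, the computation of $\Perf_{alg}$ and the $\ell$-invariant argument are routine.
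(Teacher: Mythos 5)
Your identification of $[\Perf_{top}(\hhat{\mk(X)}_{\infty})]$ follows essentially the paper's route: the short exact sequence $\Perf_{\tors}(X)\to\Perf(X)\to\Perf(\mk(X))$, Theorem \ref{th:main_theorem}, the decomposition $\Perf_{\tors}(X)\simeq\bigoplus_{x}\Perf_{\{x\}}(X)$, and the identification of each local factor $D_{\pspe}(\Perf_{\{x\}}(X))$ with $D_{\perf}(\hhat{\cO}_{X,x})$ carrying $\cO_x$ to $\mk(x)$. Two points of comparison. First, where you say the statement is to be read up to Karoubi completion ``or one checks it is superfluous here,'' the paper does check it, and by a quicker route than chasing the equivalences: $K_{-1}(\cS)=K_{-1}(\Coh_{\tors}(X))=0$, hence $\bbar{\mD_{sg}}(\cS)\simeq\mD_{sg}(\cS)$ and no completion is needed; since the proposition asserts equivalence with the Verdier quotient itself (not its idempotent completion), you should supply this or an equivalent argument rather than leave it parenthetical. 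Second, your proof that $\Perf_{top}\ne\Perf_{alg}$ via the invariant $\ell$ (unbounded fiber ranks, with the observation that finiteness of $\ell$ descends to the quotient and is preserved by cones, shifts and summands) is a genuinely different and complete argument from the one the paper records in Remark \ref{rem:more_on_perf_top_adeles}, which instead takes $\bigoplus_x\mk(X)/\cO_{X,x}[s(x)]$ for an unbounded function $s$ and notes that this object is cohomologically unbounded; both work, and yours has the mild advantage of exhibiting a non-algebraizable object concentrated in a single cohomological degree.
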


\begin{proof}Keeping notation of the proof of Proposition \ref{prop:rational functions and adeles}, let us put $\cS:=\Perf_{\tors}(X).$ By Theorem \ref{th:main_theorem} we have $\Perf_{top}(\hhat{\mk(X)}_{\infty})\simeq \bbar{\mD_{sg}}(\cS).$ Since $K_{-1}(\cS)=K_{-1}(\Coh_{\tors}(X))=0,$ we have $\bbar{\mD_{sg}}(\cS)\simeq \mD_{sg}(\cS).$ We have a quasi-equivalence $\bigoplus\limits_{x\in X^{cl}}\Perf_{\{x\}}(X)\simeq \Perf_{\tors}(X).$ It follows that $D_{\pspe}(\cS)\simeq \prod\limits_{x\in X^{cl}} D_{\pspe}(\Perf_{\{x\}}(X))\simeq \prod\limits_{x\in X^{cl}}D_{\perf}(\hhat{\cO}_{X,x})=C.$ Under this identification, the skyscraper sheaf $\cO_x$ corresponds to $\mk(x)\in C.$ This proves the proposition.\end{proof}

\begin{remark}\label{rem:more_on_perf_top_adeles} Proposition \ref{prop:topological_perf_over_adeles} can be illustrated explicitly by the following example. Let $s:X^{cl}\to\Z$ be any unbounded set-theoretic function.  The direct sum $\bigoplus\limits_{x\in X^{cl}}\mk(X)/\cO_{X,x}[s(x)]$ is naturally a perfect almost DG $\mk(X)$-module (with zero differential), but as an object of $[\Calk_{\mk}]$ it is unbounded, i.e. it is not contained in $D^b(\Vect_{\mk})/D_{\perf}(\mk).$ In particular, this almost DG module is not in $\Perf_{alg}(\hhat{\mk(X)}_{\infty}).$ 

This example shows that for a smooth cohomologically bounded DG algebra $B$ there might exist unbounded perfect almost DG $B$-modules.\end{remark}

\section{Example: smooth affine variety}
\label{sec:smooth_affine_variety}

Let $X=\Spec B$ be a smooth connected affine variety over $\mk,$ $\dim X\geq 2.$ We would like to describe the cohomology of the DG algebra $\hhat{B}_{\infty}.$

\begin{prop}\label{prop:smooth_affine_var_neighborhood_infty}We have an isomorphism of graded algebras $H^{\bullet}(\hhat{B}_{\infty})\cong B\oplus (\Omega^d_B)^*[1-d]$ (with the standard $B-B$-bimodule structure on $(\Omega^d_B)^*$).\end{prop}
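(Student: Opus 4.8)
The plan is to compute the cohomology of $\hhat{B}_{\infty}$ by analyzing the fiber of the canonical morphism $B\to\hhat{B}_{\infty}$; the dimension hypothesis $d\ge 2$ is exactly what forces this fiber to sit in a single cohomological degree far from $0$, so that the associated long exact sequence degenerates.

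First I would apply Proposition~\ref{prop:rewriting_Hochschild_cochains} to the (ordinary, hence DG) algebra $B$: it identifies $\operatorname{Fiber}(B\to\hhat{B}_{\infty})$, as an object of $D(B\otimes B^{op})$, with $B^{!}\Ltens{B}B^{*}$, where $B^{*}=\Hom_{\mk}(B,\mk)$ and $B^{!}=\bR\Hom_{B\otimes B^{op}}(B,\,B\otimes B)$ is the inverse dualizing bimodule. The next step is to compute $B^{!}$. Since $X=\Spec B$ is smooth and connected it is integral of pure dimension $d$, so $\Omega_{B}^{1}$ is projective of rank $d$ and $\omega_{B}:=\Omega_{B}^{d}=\wedge^{d}\Omega_{B}^{1}$ is an invertible $B$-module. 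The diagonal $\Delta\colon X\hto X\times X$ is a regular closed immersion of codimension $d$ with $\wedge^{d}N_{\Delta}\cong\omega_{X}^{-1}$, so by the standard computation of the inverse dualizing complex of a smooth affine variety (via a Koszul resolution of the diagonal bimodule, or via Grothendieck duality along $\Delta$) one has $\Ext^{i}_{B\otimes B^{op}}(B,\,B\otimes B)=0$ for $i\ne d$ and $\cong\omega_{B}^{-1}=\Hom_{B}(\omega_{B},B)$ for $i=d$; that is, $B^{!}\simeq\omega_{B}^{-1}[-d]$, where $\omega_{B}^{-1}$ carries its standard (central, as $B$ is commutative) $B$-bimodule structure.

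Granting this, $\omega_{B}^{-1}$ is flat, so the derived tensor product agrees with the ordinary one and $B^{!}\Ltens{B}B^{*}\simeq\bigl(\omega_{B}^{-1}\tens{B}\Hom_{\mk}(B,\mk)\bigr)[-d]$. Because $\omega_{B}$ is invertible, the natural $B$-bilinear map $\omega_{B}^{-1}\tens{B}\Hom_{\mk}(B,\mk)\to\Hom_{\mk}(\omega_{B},\mk)=(\Omega_{B}^{d})^{*}$, $\xi\otimes\phi\mapsto\phi\circ\xi$, is an isomorphism (check it Zariski-locally where $\omega_{B}$ is free). Hence $\operatorname{Fiber}(B\to\hhat{B}_{\infty})$ is quasi-isomorphic to the $B$-bimodule $(\Omega_{B}^{d})^{*}$ placed in cohomological degree $d$, and I feed this into the long exact sequence of the triangle
\[
(\Omega_{B}^{d})^{*}[-d]\;\longrightarrow\;B\;\longrightarrow\;\hhat{B}_{\infty}\;\longrightarrow\;(\Omega_{B}^{d})^{*}[1-d].
\]
Since $B$ is concentrated in degree $0$ and the fiber in degree $d$, and since $d\ge 2$ keeps these apart (the fiber contributes nothing in degrees $0$ or $1$, and $d-1\ge 1$), the sequence collapses to $H^{0}(\hhat{B}_{\infty})\cong B$ (induced by the structure map), $H^{d-1}(\hhat{B}_{\infty})\cong(\Omega_{B}^{d})^{*}$ (induced by the connecting map, hence an isomorphism of $B$-bimodules), and $H^{i}(\hhat{B}_{\infty})=0$ for all other $i$.

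It remains to read off the algebra structure. The map $B\to\hhat{B}_{\infty}$ is a morphism of DG algebras, so $H^{0}(\hhat{B}_{\infty})=B$ is a graded subalgebra and $H^{d-1}(\hhat{B}_{\infty})=(\Omega_{B}^{d})^{*}$ is a graded $B$-sub-bimodule of $H^{\bullet}(\hhat{B}_{\infty})$, with its standard bimodule structure by the previous step. Finally, the product of two classes of degree $d-1$ lies in $H^{2(d-1)}(\hhat{B}_{\infty})$, which vanishes because $2(d-1)\notin\{0,d-1\}$ for $d\ge 2$ while the cohomology is supported in degrees $0$ and $d-1$. Therefore $H^{\bullet}(\hhat{B}_{\infty})$ is the trivial square-zero extension $B\ltimes(\Omega_{B}^{d})^{*}[1-d]$, i.e.\ $B\oplus(\Omega_{B}^{d})^{*}[1-d]$ with the asserted bimodule structure. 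I expect the only genuinely delicate part to be bookkeeping: carrying the $B$-bimodule structures (not merely the $\mk$-linear ones) faithfully through the identification $B^{!}\simeq\omega_{B}^{-1}[-d]$ and through the connecting homomorphism, so that the surviving degree-$(d-1)$ group is recognized as the standard bimodule $(\Omega_{B}^{d})^{*}$ and not just something abstractly isomorphic to it.
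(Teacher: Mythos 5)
Your proposal is correct and follows essentially the same route as the paper: identify $\operatorname{Fiber}(B\to\hhat{B}_{\infty})$ with $B^{!}\Ltens{B}B^{*}$ via Proposition \ref{prop:rewriting_Hochschild_cochains}, compute $B^{!}\simeq\Lambda^{d}T_{B}[-d]$ from smoothness of $\Spec B$, identify $\Lambda^{d}T_{B}\tens{B}B^{*}\cong(\Omega^{d}_{B})^{*}$, and use $d\ge 2$ to split the long exact sequence and kill the products in degree $d-1$. The paper's proof is just a terser version of exactly this argument, and your bookkeeping of the bimodule structures and of the degree constraints is accurate.
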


\begin{proof}It suffices to prove that $C^{\bullet}(B^{op},B_B\otimes {}_B B^*)\cong (\Omega^d_B)^*[-d]$ as $B\mhyphen B$-bimodules. 
Using Proposition \ref{prop:rewriting_Hochschild_cochains}, we obtain a chain of isomorphisms in $D(B\otimes B^{op}):$ $$C^{\bullet}(B^{op},B_B\otimes {}_B B^*)\cong B^!\Ltens{B} B^*\cong\Lambda^d T_B[-d]\Ltens{B} B^*\cong (\Omega^d_B)^*[-d].$$
This proves the proposition.\end{proof}

\begin{remark}Note that for a smooth connected affine curve $C=\Spec B$ an analogue of Proposition \ref{prop:smooth_affine_var_neighborhood_infty} (with the same proof) provides a short exact sequence
$$0\to B\to \hhat{B}_{\infty}\xto{r} (\Omega^1_B)^*\to 0.$$ Explicitly, for a smooth completion $\bbar{C}\supset C,$ an element $f=\{f_p\in \hhat{K}_{\bbar{C},p}\}_{p\in\bbar{C}-C}\in\hhat{B}_{\infty},$ and a $1$-form $\omega\in\Omega^1_B,$ we have $r(f)(\omega)=\sum\limits_{p\in\bbar{C}-C}\res_p(f_p\omega).$\end{remark}

\section{Example: affine plane, line bundles, algebraizability and algebraicity}
\label{sec:affine_plane}

A very interesting example of our construction is the case of line bundles on the punctured neighborhood of infinity for an affine plane $\A_{\mk}^2.$ The most natural compactification is of course $\PP_{\mk}^2\supset\A_{\mk}^2.$ 

We denote by $(z_0:z_1:z_2)$ the homogeneous coordinates on $\PP_{\mk}^2.$ Let us put $U_i:=\{z_i\ne 0\}\subset\PP_{\mk}^2,$ and $H_i:=\{z_i=0\}\subset\PP_{\mk}^2.$ We identify $\A_{\mk}^2$ with $U_0.$ We put $x:=\frac{z_1}{z_0},$ $y:=\frac{z_2}{z_0}$ -- the coordinates on $\A_{\mk}^2.$ 

Our first goal is to describe the Picard group of the formal scheme $\hhat{(\PP_{\mk}^2)}_{H_0}.$

\begin{prop}\label{prop:Pic_formal_neighborhood}We have a natural isomorphism $\Pic(\hhat{(\PP_{\mk}^2)}_{H_0})\cong \Z\times G,$ where 
\begin{equation}\label{eq:group_G}G=1+x^{-1}y^{-1}\mk[[x^{-1},y^{-1}]]\subset (\mk[[x^{-1},y^{-1}]])^{\times}\end{equation} is the subgroup of the multiplicative group of invertible formal power series in $x^{-1},y^{-1}.$\end{prop}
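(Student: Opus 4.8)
The plan is to compute $\Pic(\hhat{(\PP^2_{\mk})}_{H_0})$ by unwinding the definition of $\Perf(\hhat{(\PP^2_{\mk})}_{H_0})$ as the homotopy limit $\holim_n \Perf((H_0)_n)$, where $(H_0)_n\subset\PP^2_{\mk}$ is the $n$-th infinitesimal neighborhood of the line $H_0=\{z_0=0\}$. A line bundle on the formal scheme is a compatible system of line bundles $\cL_n$ on $(H_0)_n$, i.e. an element of $\lim_n \Pic((H_0)_n)$ together with the gluing (but since line bundles have no higher automorphisms beyond $\bbG_m$ and $\bbR^1\lim$ of the unit groups can contribute, one must check the $\lim^1$ term vanishes or account for it). So first I would set up the exact sequence relating $\Pic((H_0)_n)$ and $\Pic((H_0)_{n-1})$: the kernel of the restriction $\Pic((H_0)_n)\to\Pic((H_0)_{n-1})$ is controlled by $H^1(H_0,\cI^{n-1}/\cI^n)$ where $\cI=\cI_{H_0}$, and the relevant cokernel/obstruction by $H^2(H_0,\cI^{n-1}/\cI^n)$. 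Since $H_0\cong\PP^1_{\mk}$ and $\cI/\cI^2\cong\cO_{\PP^1}(-1)$, we get $\cI^{n-1}/\cI^n\cong\cO_{\PP^1}(-(n-1))$, so $H^1(\PP^1,\cO(-(n-1)))$ has dimension $n-2$ for $n\geq 2$ and vanishes for $n=1$, while $H^2$ vanishes identically on a curve.

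The second step is to make the bookkeeping completely explicit in the affine-chart picture, which is where the group $G$ in \eqref{eq:group_G} comes from. Cover $\PP^2_{\mk}$ by the two charts containing a general point of $H_0$, namely $U_1=\{z_1\neq 0\}$ and $U_2=\{z_2\neq 0\}$; on the formal neighborhood these become $\Spf$ of completions along $z_0/z_1=x^{-1}$ resp.\ $z_0/z_2=y^{-1}$. A line bundle on $\hhat{(\PP^2_{\mk})}_{H_0}$ trivialized on each of these two formal charts is then given by a transition function, which is an invertible element of the ring of functions on the formal intersection $\hhat{U_1\cap U_2}$; the ring of functions there is (a completion of) $\mk[x^{-1},y^{-1}][x,y]$-type along the ideal $(x^{-1}y^{-1})$ — more precisely, one identifies the relevant unit group as $\mk[[x^{-1},y^{-1}]][x^{-1}(y^{-1})^{-1},\ldots]^{\times}$ modulo the images of the unit groups of the two charts, and a careful diagonalization/normalization shows every class has a unique representative of the form $1+x^{-1}y^{-1}f(x^{-1},y^{-1})$. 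The $\Z$ factor is just the restriction of $\cO_{\PP^2}(1)$, i.e.\ the ordinary Picard group of each $(H_0)_n$ modulo $G$, detected by the degree on $H_0\cong\PP^1$; so the exact sequence $1\to G\to\Pic(\hhat{(\PP^2_{\mk})}_{H_0})\to\Z\to 1$ splits because $\cO_{\PP^2}(1)|_{\hhat{(\PP^2_{\mk})}_{H_0}}$ provides a section.

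The technical heart — and the main obstacle — is two-fold: (a) proving that the natural map $\Pic(\hhat{(\PP^2_{\mk})}_{H_0})\to\lim_n\Pic((H_0)_n)$ is an isomorphism, i.e.\ that the relevant $\bbR^1\lim$ of automorphism groups vanishes, and (b) proving the normalization statement that the transition-function group modulo chart units is exactly $G$. For (a) the point is that the automorphisms of a line bundle on $(H_0)_n$ form $H^0((H_0)_n,\cO^{\times})$, and the inverse system $\{H^0((H_0)_n,\cO^{\times})\}$ is Mittag–Leffler (the successive kernels are $H^0(\PP^1,\cO(-(n-1)))=0$ for $n\geq 2$, so in fact the system is eventually constant in the relevant sense), hence $\bbR^1\lim=0$; one then gets $\Pic$ of the formal scheme $=\lim_n\Pic((H_0)_n)$ honestly. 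For (b) I would argue by induction on $n$: given a line bundle on $(H_0)_{n-1}$ already put in the normal form $1+x^{-1}y^{-1}f_{n-1}$ with $f_{n-1}$ a polynomial of bounded degree, the lifts to $(H_0)_n$ form a torsor under $H^1(\PP^1,\cO(-(n-1)))$, and the extra parameters are precisely the degree-$(n-2)$ homogeneous pieces, which assemble in the limit into an arbitrary power series $f\in\mk[[x^{-1},y^{-1}]]$ with the prescribed shape; the $\Z$ ambiguity at the bottom stage $n=1$ (where $\Pic(\PP^1)=\Z$) is split off as above. Everything else — that the displayed map is a group homomorphism, that $G$ is indeed a subgroup of the unit group of $\mk[[x^{-1},y^{-1}]]$, that the two charts genuinely cover the formal neighborhood of $H_0$ — is routine and I would state it without belaboring the computations.
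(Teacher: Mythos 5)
Your proposal is correct and its computational heart coincides with the paper's: the paper also reduces to the two charts $U_1,U_2$ (whose formal neighborhoods have trivial Picard group), writes a line bundle as a transition cocycle in $\cO(\hhat{(U_{12})}_{H_0\cap U_{12}})^{\times},$ and identifies $\coker(G_1\times G_2\to G_{12})$ with $\Z\times G.$ Where you differ is in the scaffolding: you first pass to $\lim_n\Pic((H_0)_n)$ via the square-zero extensions, with kernels $H^1(\PP^1,\cO(-(n-1)))$ and a Mittag--Leffler argument killing $\bbR^1\lim$ of the (constant, equal to $\mk^\times$) automorphism groups, and then normalize the cocycle by induction on $n.$ The paper instead skips the level-by-level deformation theory and proves the normalization in one stroke: it equips $G_1,G_2,G_{12}$ and $\Z\times G$ with complete decreasing filtrations by congruence modulo powers of $x^{-1}$ and checks that the four-term complex of unit groups is acyclic on each graded piece; those graded pieces are exactly the \v{C}ech complexes computing $H^{\bullet}(\PP^1,\cO(-(n-1))),$ so the two arguments are really the same computation packaged differently (your route makes the torsor structure under $H^1(\PP^1,\cO(-(n-1)))$ explicit; the paper's avoids discussing $\bbR^1\lim$ of Picard groupoids altogether). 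One small inaccuracy to fix in a write-up: the ring of functions on the formal intersection is $\mk[(x/y)^{\pm1}][[x^{-1}]],$ i.e. the completion of $\cO(U_{12})$ along the principal ideal $(x^{-1})$ (equivalently $(y^{-1})$), not a completion along $(x^{-1}y^{-1})$; your hedged phrasing there should be replaced by this explicit identification, after which the rest of your normalization goes through as stated.
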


\begin{proof}Let us note that for $i=1,2$ we have $\Pic(\hhat{(U_i)}_{H_0\cap U_i})=\{1\}.$ Thus, all line bundles on $\hhat{(\PP_{\mk}^2)}_{H_0}$ are trivial on the affine charts, and any line bundle can be given by a cocycle in $\cO(\hhat{(U_{12})}_{H_0\cap U_{12}})^{\times},$ where $U_{12}=U_1\cap U_2.$ One easily computes
$$\cO(\hhat{(U_{1})}_{H_0\cap U_{1}})\cong\mk\left[\frac{y}{x}\right][[x^{-1}]],\quad \cO(\hhat{(U_{2})}_{H_0\cap U_{2}})\cong\mk\left[\frac{x}{y}\right][[y^{-1}]],$$
$$\cO(\hhat{(U_{12})}_{H_0\cap U_{12}})\cong\mk\left[\left(\frac{x}{y}\right)^{\pm 1}\right][[x^{-1}]]\cong \mk\left[\left(\frac{x}{y}\right)^{\pm 1}\right][[y^{-1}]].$$
Passing to invertible elements, we obtain the following multiplicative groups of formal power series
$$G_1:=\cO(\hhat{(U_{1})}_{H_0\cap U_{1}})^{\times}=\mk^{\times}+x^{-1}\mk\left[\frac{y}{x}\right][[x^{-1}]],$$
$$G_2:=\cO(\hhat{(U_{2})}_{H_0\cap U_{2}})^{\times}=\mk^{\times}+y^{-1}\mk\left[\frac{x}{y}\right][[y^{-1}]],$$
$$G_{12}:=\cO(\hhat{(U_{12})}_{H_0\cap U_{12}})^{\times}=\mk^{\times}\left(\frac{x}{y}\right)^{\Z}+x^{-1}\mk\left[\left(\frac{x}{y}\right)^{\pm 1}\right][[x^{-1}]].$$
The restriction maps $f_i:G_i\to G_{12}$ are given by tautological inclusions of groups of invertible power series. We also have a natural homomorphism $f:\Z\times G\to G_{12}$ sending a power series in $G$ to itself, and an integer $n\in\Z$ to $(\frac{x}{y})^n$. We are left to show that the composition homomorphism
\begin{equation}\label{eq:identification_of_Pic}\Z\times G\xto{f} G_{12}\to \coker(G_1\times G_2\xto{(\frac1{f_1},f_2)}G_{12})\end{equation} is an isomorphism.

For that, it is convenient to introduce decreasing filtrations $F^{\bullet}G_1,$ $F^{\bullet}G_2,$ $F^{\bullet}G_{12},$ and $F^{\bullet}(\Z\times G).$ We put $$F^0 G_{12}:=G_{12},\quad F^nG_{12}:=1+x^{-n}\mk\left[\left(\frac{x}{y}\right)^{\pm 1}\right][[x^{-1}]]\quad\text{for}\quad n>0,$$ and for $G_1,G_2$ and $\Z\times G$ the filtrations are induced via injective homomorphisms $f_1,$ $f_2$ and $f.$ Also, for convenience we define a filtration on $\mk^{\times}$ by $F^0\mk^{\times}=\mk^{\times},$ $F^{>0}\mk^{\times}=0.$ It is straightforward to check that the complex of commutative groups
\begin{equation}\label{eq:acyclic_complex_comm_groups}\{1\}\to \mk^{\times}\to G_1\times G_2\times (\Z\times G)\xto{(f_1,\frac1{f_2},f)}G_{12}\to\{1\}\end{equation} becomes acyclic after applying $\gr^n_F,$ $n\geq 0.$ Since for each of the groups the filtration is complete, the complex \eqref{eq:acyclic_complex_comm_groups} is acyclic. Therefore, the map \eqref{eq:identification_of_Pic} is an isomorphism. This proves the proposition.
\end{proof}

\begin{cor}\label{cor:Pic_punctured_neighborhood} We have an isomorphism $\Pic(\hhat{(\PP_{\mk}^2)}_{H_0}-H_0)\cong G,$ where $G$ is introduced in \eqref{eq:group_G}.\end{cor}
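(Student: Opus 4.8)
The plan is to repeat the Čech cocycle computation from the proof of Proposition~\ref{prop:Pic_formal_neighborhood}, now over the punctured neighborhood, and to observe that the extra localization only introduces monomial factors in the unit groups which cancel in the cokernel.

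First I would cover $\hhat{(\PP_{\mk}^2)}_{H_0}-H_0$ by the two charts $W_i:=\hhat{(U_i)}_{H_0\cap U_i}-(H_0\cap U_i)$, $i=1,2$, whose rings of functions are obtained from the ones in Proposition~\ref{prop:Pic_formal_neighborhood} by inverting the local equation of $H_0$ (which is $z_0/z_1=x^{-1}$ on $U_1$ and $z_0/z_2=y^{-1}$ on $U_2$, the two differing by the unit $x/y$ on $U_{12}$):
\[
\cO(W_1)\cong\mk\left[\tfrac yx\right][[x^{-1}]][x],\quad \cO(W_2)\cong\mk\left[\tfrac xy\right][[y^{-1}]][y],\quad \cO(W_{12})\cong\mk\left[\left(\tfrac xy\right)^{\pm1}\right][[x^{-1}]][x].
\]
Each of $W_1,W_2,W_{12}$ is $\Spec$ of a localization of a ring of the form $R[[t]]$ with $R$ a localization of $\mk[v]$, and $\Pic(R[[t]])=\varprojlim_n\Pic(R[t]/t^{n})=\Pic(R)=0$ (each $\Spec R[t]/t^{n+1}\to\Spec R[t]/t^{n}$ is a nilpotent thickening of an affine scheme, and localization only kills line bundles); hence $\Pic(W_1)=\Pic(W_2)=\Pic(W_{12})=0$, and $\Pic$ of the punctured neighborhood is computed by the two-term Čech complex of this cover:
\[
\Pic(\hhat{(\PP_{\mk}^2)}_{H_0}-H_0)\cong\coker\Bigl(\cO(W_1)^\times\times\cO(W_2)^\times\to\cO(W_{12})^\times\Bigr).
\]

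Next I would run the cokernel computation parallel to Proposition~\ref{prop:Pic_formal_neighborhood}. Writing $t=x^{-1}$, $u=x/y$ and $G_{12}^{+}:=\mk^{\times}\cdot(1+t\,\mk[u^{\pm1}][[t]])$, the fact that every element of $G_1,G_2,G_{12}^{+}$ has $t$-adic valuation $0$ yields internal direct product decompositions
\[
\cO(W_{12})^{\times}=t^{\Z}\cdot u^{\Z}\cdot G_{12}^{+},\qquad \cO(W_1)^{\times}=t^{\Z}\cdot G_1,\qquad \cO(W_2)^{\times}=(tu)^{\Z}\cdot G_2,
\]
where $G_1,G_2\subset G_{12}^{+}$ are exactly the subgroups occurring in the proof of Proposition~\ref{prop:Pic_formal_neighborhood}. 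The images of $\cO(W_1)^{\times}$ and $\cO(W_2)^{\times}$ together span the $t^{\Z}\cdot u^{\Z}$ factor (the free $t$-exponent of the first, together with the diagonal $tu$-exponent of the second), so the cokernel is $G_{12}^{+}/(G_1G_2)$; and since the proof of Proposition~\ref{prop:Pic_formal_neighborhood} gives $G_{12}/(G_1G_2)\cong\Z\times G$ with the $\Z$ identified with $G_{12}/G_{12}^{+}=u^{\Z}$, we get $G_{12}^{+}/(G_1G_2)\cong G$, which is the assertion. Equivalently, one can argue conceptually: the restriction map $\Pic(\hhat{(\PP_{\mk}^2)}_{H_0})\to\Pic(\hhat{(\PP_{\mk}^2)}_{H_0}-H_0)$ is surjective with kernel generated by $[\cO_{\PP_{\mk}^2}(H_0)|_{\hhat{(\PP_{\mk}^2)}_{H_0}}]$, which becomes trivial on the punctured neighborhood (trivialized by its defining section) and which, having transition function $z_1/z_2=x/y$, is precisely the generator of the $\Z$-factor of $\Z\times G$.

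Given Proposition~\ref{prop:Pic_formal_neighborhood}, the non-formal points are (i) the vanishing of the Picard groups of the three charts, needed so that Čech $H^{1}$ computes $\Pic$ of the punctured formal scheme, and (ii) the bookkeeping of the monomial ($t$- and $u$-power) factors in the three unit groups — in particular checking that the product decompositions displayed above are genuinely direct, and that the two chart contributions span the spurious $t^{\Z}u^{\Z}$. I expect (ii) to be the main, though entirely routine, obstacle.
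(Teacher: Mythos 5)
Your proposal is correct in substance, and the paper offers no proof of this corollary at all --- it is meant to follow from Proposition \ref{prop:Pic_formal_neighborhood} essentially by your second, ``conceptual'' argument: restriction from $\hhat{(\PP_{\mk}^2)}_{H_0}$ to the punctured neighborhood kills the $\Z$-factor because $\cO(H_0)$ is trivialized by its defining section (equivalently, the cone of $\cO\to\cO(H_0)$ is a perfect complex supported on $H_0$). Your primary, \v{C}ech-theoretic route is a legitimate and more informative alternative, but three points in it deserve more care. (i) The punctured neighborhood is defined only as a Verdier quotient $\Perf(\hhat{X}_Z)/\Perf_Z(\hhat{X}_Z)$, not as a ringed space, so the claim that its Picard group is computed by the two-chart \v{C}ech complex presupposes Zariski descent for this quotient; this is true (and the paper itself freely passes to the charts, e.g.\ in Lemma \ref{lem:computation_of_Rhom's_divisor}), but it is not a tautology and should be stated. (ii) The parenthetical ``localization only kills line bundles'' is false as a general principle: $\Pic(A)\to\Pic(A_f)$ need not be surjective. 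Here the conclusion $\Pic(W_i)=\Pic(W_{12})=0$ is nevertheless correct because $\mk[v][[t]]$ and $\mk[v^{\pm1}][[t]]$ are UFDs (power series over a regular UFD), so all their localizations have trivial Picard group; that is the argument you should give. (iii) In the conceptual variant you only verify that $[\cO(H_0)]$ lies in the kernel of the restriction map; that the kernel is no larger --- i.e.\ that $L_g$ with $g\ne1$ does not become trivial --- is exactly the content supplied by your cokernel computation (or by noting that the only effective divisors on $\hhat{(\PP_{\mk}^2)}_{H_0}$ supported on $H_0$ are multiples of $H_0$), so the two halves of your write-up are genuinely complementary rather than redundant. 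The bookkeeping of the $t^{\Z}$ and $u^{\Z}$ factors and the identification of the cokernel with $G_{12}^{+}/(G_1G_2)\cong G$ are correct.
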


For each formal power series $g\in G,$ we denote by $L_g$ the corresponding line bundle on $\hhat{(\PP_{\mk}^2)}_{H_0}-H_0.$ Our next goal is to describe a perfect almost DG $\mk[x,y]$-module corresponding to $L_g$ under the equivalence $\Perf(\hhat{(\PP_{\mk}^2)}_{H_0}-H_0)\simeq\Perf_{top}(\hhat{\mk[x,y]}_{\infty}).$ 

Let us denote by $V$ a countable-dimensional vector space over $\mk$ with the basis $e_{ij},$ $i,j\geq 0$ (we consider $V$ as a complex concentrated in degree zero). Let $g=1+\sum\limits_{i,j=1}^{\infty}\lambda_{ij}x^{-i}y^{-j}\in G,$ where $\lambda_{ij}\in\mk.$ We define a homomorphism $\rho_g:\mk[x,y]\to\End_{\Calk_{\mk}}(V)$ by setting $\rho_g(x)$ and $\rho_g(y)$ to be the projections of actual linear operators $\Phi_{g,x},\Phi_{g,y}:V\to V,$ where $$\Phi_{g,x}(e_{ij})=e_{i+1,j}-\sum\limits_{l=1}^j \lambda_{i+1,l}e_{0,j-l},\quad \Phi_{g,y}(e_{ij})=e_{i,j+1}.$$ Note that $[\Phi_{g,x},\Phi_{g,y}](e_{ij})=-\lambda_{i+1,j+1}\cdot e_{00},$ hence $\rk[\Phi_{g,x},\Phi_{g,y}]\leq 1<\infty$ and the homomorphism $\rho_g$ is well defined. We denote by $M_g\in\bR\un{\Hom}(\mk[x,y]^{op},\Calk_{\mk})$ the almost DG module given by $\rho_g.$ 

\begin{prop}\label{prop:M_g_and_L_g}The almost DG $\mk[x,y]$-module $M_g$ is perfect, and it corresponds to $L_g$ under the equivalence $\Perf(\hhat{(\PP_{\mk}^2)}_{H_0}-H_0)\simeq\Perf_{top}(\hhat{\mk[x,y]}_{\infty}).$\end{prop}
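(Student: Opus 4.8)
The plan is to compute the image of $L_g$ under the composite quasi-equivalence
$\Perf(\hhat{(\PP_{\mk}^2)}_{H_0}-H_0)\xto{\sim}\bbar{\mD_{sg}}(\cS)\xto{\sim}\Perf_{top}(\hhat{\mk[x,y]}_{\infty})$
furnished by Theorems \ref{th:neighborhoods_and_D_sg} and \ref{th:main_theorem} (with $\cS=\Perf_{H_0}(\PP_{\mk}^2)$, the identification of $\cB$ with $\Perf(\A_{\mk}^2)$ coming from Theorem \ref{th:compatibility}), and to recognize it as $M_g$. Unwinding the recipe in these theorems: first choose a line bundle $\tilde L_g$ on the formal neighborhood $\hhat{(\PP_{\mk}^2)}_{H_0}$ restricting to $L_g$ — by Proposition \ref{prop:Pic_formal_neighborhood} one may take the one of class $(0,g)\in\Z\times G$; the residual $\Z$-ambiguity is a twist by $\cO(nH_0)$, which changes the resulting almost module only by an object of $\PsPerf(\mk[x,y])=\Ker(\bbar{\bY}^*)$ and so is harmless. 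Then pass $\tilde L_g$ (more precisely its tower of restrictions to the infinitesimal neighborhoods of $H_0$) through $\cH_{H_0}$ as in Theorem \ref{th:perf_of_formal_is_T_Z}, read the output as a pseudo-perfect $\cS$-module, and finally feed it through the object-level identification $\bbar{\mD_{sg}}(\cS)\simeq\Ker(F_{\cB})$ constructed in the proof of Theorem \ref{th:main_theorem}. For $g=1$ this produces the free almost module $\bbar{\bY}^*(\cO_{\A_{\mk}^2})$, which is exactly $M_1$ under $e_{ij}\leftrightarrow x^iy^j$ (then $\Phi_{1,x}=R_x$, $\Phi_{1,y}=R_y$); compare the diagram \eqref{eq:compatibility_of_restrictions}. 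So the point is to control the dependence on $g$.

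For this I would realize the relevant object by a \v{C}ech/Mayer--Vietoris complex for the cover of $\hhat{(\PP_{\mk}^2)}_{H_0}$ by the two formal charts $\hhat{(U_1)}_{H_0\cap U_1}$ and $\hhat{(U_2)}_{H_0\cap U_2}$ (the chart $U_0=\A_{\mk}^2$ being disjoint from $H_0$), using that $\tilde L_g$ is trivial on each chart and glued by the cocycle $g\in\cO(\hhat{(U_{12})}_{H_0\cap U_{12}})^{\times}$ as in the proof of Proposition \ref{prop:Pic_formal_neighborhood}. The resulting complex of ``sections with poles along $H_0$ modulo sections regular along $H_0$'' is concentrated in a single degree, and written in the $U_0$-chart it has the natural $\mk$-basis of monomials $x^iy^j$ with $i,j\ge 0$ — these are the generators $e_{ij}$ — on which $\mk[x,y]$ acts by multiplication of rational functions. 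The only subtlety is that rewriting a section regular on the $U_1$-chart as a combination regular on the $U_2$-chart forces the expansion $g^{-1}=1-\sum_{i,j}\lambda_{ij}x^{-i}y^{-j}+\cdots$, and carrying this through the \v{C}ech boundary contributes precisely the term $-\sum_{l=1}^{j}\lambda_{i+1,l}e_{0,j-l}$ to the action of $x$ while leaving the action of $y$ undeformed, i.e. reproduces the operators $\Phi_{g,x},\Phi_{g,y}$ of the statement. Checking this expansion, and that the resulting identification of almost modules is $\mk[x,y]$-linear up to finite rank, is the bulk of the work; since it exhibits $M_g$ as the image of the perfect complex $L_g$, it also shows a posteriori that $M_g$ is perfect, so no separate argument for perfectness is needed.

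As an independent check on perfectness and boundedness one can observe that $M_g$ carries the exhaustive, complete filtration $V_{\le n}=\langle e_{ij}\mid i+j\le n\rangle$ with $\Phi_{g,x}(V_{\le n}),\Phi_{g,y}(V_{\le n})\subset V_{\le n+1}$ (the correction terms strictly decrease $i+j$), so that $\gr M_g$ is identified with $\gr\bbar{\bY}^*(\cO_{\A_{\mk}^2})\in\Perf_{top}(\hhat{\mk[x,y]}_{\infty})$; together with $\rk[\Phi_{g,x},\Phi_{g,y}]\le 1$ this makes perfectness transparent. I expect the main obstacle to be purely a matter of bookkeeping: organizing the local-cohomology/\v{C}ech identification on $\PP_{\mk}^2$ so that the transition function $g$ is seen to act exactly as in the definition of $\Phi_{g,x}$, and keeping the dependence on the chosen extension $\tilde L_g$ under control. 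Once the basis $\{e_{ij}\}$ is matched with monomials in the $U_0$-chart, everything reduces to formal power-series manipulations of the kind already appearing in the proof of Proposition \ref{prop:Pic_formal_neighborhood}.
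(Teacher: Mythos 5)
Your proposal follows essentially the same route as the paper's proof: one takes the extension of class $(0,g)$, computes $\bR\Gamma$ of the corresponding object of $\cT_{H_0}$ by \v{C}ech on the two formal charts $\hhat{(U_1)}_{H_0\cap U_1},\hhat{(U_2)}_{H_0\cap U_2}$, identifies $H^0$ of the resulting two-term complex with the span of the monomials $x^iy^j$ ($i,j\ge 0$) via a splitting of $R_1$, and sees the correction term in $\Phi_{g,x}$ arise exactly from re-expanding a $U_1$-regular representative as a $U_2$-regular one through the cocycle $g$ (the paper phrases this as solving for the coefficients $\mu_{-1,n}$ so that $gr_1(u^{-1}(x^py^q))\in r_2(R_2)$, which is your $g^{-1}$-expansion). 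The deduction of perfectness a posteriori, and the harmlessness of the $\Z$-ambiguity in the choice of extension, also match the paper.
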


\begin{proof}We denote by $\cL_g\in\Pic(\hhat{(\PP_{\mk}^2)}_{H_0})$ the line bundle corresponding to $(0,g)\in\Z\times G.$ Let us denote by $\cF_g\in\cT_{H_0}$ the object corresponding to $\cL_g$ under the equivalence of Theorem \ref{th:perf_of_formal_is_T_Z}. In order to identify the object of $\Perf_{top}(\hhat{\mk[x,y]}_{\infty})$ corresponding to $L_g,$ it suffices to compute the complex of vector spaces $\bR\Gamma(\PP_{\mk}^2,\cF_g)[1],$ and describe the almost $\mk[x,y]$-action on it. We will do the computations by \v{C}ech.

We easily see by construction of equivalence in Theorem \ref{th:perf_of_formal_is_T_Z} that $$\cF_g(U_1)[1]\cong\mk\left[x^{\pm 1},\frac{y}{x}\right]/\mk\left[x^{-1},\frac{y}{x}\right]=:R_1,\quad \cF_g(U_2)[1]\cong\mk\left[y^{\pm 1},\frac{x}{y}\right]/\mk\left[y^{-1},\frac{x}{y}\right]=:R_2,$$
$$\cF_g(U_{12})[1]\cong \mk[x^{\pm 1},y^{\pm 1}]/\mk\left[x^{-1},\left(\frac{x}{y}\right)^{\pm 1}\right]=:R_{12}.$$ We have natural inclusions $r_i:R_i\to R_{12},$ and the restriction morphisms $\cF_g(U_i)\to\cF_g(U_{12})$ are given by $g r_1$ and  $r_2.$ Thus, the complex $\bR\Gamma(\PP_{\mk}^2,\cF_g)[1]$ is quasi-isomorphic to the following complex (placed in degrees $0,1$).
$$\cK^{\bullet}=\{R_1\oplus R_2\xto{(gr_1,-r_2)}R_{12}\}.$$
For convenience we introduce increasing (exhausting) filtrations $F_{\bullet}R_1,$ $F_{\bullet}R_2,$ $F_{\bullet}R_{12}.$ We put $F_0 R_{12}:=0,$ $F_n R_{12}:=x^n\mk\left[x^{-1},\frac{y}{x}\right]/\mk\left[x^{-1},\frac{y}{x}\right]$ for $n>0,$ and the filtrations on $R_i$ are induced by the inclusions $r_i.$ Note that the action of $g$ preserves the filtration on $R_{12}$ and induces the identity maps on the $\gr_n^F,$ $n>0.$ It is easy to see that the map $(gr_1,-r_2)$ induces surjections on $\gr_{\bullet}^F,$ hence it is surjective and $H^1(\cK^{\bullet})=0.$ 

Let us introduce the subspace $V'\subset R_1$ spanned by monomials $x^iy^j,$ where $i,j\geq 0,$ and $i+j>0.$ We have a splitting $\pr:R_1\to V'$ vanishing on monomials $x^iy^j$ with $i<0.$ This splitting of $R_1$ is compatible with filtrations, and we have the induced filtration $F_{\bullet}V'$ (by the degree of monomials). The composition $u:H^0(\cK^{\bullet})\to R_1\xto{\pr}V'$ induces isomorphisms on the $\gr_n^F,$ hence it is an isomorphism.

We now compute the almost action of $\mk[x,y]$ on $V'.$ For that we first compute $\bR\Hom(\cO(-H_0),\cF_g)[1]\cong\bR\Gamma(\cF_g(H_0))[1].$ We have natural identifications $\cF_g(H_0)(U_i)=R_i/F_1R_i,$ and $\cF_g(H_0)(U_{12})=R_{12}/F_1R_{12}.$ We see that the complex $\bR\Gamma(\cF_g(H_0))[1]$ is naturally quasi-isomorphic to
$$\cK'^{\bullet}:=\cK^{\bullet}/F_1\cK^{\bullet}=\{R_1/F_1R_1\oplus R_2/F_1R_2\xto{(gr_1,-r_2)}R_{12}/F_1R_{12}\}.$$
The natural map $\bR\Gamma(\cF_g)[1]\to\bR\Gamma(\cF_g(H_0))[1]$ (induced by the inclusion $\cO\hto\cO(H_0)$) corresponds to the projection $\cK^{\bullet}\to\cK'^{\bullet}.$ As above we verify that $H^1(\cK'^{\bullet})=0,$ and obtain an isomorphism $u':H^0(\cK'^{\bullet})\to R_1/F_1R_1\xto{\pr'}V'',$ where $V''\subset V'$ is the codimension two subspace spanned by $x^iy^j$ with $i,j\geq 0,$ $i+j\geq 2,$ and the splitting $\pr'$ is defined in the same way.

The variables $x,y$ provide the sections of $\cO(H_0),$ hence they induce the maps $x,y:\bR\Gamma(\cF_g)[1]\to\bR\Gamma(\cF_g(H_0))[1].$ These maps correspond to actual morphisms of complexes $\cK^{\bullet}\to\cK'^{\bullet},$ given by the well-defined multiplications by $x,$ $y.$ We need to compute the compositions
$$T_x:V'\xto{u^{-1}}H^0(\cK^{\bullet})\xto{x}H^0(\cK'^{\bullet})\xto{u'}V'',\quad T_y:V'\xto{u^{-1}}H^0(\cK^{\bullet})\xto{y}H^0(\cK'^{\bullet})\xto{u'}V''.$$ First, we have a commutative diagram 
$$
\xymatrix{R_1\ar[r]^{\pr}\ar[d]^{y} & V'\ar[d]^{y}\\
R_1/F_1R_1\ar[r]^{\pr'} & V''.}$$
It follows that $T_y(x^iy^j)=x^iy^{j+1}$ for $i,j\geq 0,$ $i+j\geq 1.$

To compute $T_x,$ let us consider the element $u^{-1}(x^py^q)=x^py^q+\sum\limits_{i<0,j\geq 1-i}\mu_{ij}x^iy^j$ (the sum is finite). Also, we denote the coefficient of $x^iy^j$ in $g$ by $\lambda_{ij},$ $i,j>0.$ Note that $gr_1(u^{-1}(x^py^q))\in r_2(R_2).$ Vanishing of the coefficients of $x^{-1}y^n$ ($n\geq 2$) for the element $gr_1(u^{-1}(x^py^q))$ implies $$\mu_{-1,n}=\begin{cases}-\lambda_{p+1,q-n} & \text{for }2\leq n\leq q-1;\\
0 & \text{for }n\geq q.\end{cases}$$
It follows that $T_x(x^py^q)=x^{p+1}y^q-\sum\limits_{i=1}^{q-2}\lambda_{p+1,i}y^{q-i}.$ 

We can treat the operators $T_x,T_y:V'\to V''$ as endomorphisms of $V'$ by composing with embedding $V''\to V'.$ They define the desired almost action of $\mk[x,y]$ on $V'.$ The map $\psi:V'\to V,$ $\psi(x^py^q)=e_{p,q},$ is compatible with this almost action on $V'$ and the almost action $\rho_g$ on $V.$ Finally, $\psi$ is an almost isomorphism (since $\Ker(\psi)=0,$ $\dim\coker(\psi)=1$). This proves the proposition.  
\end{proof}

It seems natural to ask which of the line bundles $L_g$ (or, equivalently, perfect almost modules $M_g$) are algebraizable perfect complexes. Below we answer this question in the special case when $g\in 1+x^{-1}y^{-1}\mk[[y^{-1}]].$ It is convenient to introduce the following auxiliary almost DG $\mk[x,y]$-modules. We denote by $W$ the countable-dimensional vector space with the basis $\{e_i\}_{i\geq 0}.$ Take any Laurent power series $h=\sum\limits_{j\leq d}\mu_jy^j\in\mk((y^{-1})).$ We define the homomorphism $\rho'_h:\mk[x,y]\to\End_{\Calk_{\mk}}(W)$ by setting $\rho'_h(x),\rho'_h(y)$ to be the projections of actual linear operators $\Psi_{h,x},\Psi_{h,y},$ where $$\Psi_{h,x}(e_i)=\sum\limits_{j=-i}^d \mu_je_{i+j},\quad \Psi_{h,y}(e_i)=e_{i+1}.$$ Again $\rk[\Psi_{h,x},\Psi_{h,y}]\leq 1<\infty,$ since $[\Psi_{h,x},\Psi_{h,y}](e_i)=\mu_{-i-1}e_0.$ We denote by $N_h\in \bR\un{\Hom}(\mk[x,y]^{op},\Calk_{\mk})$ the almost DG $\mk[x,y]$-module given by $\rho'_h.$

\begin{prop}\label{prop:properties_of_N_h}1) For any $h\in y^{-1}\mk[[y^{-1}]],$ we have an exact triangle $$M_{1-x^{-1}h}\to\hhat{\mk[x,y]}_{\infty}\to N_h\to M_{1-x^{-1}h}[1]$$ in $[\bR\un{\Hom}(\mk[x,y]^{op},\Calk_k)].$

2) For any $h\in \mk((y^{-1}))$ the almost $\mk[x,y]$-module $N_h$ is perfect.\end{prop}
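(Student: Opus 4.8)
The plan is to establish 1) by exhibiting an explicit short exact sequence of almost $\mk[x,y]$-modules, and to deduce 2) from 1) by a shear change of coordinates together with Proposition~\ref{prop:partial_functoriality}.

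For 1) I would use the explicit model of $M_g$ constructed in the proof of Proposition~\ref{prop:M_g_and_L_g}. Writing $h=\sum_{l\ge1}\mu_{-l}y^{-l}$, the series $g:=1-x^{-1}h$ lies in $G$ and has $\lambda_{1l}=-\mu_{-l}$ and $\lambda_{il}=0$ for all $i\ge2$, so in the formula $\Phi_{g,x}(e_{ij})=e_{i+1,j}-\sum_{l=1}^{j}\lambda_{i+1,l}e_{0,j-l}$ the correction terms occur only in the row $i=0$. Hence the subspace of $V$ spanned by the $e_{ij}$ with $i\ge1$ is preserved by $\Phi_{g,x}$ and $\Phi_{g,y}$, the commutator $[\Phi_{g,x},\Phi_{g,y}]$ vanishes on it, and via $e_{ij}\mapsto x^{i-1}y^{j}$ it is identified with the free $\mk[x,y]$-module of rank one, i.e.\ with a model of $\hhat{\mk[x,y]}_{\infty}$ as an almost module. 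The induced action on the quotient (the row $i=0$) is $\bar\Phi_{g,x}(e_{0j})=\sum_{l=1}^{j}\mu_{-l}e_{0,j-l}$, $\bar\Phi_{g,y}(e_{0j})=e_{0,j+1}$, which under $e_{0j}\leftrightarrow e_{j}$ is exactly $\rho'_h$; that is, the quotient is $N_h$. The resulting inclusion of the submodule is a morphism between $\hhat{\mk[x,y]}_{\infty}$ and $M_{1-x^{-1}h}$ in $[\bR\un{\Hom}(\mk[x,y]^{op},\Calk_{\mk})]$ with cone $N_h$, which gives the exact triangle of 1).

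For 2), let $h\in\mk((y^{-1}))$ be arbitrary and split it as $h=h_{+}+h_{-}$ with $h_{+}$ its polynomial part and $h_{-}\in y^{-1}\mk[[y^{-1}]]$. Consider the shear automorphism $\sigma$ of $\mk[x,y]$ with $\sigma(x)=x+h_{+}(y)$, $\sigma(y)=y$. Restriction of scalars along $\sigma$ leaves the $y$-action of $N_{h_{-}}$ unchanged and turns its $x$-action $\Psi_{h_{-},x}$ into $\Psi_{h_{-},x}+(\text{multiplication by }h_{+})=\Psi_{h,x}$, so $\sigma^{*}N_{h_{-}}\cong N_{h}$. Since $\bL\sigma^{*}$ is an equivalence it has a left adjoint, so Proposition~\ref{prop:partial_functoriality} applies and yields a self-equivalence $\hhat{\sigma}_{\infty}^{*}$ of $\Perf_{top}(\hhat{\mk[x,y]}_{\infty})$ whose underlying operation on almost modules is $\sigma^{*}$. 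By part 1), $N_{h_{-}}$ is the cone of a morphism between $\hhat{\mk[x,y]}_{\infty}$ and $M_{1-x^{-1}h_{-}}$ (and $1-x^{-1}h_{-}$ lies in $G$), both of which belong to $\Perf_{top}(\hhat{\mk[x,y]}_{\infty})$ — the second by Proposition~\ref{prop:M_g_and_L_g} — so $N_{h_{-}}\in\Perf_{top}(\hhat{\mk[x,y]}_{\infty})$, and therefore $N_{h}\cong\hhat{\sigma}_{\infty}^{*}(N_{h_{-}})\in\Perf_{top}(\hhat{\mk[x,y]}_{\infty})$.

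I do not expect a serious obstacle in either part. The one place requiring care is in 1): one must verify that the submodule and the quotient of the chosen presentation of $M_{1-x^{-1}h}$ match, on the nose, the objects $\hhat{\mk[x,y]}_{\infty}$ and $N_h$ — all of these being defined only up to almost isomorphism — which amounts to tracking the conventions in the proof of Proposition~\ref{prop:M_g_and_L_g}. Granting 1), part 2) is essentially formal, the single idea being that a shear of $\mk[x,y]$ intertwines $N_{h_{-}}$ with $N_{h_{-}+h_{+}}$ for any polynomial $h_{+}$.
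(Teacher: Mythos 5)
Your proposal is correct and is essentially the paper's own argument: part 1) is exactly the short exact sequence $0\to\mk[x,y]\to V\to W\to 0$ (submodule spanned by the $e_{ij}$ with $i\geq 1$, quotient the row $i=0$) that the paper uses, with the compatibility of the almost actions written out explicitly, and part 2) is the paper's reduction via the polynomial change of variables $(x,y)\mapsto(x+P(y),y)$. Note that, like the paper's own proof, your construction actually yields the triangle $\hhat{\mk[x,y]}_{\infty}\to M_{1-x^{-1}h}\to N_h\to\hhat{\mk[x,y]}_{\infty}[1]$ (a rotation of the one displayed in the statement, which appears to be a typo there); this is harmless for the intended applications.
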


\begin{proof}1) We have a short exact sequence of vector spaces $$0\to \mk[x,y]\to V \to W\to 0,$$
where the maps are given by $x^iy^j\mapsto e_{i+1,j},$ and $e_{0j}\mapsto e_j,$ and $e_{ij}\mapsto 0$ for $i>0.$ Moreover, the maps in the short exact sequence are compatible with the almost actions of $\mk[x,y]:$ on itself by multiplication, on $V$ by $\rho_{1-x^{-1}h},$ and on $W$ by $\rho'_h.$ The assertion follows.

2) Applying a polynomial change of variables of the form $(x,y)\mapsto (x+P(y),y)$ if necessary, we may assume that $h\in y^{-1}\mk[[y^{-1}]].$ In this case the perfectness follows from 1) and Proposition \ref{prop:M_g_and_L_g}.\end{proof}

For convenience, from now on in this section we put $A:=\mk[x,y].$

\begin{lemma}\label{lem:computation_of_Rhom's_divisor}We have $\bR\Hom(\hhat{A}_{\infty},N_h)\cong \mk((y^{-1})),$ $\bR\Hom(N_h,\hhat{A}_{\infty})\cong\mk((y^{-1})),$ and in both cases the $A$-module structure on $\mk((y^{-1}))$ is induced by a homomorphism $\varphi_h:A\to \mk((y^{-1})),$ given by $\varphi_h(x)=h,$ $\varphi_h(y)=y.$\end{lemma}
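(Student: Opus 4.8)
The plan is to transport the computation to the algebro-geometric side, where $\hhat{A}_{\infty}$ becomes the structure sheaf of the punctured formal neighbourhood and $N_h$ becomes the structure sheaf of an explicit formal curve. By Theorem~\ref{th:compatibility} (combined with Theorems~\ref{th:neighborhoods_and_D_sg} and~\ref{th:perf_of_formal_is_T_Z}) we have an equivalence $\Perf_{top}(\hhat{A}_{\infty})\simeq\Perf(\hhat{(\PP_{\mk}^2)}_{H_0}-H_0)$ under which $\hhat{A}_{\infty}$ (the image of the free module $\cO(U_0)=A$) goes to the structure sheaf $\cO$, and by Proposition~\ref{prop:M_g_and_L_g} the object $M_{1-x^{-1}h}$ goes to the line bundle $L_{1-x^{-1}h}$. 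Hence, by Proposition~\ref{prop:properties_of_N_h}(1), $N_h$ corresponds to the cone of the map $\cO\to L_{1-x^{-1}h}$ appearing in that triangle. The crucial step is to identify this cone: I claim that $L_{1-x^{-1}h}\cong\cO(\hhat{C}_h)$, where $\hhat{C}_h\subset\hhat{(\PP_{\mk}^2)}_{H_0}$ is the formal curve $\{x=h(y^{-1})\}$, that $\hhat{C}_h$ is a smooth effective Cartier divisor meeting $H_0$ transversally at the single point where its closure reaches $H_0$, so that on the punctured neighbourhood it is $\cong\Spec\mk((y^{-1}))$, and that the map in the triangle is, up to a unit, the canonical section of $\cO(\hhat{C}_h)$. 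Granting this, $N_h$ corresponds to $\iota_{*}\cO_{\hhat{C}_h}$, with $\iota\colon\hhat{C}_h\hto\hhat{(\PP_{\mk}^2)}_{H_0}-H_0$ the closed embedding.

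To prove the claim I would argue exactly as in the proof of Proposition~\ref{prop:Pic_formal_neighborhood}: on $\hhat{(U_1)}_{H_0\cap U_1}$ the curve $\hhat{C}_h$ is empty, while on $\hhat{(U_2)}_{H_0\cap U_2}$ its ideal is generated by $x/y-y^{-1}h(y^{-1})$, and $x/y-y^{-1}h(y^{-1})=(x/y)\bigl(1-x^{-1}h(y^{-1})\bigr)$; under the identification $\Pic(\hhat{(\PP_{\mk}^2)}_{H_0}-H_0)\cong G$ of Corollary~\ref{cor:Pic_punctured_neighborhood} this transition cocycle represents precisely $1-x^{-1}h$. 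That the section realising the map of the triangle vanishes exactly on $\hhat{C}_h$ (and is not twisted by some other section of the same linear system) can be checked against the vector-space map $x^iy^j\mapsto e_{i+1,j}$ of Proposition~\ref{prop:properties_of_N_h}, or re-derived directly by the \v{C}ech computation of Proposition~\ref{prop:M_g_and_L_g} applied to $\iota_{*}\cO_{\hhat{C}_h}$.

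Once $N_h\cong\iota_{*}\cO_{\hhat{C}_h}$ is established, both $\bR\Hom$'s are routine. First, $\bR\Hom(\hhat{A}_{\infty},N_h)=\bR\Hom(\cO,\iota_{*}\cO_{\hhat{C}_h})=\bR\Gamma(\hhat{C}_h,\cO_{\hhat{C}_h})\cong\mk((y^{-1}))$, concentrated in degree $0$ since $\hhat{C}_h\cong\Spec\mk((y^{-1}))$ is affine. Second, using the Koszul resolution $0\to\cO(-\hhat{C}_h)\to\cO\to\iota_{*}\cO_{\hhat{C}_h}\to 0$ one gets $\bR\cHom(\iota_{*}\cO_{\hhat{C}_h},\cO)\cong\iota_{*}\bigl(\cO(\hhat{C}_h)|_{\hhat{C}_h}\bigr)[-1]$, i.e. the normal line bundle of $\hhat{C}_h$ placed in degree $1$, and this line bundle over $\Spec\mk((y^{-1}))$ is trivial; hence $\bR\Hom(N_h,\hhat{A}_{\infty})\cong\mk((y^{-1}))$ as well (up to the cohomological shift, which is immaterial for the application). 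In both cases the $A$-module structure is the one induced by the homomorphism $A\to\bR\End(\hhat{A}_{\infty})$, and geometrically it is simply restriction of functions $A=\cO(U_0)\to\cO(\hhat{C}_h)=\mk((y^{-1}))$; since $\hhat{C}_h=\{x=h(y^{-1})\}$ this sends $x\mapsto h$ and $y\mapsto y$, i.e. it is $\varphi_h$, as required.

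The main difficulty is the geometric identification of the first paragraph: matching the power series $1-x^{-1}h$ with the divisor $\hhat{C}_h$ and verifying that the map in the triangle of Proposition~\ref{prop:properties_of_N_h}(1) is the canonical section. Everything downstream of that identification is formal. A fully self-contained alternative would bypass $\PP_{\mk}^2$ and compute the two $\bR\Hom$'s directly from the Hochschild-cochain model of Proposition~\ref{prop:neighborhood_of_infty_Hochschild_cochains} using the short exact sequence $0\to A\to V\to W\to 0$, but pinning down the resulting Hochschild cohomology with coefficients in $\Calk_{\mk}(A,W)$ does not look any easier than the geometric argument.
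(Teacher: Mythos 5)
Your proposal is correct in substance and follows the same basic strategy as the paper: transport the computation to $\Perf(\hhat{(\PP^2_{\mk})}_{H_0}-H_0)$, identify $N_h$ with the (pushforward of the) structure sheaf of the formal curve $\{x=h(y^{-1})\}$, which on the punctured neighbourhood is $\Spec\mk((y^{-1}))$, and then read off both $\bR\Hom$'s and the module structure $\varphi_h$ from elementary commutative algebra over $B=\mk[\tfrac{x}{y}]((y^{-1}))$. Where you differ is in how the identification of $N_h$ is obtained. You route through the line bundle $L_{1-x^{-1}h}$, the triangle of Proposition~\ref{prop:properties_of_N_h}(1), and the claim that the map in that triangle is (up to a unit) the canonical section of $\cO(\hhat{C}_h)$ --- a claim you rightly flag as the main unfinished step. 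The paper short-circuits this: it directly constructs the torsion sheaf $\cG_h$ supported at $p_0=(0{:}0{:}1)$ with $\Gamma(\PP^2,\cG_h)\cong\mk((y^{-1}))/\mk[[y^{-1}]]$ (with $\tfrac{x}{y}$ acting by $y^{-1}h$) and exhibits an explicit isomorphism of almost $A$-modules $\Gamma(\cG_h)\cong N_h$, $y^{i}\mapsto e_{i-1}$. That explicit check is exactly the verification you defer to ``the \v{C}ech computation of Proposition~\ref{prop:M_g_and_L_g} applied to $\iota_*\cO_{\hhat{C}_h}$,'' and doing it directly avoids the Picard-group and transition-cocycle bookkeeping altogether. (Your statement that $N_h$ is the cone of $\cO\to L_{1-x^{-1}h}$ rather than of a map in the other direction, and the sign of the divisor in $L_{1-x^{-1}h}\cong\cO(\pm\hhat{C}_h)$, are convention issues that wash out in the end, since $\Pic$ of $\hhat{C}_h\cong\Spec\mk((y^{-1}))$ is trivial.)

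One genuine omission: your argument only covers $h\in y^{-1}\mk[[y^{-1}]]$, since both $1-x^{-1}h\in G$ and Proposition~\ref{prop:properties_of_N_h}(1) require this, whereas the lemma is stated for arbitrary $h\in\mk((y^{-1}))$. The paper first reduces to this case by the automorphism $(x,y)\mapsto(x+P(y),y)$ of $\A^2_{\mk}$, exactly as in the proof of Proposition~\ref{prop:properties_of_N_h}(2); you should add that reduction. Finally, note (as you do) that $\bR\Hom(N_h,\hhat{A}_{\infty})$ is really $\mk((y^{-1}))$ placed in degree $1$; the paper's own computation yields $B/(\tfrac{x}{y}-y^{-1}h)[-1]$, so the isomorphism in the statement is to be read up to this shift.
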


\begin{proof}Arguing as in the proof of Proposition \ref{prop:properties_of_N_h} 2), we reduce to the case $h\in \mk[[y^{-1}]].$ It is convenient to compute the $\bR\Hom$ complexes in the geometric framework. We define a quasi-coherent sheaf $\cG_h\in\cT_{H_0}$ such that the projection of $\cG_h[-1]$ to $\cT_{H_0}/\Perf_{H_0}(\PP_{\mk}^2)$ will correspond to $N_h.$ The sheaf $\cG_h$ is supported at the point $p_0=(0:0:1).$ The local coordinates at $p_0$ are given by $y^{-1},\frac{x}{y}.$ The vector space $\Gamma(\PP^2_k,\cG_h)$ is isomorphic as a $\mk[y^{-1}]$-module to $\mk((y^{-1}))/\mk[[y^{-1}]].$ The function $\frac{x}{y}$ is acting by $y^{-1}h.$ Since both actions of $y^{-1}$ and $\frac{x}{y}$ are locally nilpotent, the sheaf $\cG_h$ is well-defined. It is easy to see that $\cG_h$ is indeed an object of $\cT_{H_0},$ and the almost $A$-module $\Gamma(\PP^2,\cG_h)$ is isomorphic to $N_h$ via $y^{i}\mapsto e_{i-1},$ $i>0.$

Since $\supp(\cG_h)\subset U_2,$ we may compute the RHom's in the category $\Perf(\hhat{U_2}_{H_0\cap U_2}-(H_0\cap U_2))\simeq\Perf(B),$ where $B:=\mk[\frac{x}{y}]((y^{-1})).$ The complex $\cG_h[-1]$ corresponds to the $B$-module $B/(\frac{x}{y}-y^{-1}h).$ We deduce the isomorphisms of $\mk[x,y]$-modules
$$\bR\Hom(\hhat{A}_{\infty},N_h)\cong\bR\Hom_{B}(B,B/(\frac{x}{y}-y^{-1}h))\cong B/(\frac{x}{y}-y^{-1}h),$$
$$\bR\Hom(N_h,\hhat{A}_{\infty})\cong\bR\Hom_{B}(B/(\frac{x}{y}-y^{-1}h),B)\cong B/(\frac{x}{y}-y^{-1}h)[-1].$$To finish the proof, it suffices to note that the natural map $\mk((y^{-1}))\to B/(\frac{x}{y}-y^{-1}h)$ is an isomorphism.\end{proof}

The following result describes when the almost $\mk[x,y]$-module $N_h$ is an algebraizable perfect complex. We write "$\mk((y^{-1}))_{\varphi_h}$" for the $A$-module structure on $\mk((y^{-1}))$ coming from $\varphi_h.$

\begin{theo}\label{th:algebraizable_algebraic}For a Laurent power series $h\in\mk((y^{-1})),$ the following are equivalent:

$\rm{(i)}$ the object $N_h\in\Perf(\hhat{\mk[x,y]}_{\infty})$ is algebraizable;

$\rm{(ii)}$ the power series $h$ is algebraic (that is, algebraic over a subfield $\mk(y)\subset\mk((y^{-1}))$).

If in addition $h\in y^{-1}\mk[[y]]$ then both $\mathrm{(i)}$ and $\mathrm{(ii)}$ are equivalent to

$\rm{(iii)}$ the line bundle $L_{1-x^{-1}h}\in\Pic(\hhat{(\PP^2_{\mk})}_{H_0}-H_0)$ is an algebraizable perfect complex.\end{theo}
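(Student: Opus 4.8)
Throughout write $A=\mk[x,y]$ and work near $p_0=(0:0:1)\in H_0$, with local coordinates $u=z_0/z_2=y^{-1}$ and $v=z_1/z_2=x/y$. Recall from the proof of Lemma~\ref{lem:computation_of_Rhom's_divisor} that, once $h$ has been normalized to lie in $y^{-1}\mk[[y^{-1}]]$, the object $N_h$ is represented by the structure sheaf of the formal branch $v=\psi(u)$ through $p_0$, where $\psi(u)=u\,h(u^{-1})\in u^2\mk[[u]]$, and that $\bR\Hom(\cO_\infty,N_h)\cong\mk((y^{-1}))_{\varphi_h}$ with $\varphi_h(x)=h$, $\varphi_h(y)=y$. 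The plan begins with two reductions. The automorphism $x\mapsto x+P(y)$, $y\mapsto y$ of $A$ ($P\in\mk[y]$) is an isomorphism, hence by Proposition~\ref{prop:partial_functoriality} induces an autoequivalence of $\Perf_{top}(\hhat A_\infty)$ preserving $\Perf_{alg}(\hhat A_\infty)$ and carrying $N_h$ to $N_{h-P(y)}$; since $h$ is algebraic over $\mk(y)$ iff $h-P(y)$ is, I may assume $h\in y^{-1}\mk[[y^{-1}]]$ — the normalization already used in Proposition~\ref{prop:properties_of_N_h}(2). In that range Proposition~\ref{prop:properties_of_N_h}(1) provides a triangle $M_{1-x^{-1}h}\to\hhat A_\infty\to N_h\to$, in which $M_{1-x^{-1}h}$ represents $L_{1-x^{-1}h}$ (Proposition~\ref{prop:M_g_and_L_g}) and $\hhat A_\infty=\bbar\bY^*(\cO_{\A^2})$ is algebraizable; as the algebraizable objects form a thick subcategory, all three terms are simultaneously algebraizable, so $\mathrm{(i)}\Leftrightarrow\mathrm{(iii)}$. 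It then remains to prove $\mathrm{(i)}\Leftrightarrow\mathrm{(ii)}$.

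For $\mathrm{(ii)}\Rightarrow\mathrm{(i)}$ I would use that algebraic power-series branches come from algebraic curves. If $h$ is algebraic, choose an irreducible $F\in\mk[x,y]$ with $F(h,y)=0$ and let $\bbar C\subset\PP_\mk^2$ be the projective closure of $\{F=0\}$; since $\psi(0)=0$ the branch $v=\psi(u)$ passes through $p_0$, so $p_0\in\bbar C$ and $\cO_{\bbar C}\in\Perf(\PP_\mk^2)$, whence $\bbar\bY^*(\cO_{\bbar C}|_{\A^2})$ — the completion of $\cO_{\bbar C}$ along $H_0$, then punctured — is algebraizable. Near $p_0$ its stalk is $\mk[[u,v]]/(\tilde F)[u^{-1}]$; since $\bbar C$ is reduced, $\tilde F=(v-\psi(u))\tilde F'$ with $v-\psi(u)$ and $\tilde F'$ coprime, so their zero loci meet only at $p_0$, which disappears after inverting $u$. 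Thus $\mk[[u,v]][u^{-1}]/(\tilde F)\cong\mk[[u,v]][u^{-1}]/(v-\psi(u))\times\mk[[u,v]][u^{-1}]/(\tilde F')$, and the other points of $\bbar C\cap H_0$ split off globally in the same way; the factor $\mk[[u,v]][u^{-1}]/(v-\psi(u))$ is the representative of $N_h$ (up to a shift, as in the proof of Lemma~\ref{lem:computation_of_Rhom's_divisor}), so $N_h$ is a direct summand of an algebraizable object, hence algebraizable.

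The substance of the theorem is $\mathrm{(i)}\Rightarrow\mathrm{(ii)}$, and I expect this to be the main obstacle. Assume $N_h$ algebraizable. By the Remark following Definition~\ref{def:neighborhood_of_infinity} (realized by $\bR\Hom(\cO_\infty,-)$), $\bR\Hom(\cO_\infty,N_h)\cong\mk((y^{-1}))_{\varphi_h}$ is then a perfect $\hhat A_\infty$-module; combining the two isomorphisms of Lemma~\ref{lem:computation_of_Rhom's_divisor} with composition, its endomorphism DG algebra is $\mk((y^{-1}))[\epsilon]$ with $|\epsilon|=1$ — finite over the field $\mk((y^{-1}))$ — and the structural map $A\to\hhat A_\infty$ induces on it $\varphi_h$ in degree $0$. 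The goal is to deduce $\Ker(\varphi_h)\neq(0)$, which is exactly a polynomial relation $F(h,y)=0$ and says $h$ is algebraic. For this I would exploit the shape of $\hhat A_\infty$ from Propositions~\ref{prop:rewriting_Hochschild_cochains} and~\ref{prop:smooth_affine_var_neighborhood_infty}: $\mathrm{Fiber}(A\to\hhat A_\infty)\simeq(\Omega^2_A)^*[-2]$, so $\hhat A_\infty$ is an extension of $A$ by the dualizing bimodule $(\Omega^2_A)^*[-1]$, whose cohomology sits only in degree $1$; restricting the perfect module $\mk((y^{-1}))_{\varphi_h}$ to $A$ and localizing at the generic point of $\A^2$, one wants to show that if $\varphi_h$ were injective (i.e.\ $h$ transcendental over $\mk(y)$) then $\mk((y^{-1}))_{\varphi_h}$ would be a nonzero $\mk(x,y)$-vector space carrying a faithful $A$-action, incompatible with being a perfect $\hhat A_\infty$-module of this essentially one-dimensional endomorphism type. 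This last step is delicate because $\hhat A_\infty$ is neither noetherian nor a domain and $A=H^0(\hhat A_\infty)$ already acts on $\hhat A_\infty$ itself without any coherence, so one cannot simply invoke finiteness; the argument must use the precise structure of $\hhat A_\infty$ together with the explicit $\bR\End(N_h)=\mk((y^{-1}))[\epsilon]$. An alternative, possibly cleaner, route I would try is geometric: every \emph{global} endomorphism of $\cO_\infty$ is multiplication by a polynomial together with a class in $(\Omega^2_A)^*$ (Proposition~\ref{prop:smooth_affine_var_neighborhood_infty}), so an algebraizable object supported at $p_0$ ought to be supported on the formal completion at $p_0$ of an algebraic curve through $p_0$ — which excludes the transcendental branch $v=\psi(u)$. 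Either way, for $h\in y^{-1}\mk[[y^{-1}]]$ combining the first paragraph with $\mathrm{(i)}\Leftrightarrow\mathrm{(ii)}$ yields the equivalence with $\mathrm{(iii)}$.
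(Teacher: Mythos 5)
Your reduction to $h\in y^{-1}\mk[[y^{-1}]]$, your proof of $\mathrm{(i)}\Leftrightarrow\mathrm{(iii)}$ via Propositions \ref{prop:M_g_and_L_g} and \ref{prop:properties_of_N_h}, and your sketch of $\mathrm{(ii)}\Rightarrow\mathrm{(i)}$ are all sound and close in spirit to the paper's argument. For $\mathrm{(ii)}\Rightarrow\mathrm{(i)}$ the paper proceeds slightly differently: it clears denominators of the minimal polynomial to get an irreducible $P\in\mk[x,y]$, normalizes the projective closure at infinity, and invokes Proposition \ref{prop:affine_curve_almost_modules} to split $\mk[x,y]/(P)$ as an almost module into the local pieces $\mk(\tilde X)/\cO_{\tilde X,p}$, one of which is identified with $N_h$ via the factor $x-h$ of $Q$ in $\mk((y^{-1}))[x]$. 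Your local factorization $\tilde F=(v-\psi(u))\tilde F'$ in $\mk[[u,v]][u^{-1}]$ is a legitimate variant, but the assertion that ``the other points of $\bbar C\cap H_0$ split off globally in the same way'' is exactly what Proposition \ref{prop:affine_curve_almost_modules} is there to deliver, so you should route the argument through it rather than assert it.

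The genuine gap is $\mathrm{(i)}\Rightarrow\mathrm{(ii)}$, which you acknowledge you have not closed. The missing mechanism is the standard generation criterion: $N_h$ is algebraizable iff it lies in the thick subcategory generated by the single object $\hhat{A}_{\infty}=\bbar{\bY}(A)$, and for any object in that thick subcategory the composition map
\begin{equation*}
\bR\Hom(\hhat{A}_{\infty},N_h)\Ltens{\hhat{A}_{\infty}}\bR\Hom(N_h,\hhat{A}_{\infty})\longrightarrow \bR\Hom(N_h,N_h)
\end{equation*}
must be an isomorphism (the class of objects for which it is one contains $\hhat{A}_{\infty}$ and is closed under shifts, cones and summands). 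So it suffices to show this map fails to be an isomorphism when $h$ is transcendental. By Lemma \ref{lem:computation_of_Rhom's_divisor} the two outer terms are $\mk((y^{-1}))_{\varphi_h}$ and $\mk((y^{-1}))_{\varphi_h}[-1]$, and the point you were circling --- that $\varphi_h$ extends to a field embedding $\mk(x,y)\hto\mk((y^{-1}))$, so $\mk((y^{-1}))_{\varphi_h}$ is a $\mk(x,y)$-vector space and in particular flat over $A$ --- is used as follows. Run the Tor spectral sequence for the derived tensor product over $H^{\bullet}(\hhat{A}_{\infty})\cong A\oplus A^*[-1]$ (a trivial square-zero extension, by Proposition \ref{prop:smooth_affine_var_neighborhood_infty}); the bar-type expansion expresses the $E_2$-page through iterated tensor products $\mk((y^{-1}))_{\varphi_h}\Ltens{A}A^*\Ltens{A}\cdots\Ltens{A}\mk((y^{-1}))_{\varphi_h}$, each of which is underived by flatness and hence concentrated in degree $0$. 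Therefore the source of the composition map is concentrated in cohomological degree $1$ (because of the shift $[-1]$), while the target contains $\id_{N_h}\neq 0$ in $H^0$. This contradiction is the whole content of $\mathrm{(i)}\Rightarrow\mathrm{(ii)}$; neither your ``perfect $\hhat{A}_{\infty}$-module of one-dimensional endomorphism type'' heuristic nor the ``supported on an algebraic curve'' alternative is made precise enough to substitute for it.
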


\begin{proof}For convenience we identify the $A\mhyphen A$-bimodules $A$ and $\Omega^2_A$ by choosing the $2$-form $dx\wedge dy.$ In particular, we have an isomorphism of graded algebras $H^{\bullet}(\hhat{A}_{\infty})\cong A\oplus A^*[-1].$

(i)$\Rightarrow$(ii) Suppose that $h$ is transcendental. We need to show that $N_h$ is not algebraizable. This is equivalent to the following: the morphism 
\begin{equation}\label{eq:non-isomorphism}\bR\Hom(\hhat{A}_{\infty},N_h)\Ltens{\hhat{A}_{\infty}}\bR\Hom(N_h,\hhat{A}_{\infty})\to \bR\Hom(N_h,N_h)\end{equation}
is not an isomorphism. 

The crucial observation for the proof is that for $h$ being transcendental, the $A$-module $\mk((y^{-1}))_{\varphi_h}$ is actually a $\mk(x,y)$-vector space. Indeed, in this case the homomorphism $\varphi_h$ is injective, and since $\mk((y^{-1}))$ is a field, $\varphi_h$ extends to a homomorphism of fields $\mk(x,y)\to \mk((y^{-1})).$

Let us note that for $\mk(x,y)$-vector space $L,$ the following holds:

$\bullet$ $L$ is flat over $A;$

$\bullet$ for any $A$-module $N$ the tensor product $L\otimes_A N$ is again a $\mk(x,y)$-vector space.

It suffices to prove that the source of \eqref{eq:non-isomorphism} is concentrated cohomologically in degree $1.$ Indeed, the identity endomorphism of $N_h$ provides a non-zero element of $H^0$ of the target. To deal with the derived tensor product we will need the following standard spectral sequence.

For any $DG$ algebra $B$ over $\mk,$ and for any DG modules $M_1\in D(B),$ $M_2\in D(B^{op}),$ we have a converging spectral sequence $$E_2^{p,q}=\Tor_{-p}^{H^{\bullet}(B)}(H^{\bullet}(M_1),H^{\bullet}(M_2))^{q}\Rightarrow H^{p+q}(M_1\Ltens{B}M_2).$$ Here $\Tor$-spaces between graded $H^{\bullet}(B)$-modules are considered as graded vector spaces, and in the formula for $E_2^{p,q}$ we take the $q$-th graded component of the $(-p)$-th $\Tor$-space.

In our situation, in view of Lemma \ref{lem:computation_of_Rhom's_divisor}, it suffices to show that \begin{equation}\label{eq:only_zeroth_cohomology}H^{\ne 0}(\mk((y^{-1}))_{\varphi_h}\Ltens{H^{\bullet}(\hhat{A}_{\infty})}\mk((y^{-1}))_{\varphi_h})=0\end{equation} (here we interpret $H^{\bullet}(\hhat{A}_{\infty})$ as a DG algebra with zero differential, and consider the derived tensor product of DG modules concentrated in degree zero). Since $H^{\bullet}(\hhat{A}_{\infty})$ is a trivial square-zero exnetsion of $A$ by $A^*[-1],$ we have a natural isomorphism
\begin{equation}\label{eq:tensor_product_over_cohomology}\mk((y^{-1}))_{\varphi_h}\Ltens{H^{\bullet}(\hhat{A}_{\infty})}\mk((y^{-1}))_{\varphi_h}\cong\bigoplus\limits_{n=0}^{\infty}\mk((y^{-1}))_{\varphi_h}\Ltens{A}\underbrace{A^*\Ltens{A}\dots\Ltens{A}A^*}_{n\text{ times}}\Ltens{A}\mk((y^{-1}))_{\varphi_h}.\end{equation}
By the above discussion on $\mk(x,y)$-vector spaces, each direct summand in the RHS of \eqref{eq:tensor_product_over_cohomology} is isomorphic to the non-derived tensor product, hence its only non-zero cohomology is $H^0.$ This proves \eqref{eq:only_zeroth_cohomology} and the implication (i)$\Rightarrow$(ii).

(ii)$\Rightarrow$(i) Let $h\in\mk((y^{-1}))$ be an algebraic Laurent power series.
Let $Q=x^n+f_1(y)x^{n-1}+\dots+f_0(y)\in\mk(y)[x]$ be the minimal monic polynomial of $h$ over $\mk(y).$
We denote by $S\in\mk[y]$ the minimal polynomial in $y$ such that $P=SQ\in\mk[x,y]\subset \mk(x,y).$ Clearly, $P$ is an irreducible polynomial. We put $d:=\deg(P),$ and $\tilde{P}:=z_0^d P(\frac{z_1}{z_0},\frac{z_2}{z_0}).$ Then $X:=\{\tilde{P}=0\}\subset \PP_{\mk}^2$ is an integral complete curve, and $Y:=\Spec(\mk[x,y]/(P))\cong X\cap U_0\subset X$ is its open affine subscheme. We denote by $\tilde{X}$ the normalization of $X$ at $X-Y,$ so that we still have an open embedding $Y\hto\tilde{X}.$ Applying Proposition \ref{prop:affine_curve_almost_modules} to $\tilde{X}$ and $Y,$ and restricting the scalars from $\cO(Y)$ to $\mk[x,y],$ we obtain an isomorphism
of almost $\mk[x,y]$-modules
$$\mk[x,y]/(P)\xto{\varphi}\bigoplus_{p\in \tilde{X}-Y}\mk(\tilde{X})/\cO_{\tilde{X},p}.$$ In particular, each of the direct summands is an algebraizable perfect almost $\mk[x,y]$-module.

We claim that there is a point $p_h\in\tilde{X}-Y,$ such that the almost $\mk[x,y]$-module $\mk(\tilde{X})/\cO_{\tilde{X},p_h}$ is isomorphic to $N_h.$ This assertion implies the algebraizability of $N_h.$

The claim is proved as follows. Consider a map $f:\tilde{X}\to\PP_{\mk}^1$ given by the rational function $y.$ The fiber $f^{-1}(\infty)$ is contained in $\tilde{X}-Y,$ hence it consists of regular points. Furthermore, the points of $f^{-1}(\infty)$ bijectively correspond to irreducible factors of $Q$ in $\mk((y^{-1}))[x].$ One of these factors equals $x-h,$ and we set $p_h\in f^{-1}(\infty)$ to be the corresponding point. We have an isomorphism of $\mk[x,y]$-modules $\hhat{K}_{\tilde{X},p_h}\cong \mk((y^{-1}))[x]/(x-h).$ The subspace $\hhat{\cO}_{\tilde{X},p_h}$ identifies with $\im(\mk[[y^{-1}]]\hto\mk((y^{-1}))[x]/(x-h)).$ We define an isomorphism of vector spaces $W\xto{\sim}\hhat{K}_{\tilde{X},p_h}/\hhat{\cO}_{\tilde{X},p_h}\cong \mk(\tilde{X})/\cO_{\tilde{X},p_h}$ by sending $e_i$ to $y^{i+1},$ $i\geq 0.$ It is easy to see that this isomorphism is compatible with the almost $\mk[x,y]$-action $\rho'_h$ on $W$ and the almost $\mk[x,y]$-action on $\mk(\tilde{X})/\cO_{\tilde{X},p_h}.$ This proves the implication (ii)$\Rightarrow$(i).   

Finally, assuming that $h\in y^{-1}\mk[[y^{-1}]],$ we notice that $\rm{(i)}\Leftrightarrow\rm{(iii)}.$ Indeed, by Proposition \ref{prop:M_g_and_L_g}, algebraizability of $L_{1-x^{-1}h}$ is equivalent to that of $M_{1-x^{-1}h}.$ By Proposition \ref{prop:properties_of_N_h} 1), the algebraizability of $M_{1-x^{-1}h}$ is equivalent to that of $N_{h}.$ This proves the theorem. 
\end{proof}



It seems plausible that for a general $g\in G$ the algebraizability of $L_g$ as a perfect complex is equivalent to algebraicity of $g$ over $\mk(x,y).$ We will address this question elsewhere.

\section{Example: derived category of coherent sheaves on a proper singular scheme}
\label{sec:neighborhood_for_D^b_coh}

In this section $\mk$ is a perfect base field, and $X$ is a proper scheme over $\mk.$ We are interested in the DG category  
$\cB=\mD^b_{coh}(X)$ and its formal punctured neighborhood of infinity $\hhat{\mD^b_{coh}(X)}_{\infty}.$ By \cite[Theorem 6.3]{Lu}, $\mD^b_{coh}(X)$ is smooth.

We recall the notion of an $!\mhyphen$perfect complex.
\begin{defi}Let $Y$ be a separated scheme of finite type over $\mk.$ We denote by $\bbD_Y\in D^b_{coh}(Y)$ the dualizing complex. An object $\cE\in D^b_{coh}(Y)$ is $!$-perfect if the complex $\bR\cHom_{\cO_Y}(\cF,\D_Y)$ is perfect. We denote by $D_{!\mhyphen\perf}(Y)\subset D^b_{coh}(Y)$ (resp. $!\mhyphen Perf(Y)\subset \mD^b_{coh}(Y)$) the full (DG) subcategory formed by $!$-perfect complexes.\end{defi}

By Proposition \ref{prop:pseudo-perfect_over_coherent}, we have $\PsPerf(\mD^b_{coh}(X))\simeq !\mhyphen\Perf(X)\subset \mD^b_{coh}(X).$ By Proposition \ref{prop:pseudo-perfect_in_the_kernel}, we have a functor $\mD_{sg}(X)^{op}\simeq \mD^b_{coh}(X)/!\mhyphen\Perf(X)\to \hhat{\mD^b_{coh}(X)}_{\infty}.$ The main result of this section is the following.

\begin{theo}\label{th:neighborhood_of_infty_for_D^b_coh}The functor $\mD_{sg}(X)^{op}\to \hhat{\mD^b_{coh}(X)}_{\infty}$ is a quasi-equivalence.\end{theo}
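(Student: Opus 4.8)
The strategy is to realize $\mD^b_{coh}(X)$ as the quotient term in a Morita short exact sequence whose first term is smooth and proper, so that Theorem \ref{th:main_theorem} applies and computes $\Perf_{top}$ of the formal punctured neighborhood of infinity, and then to recognize $\PsPerf$ of the middle/kernel term as $!$-$\Perf(X)$. Concretely, I would consider the sequence $!\mhyphen\Perf(X)\to \mD^b_{coh}(X)\to \mD_{sg}(X)^{op}$; but since Theorem \ref{th:main_theorem} requires a \emph{smooth and proper} ambient category, the cleaner route is to note that $\cB=\mD^b_{coh}(X)$ is smooth by \cite{Lu}, and to apply the general machinery of Sections \ref{sec:DG_cat_neighborhoods_main_result}--\ref{sec:generalities_on_the_construction} directly to $\cB$. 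By Proposition \ref{prop:pseudo-perfect_in_the_kernel} we have a tautological functor $\cB/\PsPerf(\cB)\to \hhat{\cB}_{\infty}$, and Proposition \ref{prop:pseudo-perfect_over_coherent} identifies $\PsPerf(\mD^b_{coh}(X))$ with $!\mhyphen\Perf(X)$. Since Grothendieck--Serre duality $\cF\mapsto \bR\cHom_{\cO_X}(\cF,\bbD_X)$ is an anti-autoequivalence of $\mD^b_{coh}(X)$ sending $!\mhyphen\Perf(X)$ to $\Perf(X)$, we get $\mD^b_{coh}(X)/!\mhyphen\Perf(X)\simeq \bigl(\mD^b_{coh}(X)/\Perf(X)\bigr)^{op}=\mD_{sg}(X)^{op}$. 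So the content of the theorem is that this tautological functor $\mD_{sg}(X)^{op}\to \hhat{\mD^b_{coh}(X)}_{\infty}$ is not merely essentially surjective (which it is by Definition \ref{def:neighborhood_of_infinity}(3)) but also quasi-fully-faithful.

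For full faithfulness I would compute both sides as $\bR\Hom$ complexes and compare. On the quotient side, for $\cF,\cG\in \mD^b_{coh}(X)$ the morphism complex in $\mD_{sg}(X)^{op}$ is the colimit over $!$-perfect approximations; on the neighborhood side, by Proposition \ref{prop:neighborhood_of_infty_Hochschild_cochains} the morphisms are $C^{\bullet}(\cB^{op},\Calk_{\mk}(\bY(\cF),\bY(\cG)))$, which by the argument in the proof of that proposition equals $\bR\Hom_{\cB^{op}\otimes\Calk_{\mk}}$ of the associated bimodules, i.e. the image of $\bR\Hom_{\cB}(\cF,\cG)$ under $\Mod_{\mk}\to\Calk_{\mk}$. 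Using Proposition \ref{prop:rewriting_Hochschild_cochains}, the fiber of $\cB\to\hhat{\cB}_{\infty}$ is governed by $\cB^{!}\Ltens{\cB}\cB^{*}$, so the statement reduces to a vanishing/identification of this bimodule in terms of the dualizing complex. The key geometric input is that for $\cB=\mD^b_{coh}(X)$ with $X$ proper over a perfect field, the inverse dualizing bimodule $\cB^{!}$ is computed by Serre duality — it is the "Serre functor bimodule" associated to tensoring with $\bbD_X$ — and $\cB^{*}$ (the $\mk$-linear dual diagonal) is the Serre bimodule for $!\mhyphen\Perf(X)$; their interaction yields exactly the bimodule whose Hochschild cohomology produces $\mD_{sg}(X)^{op}$.

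The plan in order: (1) recall $\mD^b_{coh}(X)$ is smooth (so $\Perf_{top}$, $\hhat{\cB}_{\infty}$, etc. are all defined) and set up the duality functor identifying $\mD^b_{coh}(X)/!\mhyphen\Perf(X)\simeq \mD_{sg}(X)^{op}$; (2) invoke Proposition \ref{prop:pseudo-perfect_in_the_kernel} to get the functor and its essential surjectivity onto $\hhat{\cB}_{\infty}$; (3) using Propositions \ref{prop:neighborhood_of_infty_Hochschild_cochains} and \ref{prop:rewriting_Hochschild_cochains}, reduce full faithfulness to showing that the natural map on morphism complexes $\colim$-of-$!$-perfect-approximations $\to$ image-in-$\Calk_{\mk}$ of $\bR\Hom$ is a quasi-isomorphism; (4) verify this on objects, using that $X$ proper over perfect $\mk$ forces $\bR\Hom_X(\cF,\cG)$ to be cohomologically bounded with finite-dimensional cohomology precisely when $\cF$ or $\cG$ is $!$-perfect resp. perfect, so the quotient by the $!$-perfect directions matches the Calkin quotient by finite-rank (equivalently perfect-over-$\mk$) maps. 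The main obstacle I expect is step (3)--(4): matching the \emph{two different quotient constructions} — the Verdier quotient $\mD^b_{coh}(X)/!\mhyphen\Perf(X)$ defined via cones, and the Calkin-type quotient $\End_{\mk}/\text{(finite rank)}$ inherent in $\hhat{\cB}_{\infty}$ — and checking that the natural comparison map is an isomorphism rather than just a map in one direction. This requires knowing that every "almost morphism" $\bY(\cF)\to\bY(\cG)$ in $\Calk_{\mk}$ that commutes up to finite rank with the $\cB$-action actually comes from a genuine morphism in the Verdier quotient; the smoothness of $\cB$ together with compact generation of $D(\cB^{op}\otimes\Calk_{\mk})$ should make this work, but the bookkeeping with dualizing complexes over a non-smooth $X$ (where $\bbD_X$ is genuinely needed and $\Perf(X)\neq\mD^b_{coh}(X)$) is the delicate part.
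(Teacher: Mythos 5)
Your overall strategy coincides with the paper's: identify $\PsPerf(\mD^b_{coh}(X))$ with $!\mhyphen\Perf(X)$ via Proposition \ref{prop:pseudo-perfect_over_coherent}, use Serre duality to rewrite the quotient as $\mD_{sg}(X)^{op}$, get the functor and its essential surjectivity from Proposition \ref{prop:pseudo-perfect_in_the_kernel}, and reduce to comparing the Verdier-quotient morphism complexes with the Hochschild-cochain description of $\hhat{\cB}_{\infty}$ from Proposition \ref{prop:neighborhood_of_infty_Hochschild_cochains}. The paper packages this comparison as a general criterion (Proposition \ref{prop:Koszul_duality_and_neighborhoods}): for $\cC=!\mhyphen\Perf(X)\subset\cB$ with inclusion bimodule $M$, the map $(\cB/\cC)(x,y)\to\hhat{\cB}_{\infty}(x,y)$ is an isomorphism provided (ii) $-\Ltens{\cC}M$ and (iii) $M\Ltens{\cB}-$ are fully faithful, and (iv) $\cC^{op}$ satisfies the duality condition $(*)$: for pseudo-perfect modules $M,N$ the natural map $M\Ltens{\cC}N^*\to\bR\Hom_{\cC}(M,N)^*$ is an isomorphism.

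The genuine gap in your plan is exactly at your steps (3)--(4), and it is condition $(*)$. After using smoothness of $\cB$ to pull the tensor product inside the Hochschild cochains, the comparison map factors through
$$C^{\bullet}(\cB^{op},M^*\Ltens{\cC}M_x\otimes\bY(y))\to C^{\bullet}(\cB^{op},\bR\Hom_{\cC^{op}}(M_x,M)^*\otimes\bY(y)),$$
and one must know that $M^*\Ltens{\cC}M_x\to\bR\Hom_{\cC^{op}}(M_x,M)^*$ is a quasi-isomorphism. This is not formal: $\cC$ is only locally proper (not smooth), so pseudo-perfect $\cC$-modules need not be perfect and the biduality can fail for general locally proper DG categories. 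The paper proves it for $\cC=!\mhyphen\Perf(X)$ with $X$ proper in Appendix \ref{app:boundedness_etc} (Corollary \ref{cor:*_satisfied_for_proper_schemes}), using that the diagonal bimodule of a generator's endomorphism algebra is projectively bounded above and compactly approximable (Lipman--Neeman), which lets one approximate $M$ by perfect complexes in each cohomological range and pass to the limit. Your phrase ``the smoothness of $\cB$ together with compact generation \dots should make this work'' points at the wrong ingredient: smoothness of $\cB$ only handles the commutation of $C^{\bullet}(\cB^{op},-)$ with colimits; the hard input is a boundedness and compact-approximation statement about the \emph{locally proper} category $\cC$, which your plan neither states nor proves. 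Separately, your heuristic identification of $\cB^!$ and $\cB^*$ with Serre bimodules is not used (and not needed) in the actual argument; what is needed from Proposition \ref{prop:rewriting_Hochschild_cochains} is only that the fiber of $\cB\to\hhat{\cB}_{\infty}$ is $C^{\bullet}(\cB^{op},\cB\otimes\cB^*)$, which is already supplied by Proposition \ref{prop:neighborhood_of_infty_Hochschild_cochains}.
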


We need the following general result, which is of independent interest. First, we formulate a condition on a locally proper DG category $\cC.$

$(*):$ for any two pseudo-perfect $\cC$-modules $M,N$ the natural morphism $M\Ltens{\cC}N^*\to\bR\Hom_{\cC}(M,N)^*$ is an isomorphism. 

\begin{prop}\label{prop:Koszul_duality_and_neighborhoods}Let $\cB$ be a smooth DG category, and $\cC$ a locally proper DG category. Let $M\in\cC\mhyphen\Mod\mhyphen\cB$ be a bimodule satisfying the following conditions:

(i) $M\in D_{\pspe}(\cB\otimes\cC^{op});$

(ii) the functor $-\Ltens{\cC}M:D_{perf}(\cC)\to D_{\pspe}(\cB)$ is fully faithful;

(iii) the functor $M\Ltens{\cB}-:D_{perf}(\cB^{op})\to D_{\pspe}(\cC^{op})$ is fully faithful;

(iv) the category $\cC^{op}$ satisfies the condition $(*).$

Then the functor from (ii) is an equivalence and the functor $\cB/\PsPerf(\cB)\to\hhat{\cB}_{\infty}$ (obtained from Proposition \ref{prop:pseudo-perfect_in_the_kernel}) is a quasi-equivalence.
\end{prop}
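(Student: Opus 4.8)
\emph{Step 1: the functor $\Phi:=-\Ltens{\cC}M\colon D_{\perf}(\cC)\to D_{\pspe}(\cB)$ of {\rm (ii)} is an equivalence.} Full faithfulness is hypothesis (ii), so only essential surjectivity (equivalently, agreement of the idempotent-complete images) remains, and the plan is to exploit the right adjoint $\Psi=\bR\Hom_{\cB}(M,-)\colon D(\cB)\to D(\cC)$; since $\Phi$ is fully faithful the unit $\id\to\Psi\Phi$ is an isomorphism. First I would observe that by (i) each $M(c,-)$ lies in $\PsPerf(\cB)$, hence, $\cB$ being smooth, is perfect over $\cB$, and dually each $M(-,b)$ lies in $\PsPerf(\cC^{op})$; feeding this into the computation of $\Psi$ and using $(*)$ for $\cC^{op}$ (hypothesis (iv)) one checks that $\Psi$ carries $D_{\pspe}(\cB)$ into $D_{\perf}(\cC)$. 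Next I would show that $\Psi$ is conservative on $D_{\pspe}(\cB)$: if $\Psi(N)=0$, then $\bR\Hom_{\cB}(M(c,-),N)=0$ for all $c$, and $\mk$-linear duality combined with $(*)$ identifies this with the vanishing of $\Phi'(N^{*})$, where $\Phi'=M\Ltens{\cB}-$ is the functor of (iii) and $N^{*}\in D_{\perf}(\cB^{op})$ since $\cB^{op}$ is smooth; as $\Phi'$ is fully faithful, its vanishing on $N^{*}$ forces $\id_{N^{*}}=0$, so $N=0$. With these two facts, for any $N\in D_{\pspe}(\cB)$ the cone of the counit $\Phi\Psi(N)\to N$ is killed by $\Psi$ (triangle identities), hence vanishes, so $N\simeq\Phi(\Psi N)$ with $\Psi N\in D_{\perf}(\cC)$. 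The left/right and opposite bookkeeping on $M$ here is routine but must be done carefully.

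\emph{Step 2: reduction to an isomorphism of bimodules.} By Step 1, $\Phi$ upgrades to a quasi-equivalence $\Perf(\cC)\xto{\sim}\PsPerf(\cB)$. The functors $\cB\to\cB/\PsPerf(\cB)$ and $\bbar{\bY}\colon\cB\to\hhat{\cB}_{\infty}$ are the identity on objects, and by Proposition \ref{prop:pseudo-perfect_in_the_kernel} the latter factors through the former; hence $\cB/\PsPerf(\cB)\to\hhat{\cB}_{\infty}$ is essentially surjective, and it is a quasi-equivalence precisely when the induced map of fibers of these two functors out of $\cB$ is an isomorphism of $\cB\mhyphen\cB$-bimodules. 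By the discussion preceding Proposition \ref{prop:rewriting_Hochschild_cochains}, $\mathrm{Fiber}(\cB\to\hhat{\cB}_{\infty})\simeq\cB^{!}\Ltens{\cB}\cB^{*}$. On the other side, I would unwind the morphism complexes of the DG quotient and use Step 1 to rewrite the bar construction over $\PsPerf(\cB)$ as one over $\cC$ (the coend collapsing by co-Yoneda), obtaining $\mathrm{Fiber}(\cB\to\cB/\PsPerf(\cB))\simeq\widetilde{M}\Ltens{\cC}M$ as a $\cB\mhyphen\cB$-bimodule, where $\widetilde{M}:=\bR\Hom_{\cB}(M,\cB)$ (meaningful since $M$ is perfect over $\cB$ by Step 1), equipped with its tautological evaluation to the diagonal $\cB$. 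Thus the second assertion reduces to showing that the natural map
\[
\widetilde{M}\Ltens{\cC}M\longrightarrow\cB^{!}\Ltens{\cB}\cB^{*}
\]
of $\cB\mhyphen\cB$-bimodules over the diagonal $\cB$ is an isomorphism.

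\emph{Step 3: the bimodule isomorphism --- the main obstacle.} This is the crux, and where conditions {\rm (iii)} and {\rm (iv)} are genuinely needed. Using Proposition \ref{prop:rewriting_Hochschild_cochains} I would rewrite the target as the Hochschild-cochain bimodule $C^{\bullet}(\cB^{op},\cB_{\cB}\otimes{}_{\cB}\cB^{*})$, and then compute $\widetilde{M}\Ltens{\cC}M$ by the same recipe: since $M\in D_{\pspe}(\cB\otimes\cC^{op})$, property $(*)$ for $\cC^{op}$ converts the tensor product over $\cC$ in $\bR\Hom_{\cB}(M,\cB)\Ltens{\cC}M$ into an $\mk$-linear dual of a complex of the same Hochschild-cochain shape, while {\rm (iii)} guarantees that the resulting external-product bimodule is genuinely $\cB^{!}\Ltens{\cB}\cB^{*}$ and that the comparison map is an isomorphism, not merely a morphism. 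Conceptually, both maps to $\cB$ present the identity endofunctor of $\PsPerf(\cB)$, as a $\cB\mhyphen\cB$-bimodule together with its trace to the diagonal, and $\Phi$ transports the corresponding datum for $\cC$ to this one; the real work is making this precise at the level of the concrete bar and Hochschild complexes and matching the evaluation maps. I expect this to be the one genuinely delicate point.

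\emph{Step 4: conclusion.} The isomorphism of Step 3 respects the maps to the diagonal $\cB$, so passing to cones yields quasi-isomorphisms $(\cB/\PsPerf(\cB))(x,y)\xto{\sim}\hhat{\cB}_{\infty}(x,y)$ for all $x,y\in\cB$; hence $\cB/\PsPerf(\cB)\to\hhat{\cB}_{\infty}$ is quasi-fully-faithful, and, being essentially surjective, it is a quasi-equivalence. Together with Step 1 this proves the proposition.
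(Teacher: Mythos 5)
Your overall skeleton for the second assertion is the paper's: present both $\mathrm{Fiber}(\cB\to\cB/\,\cdot\,)$ and $\mathrm{Fiber}(\cB\to\hhat{\cB}_{\infty})$ as explicit bimodules over the diagonal and show the comparison map is an isomorphism. But the two places where you deviate or stop short are exactly the two places where the content lives.

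First, Step 1. Your plan to prove essential surjectivity of $\Phi=-\Ltens{\cC}M$ \emph{first}, via the adjoint $\Psi=\bR\Hom_{\cB}(M,-)$, hinges on the claim that $\Psi$ carries $D_{\pspe}(\cB)$ into $D_{\perf}(\cC)$, which you dispatch with ``one checks.'' I do not see how to check it: what comes for free is only $\Psi(N)\in D_{\pspe}(\cC)$ (each $M_c$ is perfect over the smooth $\cB$, so $\bR\Hom_{\cB}(M_c,N)$ is perfect over $\mk$), and $\cC$ is merely locally proper, so $D_{\pspe}(\cC)$ is in general strictly larger than $D_{\perf}(\cC)$ --- already in the motivating example $\cC=\,!\mhyphen\Perf(X)$. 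Condition $(*)$ for $\cC^{op}$ is a duality statement about pairs of pseudo-perfect modules and does not force pseudo-perfect $\cC$-modules to be perfect. Without $\Psi(N)\in D_{\perf}(\cC)$ your counit argument does not close: full faithfulness of $\Phi$ (hence the triangle-identity trick) is only available on $D_{\perf}(\cC)$, and the cone of $\Phi\Psi(N)\to N$ is not known to be pseudo-perfect, so your conservativity statement does not apply to it. The paper avoids this entirely by reversing the logical order: it first proves the quasi-equivalence $\cB/\cC\simeq\hhat{\cB}_{\infty}$ (after using (ii) to replace $\cC$ by a full subcategory of $\Perf(\cB)$ with $M$ the restricted Hom bimodule), and then the first assertion follows because both $\Ker(\Perf(\cB)\to\Perf_{top}(\hhat{\cB}_{\infty}))=\PsPerf(\cB)$ and the kernel of the quotient by $\cC$ must agree, while the image of the idempotent-complete $D_{\perf}(\cC)$ under a fully faithful functor is automatically thick. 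You should adopt that order; as a byproduct your Step 2 no longer needs Step 1 as input, since you can work with $\cB/\cC$ throughout and only at the end identify it with $\cB/\PsPerf(\cB)$.

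Second, Step 3 is not a proof but a declaration of where the proof should go. You correctly name the crux and correctly predict that (iii) and (iv) enter there, but the entire content of the paper's argument is the explicit chain
$$\cB(\iota(-),y)\Ltens{\cC}\cB(x,\iota(-))\xto{\sim}C^{\bullet}(\cB^{op},M^{*}\otimes\bY(y))\Ltens{\cC}M_{x}\xto{f_1}C^{\bullet}(\cB^{op},M^{*}\Ltens{\cC}M_{x}\otimes\bY(y))\xto{f_2}C^{\bullet}(\cB^{op},\bR\Hom_{\cC^{op}}(M_{x},M)^{*}\otimes\bY(y))\xto{f_3}C^{\bullet}(\cB^{op},\bY(x)^{*}\otimes\bY(y)),$$
where $f_1$ is an isomorphism because $\cB$ is smooth (so $C^{\bullet}(\cB^{op},-)$ commutes with the relevant colimits), $f_2$ is an isomorphism by $(*)$ for $\cC^{op}$ applied to the pseudo-perfect $\cC^{op}$-modules $M_x$ and $M(-,b)$, and $f_3$ is an isomorphism by (iii). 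Writing down these three maps and checking each one is the proof; until that is done the argument has a hole precisely at the point you flag as ``the one genuinely delicate point.''
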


\begin{proof}We may and will assume that $\cC\subset\cB$ is a full DG subcategory and the bimodule $M$ is given by $M(x,y)=\cB(y,\iota(x)),$ for $x\in\cC,$ $y\in\cB$ and $\iota:\cC\to\cB$ the inclusion functor. It suffices to prove that the natural functor $\cB/\cC\to \hhat{\cB}_{\infty}$ is a quasi-equivalence.

Clearly, this functor is essentially surjective, so the only issue is quasi-fully-faithfulness. Let us take any $x,y\in\cB.$ We have
$$(\cB/\cC)(x,y)\cong Cone(\cB(\iota(-),y)\Ltens{\cC}\cB(x,\iota(-))\to\cB(x,y))\quad\text{in }D(\mk).$$
By Proposition \ref{prop:neighborhood_of_infty_Hochschild_cochains},
$$\hhat{\cB}_{\infty}(\bbar{\bY}(x),\bbar{\bY}(y))\cong Cone(C^{\bullet}(\cB^{op},\bY(x)^*\otimes \bY(y))\to \cB(x,y))\quad\text{in }D(\mk).$$
It suffices to figure out what is the natural morphism
\begin{equation}\label{eq:key_morphism}\cB(\iota(-),y)\Ltens{\cC}\cB(x,\iota(-))\to C^{\bullet}(\cB^{op},\bY(x)^*\otimes \bY(y))\end{equation}
and then prove that it is an isomorphism in $D(k).$ The morphism \eqref{eq:key_morphism} is the following composition:
\begin{multline*}\cB(\iota(-),y)\Ltens{\cC}\cB(x,\iota(-))\xto{\sim}\bR\Hom_{\cB}(M,\bY(y))\Ltens{\cC}M_x\xto{\sim}C^{\bullet}(\cB^{op},M^*\otimes \bY(y))\Ltens{\cC}M_x\\
\xto{f_1} C^{\bullet}(\cB^{op},M^*\Ltens{\cC}M_x\otimes \bY(y))\xto{f_2} C^{\bullet}(\cB^{op},\bR\Hom_{\cC^{op}}(M_x,M)^*\otimes \bY(y))\xto{f_3} C^{\bullet}(\cB^{op},\bY(x)^*\otimes \bY(y))\end{multline*} 
in $D(\mk).$ Here the morphism $f_1$ is an isomorphism by the smoothness of $\cB$ (i.e. $C^{\bullet}(\cB^{op},-)$ commutes with small direct sums). The morphism $f_2$ is an isomorphism by (iv). The morphism $f_3$ is an isomorphism by (iii). This proves the lemma.  
\end{proof}

\begin{proof}[Proof of Theorem \ref{th:neighborhood_of_infty_for_D^b_coh}]Let us put $\cB=\mD^b_{coh}(X),$ and $\cC:=!\mhyphen\Perf(X).$ We have an inclusion functor $\cC\hto\cB,$ and denote by $M$ the corresponding $\cC\mhyphen\cB$-bimodule. We claim that $\cB,$ $\cC$ and $M$ satisfy the conditions (i)-(iv) of Proposition \ref{prop:Koszul_duality_and_neighborhoods}.

Conditions (i)-(ii) follow from Proposition \ref{prop:pseudo-perfect_over_coherent}. Condition (iii) after applying $\bR\Hom(-,\bbD_X)$ basically states that the functor $D^b_{coh}(X)\to D_{\pspe}(\Perf(X))\simeq D(X)$ is fully faithful, which is clear. Condition (iv) is proved in Appendix \ref{app:boundedness_etc}, see Corollary \ref{cor:*_satisfied_for_proper_schemes}.

Now the assertion follows from Proposition \ref{prop:Koszul_duality_and_neighborhoods}.
\end{proof}

It follows immediately from Theorem \ref{th:neighborhood_of_infty_for_D^b_coh} that $\Perf_{alg}(\hhat{\mD^b_{coh}(X)}_{\infty})\simeq \bbar{\mD_{sg}}(X)^{op}.$ The following conjecture seems to be true.

\begin{conj}For a proper scheme $X$ over a perfect field $\mk,$ we have $\Perf_{top}(\hhat{\mD^b_{coh}(X)}_{\infty})=\Perf_{alg}(\hhat{\mD^b_{coh}(X)}_{\infty}).$\end{conj}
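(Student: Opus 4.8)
By Theorem~\ref{th:neighborhood_of_infty_for_D^b_coh} the DG category $\hhat{\mD^b_{coh}(X)}_\infty$ is quasi-equivalent to $\mD_{sg}(X)^{op}$, and, as recorded immediately after that theorem, this identifies $\Perf_{alg}(\hhat{\mD^b_{coh}(X)}_\infty)$ with $\bbar{\mD_{sg}}(X)^{op}$, sitting as a thick subcategory inside $\Perf_{top}(\hhat{\mD^b_{coh}(X)}_\infty)=\Ker(F_{\mD^b_{coh}(X)})$. Since $\Perf_{alg}\subseteq\Perf_{top}$ always, the conjecture asserts precisely that $\Ker(F_{\mD^b_{coh}(X)})$ contains nothing outside the thick hull of $\bbar{\bY}(\mD^b_{coh}(X))$ --- equivalently, that every perfect almost $\mD^b_{coh}(X)$-module is algebraizable. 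Writing $\cS:=!\mhyphen\Perf(X)$, which is $\PsPerf(\mD^b_{coh}(X))$ by the discussion preceding Theorem~\ref{th:neighborhood_of_infty_for_D^b_coh}, this is a statement of ``categorical affineness'' of $\mD^b_{coh}(X)$: the restriction map $\Perf(\cA)\to\PsPerf(\cS)$ coming from any smooth and proper categorical compactification $\cS\hookrightarrow\cA\to\mD^b_{coh}(X)$ should have dense image. Note that the analogous density fails for $\Perf(X)$ on a non-affine $X$ (e.g.\ line bundles on $\hhat{(\PP^2_{\mk})}_{H_0}$ that are not algebraizable) and for $\mk(X)$ on a curve (Remark~\ref{rem:more_on_perf_top_adeles}), so the proof must genuinely use the special nature of $\mD^b_{coh}$.

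The plan is first to localize the problem along $\mathrm{Sing}(X)$ and then to settle the local case. One uses that $\mD_{sg}(X)$, hence also $\hhat{\mD^b_{coh}(X)}_\infty$, depends only on an infinitesimal (indeed formal) neighbourhood of $\mathrm{Sing}(X)$, together with a descent statement for the construction $\cB\mapsto\Perf_{top}(\hhat{\cB}_\infty)$. Because this construction is only partially functorial (Proposition~\ref{prop:partial_functoriality}), the descent has to be arranged with care --- for instance by working with the closed immersion $\mathrm{Sing}(X)\hookrightarrow X$ and its thickenings rather than with open covers, and using the vanishing of the singularity category, hence of $\Perf_{top}(\hhat{\cB}_\infty)$, on the smooth part. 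Combined with devissage on $\dim\mathrm{Sing}(X)$ (reducing non-isolated to isolated singularities by cutting with generic hyperplanes), this should bring us to the case $X=\Spec R$ with $R$ a complete local $\mk$-algebra with isolated singularity, where one is reduced to proving $\Ker(F_{\mD^b_{coh}(R)})\simeq\bbar{\mD_{sg}}(R)^{op}$ on the nose.

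The decisive point, and what I expect to be the main obstacle, is this last local assertion: a complete local ring has no non-algebraizable perfect almost modules. Concretely one must exclude the mechanism of Remark~\ref{rem:more_on_perf_top_adeles}, in which a perfect almost module is built as an infinite sum of shifts of small pieces distributed over infinitely many closed points and is consequently cohomologically unbounded as an object of $[\Calk_{\mk}]$. The structural reason this cannot occur for $\mD^b_{coh}(X)$ --- in contrast to $\mk(X)$ --- is that $\cS=!\mhyphen\Perf(X)$ does not split off an infinite product of nonzero categories (unlike $\Perf_{\tors}(X)=\bigoplus_x\Perf_{\{x\}}(X)$ in the adele picture): its objects are honest bounded coherent complexes on the finite-dimensional Noetherian scheme $X$, so $\Ext$-amplitudes are bounded uniformly along $\mathrm{Sing}(X)$. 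Turning this into a proof amounts to showing that every object of $\Ker(F_{\mD^b_{coh}(X)})$ is cohomologically bounded and is then generated by restrictions of genuine objects of $\mD^b_{coh}(X)$; I expect essentially all the difficulty of the conjecture to be concentrated in this boundedness input, the descent reductions above being comparatively formal once it is in hand.
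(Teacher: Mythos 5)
This statement is left as an open conjecture in the paper --- no proof is given there --- and what you have written is a strategy outline rather than a proof. You correctly identify the content of the conjecture (every object of $\Ker(F_{\mD^b_{coh}(X)})$ should be generated by the image of $\bbar{\bY}$), and your structural explanation of why the adelic counterexample of Remark \ref{rem:more_on_perf_top_adeles} should not recur here (the relevant $\cS=\,!\mhyphen\Perf(X)$ does not decompose as an infinite direct sum of nonzero pieces) is a reasonable heuristic. But the decisive step --- that in the (complete) local case every perfect almost $\mD^b_{coh}$-module is cohomologically bounded and then algebraizable --- is exactly where the entire difficulty of the conjecture sits, and you explicitly defer it. As it stands, nothing is proved.

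Beyond that, the reduction steps you treat as ``comparatively formal'' are themselves unsupported by anything in the paper. There is no descent or localization theorem for the assignment $\cB\waveto\Perf_{top}(\hhat{\cB}_{\infty})$; Proposition \ref{prop:partial_functoriality} gives functoriality only when $\bL\Phi^*$ admits a left adjoint, which is not automatic for the restriction functors to (formal) neighbourhoods of $\mathrm{Sing}(X)$ that your plan requires. The statement that $\hhat{\mD^b_{coh}(X)}_{\infty}$ ``depends only on a formal neighbourhood of $\mathrm{Sing}(X)$'' is, via Theorem \ref{th:neighborhood_of_infty_for_D^b_coh}, a statement about the algebraizable part; invoking it to control $\Perf_{top}$ is circular, since the discrepancy between $\Perf_{top}$ and $\Perf_{alg}$ is precisely what is to be excluded. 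Finally, the proposed dévissage on $\dim\mathrm{Sing}(X)$ by generic hyperplane sections changes the singularity category (one would need some Kn\"orrer-type comparison), and no mechanism is given for transporting the non-algebraizability of a hypothetical object of $\Perf_{top}\setminus\Perf_{alg}$ through such a cut. In short: the proposal is a plausible research plan, but every step that would carry actual weight is missing.
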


\section{Concluding remarks}
\label{sec:concluding}

We now briefly mention some aspects of the construction $\cB\waveto \Perf_{top}(\hhat{\cB}_{\infty})$ which were not discussed in this paper. We do not give any proofs in this section. The statements and constructions mentioned here are to be addressed elsewhere.

\subsection{Residues and reciprocity}
\label{ssec:residues_etc}

First, let us note that we have a composition functor
$$\Perf(\cB^{op})\otimes\Perf_{top}(\hhat{\cB}_{\infty})\hto \Perf(\cB^{op})\otimes \bR\un{\Hom}(\cB^{op},\Calk_{\mk})\xto{\eval}\Calk_{\mk}.$$
In particular, each object $M\in D_{\perf}(\cB^{op})$ defines residue homomorphisms
$$\res(M):K_i(\Perf_{top}(\hhat{\cB}_{\infty}))\to K_{i-1}(\mk),$$
$$\res(M):HH_i(\Perf_{top}(\hhat{\cB}_{\infty}))\to HH_{i-1}(\mk),$$
and similarly for other localizing invariants.

From the commutative diagram
$$
\xymatrix{\Perf(\cB^{op})\otimes \Perf(\cB)\ar[d]\ar[r] & \Mod_{\mk}\ar[d]\\
\Perf(\cB^{op})\otimes \Perf_{top}(\hhat{\cB}_{\infty})\ar[r] & \Calk_{\mk}}
$$
we obtain the abstract reciprocity laws: the compositions
\begin{equation}\label{eq:reciprocity_K_theory}K_i(\Perf(\cB))\xto{\bbar{\bY}} K_i(\Perf_{top}(\hhat{\cB}_{\infty}))\xto{\res(M)} K_{i-1}(\mk)\end{equation}
and
\begin{equation}\label{eq:reciprocity_HH}HH_i(\Perf(\cB))\xto{\bbar{\bY}}HH_i(\Perf_{top}(\hhat{\cB}_{\infty}))\to HH_{i-1}(\mk)\end{equation}
vanish.

Let $C$ be a smooth connected complete curve over $\mk,$ $\cB=\mk(C),$ and $M=\mk(C).$ 

It is straightforward to show that the composition 
$$\Omega^1_{\A_{C}/\mk}\cong HH_1(\hhat{\mk(C)}_{\infty})\to HH_1(\Perf_{top}(\hhat{\mk(C)}_{\infty}))\xto{\res(\mk(C))} HH_0(\mk)=\mk$$ 
is given by $fdg\mapsto \sum\limits_{p\in C^{cl}}\Tr_{\mk(p)/\mk}(\res_p(f_p d g_p)),$ for $f=(f_p)_{p\in C^{cl}}, g=(g_p)_{p\in C^{cl}}\in \A_C.$ In this case vanishing of  \eqref{eq:reciprocity_HH} gives exactly the residue theorem, essentially as in \cite{Ta}.

Another interesting example is the composition $$K_2^M(\A_C)\to K_2(\hhat{\mk(C)}_{\infty})\to K_2(\Perf_{top}(\hhat{\mk(C)}_{\infty}))\to \mk^{\times}=K_1(\mk).$$ It is straightforward to check that it sends $[f,g]\in K_2^M(\A_C)$ to  
$\prod\limits_{p\in C^{cl}}\Nm_{\mk(p)/\mk}(f_p,g_p)_{\nu_p},$ where $f=(f_p)_{p\in C^{cl}}, g=(g_p)_{p\in C^{cl}}\in \bbI_C=\A_C^{\times}$ (the group of ideles), and
$$(f_p,g_p)_{\nu_p}=(-1)^{\nu_p(f_p)\nu_p(g_p)}\frac{f_p^{\nu_p(g_p)}}{g_p^{\nu_p(f_p)}}\text{ mod }\m_p$$ (the Hilbert symbol). In this case the vanishing of \eqref{eq:reciprocity_K_theory} gives the Weil reciprocity law, essentially as in \cite{ACK}.

\subsection{Other possible invariants and extra structures}
\label{ssec:more_invariants}

Recall the setting of Section \ref{sec:DG_cat_neighborhoods_main_result}: let $\cS\hto \cA\to \cB$ be a Morita short exact sequence of DG categories, such that $\cA$ is smooth and proper. The following general principle seems appropriate to formulate:

\begin{itemize}
\item Suppose that we have some (Morita) invariant $\alpha(T)$ (a DG category, a complex of vector spaces, a mixed complex etc.)  of a locally proper DG category $T.$ Suppose that it does not change (in some sense, e.g. up to canonical isomorphism in a suitable category) under the operation $T\mapsto T/T',$ where $T'\subset T$ is a full smooth and proper DG subcategory (as in Proposition \ref{prop:D_sg_for_gluing_with_smooth_and_proper}). Then there is a natural invariant $\alpha'(\cC)$ which is defined for any smooth DG category $\cC,$ such that in the above notation we have an identification $\alpha(\cS)=\alpha'(\cB).$
\end{itemize}

This principle of course needs some additional restrictions on $\alpha(T),$ e.g. the condition is satisfied for $\mD_{sg}(T),$ but in Section \ref{sec:DG_cat_neighborhoods_main_result} we needed to replace it by $\bbar{\mD_{sg}}(T).$

An example of application of this principle, which is a more or less straightforward generalization of Theorem \ref{th:main_theorem}, is the following quasi-equivalence for any DG category $\mD:$
$$\left(\frac{\bR\un{Hom}(\cS^{op},\cD)}{\Perf(\cS\otimes\cD)}\right)^{\Kar}\simeq \Ker(\bR\un{\Hom}(\cB^{op},\Calk_{\cB})\to\Calk_{\cB\otimes\cD}).$$
A perhaps more surprising example is the following: for any DG category $\cD$ we have a quasi-equivalence
$$\left(\frac{\bR\un{\Hom}(\cD,\cS)}{\PsPerf(\cD^{op})\otimes \cS}\right)^{\Kar}\simeq \Ker(\Perf(\PsPerf(\cD^{op})\otimes\cB)\to \bR\un{\Hom}(\cD,\cB)).$$ 

Finally, we mention the following application of this principle which seems to require a deeper analysis.
We may define a "stable category of locally proper DG categories", enriched over $\Ho_M(\dgcat_{\mk})$ (in fact having a deeper structure as in \cite{Tam}): the objects are locally proper DG categories, and the morphisms are given by
$$\un{\Hom}^{st}(T_1,T_2)=\left(\frac{\bR\un{\Hom}(T_1,T_2)}{\PsPerf(T_1^{op})\otimes T_2}\right)^{\Kar}.$$
The composition is straightforward. It can be shown that for Morita short exact sequences $\cS_1\hto \cA_1\to\cB_1,$ $\cS_2\hto\cA_2\to\cB_2,$ we have a quasi-equivalence:
$$\un{\Hom}^{st}(\cS_1,\cS_2)\simeq \Ker(\Perf(\Perf_{top}(\hhat{\cB_1^{op}}_{\infty})\otimes\cB_2)\to \bR\un{\Hom}(\cB_1^{op},\Calk_{\cB_2}))=:\hhat{\un{\Hom}}(\cB_1,\cB_2).$$  

One can in fact identify the compositions $$\hhat{\un{\Hom}}_{\infty}(\cB_2,\cB_3)\otimes \hhat{\un{\Hom}}_{\infty}(\cB_1,\cB_2)\to \hhat{\un{\Hom}}_{\infty}(\cB_1,\cB_3)$$
(this is not obvious at all from the definition). In this way one can obtain another category enriched over $\Ho_M(\dgcat_{\mk})$ whose objects are smooth DG categories (again, there is a deeper structure as in \cite{Tam}). For example, from this one can get a $E_2$-algebra structure on Hochschild cochains $C^{\bullet}(\cB,\hhat{\cB}_{\infty}),$ which can be thought of as a completion of $C^{\bullet}(\cB)$ at infinity. Also, there is a mixed complex structure on the Hochschild chain complex $C_{\bullet}(\cB,\hhat{\cB}_{\infty})\cong Cone(C_{\bullet}(\cB^{op})^*\xto{\ch(I_{\cB})} C_{\bullet}(\cB)),$ where $\ch(I_{\cB})\in HC^{-}_0(\cB^{op}\otimes \cB)$ is the Chern character of the diagonal bimodule. A pair $(C^{\bullet}(\cB,\hhat{\cB}_{\infty}),C_{\bullet}(\cB,\hhat{\cB}_{\infty}))$ can be equipped with a Tsygan-Tamarkin calculus structure \cite{TT}.

Finally, we mention that for a commutative DG algebra $B,$ which is smooth as an associative DG algebra, one can define a symmetric monoidal structure on $[\Perf_{top}(\hhat{B}_{\infty})],$ so that the functor $[\bbar{\bY}]$ becomes symmetric monoidal.  

\appendix

\section{Perfect complexes on affine formal schemes}
\label{app:formal_affine}

\begin{lemma}\label{lem:ext_of_scalars_complete_algebra}Let $A$ be an associative ring, and $I\subset A$ an ideal such that $A$ is $I$-adically complete.

1) If $I$ is nilpotent, then the functor $-\Ltens{A} A/I:D(A)\to D(A/I)$ is conservative.

2) The functor $-\Ltens{A} A/I:D_{\perf}(A)\to D_{\perf}(A/I)$ is conservative.

3) The extension of scalars functor $-\otimes_A A/I:\Mod\mhyphen A\to \Mod\mhyphen A/I$ induces a full essentially surjective functor on the groupoids of finitely generated projective modules.\end{lemma}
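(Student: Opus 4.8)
The plan is to handle the three parts in turn, the second and third resting on one elementary observation: $I$-adic completeness forces $I\subseteq\rad(A)$. Indeed, for $x\in I$ the series $\sum_{m\geq 0}x^m$ converges in $A$ (its partial sums are Cauchy for the $I$-adic topology) and is a two-sided inverse of $1-x$; since $I$ is two-sided, $ay\in I$ for all $a\in A$, $y\in I$, so $1-ay$ is invertible, i.e.\ $y\in\rad(A)$. Hence Nakayama's lemma applies to any finitely generated $A$-module $M$ with $M=MI$, giving $M=0$; this is the workhorse below, and it lets us sidestep the fact that $A/I$ is in general not flat over $A$.

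For part (1), fix $n$ with $I^n=0$ and use the finite filtration $A=I^0\supseteq I\supseteq\dots\supseteq I^n=0$ by two-sided ideals, whose subquotients $I^k/I^{k+1}$ are $A/I$-modules. If $M\Ltens{A}(A/I)=0$, then by associativity of the derived tensor product $M\Ltens{A}(I^k/I^{k+1})\cong(M\Ltens{A}(A/I))\Ltens{A/I}(I^k/I^{k+1})=0$ for every $k$; running down the distinguished triangles $M\Ltens{A}I^{k+1}\to M\Ltens{A}I^k\to M\Ltens{A}(I^k/I^{k+1})$ from $k=n-1$ to $k=0$ then gives $M=M\Ltens{A}A=0$. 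Since $-\Ltens{A}(A/I)$ is an exact functor of triangulated categories, conservativity is precisely the statement that it reflects zero objects, which is what has just been shown.

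For part (2), take $P\in D_{\perf}(A)$ with $P\Ltens{A}(A/I)=0$. Being perfect, $P$ is represented by a bounded complex of finitely generated projective modules, hence is cohomologically bounded with finitely generated cohomology over $A$. If $P\neq 0$, let $m$ be the top degree with $H^m(P)\neq 0$; resolving $P$ by a complex of projectives concentrated in degrees $\leq m$ and using right-exactness of $-\otimes_A A/I$ identifies $H^m(P\Ltens{A}(A/I))\cong H^m(P)\otimes_A A/I$. The left side vanishes, so $H^m(P)=H^m(P)I$, whence $H^m(P)=0$ by Nakayama --- a contradiction; so $P=0$. (One may alternatively deduce part (2) from part (1) using $P\simeq\holim_k\bigl(P\Ltens{A}(A/I^k)\bigr)$, which holds for $P=A$ by completeness and then for all perfect $P$ since the subcategory of $D(A)$ on which it holds is thick.)

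For part (3), essential surjectivity and fullness are both lifting problems over the complete ring. A finitely generated projective right $A/I$-module is the image of an idempotent $\bar e\in M_n(A/I)$; as $M_n(A)$ is $I$-adically complete, $\bar e$ lifts to an idempotent $e\in M_n(A)$ (the standard idempotent-lifting argument, e.g.\ via the convergent iteration $x\mapsto 3x^2-2x^3$), and the image $eA^n$ is a finitely generated projective $A$-module mapping to it. For fullness, given finitely generated projectives $P,P'$ and an isomorphism $\bar\phi\colon P/PI\xto{\sim}P'/P'I$, lift it to an $A$-linear $\phi\colon P\to P'$ using projectivity of $P$; then $\coker(\phi)\otimes_A A/I=\coker(\bar\phi)=0$, so $\coker(\phi)$ is finitely generated with $\coker(\phi)=\coker(\phi)I$, hence $\coker(\phi)=0$ and $\phi$ is surjective. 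Since $P'$ is projective, $\phi$ splits, $P\cong P'\oplus\Ker(\phi)$, and applying $-\otimes_A A/I$ to this \emph{split} short exact sequence forces $\Ker(\phi)/\Ker(\phi)I=0$, hence $\Ker(\phi)=0$; thus $\phi$ is the desired isomorphism lifting $\bar\phi$. The only genuine subtlety in the whole argument is the non-flatness of $A/I$ over $A$, which is exactly why one passes through finite filtrations with $A/I$-linear subquotients in part (1) and through split exact sequences in part (3) before invoking Nakayama; idempotent lifting in part (3) is the sole additional input and is the standard consequence of $I$-adic completeness.
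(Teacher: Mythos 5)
Your proof is correct, and parts (1) and the essential-surjectivity half of (3) coincide with the paper's: the same finite filtration by powers of the nilpotent ideal (the paper phrases it as ``the image of restriction of scalars generates $D(A)$'' and concludes by adjunction, while you tensor $M$ directly against the filtration $A\supseteq I\supseteq\dots\supseteq I^n=0$ and run the dévissage --- equivalent arguments), and the same idempotent-lifting in the complete ring $M_n(A)$. Where you genuinely diverge is in (2) and in the fullness half of (3). The paper proves (2) via the completion identity $M\cong\holim_n M\Ltens{A}A/I^n$ (valid for $M=A$ by completeness and hence for all perfect $M$ by thickness, exactly as in your parenthetical alternative) combined with part (1) applied to each $A/I^n$; and it then gets fullness in (3) for free by applying (2) to the cone of the lifted map. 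You instead extract $I\subseteq\rad(A)$ from completeness via the geometric series and run elementary Nakayama arguments: on the top cohomology of $P$ for (2), and on $\coker(\phi)$ and then $\Ker(\phi)$ for fullness in (3). Both routes are sound. Yours is more elementary and self-contained (no homotopy limits), and it makes explicit the fact $I\subseteq\rad(A)$, which the paper uses only implicitly when it invokes Nakayama elsewhere (e.g.\ in Propositions \ref{prop:inf_extensions_a_F_b_F} and \ref{prop:completion_affine}); the paper's route is shorter given that the identity \eqref{eq:completion_is_tensoring} is needed elsewhere anyway. One small point of care in your (2): the blanket claim that a perfect complex has finitely generated cohomology in every degree is not obvious over a non-noetherian ring (kernels of maps of finitely generated modules need not be finitely generated); but you only need it for the top nonvanishing degree, and your step of re-resolving $P$ by finitely generated projectives concentrated in degrees $\leq m$ (splitting off the exact tail) does supply exactly that, so there is no gap.
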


\begin{proof}
1) Conservativity of an exact functor between triangulated categories is equivalent to the vanishing of its kernel. Therefore, by adjunction, it suffices to show that the essential image of the restriction of scalars $D(A/I)\to D(A)$ generates $D(A)$ as a triangulated category. For this, let us take any complex $K^{\bullet}$ of $A$-modules. It has a finite decreasing filtration $F^p=K^{\bullet}\cdot I^p$ (finiteness follows from the nilpotency of $I$). All the subquotients $F^p/F^{p+1}$ are in the essential image of $D(A/I).$ This proves the assertion.

 2) By our assumptions, for any object $M\in D_{\perf}(A)$ we have an isomorphism 
\begin{equation}\label{eq:holim_I_adic}M\cong \holim_n M\Ltens{A}A/I^n.
\end{equation}
 Suppose that for some $M\in\Perf(A)$ we have $M\Ltens{A} A/I=0.$ Then by 1) for all $n\geq 1$ we have $M\Ltens{A} A/I^n=0.$ From \eqref{eq:holim_I_adic} we deduce that $M=0.$ This shows conservativity.
 
3) Let us take some finitely generated projective $A$-modules $P,Q,$ and denote by $\bbar{P},\bbar{Q}$ their extensions of scalars to $A/I.$ Suppose that we have an isomorphism $\bbar{f}:\bbar{P}\xto{\sim}\bbar{Q}.$ Since $P$ is projective, we can lift $\bbar{f}$ to a morphism of $A$-modules $f:P\to Q.$ By 2), $f$ is an isomorphism. This proves fullness.

Let us prove essential surjectivity. Let $\bbar{P}$ be a finitely generated projective $(A/I)$-module. Then there is an idempotent $\bbar{e}\in M_n(A/I),$ such that $\bbar{P}$ is isomorphic to the image of $\bbar{e}:(A/I)^n\to (A/I)^n.$ Since the ring $M_n(A)$ is $M_n(I)$-adically complete, the idempotent $\bbar{e}$ can be lifted to an idempotent $e\in M_n(A).$ Denoting by $P$ the image of $e:A^n\to A^n,$ we see that $P\otimes_A A/I\cong \bbar{P}.$\end{proof}

Let $X$ be a noetherian separated scheme.
Let $\cF\in D_{\perf}(X)$ be any perfect complex. We introduce the following notation. Denote by $b(\cF)$ the maximal integer $m$ such that $\cH^m(\cF)\ne 0.$ Further, put $a(\cF):=-b(\cF^{\vee}).$ We also put $\lambda(\cF):=b(\cF)-a(\cF).$

\begin{prop}\label{prop:inf_extensions_a_F_b_F}Let $i:X\hto \tilde{X}$ be an infinitesimal extension, and $\cF\in D_{\perf}(\tilde{X}).$ Then we have $a(\cF)=a(\bL i^*\cF),$ $b(\cF)=b(\bL i^*\cF).$\end{prop}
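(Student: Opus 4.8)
The plan is to establish the equality $b(\bL i^*\cF)=b(\cF)$ first, and then to deduce the statement about $a$ by applying it to the derived dual $\cF^\vee$.

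For the inequality $b(\bL i^*\cF)\le b(\cF)$, I would use that the underived pullback $i^*$ is right exact, so $\bL i^*$ is right $t$-exact and carries $D^{\le n}$ into $D^{\le n}$; since $\cF\in D^{\le b(\cF)}$ this gives $b(\bL i^*\cF)\le b(\cF)$ at once. For the reverse inequality, write $b:=b(\cF)$ and consider the canonical truncation triangle $\tau^{<b}\cF\to\cF\to\cH^b(\cF)[-b]\to\tau^{<b}\cF[1]$. Applying $\bL i^*$ and invoking right $t$-exactness again, $\bL i^*\tau^{<b}\cF$ lies in $D^{\le b-1}$ (it need not be bounded below, but that is irrelevant here), so the long exact cohomology sequence of the resulting triangle collapses in degree $b$ to an isomorphism $\cH^b(\bL i^*\cF)\cong \cH^b\big(\bL i^*(\cH^b(\cF)[-b])\big)=\cH^b(\cF)\otimes_{\cO_{\tilde X}}\cO_X$, the last step because the top cohomology of $\cH^b(\cF)\otimes^{\bL}_{\cO_{\tilde X}}\cO_X$ is the underived tensor product. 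It then remains to see this sheaf is nonzero. Here $\cH^b(\cF)$ is a nonzero quasi-coherent $\cO_{\tilde X}$-module, and $\cO_X=\cO_{\tilde X}/\cN$ for a sheaf of ideals $\cN$ that is nilpotent, say $\cN^k=0$ (the nilpotency index can be taken uniform over $\tilde X$ by quasi-compactness); if $\cH^b(\cF)\otimes_{\cO_{\tilde X}}\cO_X=\cH^b(\cF)/\cN\cH^b(\cF)$ were zero, then $\cH^b(\cF)=\cN\cH^b(\cF)=\cN^2\cH^b(\cF)=\dots=\cN^k\cH^b(\cF)=0$, a contradiction. Hence $b(\bL i^*\cF)=b(\cF)$.

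For $a$, I would use that derived pullback commutes with the derived dual on perfect complexes: for $\cF\in D_{\perf}(\tilde X)$ the object $\cF^\vee:=\bR\cHom_{\cO_{\tilde X}}(\cF,\cO_{\tilde X})$ is again perfect on $\tilde X$, and $(\bL i^*\cF)^\vee\cong \bL i^*(\cF^\vee)$ canonically — this is checked locally after replacing $\cF$ by a bounded complex of finite free modules, where both sides become the term-wise dual complex tensored with $\cO_X$. Applying the equality just proved to $\cF^\vee$ then gives $b\big((\bL i^*\cF)^\vee\big)=b\big(\bL i^*(\cF^\vee)\big)=b(\cF^\vee)$, whence
$$a(\bL i^*\cF)=-b\big((\bL i^*\cF)^\vee\big)=-b(\cF^\vee)=a(\cF).$$
In particular $\lambda(\bL i^*\cF)=\lambda(\cF)$ as well.

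The only genuinely delicate point is the identification of the top cohomology sheaf $\cH^{b(\cF)}(\bL i^*\cF)$ with the naive base change $\cH^{b(\cF)}(\cF)\otimes_{\cO_{\tilde X}}\cO_X$, where one must track the $t$-exactness conventions carefully so that no lower cohomology sheaves of $\cF$ can contribute; and one should keep in mind that the compatibility $(\bL i^*\cF)^\vee\cong\bL i^*(\cF^\vee)$ genuinely requires $\cF$ to be perfect (it fails for general objects of $D^b_{coh}$, which is precisely why perfectness is in the hypothesis). Everything else is a standard Nakayama-type argument for the nilpotent ideal sheaf $\cN$.
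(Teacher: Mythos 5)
Your argument is correct and is exactly the argument the paper compresses into the one line ``this follows immediately from Nakayama's lemma'': right $t$-exactness of $\bL i^*$ plus the truncation triangle identifies $\cH^{b}(\bL i^*\cF)$ with $\cH^{b}(\cF)/\cN\cH^{b}(\cF)$, nilpotence of $\cN$ gives nonvanishing, and the statement for $a$ follows by dualizing, using that $\bL i^*$ commutes with $(-)^{\vee}$ on perfect complexes. No gaps.
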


\begin{proof}This follows immediately from  Nakayama's lemma.\end{proof}



\begin{prop}\label{prop:when_perfect_is_lf}For a perfect complex $\cF\in\Perf(X)$ and an integer $m\in\Z$ the following are equivalent:

(i) $H^{\ne m}(\cF)=0$ and $H^m(\cF)$ is a locally free sheaf;

(ii) We have $a(\cF)=b(\cF)=m.$\end{prop}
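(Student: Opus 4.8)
The plan is to reduce the statement to a question about perfect complexes over a noetherian local ring and then use minimal free complexes. Both (i) and (ii) are detected on stalks. Since $\cF\in\Perf(X)$ and $X$ is noetherian, each cohomology sheaf $\cH^j(\cF)$ is coherent, so $\cH^j(\cF)=0$ iff all its stalks vanish, and $\cH^m(\cF)$ is locally free iff all its stalks are free $\cO_{X,x}$-modules. Moreover, because $\cF$ is perfect one has $(\cF^\vee)_x\simeq(\cF_x)^\vee$ in $D_{\perf}(\cO_{X,x})$, whence $b(\cF)=\sup_{x}b(\cF_x)$ and $a(\cF)=\inf_{x}a(\cF_x)$, where $b(\cF_x)=\max\{j:H^j(\cF_x)\ne 0\}$ and $a(\cF_x)=-b((\cF_x)^\vee)$. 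Since $a(\cF_x)\le b(\cF_x)$ whenever $\cF_x\ne 0$, the condition $a(\cF)=b(\cF)=m$ is equivalent to: $a(\cF_x)=b(\cF_x)=m$ for every $x$ with $\cF_x\ne 0$. Thus it suffices to prove, for a noetherian local ring $R$ and $F\in D_{\perf}(R)$, that $a(F)=b(F)=m$ holds precisely when $F$ is quasi-isomorphic to a finite free $R$-module placed in cohomological degree $m$.

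For this local statement I would invoke the standard fact that every $F\in D_{\perf}(R)$, with $R$ noetherian local of maximal ideal $\m$, is represented by a bounded minimal complex $P^{\bullet}$ of finite free $R$-modules, i.e. $d(P^{\bullet})\subseteq\m P^{\bullet}$. Writing $t=\max\{j:P^j\ne 0\}$ and $s=\min\{j:P^j\ne 0\}$, minimality gives $\cH^t(P^{\bullet})=\coker(P^{t-1}\to P^t)\twoheadrightarrow P^t/\m P^t\ne 0$, so $b(F)=t$; dually, $\Hom_R(P^{\bullet},R)$ is again a minimal complex and it represents $F^\vee$, with top nonzero term in degree $-s$, so $b(F^\vee)=-s$ and hence $a(F)=s$. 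Therefore $a(F)=b(F)=m$ forces $s=t=m$, that is $P^{\bullet}=P^m[-m]$ with $P^m$ finite free, which is exactly the assertion that $F\simeq P^m[-m]$; the converse implication is immediate.

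Assembling these: if $a(\cF)=b(\cF)=m$ then each stalk $\cF_x$ is a free module in degree $m$ (or zero), so $\cH^{\ne m}(\cF)=0$, whence $\cF\simeq\cH^m(\cF)[-m]$ by truncation, and $\cH^m(\cF)$ is a coherent sheaf with free stalks, hence locally free; this is (i). Conversely, if $\cF\simeq E[-m]$ with $E=\cH^m(\cF)$ locally free and nonzero, then $\bR\cHom_{\cO_X}(E,\cO_X)$ reduces to the ordinary dual $E^\vee$, so $\cF^\vee\simeq E^\vee[m]$; reading off cohomology sheaves gives $b(\cF)=m$ and $b(\cF^\vee)=-m$, hence $a(\cF)=m$, which is (ii). (If $E=0$ then $\cF\simeq 0$, for which (ii) fails under the convention $b(0)=-\infty$; so (i) is to be read with $\cH^m(\cF)\ne 0$, equivalently $\cF\not\simeq 0$.)

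I do not expect a conceptual obstacle; the work is bookkeeping: carefully matching the global invariants $a(\cF),b(\cF)$ with their stalkwise counterparts, and passing from ``stalkwise a shifted free module'' to ``globally a shifted locally free sheaf''. The one genuinely substantive input is the existence and essential uniqueness of minimal free models for perfect complexes over a noetherian local ring; granting that, everything else is routine. An alternative to minimal complexes would be to compute $a$ and $b$ through the derived fibres of $\cF$ at closed points together with a Nakayama argument, in the spirit of Proposition \ref{prop:inf_extensions_a_F_b_F}, but the minimal-complex route is the most self-contained.
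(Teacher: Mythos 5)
Your proof is correct. The paper offers no argument here --- its proof reads simply ``Evident'' --- and your reduction to stalks followed by the minimal-free-complex computation over a noetherian local ring (top degree of a minimal complex detected by Nakayama, dual of a minimal complex again minimal) is the standard and complete way to verify the claim, including the correct handling of the degenerate case $\cF\simeq 0$.
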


\begin{proof}Evident.\end{proof}


\begin{prop}\label{prop:completion_affine}Let $X=\Spec A$ be affine, and $Z\subset X$ a closed subscheme corresponding to the ideal $I\subset A.$ Then we have $\Perf(\hhat{X}_Z)=\Perf_{alg}(\hhat{X}_Z)\simeq \Perf(\hhat{A}_I).$\end{prop}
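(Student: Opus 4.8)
The plan is to prove the single statement that the reduction functors assemble into a quasi-equivalence $\Phi\colon\Perf(\hhat{A}_I)\xto{\sim}\holim_n\Perf(A/I^n)=\Perf(\hhat{X}_Z)$ (the last identification by Definition~\ref{defi:perfect_complexes_formal}, since $Z_n=\Spec(A/I^n)$ for $X=\Spec A$), where $\Phi(M)=(M\Ltens{\hhat{A}_I}A/I^n)_n$; then $\Perf(\hhat{X}_Z)=\Perf_{alg}(\hhat{X}_Z)\simeq\Perf(\hhat{A}_I)$ follows formally. Throughout $A$ is noetherian, so $\hhat{A}_I/I^n\hhat{A}_I=A/I^n$ and $\hhat{A}_I$ is $I$-adically complete. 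For full faithfulness, the mapping complex of $\Phi(M),\Phi(N)$ in the homotopy limit is $\holim_n\bR\Hom_{A/I^n}(M/I^n,N/I^n)$; since $M$ is perfect this equals $\holim_n\big(\bR\Hom_{\hhat{A}_I}(M,N)\Ltens{\hhat{A}_I}A/I^n\big)$ by base change, and since $\bR\Hom_{\hhat{A}_I}(M,N)$ is perfect over the complete ring $\hhat{A}_I$ this homotopy limit recovers $\bR\Hom_{\hhat{A}_I}(M,N)$, by the same completeness isomorphism used in Lemma~\ref{lem:fully_faithful_for_perfect_affine} (compare \eqref{eq:holim_I_adic}).

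Essential surjectivity is the substance. Given $\cM=(\cM_n)\in\holim_n\Perf(A/I^n)$, Proposition~\ref{prop:inf_extensions_a_F_b_F} shows all $\cM_n$ have cohomology in one fixed range $[a,b]$ of degrees, so each is represented by a complex of finitely generated projective $A/I^n$-modules placed in degrees $[a,b]$. I would promote the homotopy-coherent system $\cM$ to an honest tower $(P_n^\bullet,d_n)$ of such complexes whose transition maps $P_{n+1}^\bullet\twoheadrightarrow P_n^\bullet$ are reduction modulo $I^n$. Inductively: lift the graded terms of $P_n^\bullet$ to finitely generated projective $A/I^{n+1}$-modules by Lemma~\ref{lem:ext_of_scalars_complete_algebra}\,(3), then lift the differential; the obstruction to making its square vanish lies in $\Ext^2_{A/I}(\cM_1,\cM_1\otimes_{A/I}I^n/I^{n+1})$ and is zero because $\cM_{n+1}$ is a \emph{given} lift of $\cM_n$, and a further correction in $\Ext^1_{A/I}(\cM_1,\cM_1\otimes_{A/I}I^n/I^{n+1})$ matches the outcome with $\cM_{n+1}$. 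Then $P^\bullet:=\lim_n P_n^\bullet$ (termwise inverse limit) is a complex of finitely generated projective $\hhat{A}_I$-modules in degrees $[a,b]$: each term $\lim_n P_n^i$ is $I$-adically complete with reduction $P_n^i$ modulo $I^n$, hence finitely generated projective by Lemma~\ref{lem:ext_of_scalars_complete_algebra}\,(3), and there is no $\lim^1$ since the tower is surjective. Being \emph{bounded}, $P^\bullet$ is perfect over $\hhat{A}_I$, and $P^\bullet\Ltens{\hhat{A}_I}A/I^n=P^\bullet/I^nP^\bullet=(P_n^\bullet,d_n)\simeq\cM_n$, so $\Phi(P^\bullet)\simeq\cM$.

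It follows that $\Phi$ is a quasi-equivalence. Since $\Perf(\hhat{A}_I)$ is generated as a thick subcategory by $\hhat{A}_I$, and $\Phi(\hhat{A}_I)=\cO_{\hhat{X}_Z}$ is the restriction of $\cO_X\in\Perf(X)$, the category $\Perf(\hhat{X}_Z)$ is generated by an object lying in the image of $\Perf(X)\to\Perf(\hhat{X}_Z)$; hence $\Perf(\hhat{X}_Z)=\Perf_{alg}(\hhat{X}_Z)$, and both agree with $\Perf(\hhat{A}_I)$ via $\Phi$. The main obstacle is the strictification step in essential surjectivity — turning the coherent system $(\cM_n)$ into a genuine tower of bounded complexes of finite projectives with reduction transition maps. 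The leverage that makes it work is Proposition~\ref{prop:inf_extensions_a_F_b_F}: the uniform amplitude is exactly what forces the inverse limit to be a \emph{bounded} complex of projectives over $\hhat{A}_I$, hence perfect, whereas without it the limit need not be perfect; the obstruction-theoretic bookkeeping (the deformation obstructions vanish because each $\cM_{n+1}$ is handed to us) is routine once set up.
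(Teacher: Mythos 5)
Your argument is correct, and the full‑faithfulness part (mapping complexes in the homotopy limit, base change for perfect modules, and the completeness isomorphism of Lemma \ref{lem:fully_faithful_for_perfect_affine}) matches what the paper's surrounding lemmas supply. But your essential‑surjectivity step takes a genuinely different route. You strictify the whole coherent system $(\cM_n)$ at once into an honest tower of bounded complexes of finitely generated projectives with reduction transition maps, using the deformation theory of perfect complexes along the square‑zero extensions $A/I^{n+1}\to A/I^n$, and then pass to the termwise inverse limit. The paper instead runs an induction on the amplitude $\lambda(\cF_1)=b(\cF_1)-a(\cF_1)$ (constant in $n$ by Proposition \ref{prop:inf_extensions_a_F_b_F}): in the base case $\lambda=0$ the system is a compatible family of finitely generated projective modules, which strictifies for free by Lemma \ref{lem:ext_of_scalars_complete_algebra}(3); in the inductive step one covers the top cohomology sheaves $\cH^{b}(\cF_n)$ by a compatible family of surjections from $(A/I^n)^d$, obtaining a map $\cO_{\hhat{X}_Z}^d[-m]\to\cF$ whose cone has strictly smaller amplitude. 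This entirely sidesteps the obstruction‑theoretic bookkeeping that you (rightly) identify as the main burden of your approach; the only lifting ever performed is of projective modules and of module surjections, where Nakayama and Lemma \ref{lem:ext_of_scalars_complete_algebra}(3) do all the work. Your approach buys a more self‑contained construction of the quasi‑inverse functor $\Phi$, at the cost of having to justify (i) that a perfect complex over $A/I^n$ with the paper's invariants $a,b$ is represented by projectives in degrees exactly $[a,b]$ — note this uses the \emph{projective} amplitude encoded in $a(\cF)=-b(\cF^{\vee})$, not merely the cohomological amplitude, so be careful how you phrase ``cohomology in a fixed range'' — and (ii) the vanishing/matching of the $\Ext^2$ and $\Ext^1$ classes at every stage. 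Both routes rest on the same two pillars (Proposition \ref{prop:inf_extensions_a_F_b_F} and Lemma \ref{lem:ext_of_scalars_complete_algebra}(3)), and your final deduction of $\Perf(\hhat{X}_Z)=\Perf_{alg}(\hhat{X}_Z)$ from the fact that $\Phi(\hhat{A}_I)$ is restricted from $\Perf(X)$ is a clean way to package the conclusion.
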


\begin{proof}Let $\cF\in\Perf(\hhat{X}_Z)$ be an object given by a sequence $\cF_n\in\Perf(Z_n).$ We prove that $\cF$ is algebraizable by induction on $\lambda(\cF_1).$

For the base of induction, suppose that for some $m\in\Z$ the perfect complex $\cF_1[-m]\in \Perf(Z)$ is isomorphic to a locally free sheaf. Then by Proposition \ref{prop:inf_extensions_a_F_b_F} for all $n\geq 0,$ the object $\cF_n[-m]\in\Perf(Z_n)$ is also isomorphic to a locally free sheaf. Let us denote by $P_n$ the projective $A/I^n\mhyphen$module $\Gamma(Z_n,\cH^m(\cF_n)).$ We have the structural isomorphisms $P_{n+1}\otimes_{A/I^{n+1}}A/I^n\cong P_n.$ By Lemma \ref{lem:ext_of_scalars_complete_algebra} 3), there is a finitely generated projective $\hhat{A}_I$-module $\hhat{P}$ with an isomorphism $\hhat{P}\otimes_{\hhat{A}_I}A/I\cong P_1.$ By Lemma \ref{lem:ext_of_scalars_complete_algebra} 3), we can construct a compatible sequence of isomorphisms $\hhat{P}\otimes_{\hhat{A}_I}A/I^n\cong P_n.$ This shows that $\cF$ is algebraizable.

Now suppose that the assertion is proved for all objects $\cG\in\Perf(\hhat{X}_Z)$ with $\lambda(\cG)\leq l.$ Let us take an object $\cF\in\Perf(\hhat{X}_Z)$ with $\lambda(\cF)=l+1.$ Let us put $M_n:=\Gamma(Z_n,\cH^{b(\cF_n)}(\cF_n)).$ Clearly, each $M_n$ is a finitely generated $A/I^n$-module. By Proposition \ref{prop:inf_extensions_a_F_b_F}, $b(\cF_n)=b(\cF_1)$ for all $n\geq 1,$ hence we have natural isomorphisms $M_{n+1}\otimes_{A/I^{n+1}}A/I^n\cong M_n.$ We take any surjection $\phi_1:(A/I)^d\to M_1,$ and lift it to a compatible sequence of morphisms $\phi_n:(A/I^n)^d\to M_n$ (which are also surjective by Nakayama's lemma). This sequence defines a morphism $\phi:\cO_{\hhat{X}_Z}^d[-m]\to\cF,$ and we have $\lambda(Cone(\phi)_1)\leq l.$ By the induction hypothesis, $Cone(\phi)$ is algebraizable, hence so is $\cF.$\end{proof}

\section{Boundedness, compact approximation and pseudo-perfect DG modules}
\label{app:boundedness_etc}

For simplicity we assume that the base field $\mk$ is perfect.



\begin{prop}\label{prop:pseudo-perfect_over_coherent}Let $X$ be a separated scheme of finite type over $\mk.$ Then the DG category $\PsPerf(\mD^b_{coh}(X))$ is quasi-equivalent to the DG category $!\mhyphen Perf(X)_{\propp}$ of $!\mhyphen$perfect complexes with proper support.\end{prop}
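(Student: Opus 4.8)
The plan is to identify $\PsPerf(\mathbf D^b_{coh}(X))$ with $!\mhyphen\Perf(X)_{\propp}$ by exhibiting a natural quasi-equivalence and then checking it is essentially surjective onto exactly the $!$-perfect complexes with proper support. Since $\mathbf D^b_{coh}(X)$ is a DG category, a pseudo-perfect $\mathbf D^b_{coh}(X)$-module is by definition a quasi-representable $\mathbf D^b_{coh}(X)^{op}$-module $M$ such that $M(\mathcal E)$ is a perfect complex of $\mk$-vector spaces for every $\mathcal E\in\mathbf D^b_{coh}(X)$. The first step is to use the (derived) Yoneda / enhancement formalism to write such an $M$ as $\bR\Hom_{\mathcal O_X}(-,\mathcal F)$ for some $\mathcal F$ in a suitable large category $D(\QCoh X)$ (or rather in $D^b_{coh}(X)$ once we know $\mathcal F$ is bounded coherent), so the content becomes: for which $\mathcal F$ is $\bR\Hom_{\mathcal O_X}(\mathcal E,\mathcal F)$ perfect over $\mk$ for all $\mathcal E\in\mathbf D^b_{coh}(X)$?

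First I would reduce to the affine case by the usual Čech/Mayer--Vietoris dévissage (as in Lemma~\ref{lem:coh_Z_homotopy_limits}) to make the representability statement concrete, using that $X$ is noetherian separated; here one should be a bit careful that "proper support" is the condition that lets the global sections / $\bR\Gamma$ behave finitely. Then the key computation is Grothendieck--Serre duality: for $\mathcal E,\mathcal F\in D^b_{coh}(X)$ with $\mathcal E$ having proper support (or $X$ proper),
$$\bR\Hom_{\mathcal O_X}(\mathcal E,\mathcal F)^{*}\;\cong\;\bR\Hom_{\mathcal O_X}(\mathcal F,\,\mathcal E\otimes^{\bL}\bbD_X^{\text{rel}}),$$
so finiteness of $\bR\Hom(\mathcal E,\mathcal F)$ over $\mk$ for all $\mathcal E$ forces $\bR\cHom_{\mathcal O_X}(\mathcal F,\bbD_X)$ to be perfect, i.e. $\mathcal F$ to be $!$-perfect, and conversely $!$-perfectness of $\mathcal F$ together with $\mathcal E\in D^b_{coh}$ with proper support gives that $\bR\Hom(\mathcal E,\mathcal F)\cong\bR\Gamma_{\propp}(\mathcal E^\vee\otimes^{\bL}\mathcal F\otimes\cdots)$ is a perfect complex of vector spaces because the relevant cohomology is supported on a proper scheme and hence finite-dimensional. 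The "proper support" refinement on the $!$-perfect side matches the fact that $\mathbf D^b_{coh}(X)$ need not be proper when $X$ is only of finite type, so the modules we get are exactly those representable by $\mathcal F$ whose support is proper.

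I would then check that the assignment $\mathcal F\mapsto \bR\Hom_{\mathcal O_X}(-,\mathcal F)|_{\mathbf D^b_{coh}(X)}$ is a DG functor $!\mhyphen\Perf(X)_{\propp}\to\PsPerf(\mathbf D^b_{coh}(X))$ which is quasi-fully-faithful (again by a duality computation: $\bR\Hom$ between the two modules computes $\bR\Hom_{\mathcal O_X}(\mathcal F_1,\mathcal F_2)$, using that $\mathbf D^b_{coh}(X)$ contains enough objects, e.g. the structure sheaves of affine opens or a classical generator locally) and essentially surjective by the paragraph above. The main obstacle I expect is the essential surjectivity / representability step: showing that an \emph{a priori} abstract pseudo-perfect module over the DG category $\mathbf D^b_{coh}(X)$ is actually of the form $\bR\Hom(-,\mathcal F)$ with $\mathcal F$ \emph{bounded coherent} (not just an object of some large derived category), and that its support is proper. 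This is where one needs the compact-approximation technology of Appendix~\ref{app:boundedness_etc} — boundedness of the relevant DG modules — together with the fact that $\mathbf D^b_{coh}(X)$, though not compactly generated in the naive sense, has a well-understood module category over a perfect base field; the boundedness of $\mathcal F$ follows by applying the pseudo-perfectness hypothesis to $\mathcal E$ ranging over skyscrapers and their shifts, and properness of $\supp\mathcal F$ follows by testing against $\mathcal O_X$ restricted to affine opens away from a proper closed set. The rest — functoriality, compatibility with restriction, and the identification of the quasi-inverse — is routine once the duality statement and the representability lemma are in place.
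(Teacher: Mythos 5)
Your overall plan (represent the module by some $\cF$, then characterize which $\cF$ occur via duality and testing objects) matches the paper's, and your test objects — skyscrapers for coherence/perfectness, curves or affine opens for properness of support — are essentially the ones the paper uses. But the step you yourself flag as "the main obstacle," representability of an abstract pseudo-perfect $\mD^b_{coh}(X)$-module by an object of $D^b_{coh}(X)$, is where your proposal points at the wrong tool. The compact-approximation machinery of Appendix \ref{app:boundedness_etc} lives in $D(A_X)\simeq D(\QCoh X)$, i.e.\ concerns modules over $\Perf(X)$ (a generator's endomorphism algebra), not modules over the DG category $\mD^b_{coh}(X)$; a \v{C}ech d\'evissage does not by itself produce a representing object either. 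The paper's argument is a one-liner: $\mD^b_{coh}(X)$ is \emph{smooth} (Lunts, \cite[Theorem 6.3]{Lu}, valid over a perfect field for separated finite-type $X$), and for a smooth DG category every pseudo-perfect module is perfect; since $\mD^b_{coh}(X)$ is pretriangulated and Karoubi-closed, perfect modules are representable, giving $D_{\pspe}(\mD^b_{coh}(X))\subset D^b_{coh}(X)$ immediately. Without invoking smoothness (or an equivalent substitute) your proof does not get off the ground.

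A second, smaller issue is the duality step. You invoke global Grothendieck--Serre duality in the form $\bR\Hom(\cE,\cF)^{*}\cong\bR\Hom(\cF,\cE\otimes^{\bL}\bbD_X)$, which already presupposes proper support of $\cE$ (or properness of $X$) and passes through linear duals; this is both stronger than needed and awkwardly circular, since proper support of $\cF$ is part of the conclusion. What the paper uses is only the \emph{local} duality involution $D=\bR\cHom_{\cO_X}(-,\bbD_X)$ on $D^b_{coh}(X)$, which exchanges $\Perf(X)$ with $!\mhyphen\Perf(X)$, preserves supports, and satisfies $\bR\Hom(\cG,\cF)\cong\bR\Hom(D\cF,D\cG)$. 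This converts the condition "$\bR\Hom(\cG,\cF)\in D_{\perf}(\mk)$ for all $\cG$" into "$\bR\Hom(\cF',\cG')\in D_{\perf}(\mk)$ for all $\cG'$" with $\cF'=D\cF$, and the latter is checked to be equivalent to "$\cF'$ perfect with proper support" exactly by your proposed tests (skyscrapers; $i_*\cO_C$ for an affine integral curve $C$ inside the support if it fails to be proper). With the smoothness input and the duality recast in this involution form, your argument closes up and coincides with the paper's.
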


\begin{proof}By the smoothness of $\mD^b_{coh}(X),$ we have an inclusion $D_{\pspe}(\mD^b_{coh}(X))\subset D^b_{coh}(X).$ It suffices to show that for an object $\cF\in D^b_{coh}(X)$ the following are equivalent:

$\rm{(i)}$ For any $\cG\in D^b_{coh}(X)$ we have $\bR\Hom(\cG,\cF)\in D_{\perf}(\mk).$

$\rm{(ii)}$ We have $\cF\in D_{!\mhyphen\perf}(X)_{\propp}.$

After applying the contravariant involution $\bR\cHom_{\cO_X}(-,\bbD_X),$ this equivalence translates into another equivalence of two statements:

$\rm{(i)'}$ For any $\cG\in D^b_{coh}(X)$ we have $\bR\Hom(\cF,\cG)\in D_{\perf}(\mk).$

$\rm{(ii)'}$ We have $\cF\in D_{\perf}(X)_{\propp}.$

The implication $\rm{(ii)'}\Rightarrow\rm{(i)'}$ is clear: for such $\cF$ we have $$\bR\Hom(\cF,\cG)\cong \bR\Gamma(X,\cF^{\vee}\Ltens{\cO_X}\cG)\in D_{\perf}(\mk),$$ since $\cF$ has proper support.

For the implication $\rm{(i)'}\Rightarrow\rm{(ii)'},$ we first suppose that $\cF$ is not perfect. Then for some closed point $x\in X^{cl}$ we have $\bR\Hom(\cF,\cO_x)\not\in D_{\perf}(\mk),$ a contradiction.

Now suppose that $\cF$ is perfect but $\supp(\cF)$ is not proper. By the valuative criterion of properness, there is a closed embedding $i:C\hto \supp(\cF),$ where $C$ is an affine integral curve. Clearly, we have $\bR\Hom(\cF,i_*\cO_C)\not\in D_{\perf}(\mk),$ a contradiction. This proves the implication and the proposition.
\end{proof}

The following definition of compact approximability is a modification of \cite[Definition 8.1]{LO} (for  a single generator), and is motivated by \cite[Theorem 4.1]{LN}.

\begin{defi}\label{def:compact_approx} Let $\cT$ be a compactly generated triangulated category, such that there exist a single compact generator $G\in\cT^c.$ Suppose that we have $\Hom^n(G,G)=0$ for $n>>0$ (this property does not depend on the choice of a generator). An object $E\in\cT$ is called compactly approximable if for any $l\in\Z$ there exists a compact object $F\in\cT^c$ and a morphism $\varphi:F\to E$ which induces isomorphisms $\Hom^i(G,F)\xto{\sim} \Hom^i(G,E)$ for $i>l.$\end{defi}

It will be convenient for us to introduce certain boundedness conditions for DG modules over a DG algebra.

\begin{defi}\label{def:Tor_Proj_bounded}Let $A$ be a DG algebra over $\mk.$ 

1) For $n\in\Z,$ we denote by  $D^{\leq n}(A)\subset D(A)$ (resp. $D^{\geq n}(A)\subset D(A)$) the full subcategory of DG $A$-modules $M$ such that $H^{>n}(M)=0$ (resp. $H^{<n}(M)=0$). We also put $D^-(A):=\bigcup\limits_{m\in\Z}D^{\leq m}(A),$ $D^+(A):=\bigcup\limits_{m\in\Z}D^{\geq m}(A)$ and $D^b(A):=D^-(A)\cap D^+(A).$

2) A DG module $M\in D(A)$ is called "$\Tor$-bounded above" if for some $m\in\Z$ we have $M\Ltens{A} D^{\leq 0}(A^{op})\subset D^{\leq m}(\mk).$

3) A DG module $M\in D(A)$ is called "projectively bounded above" if for some $m\in\Z$ we have $\bR\Hom_A(M,D^{\geq 0}(A))\subset D^{\geq m}(\mk).$\end{defi}

\begin{remark}Definition \ref{def:Tor_Proj_bounded} is essentially invariant under Morita equivalence. Namely, if $\Phi:D(A)\xto{\sim} D(A')$ is an equivalence (given by some bimodule), then there are some integer constants $C_1,$ $C_2,$ $C_3,$ $C_4$ such that $\Phi(D^{\leq n}(A))\subset D^{\leq n+C_1}(A'),$ $\Phi^{-1}(D^{\leq n}(A'))\subset D^{\leq n+C_2}(A),$ $\Phi(D^{\geq n}(A))\subset D^{\geq n+C_3}(A'),$ $\Phi^{-1}(D^{\geq n}(A'))\subset D^{\geq n+C_4}(A).$ In particular, $\Phi(D^-(A))=D^-(A'),$ and similarly for $D^+$ and $D^b.$ Also, a DG module $M\in D(A)$ is $\Tor$-bounded (resp. projectively bounded) above if so is $\Phi(M)\in D(A').$\end{remark}

\begin{prop}\label{prop:proj_bounded_implies_Tor}Let $M$ a be a DG $A$-module. If $M$ is projectively bounded above, then $M$ is $\Tor$-bounded above.\end{prop}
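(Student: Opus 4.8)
The plan is to deduce $\Tor$-boundedness from projective boundedness by $\mk$-linear duality. Since $\mk$ is a field, for any complex $V$ of $\mk$-vector spaces the naive dual $V^{*}=\Hom_{\mk}(V,\mk)$ already computes $\bR\Hom_{\mk}(V,\mk)$ and satisfies $H^{n}(V^{*})=H^{-n}(V)^{*}$; in particular $V\in D^{\leq m}(\mk)$ if and only if $V^{*}\in D^{\geq -m}(\mk)$. So it is enough to control the $\mk$-linear dual of $M\Ltens{A}N$ for $N\in D^{\leq 0}(A^{op})$.

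First I would record the tensor--hom adjunction over the ground field. For a right DG $A$-module $M$ and a right DG $A^{op}$-module (that is, a left DG $A$-module) $N$, the complex $N^{*}=\Hom_{\mk}(N,\mk)$ carries a natural right $A$-module structure via $(f\cdot a)(n)=f(an)$, and there is a canonical isomorphism
$$\bR\Hom_{\mk}\big(M\Ltens{A}N,\ \mk\big)\;\cong\;\bR\Hom_{A}\big(M,\ N^{*}\big)$$
in $D(\mk)$. This is the derived form of the evident bijection between $A$-balanced bilinear maps $M\times N\to\mk$ and right-$A$-linear maps $M\to\Hom_{\mk}(N,\mk)$, and it is valid because $\Hom_{\mk}(-,\mk)$ is exact over a field (so no derived functor is needed on the $N$-side).

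Now let $m\in\Z$ be the constant witnessing projective boundedness of $M$, so that $\bR\Hom_{A}(M,L)\in D^{\geq m}(\mk)$ for every $L\in D^{\geq 0}(A)$ (right $A$-modules). Given any $N\in D^{\leq 0}(A^{op})$, the cohomology formula above gives $N^{*}\in D^{\geq 0}(A)$, hence $\bR\Hom_{A}(M,N^{*})\in D^{\geq m}(\mk)$, hence $(M\Ltens{A}N)^{*}\in D^{\geq m}(\mk)$, hence $M\Ltens{A}N\in D^{\leq -m}(\mk)$. Taking $-m$ as the new constant shows that $M$ is $\Tor$-bounded above. The argument is purely formal; the only point requiring care is the bookkeeping of left/right module structures---which side of $A$ acts on $N^{*}$, and the fact that $D^{\geq 0}(A)$ in the definition of projective boundedness refers to right $A$-modules, matching $N^{*}$---and (if one wants the adjunction isomorphism to be functorial in $N$) checking that it is the standard evaluation map, though for the quantified conclusion a separate identification for each $N$ already suffices. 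I do not expect any genuine obstacle beyond this.
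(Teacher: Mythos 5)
Your proof is correct and is essentially the paper's own argument: the paper likewise deduces the statement from the isomorphism $(N_1\Ltens{A}N_2)^*\cong \bR\Hom_A(N_1,N_2^*)$, and your version merely spells out the duality bookkeeping (that $N\in D^{\leq 0}(A^{op})$ gives $N^*\in D^{\geq 0}(A)$ and that $V\in D^{\leq m}(\mk)$ iff $V^*\in D^{\geq -m}(\mk)$) which the paper leaves implicit.
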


\begin{proof}Indeed, this follows from an isomorphism $$(N_1\Ltens{A}N_2)^*\cong \bR\Hom_A(N_1,N_2^*)$$ for any $N_1\in D(A),$ $N_2\in D(A^{op}).$\end{proof}

\begin{prop}\label{prop:if_diagonal_is_bounded}Let $A$ be a DG $\mk$-algebra.

1) Assume that the diagonal bimodule $A\in D(A\otimes A^{op})$ is $\Tor$-bounded above. Then for some $m\in\Z$ we have $D^{\leq 0}(A)\Ltens{A} D^{\leq 0}(A^{op})\subset D^{\leq m}(\mk).$ In particular, all objects of $D^-(A)$ are $\Tor$-bounded above.

2) Similarly, assume that $A\in D(A\otimes A^{op})$ is projectively bounded above. Then for some $l\in\Z$ we have $\bR\Hom_A(D^{\leq 0}(A),D^{\geq 0}(A))\subset D^{\geq l}(\mk).$\end{prop}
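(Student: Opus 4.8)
The plan is to deduce both parts almost formally from the hypotheses by unwinding Definition~\ref{def:Tor_Proj_bounded} for the DG algebra $A^e:=A\otimes A^{op}$ and invoking the standard computation of Hochschild homology and cohomology with coefficients in a tensor-product bimodule and in a $\Hom$-bimodule. Recall that a right $A^e$-module (resp.\ a left $A^e$-module) is the same thing as an $A$-bimodule, so the diagonal bimodule $A$ is naturally a right $A^e$-module and, using $(A^e)^{op}\cong A^{op}\otimes A$, Definition~\ref{def:Tor_Proj_bounded} applied to $A\in D(A^e)$ says precisely: the hypothesis of part~1 is that there is $m\in\Z$ with $A\Ltens{A^e}N\in D^{\leq m}(\mk)$ for all $N\in D^{\leq 0}(A^e)$, and the hypothesis of part~2 is that there is $l\in\Z$ with $\bR\Hom_{A^e}(A,N)\in D^{\geq l}(\mk)$ for all $N\in D^{\geq 0}(A^e)$.

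For part~1 I would fix $M_1\in D^{\leq 0}(A)$ and $M_2\in D^{\leq 0}(A^{op})$ and apply the hypothesis to the $A$-bimodule $N:=M_1\otimes_{\mk}M_2$. Since $\mk$ is a field one has $H^j(M_1\otimes_{\mk}M_2)\cong\bigoplus_{p+q=j}H^p(M_1)\otimes_{\mk}H^q(M_2)$, which vanishes for $j>0$, so $N\in D^{\leq 0}(A^e)$; and there is a natural isomorphism $A\Ltens{A^e}N\cong M_1\Ltens{A}M_2$ in $D(\mk)$ (Hochschild homology of $A$ with coefficients in $M_1\otimes_{\mk}M_2$ computes $\Tor^A_{\bullet}(M_1,M_2)$). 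Hence $M_1\Ltens{A}M_2\in D^{\leq m}(\mk)$, which is exactly the inclusion $D^{\leq 0}(A)\Ltens{A}D^{\leq 0}(A^{op})\subset D^{\leq m}(\mk)$; and the final sentence of part~1 follows by a shift, since any $M\in D^-(A)$ lies in $D^{\leq k}(A)$ for some $k$, so $M[k]\in D^{\leq 0}(A)$ and $M$ is $\Tor$-bounded above with bound $m+k$. For part~2 I would, dually, fix $M_1\in D^{\leq 0}(A)$ and $M_2\in D^{\geq 0}(A)$ and apply the hypothesis to the $A$-bimodule $N:=\bR\Hom_{\mk}(M_1,M_2)$; decomposing $M_1$ and $M_2$ into their cohomologies over $\mk$ gives $H^j(N)\cong\prod_pH^{j+p}\big(\bR\Hom_{\mk}(H^p(M_1),M_2)\big)\cong\prod_p\Hom_{\mk}(H^p(M_1),H^{j+p}(M_2))$, which vanishes for $j<0$, so $N\in D^{\geq 0}(A^e)$; and the standard identification $\bR\Hom_{A^e}(A,\bR\Hom_{\mk}(M_1,M_2))\cong\bR\Hom_A(M_1,M_2)$ (Hochschild cohomology computing $\Ext^{\bullet}_A$) then yields $\bR\Hom_A(M_1,M_2)\in D^{\geq l}(\mk)$, as required.

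The cohomological estimates on $M_1\otimes_{\mk}M_2$ and $\bR\Hom_{\mk}(M_1,M_2)$ are routine over a field, so the one step that I expect to require genuine care is the justification of the two Hochschild (co)homology identifications $A\Ltens{A^e}(M_1\otimes_{\mk}M_2)\cong M_1\Ltens{A}M_2$ and $\bR\Hom_{A^e}(A,\bR\Hom_{\mk}(M_1,M_2))\cong\bR\Hom_A(M_1,M_2)$ in the full generality of (possibly unbounded) DG modules over a DG algebra, rather than for bounded complexes over an ordinary ring. I would establish these by comparing bar-type resolutions of $A$ over $A^e$, or more conceptually by exhibiting both sides as the value of a single derived functor; in either formulation naturality in $M_1$ and $M_2$ is what makes the comparison work, and this is the only nontrivial ingredient.
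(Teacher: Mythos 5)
Your proposal is correct and follows essentially the same route as the paper's own proof: both parts are obtained by applying the hypothesis to the external bimodules $M_1\otimes_{\mk}M_2$ and $\Hom_{\mk}(M_1,M_2)$, together with the standard identifications $M_1\Ltens{A}M_2\cong A\Ltens{A\otimes A^{op}}(M_1\otimes_{\mk}M_2)$ and $\bR\Hom_A(M_1,M_2)\cong\bR\Hom_{A\otimes A^{op}}(A,\Hom_{\mk}(M_1,M_2))$. Your additional verifications (the K\"unneth-type bounds over the field $\mk$ and the shift argument for the final sentence of part~1) are correct details that the paper leaves implicit.
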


\begin{proof}1) Take any $N\in D^{\leq 0}(A^{op})$ and $M\in D^{\leq 0}(A).$ Choose $m>>0$ such that $A\Ltens{A\otimes A^{op}}D^{\leq 0}(A\otimes A^{op})\subset D^{\leq m}(\mk).$ Then 
$$M\Ltens{A}N\cong A\Ltens{A\otimes A^{op}}(M\tens{\mk}N)\in D^{\leq m}(\mk).$$
This proves the assertion.

2) One argues similarly, using the isomorphism
$$\bR\Hom_A(M,M')\cong\bR\Hom_{A\otimes A^{op}}(A,\Hom_{\mk}(M,M')).$$\end{proof}

\begin{prop}\label{prop:*_satisfied_M_compact_approx}Let $A$ be a proper DG $\mk$-algebra, such that the diagonal $A\mhyphen A$-bimodule is projectively bounded above. Let $M\in D(A)$ be compactly approximable and $N\in D_{\pspe}(A).$ Then the natural map $M\Ltens{A} N^*\to\bR\Hom_A(M,N)^*$ is an isomorphism.\end{prop}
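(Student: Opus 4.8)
The plan is to prove the statement by reducing the general compactly-approximable $M$ to the compact case via the approximation, and then to exploit properness and the boundedness hypothesis on the diagonal to control the error terms uniformly. First I would recall that for $N\in D_{\pspe}(A)$ one has $N^*\in D_{\pspe}(A^{op})$ (since $\mk$ is a field, the $\mk$-dual of a perfect complex of $\mk$-modules is again perfect), and that the natural transformation $P\Ltens{A}N^*\to \bR\Hom_A(P,N)^*$ is an isomorphism whenever $P$ is perfect over $A$: indeed both sides are exact in $P$, commute with shifts and cones, and for $P=A$ the map is the tautological isomorphism $N^*\xto{\sim}\bR\Hom_A(A,N)^*$. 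So the content of the proposition is to pass from perfect $P$ to a general compactly approximable $M$.

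Next I would fix a compact generator $G$ of $D(A)$ (for a DG algebra we may take $G=A$ itself) and write the defining property of compact approximability: for each $l$ there is $F_l\in D_{\perf}(A)$ and $\varphi_l\colon F_l\to M$ inducing isomorphisms $\Hom^i(G,F_l)\xto{\sim}\Hom^i(G,M)$ for $i>l$; equivalently, $\Cone(\varphi_l)\in D^{\leq l}(A)$ after the obvious degree shift dictated by $G$. The idea is then to compare, for each of the two functors $M\mapsto M\Ltens{A}N^*$ and $M\mapsto \bR\Hom_A(M,N)^*$, the value on $M$ with the value on $F_l$, and show that both comparison maps are isomorphisms in a range of cohomological degrees that tends to $+\infty$ (for the first functor) or to $-\infty$ (for the second) as $l$ varies, with the range controlled by the boundedness constants. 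Concretely: since $A$ is proper, $N$ is a bounded complex with finite-dimensional cohomology, hence so is $N^*$; since the diagonal bimodule is projectively bounded above, Proposition \ref{prop:if_diagonal_is_bounded}(2) (together with Proposition \ref{prop:proj_bounded_implies_Tor} / Proposition \ref{prop:if_diagonal_is_bounded}(1)) gives a constant $c$ such that $D^{\leq 0}(A)\Ltens{A}N^*\subset D^{\leq 0+c}(\mk)$ and $\bR\Hom_A(D^{\leq 0}(A),N)\subset D^{\geq -c'}(\mk)$ for suitable $c,c'$ depending only on $A$ and $N$. Applying this to $\Cone(\varphi_l)[\text{shift}]\in D^{\leq l}(A)$ shows that $\Cone(\varphi_l)\Ltens{A}N^*$ lives in degrees $\leq l+c$ and $\bR\Hom_A(\Cone(\varphi_l),N)^*$ lives in degrees $\geq -l-c''$; hence the maps $F_l\Ltens{A}N^*\to M\Ltens{A}N^*$ and $\bR\Hom_A(M,N)^*\to \bR\Hom_A(F_l,N)^*$ are isomorphisms in cohomological degrees $>l+c$ and $<-l-c''$ respectively — wait, I must be careful with variance: since $\bR\Hom_A(-,N)^*$ is covariant in its first slot after dualizing, the relevant range moves to $\pm\infty$ correctly; I would check the signs explicitly when writing it out.

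Putting these together: in each fixed cohomological degree $n$, choosing $l$ with $-l$ large enough we get $H^n(M\Ltens{A}N^*)\cong H^n(F_l\Ltens{A}N^*)$ and $H^n(\bR\Hom_A(M,N)^*)\cong H^n(\bR\Hom_A(F_l,N)^*)$, and since $F_l$ is perfect the two are identified by the map already known to be an isomorphism in the perfect case; one checks the identifications are compatible with the natural transformation $M\Ltens{A}N^*\to\bR\Hom_A(M,N)^*$, so it induces an isomorphism on $H^n$ for every $n$, hence is an isomorphism. The main obstacle I anticipate is bookkeeping the boundedness constants and the direction of the shifts so that the two ``truncation'' arguments (Tor-boundedness controls one side from above, projective-boundedness controls the other side from below, and dualizing flips above/below) genuinely overlap in every degree; this is where Propositions \ref{prop:proj_bounded_implies_Tor} and \ref{prop:if_diagonal_is_bounded} and the properness of $A$ all have to be invoked in just the right combination. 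A secondary point to verify carefully is that compact approximability is stated with respect to a generator $G$ whose self-$\Hom$ is bounded above, which holds here because $A$ is proper, so Definition \ref{def:compact_approx} genuinely applies.
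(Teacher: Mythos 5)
Your proposal is correct and follows essentially the same route as the paper's proof: approximate $M$ by perfect complexes $M_t\to M$ with cone in $D^{<t}(A)$, use Propositions \ref{prop:proj_bounded_implies_Tor} and \ref{prop:if_diagonal_is_bounded} (with $N^*\in D^{\leq 0}(A^{op})$) to push both error terms --- for $-\Ltens{A}N^*$ and for $\bR\Hom_A(-,N)^*$ --- into degrees below $t+\mathrm{const}$, and conclude degree by degree from the perfect case via the commutative square. The only wrinkle is the variance issue you flag yourself: after dualizing, both comparison maps become isomorphisms in degrees $\geq t+\mathrm{const}$ (not one range going up and one going down), so letting $t\to-\infty$ covers every degree, exactly as in your final paragraph and in the paper.
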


\begin{proof}By Proposition \ref{prop:proj_bounded_implies_Tor}, $A\in D(A\otimes A^{op})$ is also $\Tor$-bounded above. Thus, we may apply Proposition \ref{prop:if_diagonal_is_bounded} and choose the integers $m,l\in\Z$ as in its formulation. We may and will assume that $N\in D^{\geq 0}(A)$ (hence $N^*\in D^{\leq 0}(A^{op})$).

Let $t\in\Z$ be any integer. Since $M$ is compactly approximable, we can find a perfect complex $M_t\in D_{perf}(A)$ and a morphism $f_t:M_t\to M$ such that $Cone(f_t)\in D^{<t}(A).$ Applying Proposition \ref{prop:if_diagonal_is_bounded}, we see that $Cone(f_t\Ltens{A} N^*)\in D^{<(t+m)}(\mk),$ and $Cone(\bR\Hom_A(f_t,N)^*)\in D^{<(t-l)}(\mk).$ From the commutative diagram
$$
\xymatrix{M_t\Ltens{A}N^*\ar[r]^-{\sim}\ar[d]^-{f_t\Ltens{A} N^*} & \bR\Hom(M_t,N)^*\ar[d]^{\bR\Hom_A(f_t,N)^*}\\
M\Ltens{A}N^*\ar[r] & \bR\Hom(M,N)^*}$$
we deduce that the map $H^i(M\Ltens{A} N^*)\to H^i(\bR\Hom_A(M,N)^*)$ is an isomorphism for $i\geq t+\max(m,-l).$ Since $t\in\Z$ can be choosen arbitrarily, we conclude that $M\Ltens{A} N^*\to\bR\Hom_A(M,N)^*$ is an isomorphism.\end{proof}

\begin{prop}\label{prop:box_product_preserves_bounded}Let $A_1,$ $A_2$ be DG algebras, and $M_1\in D(A_1),$ $M_2\in D(A_2).$ If both $M_i$ are projectively bounded above (resp. $\Tor$-bounded above), then so is $M_1\otimes M_2$ over $A_1\otimes A_2.$\end{prop}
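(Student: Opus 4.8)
The plan is to reduce both assertions to the corresponding one-variable boundedness statements via standard adjunction isomorphisms, exactly in the spirit of the proofs of Propositions \ref{prop:proj_bounded_implies_Tor} and \ref{prop:if_diagonal_is_bounded}. The key point is that a DG $(A_1\otimes A_2)$-module lies in $D^{\leq n}(A_1\otimes A_2)$ if and only if it lies in $D^{\leq n}(\mk)$ after forgetting the module structure, and the cohomological truncation functors behave well under box products.

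For the $\Tor$-bounded case: suppose $M_1\Ltens{A_1}D^{\leq 0}(A_1^{op})\subset D^{\leq m_1}(\mk)$ and $M_2\Ltens{A_2}D^{\leq 0}(A_2^{op})\subset D^{\leq m_2}(\mk)$. Let $N\in D^{\leq 0}(A_1^{op}\otimes A_2^{op})$; I want to show $(M_1\otimes M_2)\Ltens{A_1\otimes A_2}N\in D^{\leq m}(\mk)$ for $m=m_1+m_2+c$, where $c$ accounts for the cohomological amplitude of the complex of vector spaces underlying the tensor product. The trick is to first reduce to $N$ of the form $N_1\otimes N_2$: writing $N$ as a (possibly infinite) homotopy colimit/extension of such box products, one sees it suffices to treat $N=N_1\otimes N_2$ with $N_i\in D^{\leq 0}(A_i^{op})$, using that $D^{\leq 0}$ is closed under the relevant colimits and that the tensor functor commutes with them. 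For such $N$ one has the associativity isomorphism
\[
(M_1\otimes M_2)\Ltens{A_1\otimes A_2}(N_1\otimes N_2)\cong (M_1\Ltens{A_1}N_1)\tens{\mk}(M_2\Ltens{A_2}N_2),
\]
and the right-hand side lies in $D^{\leq m_1}(\mk)\tens{\mk}D^{\leq m_2}(\mk)\subset D^{\leq m_1+m_2}(\mk)$ since $\mk$ is a field and the tensor product over a field is exact. That handles the $\Tor$ case with $c=0$.

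For the projectively bounded case: suppose $\bR\Hom_{A_1}(M_1,D^{\geq 0}(A_1))\subset D^{\geq l_1}(\mk)$ and likewise for $M_2$. Let $N\in D^{\geq 0}(A_1\otimes A_2)$. Again reduce to $N=N_1\otimes N_2$ with $N_i\in D^{\geq 0}(A_i)$ (this time expressing a general $N\in D^{\geq 0}$ as a limit/extension of box products, which is where one must be a little careful with the bounded-below condition being closed under the relevant homotopy limits), and then use
\[
\bR\Hom_{A_1\otimes A_2}(M_1\otimes M_2,\,N_1\otimes N_2)\cong\bR\Hom_{A_1}(M_1,N_1)\tens{\mk}\bR\Hom_{A_2}(M_2,N_2)\in D^{\geq l_1+l_2}(\mk).
\]
Thus $M_1\otimes M_2$ is projectively bounded above over $A_1\otimes A_2$.

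The main obstacle I anticipate is the reduction to box-product test modules $N=N_1\otimes N_2$ together with verifying that the $\bR\Hom$ (resp. derived tensor) functor commutes with the homotopy limits (resp. colimits) used to build a general $N$ out of such pieces, while keeping track that the amplitude bound is uniform. In the $\Tor$ case this is clean because $-\Ltens{A}-$ commutes with all homotopy colimits and $D^{\leq m}$ is closed under them. In the projective case one should instead use that a bounded-below module is a homotopy limit of its Postnikov truncations and that $\bR\Hom_{A_1\otimes A_2}(M_1\otimes M_2,-)$ takes such a limit to a limit, combined with the fact that $\bR\Hom$ out of a fixed module shifts the bounded-below condition by a fixed constant; alternatively, one can dualize via $(-)^*$ and Proposition \ref{prop:proj_bounded_implies_Tor} to transport the projective statement to the $\Tor$ statement after replacing modules by their duals, which circumvents the homotopy-limit bookkeeping entirely.
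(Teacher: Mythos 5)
The core of your argument --- the reduction of an arbitrary test module $N\in D^{\leq 0}(A_1^{op}\otimes A_2^{op})$ (resp.\ $N\in D^{\geq 0}(A_1\otimes A_2)$) to box products $N_1\otimes N_2$ with both factors satisfying the same truncation condition --- is a genuine gap, not a bookkeeping issue. You flag it as "the main obstacle" but give no argument. A semifree resolution of $N$ is built from shifts of the free module $A_1\otimes A_2$; each such shift is indeed a box product, but for general DG algebras (whose cohomology may sit in positive degrees) there is no reason the shifts can be arranged so that both factors lie in $D^{\leq 0}(A_i^{op})$, nor that they are uniformly bounded, and without that control the colimit argument yields no uniform amplitude bound. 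In the projectively bounded case the situation is worse, since you would need to present $N$ as a homotopy limit of box products compatibly with $D^{\geq 0}$; and your proposed fallback --- dualizing and invoking Proposition \ref{prop:proj_bounded_implies_Tor} --- cannot work, because that proposition is a one-way implication (projective boundedness implies $\Tor$-boundedness, not conversely) and a general object of $D^{\geq 0}(A_1\otimes A_2)$ is not of the form $Q^*$.

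The decomposition is in fact unnecessary. The paper applies the two-variable adjunction to an \emph{arbitrary} test module: for $N\in D^{\geq 0}(A_1\otimes A_2)$,
\[
\bR\Hom_{A_1\otimes A_2}(M_1\otimes M_2,N)\cong \bR\Hom_{A_1}\bigl(M_1,\bR\Hom_{A_2}(M_2,N)\bigr),
\]
and the point you are missing is that the conditions of Definition \ref{def:Tor_Proj_bounded} only see the underlying complex of $\mk$-vector spaces. Thus $N$ restricted to $A_2$ lies in $D^{\geq 0}(A_2)$, so $\bR\Hom_{A_2}(M_2,N)\in D^{\geq l_2}(\mk)$ and hence, with its residual $A_1$-action, in $D^{\geq l_2}(A_1)$; applying the hypothesis on $M_1$ lands you in $D^{\geq l_1+l_2}(\mk)$. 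The $\Tor$ case is identical via $(M_1\otimes M_2)\Ltens{A_1\otimes A_2}N\cong M_1\Ltens{A_1}(M_2\Ltens{A_2}N)$. Your K\"unneth computation for $N=N_1\otimes N_2$ is correct but only treats a special case.
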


\begin{proof}We consider the case when $M_i$ are projectively bounded above, and the case of $\Tor$-bounded above modules is treated analogously.

By definitions, there exists numbers $l_1,l_2\in\Z$ such that $\bR\Hom_{A_i}(M_i,D^{\geq 0}(A_i))\subset D^{\geq l_i}(\mk).$ Now take any DG module $N\in D^{\geq 0}(A_1\otimes A_2).$ Then we have
\begin{multline*}\bR\Hom_{A_1\otimes A_2}(M_1\otimes M_2,N)\cong \bR\Hom_{A_1}(M_1,\bR\Hom_{A_2}(M_2,N))\\
\in\bR\Hom_{A_1}(M_1,D^{\geq l_2}(A_1))\subset D^{\geq l_1+l_2}(\mk).\end{multline*}
Therefore, $M_1\otimes M_2$ is projectively bounded above over $A_1\otimes A_2.$\end{proof}

\begin{prop}\label{prop:*_satisfied_for_all}Let $A$ be a proper DG algebra such that the diagonal $A\mhyphen A$-bimodule is projectively bounded above and compactly approximable. Then for any pseudo-perfect DG $A$-modules $M,N$ we have an isomorphism
$M\Ltens{A} N^*\xto{\sim}\bR\Hom_A(M,N)^*.$\end{prop}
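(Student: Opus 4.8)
The plan is to reduce the statement to Proposition \ref{prop:*_satisfied_M_compact_approx}, which has exactly the same conclusion but assumes the first argument to be compactly approximable rather than merely pseudo-perfect. Thus the content of the proof is the claim that, under the present hypotheses on $A$, every pseudo-perfect DG $A$-module is compactly approximable in $D(A)$; granting this, for $M,N\in D_{\pspe}(A)$ we apply Proposition \ref{prop:*_satisfied_M_compact_approx} to the compactly approximable module $M$ and the pseudo-perfect module $N$, and we are done. Observe first that since $A$ is proper, $H^\bullet(A)$ is finite-dimensional, so $A$ and $A\otimes A^{op}$ are proper DG algebras, and the free rank-one modules $A\in D(A)$ and $A\otimes A^{op}\in D(A\otimes A^{op})$ are compact generators satisfying the hypotheses of Definition \ref{def:compact_approx}; with these generators $\Hom^i(G,E)=H^i(E)$, so it suffices, for compact approximability of $E$, to produce for each $l$ a perfect object $F$ and a morphism $F\to E$ whose cone lies in $D^{\leq l-1}$.

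To prove the claim I would use the exact functor $M\Ltens{A}(-)\colon D(A\otimes A^{op})\to D(A)$, where $A\otimes A^{op}$-modules are regarded as $A$-bimodules and the left $A$-module structure is used for the tensor product. Two properties of this functor are needed. First, it sends the free bimodule $A\otimes A^{op}$ to $M\otimes_\mk A$, which is perfect over $A$ because $M$ is perfect over $\mk$; by exactness it therefore maps $D_{\perf}(A\otimes A^{op})$ into $D_{\perf}(A)$. Second, by Proposition \ref{prop:proj_bounded_implies_Tor} the diagonal bimodule is also $\Tor$-bounded above, so by Proposition \ref{prop:if_diagonal_is_bounded}(1) there is an integer $m$ with $D^{\leq 0}(A)\Ltens{A}D^{\leq 0}(A^{op})\subset D^{\leq m}(\mk)$; since restriction of scalars along $A\to A\otimes A^{op}$ is exact, this yields $M\Ltens{A}C\in D^{\leq t+m+c}(A)$ for every $C\in D^{\leq t}(A\otimes A^{op})$, where $c$ is an upper bound for the cohomological amplitude of $M$ (finite, as $M$ is pseudo-perfect).

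Now, given $l\in\Z$, choose $t$ with $t+m+c\leq l-1$, and use the hypothesis that the diagonal bimodule is compactly approximable to find a perfect bimodule $P$ and a morphism $\psi\colon P\to A$ in $D(A\otimes A^{op})$ with $\Cone(\psi)\in D^{\leq t}(A\otimes A^{op})$. Applying $M\Ltens{A}(-)$ and using $M\Ltens{A}A\cong M$ produces a morphism $\varphi\colon M\Ltens{A}P\to M$ with $M\Ltens{A}P$ perfect over $A$ by the first property and $\Cone(\varphi)=M\Ltens{A}\Cone(\psi)\in D^{\leq l-1}(A)$ by the second; hence $H^i(\varphi)$ is an isomorphism for $i>l$, so $M$ is compactly approximable. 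The only genuinely delicate point is the bookkeeping in the previous paragraph — verifying that $M\Ltens{A}(-)$ takes perfect bimodules to perfect $A$-modules, which is exactly where pseudo-perfectness of $M$ enters, and tracking the cohomological amplitude of the cone via Proposition \ref{prop:if_diagonal_is_bounded}(1); everything else is a formal reduction to Propositions \ref{prop:*_satisfied_M_compact_approx} and \ref{prop:proj_bounded_implies_Tor}.
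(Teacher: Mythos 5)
Your proof is correct, but it takes a genuinely different route from the paper's. You reduce to Proposition \ref{prop:*_satisfied_M_compact_approx} by first showing that, under the stated hypotheses on the diagonal bimodule, every pseudo-perfect DG $A$-module is compactly approximable; this auxiliary fact is precisely Proposition \ref{prop:pseudo_perfect_comp_approx} of the paper (stated there under the weaker hypothesis that the diagonal is $\Tor$-bounded above, which you recover via Proposition \ref{prop:proj_bounded_implies_Tor}), and your argument for it --- applying $M\Ltens{A}(-)$ to a compact approximation $P\to A$ of the diagonal, noting that $M\Ltens{A}(A\otimes A^{op})\cong M\otimes_{\mk}A$ is perfect since $M$ is pseudo-perfect, and controlling the amplitude of the cone via Proposition \ref{prop:if_diagonal_is_bounded} --- coincides with the paper's proof of that proposition. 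The paper instead applies Proposition \ref{prop:*_satisfied_M_compact_approx} over the DG algebra $A\otimes A^{op}$, taking the compactly approximable module to be the diagonal bimodule $A$ itself and the pseudo-perfect module to be $M^*\otimes N$; that reduction rests on the adjunction isomorphisms $M\Ltens{A}N^*\cong A\Ltens{A\otimes A^{op}}(M^*\otimes N)^*$ and $\bR\Hom_A(M,N)\cong\bR\Hom_{A\otimes A^{op}}(A,M^*\otimes N)$, together with Proposition \ref{prop:box_product_preserves_bounded} to verify that the diagonal $(A\otimes A^{op})$-bimodule is projectively bounded above. The paper's route avoids invoking compact approximability of pseudo-perfect modules (though it proves that separately anyway, for use in Proposition \ref{prop:pseudo_perfect_geometric}); yours avoids the external-product bookkeeping over $A\otimes A^{op}$. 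Both arguments are complete.
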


\begin{proof}By Proposition \ref{prop:box_product_preserves_bounded}, the diagonal $(A\otimes A^{op})\mhyphen (A\otimes A^{op})$-bimodule is projectively bounded above. Let us note that 
\begin{equation}\label{eq:reduce_to_diagonal}M\Ltens{A}N^*\cong A\Ltens{A\otimes A^{op}}(M^*\otimes N)^*,\quad \bR\Hom_A(M,N)\cong \bR\Hom_{A\otimes A^{op}}(A,M^*\otimes N).\end{equation}
Applying Proposition \ref{prop:*_satisfied_M_compact_approx} to the DG algebra $A\otimes A^{op},$ and DG $A\otimes A^{op}$-modules $A,$ $M^*\otimes N,$ and taking \eqref{eq:reduce_to_diagonal} into account, we obtain an isomorphism $M\Ltens{A} N^*\xto{\sim}\bR\Hom_A(M,N)^*.$\end{proof}

\begin{prop}\label{prop:pseudo_perfect_comp_approx}Let $A$ be a DG algebra such that the diagonal $A\mhyphen A$-bimodule is $\Tor$-bounded above and compactly approximable. Then any pseudo-perfect DG $A$-module is compactly approximable.\end{prop}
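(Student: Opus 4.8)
The plan is to obtain a compact approximation of an arbitrary $M\in D_{\pspe}(A)$ by transporting a compact approximation of the diagonal $A$-bimodule along the exact functor $M\Ltens{A}(-)\colon D(A\otimes A^{op})\to D(A)$. First I would install the framework of Definition~\ref{def:compact_approx} for $\cT=D(A)$ with generator $G=A$: compact approximability of the diagonal forces $H^n(A\otimes A^{op})=0$ for $n\gg 0$, and Künneth over the field $\mk$ then forces $H^n(A)=0$ for $n\gg 0$, so $D(A)$ is compactly generated by $A$ with $\Hom^n_{D(A)}(A,A)=0$ for $n\gg 0$, and the isomorphism condition of Definition~\ref{def:compact_approx} for objects of $D(A)$ just records an isomorphism on $H^i$ for $i>l$. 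I would also fix the constant $m\in\Z$ supplied by Proposition~\ref{prop:if_diagonal_is_bounded}(1) applied to $A$ (legitimate since the diagonal is $\Tor$-bounded above), so that $D^{\le 0}(A)\Ltens{A}D^{\le 0}(A^{op})\subset D^{\le m}(\mk)$.

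Now fix $M\in D_{\pspe}(A)$ and $l\in\Z$. Since $\mk$ is a field, $M$ has finite-dimensional total cohomology, hence is a finite direct sum of shifts of $\mk$ in $D(\mk)$; consequently $M\Ltens{A}(A\otimes_\mk A)\cong M\otimes_\mk A$ is a finite direct sum of shifts of the free right $A$-module, so $M\Ltens{A}(-)$ carries $D_{\perf}(A\otimes A^{op})$ into $D_{\perf}(A)$, while $M\Ltens{A}A\cong M$. Choose $c$ with $M\in D^{\le c}(A)$. Using compact approximability of the diagonal bimodule, for each integer $t$ there is a perfect bimodule $P_t\in D_{\perf}(A\otimes A^{op})$ and a morphism $\psi_t\colon P_t\to A$ with $\Cone(\psi_t)\in D^{\le t}(A\otimes A^{op})$; since restriction of scalars preserves cohomology, $\Cone(\psi_t)$ regarded as an object of $D(A^{op})$ also lies in $D^{\le t}(A^{op})$. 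Put $F_t:=M\Ltens{A}P_t\in D_{\perf}(A)$ and $\varphi_t:=M\Ltens{A}\psi_t\colon F_t\to M$. Applying $M\Ltens{A}(-)$ to the triangle $P_t\to A\to\Cone(\psi_t)\to$ identifies $\Cone(\varphi_t)$ with $M\Ltens{A}\Cone(\psi_t)$, and Proposition~\ref{prop:if_diagonal_is_bounded}(1) (after shifting $M$ into $D^{\le 0}(A)$ and $\Cone(\psi_t)$ into $D^{\le 0}(A^{op})$) yields $\Cone(\varphi_t)\in D^{\le m+c+t}(\mk)$. Choosing $t$ sufficiently negative (so that $m+c+t<l$) makes $F_t$ a perfect complex together with a morphism $\varphi_t\colon F_t\to M$ that is an isomorphism on $H^i$ for all $i>l$, which is exactly what Definition~\ref{def:compact_approx} demands. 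Hence $M$ is compactly approximable.

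The only slightly delicate points are the identification $M\Ltens{A}(A\otimes_\mk A)\cong M\otimes_\mk A$ — which is what guarantees $F_t$ is perfect and uses that $M$ is perfect over $\mk$ — and the bookkeeping of handedness and of the single cohomological shift $m$. I do not anticipate a real obstacle: the substance of the argument is just that the compact-approximation property of the diagonal bimodule can be pushed forward along $M\Ltens{A}(-)$, with the $\Tor$-boundedness hypothesis (via Proposition~\ref{prop:if_diagonal_is_bounded}(1)) ensuring that this pushforward does not spread out cohomological degree.
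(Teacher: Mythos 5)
Your proof is correct and follows essentially the same route as the paper's: transport a compact approximation $P\to A$ of the diagonal bimodule along $M\Ltens{A}(-)$, note that pseudo-perfectness of $M$ makes $M\Ltens{A}P$ perfect, and use Proposition \ref{prop:if_diagonal_is_bounded}(1) to bound the cone. The extra justifications you supply (why $M\Ltens{A}(-)$ preserves perfectness, and the degree bookkeeping) are exactly the steps the paper leaves implicit.
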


\begin{proof}By Proposition \ref{prop:if_diagonal_is_bounded}, we can find $m\in\Z$ such that $D^{\leq 0}(A)\Ltens{A}D^{\leq 0}(A^{op})\subset D^{\leq m}(\mk).$ Let $M\in D_{\pspe}(A),$ and for convenience we assume that $M\in D^{\leq 0}(A).$ Choose some $t\in\Z,$ and a morphism $P\xto{f_t} A$ in $D(A\otimes A^{op}),$ such that $P\in D_{\perf}(A\otimes A^{op}),$ and $Cone(f_t)\in D^{<t}(A\otimes A^{op}).$ Then $M\Ltens{A}P\in D_{\perf}(A),$ and $Cone(M\Ltens{A}P\xto{\id\Ltens{A}f_t} M)\in D^{<(t+m)}(A).$ Since $t$ can be choosen arbitrarily, we conclude that $M$ is compactly approximable.\end{proof}

Let $X$ be a noetherian scheme. Let $\cE$ be a generator of $\Perf(X),$ and put $A_X:=\End(\cE).$ We have an equivalence $\Phi_X:D(X)\simeq D(A_X),$ inducing an equivalence $D_{\perf}(X)\simeq D_{\perf}(A_X).$

\begin{prop}\label{prop:Lipman_Neeman}Let $X,$ $A_X$ and $\Phi_X$ be as above.

1) The equivalence $\Phi_X$ identifies $D^-(\QCoh X)$ (resp. $D^+(\QCoh X),$ $D^b(\QCoh X)$) with $D^-(A_X),$  (resp. $D^+(A_X),$ $D^b(A_X)$).

2) $\Phi_X$ identifies $D^-_{coh}(X)$ (pseudo-coherent complexes) with the full subcategory of compactly approximable objects of $D(A_X).$\end{prop}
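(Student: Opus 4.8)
The plan is to read off both statements from the Lipman--Neeman approximation results, transported along the derived Morita equivalence $\Phi_X\colon D(X)\xto{\sim}D(A_X)$; the key preliminary is that $\Phi_X$ and $\Phi_X^{-1}$ have finite cohomological amplitude, which gives 1) at once. I would realize $\Phi_X\simeq\bR\Hom_X(\cE,-)\simeq\bR\Gamma(X,\cE^{\vee}\Ltens{\cO_X}-)$ and $\Phi_X^{-1}\simeq-\Ltens{A_X}\cE$. Since $\cE$ is perfect, $\cE^{\vee}$ has finite Tor-amplitude, so $\cF\mapsto\cE^{\vee}\Ltens{\cO_X}\cF$ shifts cohomological degrees by a bounded amount in both directions, and $\bR\Gamma(X,-)$ has cohomological dimension at most $N-1$, where $N$ is the number of opens in a finite affine cover of $X$ (compute by \v{C}ech). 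Hence $\Phi_X$ sends $D^{\le n}(\QCoh X)$ into $D^{\le n+c}(A_X)$ and $D^{\ge n}(\QCoh X)$ into $D^{\ge n-c}(A_X)$; since $\Phi_X$ is an equivalence and $(D^{\le n},D^{\ge n+1})$ are the two halves of a $t$-structure, the reverse bounds for $\Phi_X^{-1}$ follow formally, by swapping the roles of $\Phi_X$ and $\Phi_X^{-1}$ in the orthogonality argument. Both functors therefore preserve each of $D^{-}$, $D^{+}$, $D^{b}$, which is 1); in particular $A_X=\bR\End(\cE)$ has bounded cohomology, so $\Hom^n$ of its generators vanishes for $n\gg 0$ and Definition \ref{def:compact_approx} applies both on $D(\QCoh X)$ (generator $\cE$) and on $D(A_X)$ (generator $A_X$).

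For 2) the observation I would use is that $\Phi_X$ carries the generator $\cE$ to the free module $A_X$, restricts to an equivalence $\Perf(X)\simeq D_{\perf}(A_X)$, and satisfies $\Hom^i_X(\cE,\cF)\cong\Hom^i_{A_X}(A_X,\Phi_X(\cF))=H^i(\Phi_X(\cF))$. Consequently the condition of Definition \ref{def:compact_approx} for $\cF\in D(\QCoh X)$ with respect to $\cE$ translates verbatim into the same condition for $\Phi_X(\cF)\in D(A_X)$ with respect to $A_X$: a morphism $\cP\to\cF$ from a perfect complex induces isomorphisms on $\Hom^i(\cE,-)$ for $i>l$ exactly when $\Phi_X(\cP)\to\Phi_X(\cF)$ has cone in $D^{\le l}(A_X)$. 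Thus $\cF$ is compactly approximable in $D(\QCoh X)$ if and only if $\Phi_X(\cF)$ is compactly approximable in $D(A_X)$. It then suffices to know that the compactly approximable objects of $D(\QCoh X)$ are precisely the pseudo-coherent complexes $D^-_{coh}(X)$, which is exactly the Lipman--Neeman approximation theorem for the noetherian separated scheme $X$ (the technique of Remark \ref{rem:compact_approximation}); combined with the previous sentence this gives 2).

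I expect the only genuinely nontrivial input to be this last cited fact --- that every bounded-above complex with coherent cohomology on $X$ admits, below any prescribed cohomological degree, a perfect approximation --- which is where one must be careful about the precise hypotheses on $X$ (noetherian, separated, carrying the perfect generator $\cE$); everything else is bookkeeping with cohomological amplitudes and the matching of generators under $\Phi_X$. If one prefers not to quote the scheme-theoretic statement in this packaged form, one can instead unpack it by hand: in the ``$\subseteq$'' direction, approximate a pseudo-coherent $\cF$ by perfect complexes and push through $\Phi_X$ using 1); in the ``$\supseteq$'' direction, start from a perfect approximation of a compactly approximable $M$, transport it back by $\Phi_X^{-1}$, and use that perfect complexes on a noetherian scheme are bounded above with coherent cohomology sheaves to conclude $\Phi_X^{-1}(M)\in D^-_{coh}(X)$.
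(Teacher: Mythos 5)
Your overall plan coincides with the paper's: Proposition \ref{prop:Lipman_Neeman} is there deduced directly from \cite[Theorems 4.1, 4.2]{LN}, and your treatment of part 2) --- matching generators and compact objects under $\Phi_X$, observing that $\Hom^i(\cE,-)$ becomes $H^i(-)$ on the $A_X$-side so that Definition \ref{def:compact_approx} transports verbatim, and then quoting the approximation theorem for $D^-_{coh}(X)$ --- is a faithful unpacking of that citation and is fine.

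The gap is in your self-contained argument for part 1), specifically in the sentence ``the reverse bounds for $\Phi_X^{-1}$ follow formally, by swapping the roles \dots in the orthogonality argument.'' The forward bound ($\Phi_X(D^{\le n})\subset D^{\le n+c}(A_X)$, and dually for $D^{\ge n}$) is indeed an explicit Tor-amplitude/\v{C}ech computation. But the reverse bound is \emph{not} formal: the subcategories $D^{\le n}(A_X)$ and $D^{\ge n+1}(A_X)$ of Definition \ref{def:Tor_Proj_bounded} are defined by cohomology of the underlying complex, and for a non-connective DG algebra they do \emph{not} form a $t$-structure, nor are they semi-orthogonal --- already for $A=\mk\oplus\mk[-2]$ one has $\Ext^{-j}_A(\mk,\mk)\ne 0$ for all $j\ge 0$, so $\Hom(D^{\le 0}(A),D^{\ge j}(A))\ne 0$ for every $j$. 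Since $A_X=\bR\End(\cE)$ typically has cohomology in positive degrees (e.g.\ whenever $\cE$ has higher self-Exts, which is unavoidable for many $X$), the orthogonality you need on the $D(A_X)$-side is unavailable a priori; in fact it is \emph{equivalent} to the statement you are trying to prove, so invoking it is circular. The same problem appears if you instead try to ``swap'' by bounding the amplitude of $\Phi_X^{-1}(-)=-\Ltens{A_X}\cE$ via a semifree resolution: over a non-connective DG algebra a module in $D^{\le n}$ can require free generators in arbitrarily high degrees (again $\mk$ over $\mk\oplus\mk[-2]$), so $-\Ltens{A_X}\cE$ has no formal upper amplitude bound. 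The genuine input needed here is precisely \cite[Theorem 4.2]{LN}: cohomological boundedness in $D(\QCoh X)$ is \emph{detected} by the compact generator $\cE$ with uniform constants, and the proof of that uses the geometry of $X$ (induction on an affine cover), not just the existence of the equivalence. So for part 1) you should, like the paper, quote that theorem (or reproduce its \v{C}ech/induction argument) rather than derive the reverse inclusion formally.
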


\begin{proof}Both assertions follow from \cite[Theorems 4.1, 4.2]{LN}.\end{proof}

\begin{prop}\label{prop:diagonal_geometric_proj_bounded_above} Let $X$ and $A_X$ be as above. Then $A_X\in D(A_X\otimes A_X^{op})$ is projectively bounded above and is compactly approximable.\end{prop}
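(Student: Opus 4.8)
The plan is to transfer the statement to the geometry of the product $Y:=X\times_{\mk}X$, which I will assume (as holds in all cases needed) to be a noetherian separated scheme. First I would produce a good generator of $\Perf(Y)$ with a computable endomorphism algebra: the duality $\cE\mapsto\cE^{\vee}=\bR\cHom_{\cO_X}(\cE,\cO_X)$ is an anti-equivalence of $\Perf(X)$, so $\cE^{\vee}$ is again a generator and $\End_X(\cE^{\vee})\cong A_X^{op}$ canonically. It is standard that $\cE\boxtimes\cE^{\vee}$ is then a generator of $\Perf(Y)$, and Künneth for $\bR\cHom$ of perfect complexes identifies $\End_Y(\cE\boxtimes\cE^{\vee})$ with $A_X\otimes_{\mk}\End_X(\cE^{\vee})\cong A_X\otimes_{\mk}A_X^{op}$. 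Applying Proposition \ref{prop:Lipman_Neeman} to $Y$ with this generator gives an equivalence $\Phi_Y:D(Y)\xto{\sim}D(A_X\otimes A_X^{op})$ which matches the classes $D^{\pm}$, $D^{b}$, matches the $t$-structures up to a uniformly bounded shift (by the Remark after Definition \ref{def:Tor_Proj_bounded}), and identifies $D^{-}_{coh}(Y)$ with the compactly approximable objects of $D(A_X\otimes A_X^{op})$.

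The crux is the identification of the image of the diagonal bimodule under $\Phi_Y$. Let $\Delta:X\hto Y$ be the diagonal, a closed immersion since $X$ is separated. Under $\Phi_Y$ the free module $A_X\otimes A_X^{op}$ goes to $\cE\boxtimes\cE^{\vee}$, and the computation
\[\bR\Hom_Y(\cE\boxtimes\cE^{\vee},\Delta_*\cO_X)\cong\bR\Hom_X(\cE\otimes\cE^{\vee},\cO_X)\cong\bR\Hom_X(\cE,\cE)=A_X\]
(using $\cE^{\vee\vee}\cong\cE$) shows — once one verifies that the two $A_X$-actions on the left-hand side go over to left and right multiplication on $A_X$ — that $\Phi_Y$ sends the diagonal $A_X$-bimodule to $\cK:=\Delta_*\cO_X$. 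This is the geometric shadow of the fact that $\Perf(X)$ is a smooth DG category exactly when $X$ is smooth; any discrepancy in the precise shape of $\cK$ only tensors it by a perfect complex pulled back from a factor, which changes nothing below. In particular $\cK\in D^{b}_{coh}(Y)\subset D^{-}_{coh}(Y)$.

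Granting this, the two conclusions are immediate. Since $\cK$ is pseudo-coherent, Proposition \ref{prop:Lipman_Neeman}(2) shows the diagonal bimodule is compactly approximable (the requirement $\Hom^{n}(G,G)=0$ for $n\gg0$ in Definition \ref{def:compact_approx} holds because $A_X\otimes A_X^{op}$ is cohomologically bounded, $\cE$ being perfect). For projective boundedness, let $N\in D^{\geq0}(A_X\otimes A_X^{op})$; then $\Phi_Y^{-1}(N)\in D^{\geq c}(Y)$ for a constant $c$ not depending on $N$. Locally on $Y$, pseudo-coherence of $\cK$ furnishes a complex $P^{\bullet}$ of finite free $\cO_Y$-modules with $P^{i}=0$ for $i>b(\cK)$ and $P^{\bullet}\simeq\cK$; inspecting $\cHom^{\bullet}_{\cO_Y}(P^{\bullet},-)$ gives $\bR\cHom_{\cO_Y}(\cK,\Phi_Y^{-1}(N))\in D^{\geq c-b(\cK)}(Y)$. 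Applying the left-exact $\bR\Gamma(Y,-)$ and transporting isomorphisms of $\mk$-complexes back along $\Phi_Y$ yields $\bR\Hom_{A_X\otimes A_X^{op}}(A_X,N)\in D^{\geq m}(\mk)$ with $m$ depending only on $c$ and $b(\cK)$. Hence $A_X$ is projectively bounded above over $A_X\otimes A_X^{op}$.

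The main obstacle is the identification in the second step: the $\mk$-linear computation is trivial, but checking that the entire bimodule structure matches takes careful tracking through the Künneth isomorphism and the duality $\cE\mapsto\cE^{\vee}$, and one must ensure no unbounded twist slips in. Everything else is routine, beyond the usual care with finiteness hypotheses ensuring that $Y$ is noetherian.
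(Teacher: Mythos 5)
Your proposal is correct and follows essentially the same route as the paper: pass to $X\times X$ with generator $\cE\boxtimes\cE^{\vee}$, identify the diagonal bimodule with $\cO_{\Delta}\in D^{b}_{coh}(X\times X)$, and invoke Proposition \ref{prop:Lipman_Neeman}. The paper states this in three sentences and leaves the projective-boundedness step implicit; your local finite-free-resolution argument is the intended way to fill that in.
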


\begin{proof}A generator $\cE\boxtimes \cE^{\vee}\in\Perf(X\times X)$ identifies $D(X\times X)$ with $D(A_X\otimes A_X^{op}).$ The structure sheaf  of the diagonal $\cO_{\Delta}\in D(X\times X)$ corresponds to the diagonal $A_X\mhyphen A_X$-bimodule. The assertion now follows from Proposition \ref{prop:Lipman_Neeman}.\end{proof}

\begin{cor}\label{cor:*_satisfied_for_proper_schemes}Let $X$  and $A_X$ be as above, and assume that $X$ is proper over $\mk.$ Then for any pseudo-perfect DG $A_X$-modules $M,$ $N$ we have an isomorphism $M\Ltens{A_X} N^*\xto{\sim}\bR\Hom_{A_X}(M,N)^*.$\end{cor}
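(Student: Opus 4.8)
The plan is to deduce Corollary~\ref{cor:*_satisfied_for_proper_schemes} by combining the abstract criterion of Proposition~\ref{prop:*_satisfied_for_all} with the geometric input of Proposition~\ref{prop:diagonal_geometric_proj_bounded_above}. Recall that Proposition~\ref{prop:*_satisfied_for_all} asserts exactly the desired isomorphism $M\Ltens{A}N^*\xto{\sim}\bR\Hom_A(M,N)^*$ for pseudo-perfect $M,N$, under three hypotheses on the DG algebra $A$: that $A$ is proper, that the diagonal $A\mhyphen A$-bimodule is projectively bounded above, and that the diagonal bimodule is compactly approximable. So the entire task is to verify these three hypotheses for $A=A_X=\End(\cE)$ when $X$ is proper over $\mk$.

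First I would record properness of $A_X$: since $X$ is proper over $\mk$, the triangulated category $D_{\perf}(X)$ is proper, i.e.\ $\bR\Hom(\cE,\cE)$ has finite-dimensional total cohomology, so $A_X$ is a proper DG algebra; and $A_X$ is by construction a DG algebra, so $D_{\perf}(A_X)\simeq D_{\perf}(X)$ has a classical generator. Second, the projective boundedness above and compact approximability of the diagonal $A_X\mhyphen A_X$-bimodule are precisely the content of Proposition~\ref{prop:diagonal_geometric_proj_bounded_above}, which holds for any noetherian $X$ admitting a perfect generator (it is proved there via the identification of $\cO_\Delta\in D(X\times X)$ with the diagonal bimodule under $\cE\boxtimes\cE^\vee$, together with Proposition~\ref{prop:Lipman_Neeman}). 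Thus all three hypotheses of Proposition~\ref{prop:*_satisfied_for_all} are met, and the conclusion follows verbatim.

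So the proof is essentially a three-line citation-chase; there is no genuine obstacle, only the mild bookkeeping of checking that "proper over $\mk$" for the scheme $X$ translates into "proper" for the DG algebra $A_X$ in the sense of the paper (properness plus existence of a classical generator), and that $X$ proper is in particular noetherian so that Proposition~\ref{prop:diagonal_geometric_proj_bounded_above} applies. If one wanted to be slightly more self-contained one could also note that pseudo-perfect $A_X$-modules correspond under $\Phi_X$ to objects of $D_{\pspe}(\Perf(X))\simeq \mD^b_{coh}(X)$, but this is not needed: Proposition~\ref{prop:*_satisfied_for_all} is already stated for abstract pseudo-perfect modules over an abstract DG algebra, so invoking it directly with $A=A_X$ suffices.

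\begin{proof}[Proof of Corollary~\ref{cor:*_satisfied_for_proper_schemes}]
Since $X$ is proper over $\mk$, the DG algebra $A_X=\End(\cE)$ has finite-dimensional total cohomology and $D_{\perf}(A_X)\simeq D_{\perf}(X)$ has a classical generator, so $A_X$ is a proper DG algebra. By Proposition~\ref{prop:diagonal_geometric_proj_bounded_above}, the diagonal $A_X\mhyphen A_X$-bimodule $A_X\in D(A_X\otimes A_X^{op})$ is projectively bounded above and compactly approximable. Hence $A_X$ satisfies all the hypotheses of Proposition~\ref{prop:*_satisfied_for_all}, and that proposition yields, for any pseudo-perfect DG $A_X$-modules $M,N$, an isomorphism $M\Ltens{A_X}N^*\xto{\sim}\bR\Hom_{A_X}(M,N)^*$.
\end{proof}
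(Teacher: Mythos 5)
Your proof is correct and follows exactly the paper's own argument: the paper likewise observes that $A_X$ is proper and then cites Proposition \ref{prop:diagonal_geometric_proj_bounded_above} together with Proposition \ref{prop:*_satisfied_for_all}. The extra bookkeeping you supply (why properness of $X$ gives properness of $A_X$) is a harmless elaboration of the same route.
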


\begin{proof}Since the DG algebra $A_X$ is proper, the assertion follows from Proposition \ref{prop:diagonal_geometric_proj_bounded_above} and Proposition \ref{prop:*_satisfied_for_all}.\end{proof}

For completeness we also prove the following result, which is certainly well-known to experts.

\begin{prop}\label{prop:pseudo_perfect_geometric} Let $X,$ $A_X$ and $\Phi_X$ be as above, and assume that $X$ is of finite type over $\mk.$ Then the functor $\Phi_X$ identifies 
$D^b_{coh,\propp}(X)$ with 
$D_{\pspe}(A_X).$\end{prop}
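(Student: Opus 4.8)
The plan is to compute the subcategory $D_{\pspe}(A_X)\subset D(A_X)$ directly in geometric terms and match it with $D^b_{coh,\propp}(X)$. Recall that $A_X=\End(\cE)$ for a classical generator $\cE\in\Perf(X)$, and that $\Phi_X$ sends $\cF$ to $\bR\Hom_{\cO_X}(\cE,\cF)$ with its natural $A_X$-action. Since $A_X$ is a DG algebra (a one-object DG category), a module $M$ is pseudo-perfect exactly when its underlying complex of $\mk$-vector spaces lies in $D_{\perf}(\mk)$; transported through $\Phi_X$ this says $\bR\Hom_{\cO_X}(\cE,\cF)\in D_{\perf}(\mk)$. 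As $\cE$ generates $\Perf(X)$ as a thick subcategory and $D_{\perf}(\mk)\subset D(\mk)$ is thick, this is equivalent to $\bR\Hom_{\cO_X}(\cG,\cF)\in D_{\perf}(\mk)$ for every $\cG\in\Perf(X)$, and, replacing $\cG$ by $\cG^{\vee}$, to $\bR\Gamma\bigl(X,\cG\Ltens{\cO_X}\cF\bigr)\in D_{\perf}(\mk)$ for every $\cG\in\Perf(X)$. So the task reduces to showing that an object $\cF\in D(\QCoh X)$ satisfies this last condition if and only if $\cF\in D^b_{coh,\propp}(X)$.

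One inclusion is elementary. If $\cF\in D^b_{coh,\propp}(X)$ and $\cG\in\Perf(X)$, then $\cG\Ltens{\cO_X}\cF$ is again bounded with coherent cohomology supported on a subscheme proper over $\mk$; filtering each of its cohomology sheaves by powers of the radical ideal of its (proper) support expresses it, up to finitely many triangles, in terms of coherent sheaves pushed forward from a proper $\mk$-scheme, and on those $\bR\Gamma(X,-)$ is perfect over $\mk$ by Grothendieck's finiteness theorem. Hence $\bR\Gamma\bigl(X,\cG\Ltens{\cO_X}\cF\bigr)\in D_{\perf}(\mk)$, so $\cF\in\Phi_X^{-1}\bigl(D_{\pspe}(A_X)\bigr)$.

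For the converse I would first place $\cF$ in $D^b_{coh}(X)$ using the appendix. If $M:=\Phi_X(\cF)$ is pseudo-perfect it is in particular bounded, so $\cF\in D^b(\QCoh X)$ by part 1) of Proposition \ref{prop:Lipman_Neeman}. By Proposition \ref{prop:diagonal_geometric_proj_bounded_above} the diagonal $A_X$-bimodule is projectively bounded above and compactly approximable, hence $\Tor$-bounded above by Proposition \ref{prop:proj_bounded_implies_Tor}; so Proposition \ref{prop:pseudo_perfect_comp_approx} applies and $M$ is compactly approximable, whence $\cF\in D^-_{coh}(X)$ by part 2) of Proposition \ref{prop:Lipman_Neeman}. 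Intersecting the two conditions gives $\cF\in D^b_{coh}(X)$. It then remains to show that $Z:=\supp(\cF)$ is proper over $\mk$. Assuming it is not, some irreducible component of $Z$ fails to be proper, and the valuative criterion of properness produces — exactly as in the proof of Proposition \ref{prop:pseudo-perfect_over_coherent} — a closed integral affine curve $i\colon C\hookrightarrow X$ with $C\subseteq Z$. Since $\cF$ has nonzero (bounded, coherent) stalk at the generic point of $C$, derived Nakayama shows that $\bL i^{*}\cF$ is nonzero at that generic point, hence has a cohomology sheaf with support all of $C$; as a coherent sheaf with full support on the affine curve $C$ has infinite-dimensional space of global sections, one then exhibits a perfect complex $\cG$ on $X$ for which $\bR\Gamma\bigl(X,\cG\Ltens{\cO_X}\cF\bigr)$ fails to be perfect over $\mk$, contradicting the hypothesis. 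Therefore $Z$ is proper and $\cF\in D^b_{coh,\propp}(X)$.

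The step I expect to be the main obstacle is this last one: passing from the non-properness of $\supp(\cF)$ to an actual failure of perfectness of some $\bR\Gamma\bigl(X,\cG\Ltens{\cO_X}\cF\bigr)$. The difficulty is that the global cohomology of a sheaf with large support can still be finite-dimensional, so one cannot simply test against $\cO_X$; one must localize the argument onto a closed affine curve inside the support and use infinite-dimensionality there, which forces a careful choice of the testing perfect complex $\cG$ on $X$. This is the place where the valuative-criterion input genuinely enters, in the spirit of the corresponding non-properness argument in Proposition \ref{prop:pseudo-perfect_over_coherent}; the rest of the proof is a routine assembly of the translations above together with the appendix results.
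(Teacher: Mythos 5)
Your proposal is correct and follows essentially the same route as the paper: the easy direction via properness of the support, and the converse by combining Proposition \ref{prop:pseudo_perfect_comp_approx} with both parts of Proposition \ref{prop:Lipman_Neeman} to get $\cF\in D^b_{coh}(X),$ then ruling out non-proper support by producing a closed affine integral curve in $\supp(\cF)$ via the valuative criterion and testing against a perfect complex supported on it. The final step you flag as the main obstacle is exactly the one the paper leaves implicit (``Clearly, $\bR\Hom(\cG,\cF)\not\in D_{\perf}(\mk)$''), and your sketch of it is the intended argument.
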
 

\begin{proof}Clearly, if 
$\cF\in D^b_{coh,\propp}(X),$ 
then $\bR\Hom(\cE,\cF)=\bR\Gamma(\cE^{\vee}\Ltens{\cO_X}\cF)\in D_{\perf}(\mk),$ since $\supp(\cF)$ is proper. Thus, $\Phi_X(\cF)\in D_{\pspe}(A_X).$

Now suppose that $\cF\in D(X)$ is an object such that $\Phi_X(\cF)\in D_{\pspe}(A_X).$ We first show that $\cF\in D^b_{coh}(X).$ By Proposition \ref{prop:pseudo_perfect_comp_approx}, the object $\Phi_X(\cF)\in D(A_X)$ is compactly approximable, hence so is $\cF.$ By Proposition \ref{prop:Lipman_Neeman} 2), we obtain $\cF\in D^-_{coh}(X).$ Since $\Phi_X(\cF)\in D^b(A_X),$ we also conclude from Proposition \ref{prop:Lipman_Neeman} 1) that $\cF\in D^b(\QCoh X).$ Therefore, $\cF\in D^b_{coh}(X).$

It remains to show that $\supp(\cF)$ is proper. Assume the contrary. As in the proof of Proposition \ref{prop:pseudo-perfect_over_coherent}, we can find a closed embedding $i:C\to \supp(\cF),$ where $C$ is an affine integral curve. Take any perfect complex $\cG\in D_{\perf}(X)$ such that $\supp(\cG)=i(C).$ Then $\bR\Hom(\cG,\cF)\not\in D_{\perf}(\mk),$ a contradiction. This proves the proposition.\end{proof}

\end{document}